\tikzset{bullet/.style={draw,ellipse, text width = 4cm, text centered}}
\tikzset{rec/.style={draw, text width = 3.5 cm, text centered}}
\tikzset{plain/.style={->,>=stealth}}
\tikzset{
  barbarrow/.style={ 
     >={Straight Barb[left,length=5pt, width=5pt]}
  },
  strike through/.style={
    postaction=decorate,
    decoration={
      markings,
      mark=at position 0.5 with {
        \draw[-] (-3pt,-3pt)  --  (3pt, 3pt);
      }
    }
  }
}
\theoremstyle{plain}
\newtheorem{theorem}{Theorem}[section]
\newtheorem{lemma}[theorem]{Lemma}
\newtheorem{proposition}[theorem]{Proposition}
\newtheorem{definition}[theorem]{Definition}
\newtheorem{corollary}[theorem]{Corollary}
\newtheorem{conjecture}[theorem]{Conjecture}
\theoremstyle{remark}
\newtheorem{remark}[theorem]{Remark}
\theoremstyle{remark}
\theoremstyle{remark}
\newtheorem{example}[theorem]{Example}
\theoremstyle{remark}
\newtheoremstyle{dotless}{}{}{\itshape}{}{\bfseries}{}{ }{}
\theoremstyle{dotless}
\newtheorem*{theorem*}{Theorem}
\newtheorem*{lemma*}{Lemma}
\newtheorem*{proposition*}{Proposition}
\newtheorem*{definition*}{Definition}
\newlength{\bibitemsep}\setlength{\bibitemsep}{.2\baselineskip plus .05\baselineskip minus .05\baselineskip}
\newlength{\bibparskip}\setlength{\bibparskip}{1pt}
\let\oldthebibliography\thebibliography
\renewcommand\thebibliography[1]{%
  \oldthebibliography{#1}%
  \setlength{\parskip}{\bibitemsep}%
  \setlength{\itemsep}{\bibparskip}%
}
\newcommand{\cc}{\ensuremath{\mathbb{C}}}
\newcommand{\on}[1]{\operatorname{#1}}
\newcommand{\onsf}[1]{\operatorname{\sf #1}}
\newcommand{\delete}[1]{{}}
\newcommand{\Mod}{\on{\!-Mod}}
\newcommand{\scr}[1]{\mathscr{#1}}
\newcommand{\cal}[1]{\mathcal{#1}}
\newcommand{\mc}[1]{\mathcal{#1}}
\newcommand{\msf}[1]{\mathsf{#1}}
\newcommand{\mf}[1]{\mathfrak{#1}}
\newcommand{\bb}[1]{\mathbb{#1}}
\newcommand{\Ext}{\on{Ext}}
\newcommand{\End}{\on{End}}
\newcommand{\spec}{\onsf{spec}}
\newcommand{\CExt}{\cal Ext}
\newcommand{\Hom}{\on{Hom}}
\newcommand{\Tor}{\on{Tor}}
\newcommand{\CTor}{\cal Tor}
\newcommand{\myhookrightarrow}{\raisebox{-1pt}{ \, \begin{tikzpicture}
  \draw [right hook->] (0, 0) -- (0.7, 0);     
\end{tikzpicture} \, }}
\def\l@subsection{\@tocline{2}{0pt}{1pc}{5pc}{}} \def\l@subsection{\@tocline{2}{0pt}{2pc}{6pc}{}} \makeatother
\title[PD dg algebras and homotopy Lie algebras]{Differential graded algebras with divided powers and homotopy Lie algebras}
\author[A. Caradot]{Antoine Caradot$^1$}
\address{$^1$Hubert Curien Laboratory \\ Jean Monnet University \\ Saint-\'Etienne \\ FRANCE}
\email{antoine.caradot@univ-st-etienne.fr}
\author[Z. Lin]{Zongzhu Lin$^2$}
\address{$^2$Department of Mathematics\\
Kansas State University \\
Manhattan, KS 66506, USA}
\email{zlin@ksu.edu}
\date{June 18, 2026}                     
\keywords{Symmetric tensors, divided powers, homotopy Lie algebras, Ext algebras, complete intersection rings}
\thanks{2020 {\it Mathematics Subject Classification:}
Primary: 13D02, 16W25. Secondary: 14M10, 16T05}
\begin{document}

\begin{abstract}
Given a commutative algebra $A$ and a quotient $A$-algebra $A/I$, we construct a resolution of $A/I$ as an $A$-module such that it is also a differential graded (dg) algebra with divided powers (PD). This construction makes use of symmetric tensors in the symmetric tensor category of dg $A$-modules and does not require a Noetherian assumption on $A$. Moreover, the resolution has many lifting properties which we leverage to study the homotopy Lie algebra associated to the pair $(A,A/I)$, which is defined as the image in the Yoneda algebra $\Ext^*_{A}(A/I,A/I)$ of the cohomology of the PD derivations of this PD dg algebra. Finally we investigate the complete intersection case in more details as well as connect it to the finite generation of the Yoneda algebra. 

\delete{
{\color{blue}
In this article, we study the Ext algebra of a finitely generated complete intersection ring and determine the necessary conditions for it to be strictly graded commutative. We then relax the definition of the commutativity of a graded ring and introduce the weak graded commutativity. We determine the conditions necessary to obtained a weak graded commutative Yoneda algebra. We will also compare the commutativity conditions on the Yoneda algebra with those on its even subalgebra.
}
}

\end{abstract}

\maketitle
\tableofcontents

\section{Introduction}
This paper is part of our effort to understand the geometries attached to a vertex algebra. Corresponding to each vertex algebra $V$, there is a Poisson algebra $R(V)$ which defines a Poisson scheme $X_V=\spec{R(V)}$. In the rational $C_2$-cofinite case, $X_V$ has only one point. In \cite{CJL1}, we defined a cohomological dual $X_{V,x}^!=\spec((\on{Ext}_{R(V)}^*(\msf k_x, \msf k_x))^{ab})$ for each closed point $x\in X_V$ and call it the cohomological variety of $V$ at $x$. While it defines an invariant of the vertex algebra $V$, it is difficult to compute in most cases, in particular when $ x \in X_V$ is not a local complete intersection. In \cite{CJL1}, we estimated a lower bound on the dimension of this variety for many rational $C_2$-cofinite vertex algebras. Our goal is to establish a cohomological support variety theory (\cite{Friedlander-Parshall} for restricted Lie algebras in positive characteristic, \cite{Boe-Kujawa-Nakano} for Lie superalgebras) for representations of vertex algebras in terms of quasicoherent sheaves. In the present paper, we explore a model different from $X_{V,x}^!$. The Yoneda algebra $\on{Ext}_{R(V)}^*(\msf k_x, \msf k_x)$ is actually a Hopf algebra, which is the universal enveloping algebra of a graded Lie algebra, called the homotopy Lie algebra of $R(V)$, in the symmetric monoidal category of graded vector spaces (with Koszul braiding) when $ R(V)$ is Noetherian. Hence the idea of associated varieties of representations of Lie algebras can be applied \cite{Dixmier, Joseph, Bai-Xie, Vogan}. To prepare for this, we first need to explore this Lie algebra from the differential graded context. Since the algebra $ R(V)$ is in general not Noetherian and not complete intersection, we give a more functorial construction of the homotopy Lie algebra as well as its dg enhancement, depending on the construction of the Koszul-Tate resolution, preparing for applications to dg vertex algebras which have been established in \cite{CJL2, CJL3, CL}. One of our objectives is to construct a cohomological duality for vertex algebras, by establishing a Koszul-Tate type of resolutions of vertex algebras. 

\delete{
The Koszul-Tate resolution requires a divided power structure (PD structure) \cite{Roby1, Roby2, Roby3,Andre, Stacks-Project} on the resolution algebra, which is called semi-free in some literature. The free algebras with PD structures have been studied with concrete constructions by Roby in \cite{Roby1,Roby2,Roby3}. One of the approach is to use the functorial construction of subalgebra $\onsf{TS}_{\msf k}(V)$ of symmetric tensors in the shuffle algebra (the space $T_A(V)$ with shuffle product) over a $\msf k$-vector space $V$, or even over free modules over a base commutative ring $R$. This construction has an apparent functorial advantage, automatically making $\onsf{TS}_R(V)$ a cocommutative commutative Hopf algebra with PD structure. In this paper, we extend this construction to the symmetric monoidal category of differential complexes over general commutative rings. This is done in Section \ref{sec:3} with elementary setup in Section \ref{sec:2}. More precisely, in Section \ref{sec:3} we first extend the construction of free PD algebras by Roby in \cite{Roby3} using the shuffle algebra of symmetric tensors $\onsf{TS}_{R}(A/I)$ of a free $R$-module $A/I$ to the symmetric tensor category $\onsf{Cplx}(R)^{free}$ of differential complexes of free $R$-modules. This defines a  functor $\onsf{TS}_{R}^*(-)$ from $\onsf{Cplx}(R)^{free}$ to the category of PD dg algebras, allowing us to construct the Koszul-Tate resolution for an arbitrary commutative ring $A$ with an ideal $ I$ and quotient ring $A/I$ as $A$-module. In most constructions in the literature, we find that either $A$ is assumed to the Noetherian or is a {\color{red}$\mathbb Q$-algebra, from rational homotopy theory (thus the PD structure becomes automatic and unique) \cite{FHT}.} In Tate's construction \cite{Tate}, the approach consists in killing one cocycle at a time, which is good enough to apply when $A$ is Noetherian. Northcott \cite{Northcott} extended Tate's construction by taking limit {\color{red}still over generators.} Gulliksen-Levin's lecture notes \cite{Gulliksen-Levin} also use Northcott's construction. Here we take the generator free approach and apply the functor $ \onsf{TS}_{A}(-)$ to kill the whole homology at a degree at once. This is a Sullivan type of construction from homotopy theory where we attach all cells of a particular dimension at once.  We will rely heavily on the use of the PD structure to extend PD dg algebra homomorphisms as well as derivations. The Koszul-Tate resolution of $A/I$ as $A$-algebra depends on the choice of free $A$-modules $F_{n+1}$ that kill degree $-n$ cohomology of earlier stages. }

The Koszul-Tate resolution requires a divided power structure (PD structure) \cite{Roby1, Roby2, Roby3,Andre, Stacks-Project} on the resolution algebra, which is called semi-free in some literature. The free algebras with PD structures have been studied with concrete constructions by Roby in \cite{Roby1,Roby2,Roby3}. One of the approaches is to use the functorial construction of the subalgebra $\onsf{TS}_A(V)$ of symmetric tensors in the shuffle algebra ($T_A(V)$ with shuffle product) over a free $A$-module $V$. This construction has an apparent functorial advantage, automatically making $\onsf{TS}_A(V)$ a cocommutative commutative Hopf algebra with PD structure. In this paper, we extend this construction to the symmetric monoidal category of differential complexes over general commutative rings. This is done in Section \ref{sec:3} with elementary setup in Section \ref{sec:2}. More precisely, in Section \ref{sec:3} we first extend the construction of free PD algebras by Roby in \cite{Roby3} using the shuffle algebra of symmetric tensors $\onsf{TS}_{A}(M)$ of a free $A$-module $M$ to the symmetric tensor category $\onsf{Cplx}(A)^{free}$ of differential complexes of free $A$-modules. 
This defines a  functor $\onsf{TS}_{A}^*(-):\onsf{Cplx}(A)^{free} \to \onsf{grPDAlg}(A) $ to the category of graded PD algebras over $A$ (see \cite{Andre} for graded PD algebras), sending direct sums to tensor products. 
Thus there is a PBW type basis for each given basis $\mc B$ of $M$ with a given total order. 
This functor automatically defines a graded Hopf algebra with PD structure on $ \onsf{TS}_A(M)$. 
The freeness then makes it possible to extend differentials if $A$ is a PD dg algebra.
We use this functor $ \onsf{TS}_A(-)$  to construct a Koszul-Tate resolution for an arbitrary commutative ring $A$ with an ideal $ I$ and quotient ring $ A/I$ as an $A$-algebra.

In most constructions in the literature, we find that either $A$ is assumed to be Noetherian (most in the context of Noetherian local rings such as in \cite{Avramov, Gulliksen, Gulliksen-Levin, Tate}) or to be a $\mathbb Q$-algebra, from rational homotopy theory \cite{FHT} (thus the PD structure becomes automatic and unique) such as in \cite{Pistalo-Poncin, Brino-Pistalo-Poncin_1,Brino-Pistalo-Poncin_2}. 
In Tate's construction \cite{Tate}, the approach consists in killing one cocycle at a time, which is good enough to apply when $A$ is Noetherian. Northcott \cite{Northcott} extended Tate's construction by still working over generators and taking limits. 
This extension is then used in Gulliksen-Levin's lecture notes \cite{Gulliksen-Levin} as well as Stacks-Project (\cite[Lemma 23.6.9]{Stacks-Project}) and others. 

We take a generator free approach and apply the functor $ \onsf{TS}_{A}(-)$ to kill the entire (co)homology at a degree at once. This is a Sullivan type construction from homotopy theory \cite{Sullivan, FHT} where we attach all cells of a given dimension at once. 
We will rely heavily on the use of the PD structure to extend PD dg algebra homomorphisms as well as derivations. 
The Koszul-Tate resolution $P^*=\varinjlim_{n} P_{n}^*$ of $A/I$ as an $A$-algebra depends on the choice of the free $A$-module cover $\phi_{n+1}: F_{n+1}\to H^{-n}(P^*_n)$ that kills degree $-n$ cohomology of earlier stage $P_n^*$ and the next stage is obtained as $P_{n+1}^*=P_{n}^*\otimes_A \onsf{TS}_{A}(F_{n+1}[n+1])$. 
The differential on $ P^*_{n+1}$ is the unique extension of $d_{P_{n}^*}$ and $ \phi_{n+1}$, making $ P_{n+1}^*$ a PD dg algebra and the natural embedding $ P_{n}^*\to P_{n+1}^*$  a homomorphism of PD dg algebras. 
We remark that, by construction, each $ P_n^*$ is a graded PD Hopf algebra which is both graded commutative and graded cocommutative. 
Thus, forgetting the differential, $P^*$ is a connected graded cocommutative PD Hopf algebra over $A$. 
However it is not a PD dg Hopf algebra (see \cite{Andre}) as the comultiplication will fail to be a chain map, as one might expect.    

It is well-known that the Yoneda algebra $ \onsf{Ext}_A^*(A/I,A/I)$ is isomorphic to $H^*(\mc Hom_{A}^*(P^*,P^*))$, 
not only as graded $A$-modules, 
but also as graded algebras with the composition product on $H^*(\mc Hom_{A}^*(P^*,P^*))$ up to a Koszul sign. 
In Section \ref{sec:4},  we consider the subcomplex $\mc Der^{*,\on{pd}}_A(P^*, P^*)\subseteq \mc Hom_{A}^*(P^*,P^*)$ of the PD derivations. The complex $\mc Der^{*,\on{pd}}_A(P^*, P^*)$
is naturally a dg Lie algebra, which is a dg Lie subalgebra of $\mc Der^{*}_A(P^*, P^*)$.  
The graded Lie algebra $H^*(\mc Der^{*,\on{pd}}_A(P^*, P^*))$ (in the symmetric tensor category of graded $A$-modules) naturally maps to a graded Lie subalgebra in $H^*( \mc Hom_{A}^*(P^*,P^*))$. Its image is defined to be the {\em Homotopy Lie algebra}. 
In the case where $A$ is Noetherian, $I$ is maximal, and the resolution $P^*$ is $I$-minimal (i.e., $d(P^{-n}) \subseteq IP^{-n+1}$), 
the Yoneda algebra $ \onsf{Ext}_{A}^*(A/I,A/I)$ has a connected Hopf algebra structure whose homogeneous primitive elements form a graded Lie algebra, which is classically defined as the homotopy Lie algebra \cite{Avramov}. 
Moreover, the homotopy Lie algebra has a restricted structure, and the Yoneda algebra is the restricted enveloping algebra of the homotopy Lie algebra. 

We could not prove that the graded Lie algebra $H^*(\mc Der^{*,\on{pd}}_A(P^*, P^*))$ does not depend on the resolution $P^*$ for the $A$-algebra $A/I$. We also will not discuss the model category structure on the category of PD dg algebras so that the Koszul-Tate resolution $P^*$ constructed is a cofibrant object. 

The concept of Homotopy Lie algebra was first defined by the homotopy groups of $H$-spaces in \cite{Milnor-Moore} and later interpreted for commutative local Noetherian rings (see \cite{Avramov, Gulliksen, Gulliksen-Levin}). 
We choose the definition using the dg Lie algebra $\mc Der^{*, \on{pd}}_A(P^*, P^*)$,
which has a natural restricted structure given by $D \mapsto D \circ D$. This Lie algebra plays the role of tangent complex in the context of rational homotopy theory and model category, or derived algebraic geometry.
We will not interpret the cotangent complex in terms of the PD dg algebra $P^*$. Cotangent complexes mostly appear in the simplicial context \cite{Illusie}. 
There is a dg approach to cotangent complexes in \cite{Yekutieli}. 
In the paper \cite{Richter}, Richter has proved that divided power structures on the chain complexes will naturally arise from the simplicial commutative algebra structure via normalized Moore complex. 
There is a different notion of homotopy Lie algebra operadic approach in \cite{Hinich-Schechtman} using Lie operad in the homotopy category of differential complexes.
In case $\bb Q\subseteq A$, the homotopy Lie algebra is closely related to the Homotopy Lie algebra described in this paper. 

We have not yet considered the case where $A$ is a Poisson $\msf k$-algebra and $ I$ is a Poisson ideal, but we expect that there is a Koszul-Tate resolution $P^*$ which has a Poisson dg algebra with PD structure so that the augmentation $ P^*\to A/I$ \cite{Cartan-Eilenberg} is a Poisson algebra homomorphism. 
Due to the size of the paper, we also do not mention the algebra of PD dg differential operators on the PD dg algebra $P^*$.  

In Section \ref{sec:5}, we consider the special case when $A$ is a complete intersection as well as several examples arising from simple singularities.
In this case, $P^*$ can constructed as $I$-minimal and the homotopy Lie algebra can be computed as a certain graded Lie algebra concentrated on degree 1 and 2. Conversely, any such graded Lie algebra (with a restricted structure) can be seen as the homotopy Lie algebra of some complete intersection algebra.
In this section, we also consider the finite generation question of Yoneda algebras. This is a classical question regarding finite generation of cohomology rings of groups, restricted Lie algebras, and Hopf algebras (see \cite{Evens, Venkov, Ji-Lin, Friedlander-Parshall, Friedlander-Suslin, Ginzburg-Kumar}).
In contrast to finite groups or finite dimensional restricted Lie algebras, the Yoneda algebra is finitely generated if and only if there is a resolution $P^*$ that is finitely generated as a PD dg algebra, which is equivalent to the homotopy Lie algebra having finite total dimension or to $A$ being a complete intersection ring.

\delete{
Finite generation of cohomology ring is one main questions in many parts of mathematics. 
In this paper, we discuss when the Yoneda algebra $\End_{\mc O_x}(\sf k_x, \sf k_x)$ is a finitely generated algebra over $ \sf k_x$ for a closed  point in  an  algebraic variety. 
Given a coherent sheave  $ \mc F$ on $ X$, we also show that  $H^*(\mc O_x, \mc F_x)$ if a finitely finitely generated module over the Yoneda algebra $\End_{\mc O_x}(\sf k_x, \sf k_x)$.

 Given a finite dimensional augmented algebra $ \epsilon: A\to \sf k$, the cohomology (defined in \cite{Cartan-Eilenberg}) $ H^*(A, \sf k)=\Ext^*_A(\sf k, \sf k)$ has a $\sf k$-algebra structure. One of the question is when this algebra is finitely generated. 
 It has been conjectured that if $ A$ is a finite dimensional Hopf algebra and $ \epsilon $ is counit, then $ H^*(A, \sf k)$ is finitely generated as $ \sf k$-algebra. 
 This conjecture was based on the following earlier known result. When $ A=\sf k G$ for a finite group (for any field), this is a result of Evens\cite{Evens} and Venkov \cite{Venkov}. 
 If  $A=u(\mathfrak{g})$ is the restricted enveloping algebra of a restricted Lie algebra over a field $ \sf k$ of characteristic $p>0$, this was proved by Friedlander and Parshall \cite{Friedlander-Parshall}. 

 Friedlander and Suslin \cite{Friedlander-Suslin} extended above result to any finite  group schemes, whose group algebra (or rather the algebra of distributions at the identity) is exactly a finite dimensional cocommutative algebra $A$. 
 For the finite dimensional Hopf subalgebras arising from quantum groups at roots of unity, Ginzburg and Kumar \cite{Ginzburg-Kumar} also proved the finite generation of the cohomology ring. 
 
 Etingof has a more general conjecture for finite tensor categories. In above cases, the cohomology rings are graded commutative. 
 Thus the even degree part $H^{2*}(A, \sf k)$ is also a finitely generated commutative algebra, and thus defines a conical affine scheme $ \spec(H^{2*}(A, \sf k))$ (or $\onsf{proj}(H^{2*}(A, \sf k))$).
 However, $ \Ext_A^*(\sf k, \sf k)$ is in general not graded commutative.
 The finite generation of $\Ext_A^*(\sf k, \sf k)$ does not imply that $ \Ext_A^{2*}(\sf k, \sf k)$ is finitely generated. 
 In a subsequent paper, we will use complete intersection resolution to find large a commutative quotient of $ \Ext_A^{2*}(\sf k, \sf k)$ in order to estimate the dimension of the corresponding variety for maximal quotient. 
 
 The finite generation of the Yoneda algebra $\Ext_A^{2*}(\sf k, \sf k)$ for a Noetherian local ring $A$ was conjectured by Levin \cite[page 115]{Gulliksen-Levin}. Conter examples were found ?????}
 \delete{
{\color{blue}
 Let $ \cal C$ be an abelian $\msf k$-linear category with enough projective objects and  small limit.
 Let $ \on{Irr}(\cal C)$ be the isomorphism classes of irreducible objects.  Let $L_{\cal C}=\oplus_{L\in \on{Irr}(C)}L$. 
 One is interested in computing the Yoneda $\msf k$-algebra $ \Ext^*_{\cal C}(L_{\cal C}, L_{\cal C})$. More generally, given any object $M$ in $\cal C$, the Yoneda algebra $\Ext^*_{\cal C}(M, M)$ is of interest. 
 This algebra is not commutative in general. 
 The special case when $A$ is a commutative $\msf k$-algebra and $ I\subseteq A$ is an ideal, then taking $M=A/I$ as an $ A$-module, the analogous Yoneda algebra $ \Ext^*_A(A/I, A/I)$  can be computed in the category  $\cal C=A\Mod$ of $A$-modules.
 The commutativity of this algebra is a measurement of smoothness of the embedding of the closed sub-variety defined by the ideal $ I$ in $ \on{Spec}(A)$.  
 }}

 Although we are focusing on the case where $ A$ is a commutative ring and $I$ is an ideal in this paper, the Yoneda algebra has the following geometric formulation. 
Let $ X$ be any $\msf k$-scheme and $Z\subseteq X$ be a closed subscheme. Let $ \cal I \subseteq \cal O_X$ be the ideal sheaf (on $X$) defining $Z$. 
The structure sheaf of $Z$, $\cal O_Z=\cal O_X/\cal I$, is an $\cal O_X$-module supported over $Z$. We then want to compute the graded sheaf of $\cal O_Z$-modules ${\CExt}^*_{\cal O_X}(\cal O_Z, \cal O_Z)$ in the category of $\cal O_X$-modules on $X$. More generally, when $ Z'$ is another closed subscheme of $ X$ with the defining ideal $ \cal J$, we want to compute the graded sheaf ${\CExt}^*_{\cal O_X}(\cal O_Z, \cal O_{Z'})$.
The sheaves ${\CExt}^*_{\cal O_X}(\cal F, \cal G)$ for certain vector bundles $\cal F$ on $Z$ and $ \cal G$ on $Z'$ are expected to have physical interpretations \cite{CKS}.
We note that this sheaf is supported over  the scheme intersection $ Z\times_X Z'$, which is a subscheme of $X$ defined by the ideal $ \cal I+\cal J$ with structure sheaf $ \cal O_Z\otimes_{\cal O_X}\cal O_{Z'}$. The sheaves $ \CTor^{\cal O_X}_*(\cal O_Z, \cal O_{Z'})$ measure the  derived intersection of $ Z$ and $ Z'$ \cite{ACH}. 
When $ x\in Z\cap Z'$, by considering the localisations of $ \cal O_X$, $ \cal O_Z$ and $\cal O_{Z'}$ at $x$, the modules $ \CTor^{\cal O_{X, x}}_i(\cal O_{Z,x}, \cal O_{Z', x})$ provide the intersection multiplicity of $ Z$ and $Z'$ at $x$ as Serre's homological characterisation of intersection theory. 

\delete{
Tate constructed  a  strictly graded commutative differential graded algebra $ (T_*(R/I), d_*)$ that is a resolution of the $R$-module $R/I$. Thus the $\Tor^R_*(R/I, R/J)=H_*(T_*(R/I)\otimes_R R/J)$ and $\Ext_R^*(R/I, R/J)=H^*(\Hom_R(T_*(R/I), R/J)$.
The Yoneda algebras $\Ext_R^*(R/I, R/I)$ and $\Ext_R^*(R/J, R/J)$ act on $\Ext_R^*(R/I, R/J)$ on the left and right respectively. These can be seen in the derived category of  $R$-modules. This is exactly the case for the affine scheme $X=\spec(R)$. 
 }

If $ Z\subseteq X$ is a closed subvariety then $\Ext^1_{\cal O_X}(\cal O_Z, \cal O_Z)$ is the tangent space of the moduli space of subvarieties at the point $[Z]$. 
The (skew)-commutativity of the Yoneda product can be used to construct symplectic structure on the moduli space (see \cite{Kuznetsov-Markushevich}).
If $ [Z]$ is a smooth point of the moduli space, then the Yoneda product is skew commutative. When $Z=\{z\}$ is a closed point, $\Ext^{1}_{\cal O_X }(\msf k_z, \msf k_z)$ is the tangent space $T_zX$. 


\delete{
\subsection {Motivations} As we will see from Tate's construction, the PD structure naturally arises from the following question.
Let $A$ be an associative algebra and $I\subseteq A$ a two-sided ideal, $\bar A=A/I$ is an $A$-bimodule. 
One of the homological questions is to  compute $ \onsf{Ext}^*_A(\bar A,\bar A)$ which has a Yoneda algebra structure.
Since $ \bar A$ is a ring, one would like to construct a resolution of $ \bar A$ in the category of $ A\onsf{-bimod}$ which is a Hopf algebra such that the Yoneda algebra structure lifts to a coalgebra structure on the resolution. 
Dealing with resolutions, one cannot avoid the differential, which will naturally give rise to PD structures. 
 The category $ A\onsf{-bimod}$ is a monoidal category. Assume that there is a graded algebra object $F_*=\bigoplus _{n=0}^{\infty}F_n$ with $ F_0=A$ and $d: F_{n+1}\to F_n$ is a differential of $ A$-bimodules such that 
 $d(xy)=d(x)y+(-1)^{|x|}xd(y)$ for all $x,y\in F$ homogeneous and $dF_n\subseteq IF_{n-1}$. $F_n$ are free left and right $A$-modules.
 Note that the homology is naturally a ring $H_*(F_*, d)=\bar A$.

 The tensor product $ F_*\otimes_A F_*$ is a resolution of $ \bar A\otimes_A \bar A=\bar A$. A comultiplication $ \Delta: F_*\to F_*\otimes F_*$ would lift the Yoneda product as follows: 

We know that for any left $ A$-module $M$, $\onsf{Tor}^A_*(\bar A, M)=H_*(F_*\otimes_A M)$ and $\onsf{Ext}_A^*(\bar A, M)=H_*(\onsf{Hom}_{A\onsf{-Mod}}(F_*, M))$
}
\bigskip

\textsc{Acknowledgements}. The authors would like to thank the anonymous referee for a detailed report containing thoughtful comments that greatly helped in improving the present paper. 
This work started with substantial discussions with Cuipo Jiang on the cohomological varieties for vertex algebras and both authors appreciate her contributions. 
The second author also wants to thank Amnon Yekutieli for sending his preliminary version of \cite{Yekutieli}.

\section{The category of PD dg algebras over a PD dg ring}\label{sec:2}

\subsection{Complexes and graded modules}

Fix a commutative ring $ A$ and consider the categories $ \onsf{Cplx}(A)$ of differential (cochain) complexes of $A$-modules, and $A\onsf{-mod}^{\mathbb{Z}}$ the category of $\mathbb{Z}$-graded $A$-modules. 
Note that $A\onsf{-mod}^{\mathbb{Z}}$ is a symmetric monoidal category with tensor product $\otimes = \otimes_A $ where
\[
(M^* \otimes N^*)^n=\bigoplus_{i+j=n}M^i \otimes N^j
\]
and with braiding
\[
\begin{array}{cccc}
b_{M^*,N^*}:&M^i \otimes N^j & \to & N^j  \otimes M^i \\
& m \otimes n & \mapsto & (-1)^{ij}n \otimes m.
\end{array}
\]
Then $\onsf{Cplx}(A)$ is also a symmetric monoidal category with tensor product of differential complexes with a similar braiding. 
We label the cochain complexes 
\[ \cdots \to M^{i}\stackrel{d^{i}}{\to} M^{i+1}\to \cdots
\]
if the differential map is of degree $1$.  
By simply relabeling $ M_i=M^{-i}$, then each differential cochain complex becomes a differential chain complex with differential $d^{i}=d_{-i}$ of degree $-1$. 
We say that $(M^*,d^*)$ is $A$-free if $M^i$ is a free $A$-module for all $i$, and the category of these complexes is written $\onsf{Cplx}(A)^{free}$.

Finally we have a forgetful functor 
\[
 \onsf{Cplx}(A) \to A\onsf{-mod}^{\mathbb{Z}}
\]
sending $(M^*,d^*)$ to $M^*=\bigoplus_{n \in \mathbb{Z}}M^n$. 
For simplicity reasons, we will later on write $M^*$ for either the complex or the resulting graded $A$-module, depending on the context.

We will also consider the full subcategory $\onsf{Cplx}(A)^+$ of the cochain complexes $ (M^* , d^*)$ with $ M^i=0$ for all $ i>0$.

\subsection{PD dg rings}\label{sec:pd_rings} 

A strictly graded commutative dg ring refers to a graded commutative dg ring satisfying $ xx=0$ for all odd degree elements $x$.  

Let $(R^*, d^*) $ be a strictly graded commutative dg ring with $I \subseteq R$ a dg ideal. We write $I_{ev}=\bigoplus_{n \in \mathbb{Z}}I^{2n}$ and $I_{odd}=\bigoplus_{n \in \mathbb{Z}}I^{2n+1}$ for the even and odd components of $I$. We define $R_{ev}$ and $R_{odd}$ similarly. A divided power (PD) structure on $I$ is a sequence of maps $\gamma_n: I_{ev}\to I_{ev}$ ($n \in \mathbb Z_{+}$) and $\gamma_0:I_{ev} \to R^*$ given by $\gamma_0(x)=1$ satisfying the following conditions:
\begin{enumerate}\setlength{\itemsep}{2pt}
\item \label{axiom_1}  $\gamma_1(x)=x$ for all $x \in I$;
\item \label{axiom_2} $\gamma_n(x)\gamma_m(x)=\binom{n+m}{m} \gamma_{n+m}(x)$ for all $x\in I_{ev}$;
\item \label{axiom_3} $\gamma_n(ax)=a^n\gamma_n(x)$ for all $ a\in R_{ev}$ and $x\in I_{ev}$;
\item \label{axiom_4} $\gamma_n(x+y)=\sum_{i=0}^n\gamma_i(x)\gamma_{n-i}(y)$ for all $ x, y\in I_{ev}$;
\item \label{axiom_5} $\gamma_p( \gamma_q(x))=\frac{(pq)!}{p!(q!)^p}\gamma_{pq}(x)$ for all $x \in I_{ev}$; 
\item \label{axiom_6} $ \gamma_n(I_{2r})\subseteq I_{2rn}$ for $n>0$;
\item \label{axiom_7}$\gamma_n(xy)=0 \text{ for all } x \in R_{odd}, y\in I_{odd} \text{ homogeneous of odd degrees and } n\geq 2$;
\item \label{axiom_8} $d(\gamma_n(x))=\gamma_{n-1}(x)d(x)$. 
\end{enumerate}
A PD dg ring is a triple $(R^*,I,\gamma)$ where $R^*$ is a dg ring, $I$ is a dg ideal of $R^*$, and $\gamma$ is a PD structure on $I$. We will call $I$ the PD ideal of $(R^*,I,\gamma)$. When the ideal $I$ and the PD structure $\gamma$ are clear, we will simply write $R^*$ for the PD dg ring.

The condition \eqref{axiom_7} on the odd degree was imposed by Andr\'e in \cite{Andre}. We recall that the PD structure is heavily dependent on the ideal $I$. The use of the term ``PD'' comes from the French ``puissances divis\'ees'', the term under which the notion was introduced (see \cite{Roby2}). 

It follows from the definition that:
\begin{enumerate}
 \item Given any PD dg ring $R^*$, for any $x \in I_{ev}$, then $n! \gamma_n(x)=x^n$ for all $n \geq 0$.
 
 \item If $R$ is a graded commutative dg $\bb Q$-algebra, i.e., $\bb Q\subset R_0$, then for any dg ideal $I \subset R$ there exists a unique PD dg structure on $R$ given by $\gamma_n(x)=\frac{x^n}{n!}$ for $x \in I_{ev}$. 

 \item Any strictly graded commutative dg ring $R^*$ is a PD dg ring with $I=\{0\}$. In particular, this is true when $R^*$ is concentrated in degree $0$, which is a commutative ring. \label{note_ring_dg}
\end{enumerate}

Let $(R^*, d^*)$ be a dg ring in $\onsf{Cplx}(A)$. Set $B^*(R^*)\subseteq Z^*(R^*)\subseteq R^*$ the coboundary and cocycle subcomplexes of $R^*$. Then $ Z^*(R^*)Z^*(R^*)\subseteq Z^*(R^*)$ by the Leibniz rule, rule that also implies $ d(1)=0$. Thus $Z^*(R^*)$ is a dg subring of $R^*$ (with trivial differential) and  $B^*(R^*)$ is a two-sided ideal of $Z^*(R^*)$. 
Therefore the  cohomology $H^*(R^*)$ is dg ring (with trivial differential).

Note that $Z^*(I)=I\cap Z^*(R^*)$ is a dg ideal of $Z^*(R^*)$. It follows from \eqref{axiom_8} in the definition of the PD structure that $ \gamma_n(Z^*(I)_{ev})\subseteq Z^*(I)_{ev}$. Thus $(Z^*(R^*), Z^*(I), \gamma) $ is a PD dg ring. \delete{Since $B^*(R^*)$ is an ideal of $Z^*(R^*)$, we also have $\gamma_n(I_{ev}\cap B^*(R^*))\subseteq I_{ev}\cap B^*(R^*)$ following $n\gamma_n(d(x))=\gamma_{n-1}(d(x))d(x)$ if $ \bb Q\subseteq R$.} 
Set $I(H^*(R^*))$ the image of $Z^*(I)$ in $H^*(R^*)$, which is clearly an ideal. We want to assume that the PD structure on $(Z^*(R^*), Z^*(I), \gamma)$ induces a PD structure  $(H^*(R^*), I(H^*(R^*)), \bar \gamma)$. This requires 
\begin{align}
\gamma_n(z+b)=\gamma_n(z)+\sum_{i=1}^n\gamma_{n-i}(z)\gamma_{i}(b)\in \gamma_n(z)+I_{ev}\cap B^*(R^*)
\end{align}
for $z\in I_{ev}\cap Z^*(R^*) $ and $b\in I_{ev}\cap B^*(R^*)$, i.e., $ B^*(R^*)$ should be a PD dg ideal of $ Z^*(R^*)$. Since
 $n\gamma_{n}(d(r))=\gamma_{n-1}(d(r))d(r)$ by Axioms \eqref{axiom_1} and \eqref{axiom_2}, then  $B^*(R^*)$ is a PD dg ideal of $ Z^*(R^*)$ if $R^*$ is a $\bb Q$-algebra.  

\delete{ Add a remark on characteristic zero and the free one.

Also in the argument of $\gamma_{p}(x \star y)=0
$ for odd $x,y$. We can not reduce to $p=2$ unless $ \bb Q\subseteq R^*$.}

Given two PD dg rings $(R', I',\gamma')$ and 
$(R'', I'',\gamma'')$, with the ideal $ I=\onsf{Im}(R'\otimes_{\bb Z} I''+I'\otimes_{\bb Z} R'')\subseteq R=R'\otimes_{\bb Z} R''$, 
there exists (cf. \cite[Prop.1.7.10]{Gulliksen-Levin}) a unique PD structure  $(R,I,\gamma) $ such that the obvious dg ring homomorphisms $\iota':(R',I',\gamma')\to(R,I, \gamma)$ and $\iota'': (R'',I'',\gamma'')\to(R,I, \gamma)$ are PD dg ring homomorphisms. 
Using Axiom \eqref{axiom_3}, we can make $\gamma_n$ explicit. If $ x'\in R'$ and $ x''\in R''$ are homogeneous, then $ (x'\otimes x'')=(x'\otimes 1)(1\otimes x'')=(-1)^{|x'||x''|}(1\otimes x'')(x'\otimes 1)$. For $ x''\in I''_{ev}$ and $x' \in R'_{ev}$, we have 
\[
\begin{array}{rcl}
\gamma_n(x'\otimes x'') & = & (x'\otimes 1)^{n}\gamma_n(1\otimes x'') \\[5pt]
& = & (x'\otimes 1)^{n}\gamma_n \circ \iota''(x'') \\[5pt]
& = & (x'\otimes 1)^{n} \iota'' \circ \gamma_n''(x'') \\[5pt]
& = & (x'\otimes 1)^{n}(1\otimes \gamma''_n(x'')) \\[5pt]
& = & x'^{n}\otimes \gamma''_n(x'')
\end{array}
\]
because $\iota''$ is a PD homomorphism. Likewise, for $ x'\in I'_{ev}$ and $x'' \in R''_{ev}$ we have
\begin{align}
\gamma_n(x'\otimes x'')=&(-1)^{n|x'||x''|}(1\otimes x'')^{n}( \gamma'_n(x')\otimes 1) \notag \\[5pt]
=&(-1)^{n|x'||x''|}(-1)^{n|x'|n|x''|}\gamma'_n(x')\otimes x''^{n}\notag \\[5pt]
=&\gamma'_n(x')\otimes x''^{n}. \label{pd_tensor_first}
\end{align}
Moreover, $\gamma_n(R'_{odd} \otimes I''_{odd}+I'_{odd} \otimes R''_{odd})=0$ for $n >1$ because of Axiom \eqref{axiom_7}.

\delete{
{\color{red}
For this to work, we need to modify Axiom \eqref{axiom_7} to
\[
\gamma_n(xy)=0 \text{ for all } x \in R_{odd}, y\in I_{odd} \text{ homogeneous of odd degrees and } n\geq 2.
\]
Otherwise we cannot compute $\gamma_n(R'_{odd} \otimes I''_{odd})$ using Axiom \eqref{axiom_7} OKAY.
}
}

\delete{\color{red} Why do we have $x'^{n}\otimes \gamma''_n(x'')=\gamma'_n(x')\otimes x''^{n}$? yes because $\iota'$ and $\iota''$ are PD homomorphisms}

Therefore the category of PD dg rings is a tensor category. Let $F$ be the forgetful functor from the category of PD dg rings to the category of dg rings. This makes $F$ into a strictly monoidal functor. However, the tensor product is not the coproduct (cf. \cite[Remark 23.3.6]{Stacks-Project}). 
In fact, there is a homomorphism of PD dg rings
\[
(R_1^*,I_1^*, \gamma)\coprod_{(R_0^*,I_0^*, \gamma)}(R_2^*,I_2^*, \gamma) \to (R_1^*,I_1^*, \gamma)\otimes_{(R_0^*,I_0^*, \gamma)}(R_2^*,I_2^*, \gamma)
\]
and there is a natural homomorphism of dg rings   
\[F((R_1^*,I_1^*, \gamma))\otimes_{F((R_0^*,I_0^*, \gamma))}F((R_2^*,I_2^*, \gamma))\to F\left((R_1^*,I_1^*, \gamma)\coprod_{(R_0^*,I_0^*, \gamma)}(R_2^*,I_2^*, \gamma)\right)
\]
which is not an isomorphism (\cite[Remark 23.3.6]{Stacks-Project}).

The example in (\cite[Remark 23.3.6]{Stacks-Project}) suggests that this failure can be avoided if one requires a certain flatness condition, in particular, freeness condition. Thus working on resolutions becomes necessary in many applications.  

We note that the braiding in the tensor category of complexes of abelian groups induces a symmetric tensor category structure on the category of commutative dg rings. The same braiding defines a symmetric tensor structure on the category of PD dg rings. We will need this symmetric tensor category structure later to work on Hopf algebras.  

\delete{(Therefore one can define PD dg scheme and PD dg algebraic group, PD dg Hopf algebra)}

The following argument will become useful later on. Let $ A'$ be a commutative $A$-algebra. We can regard $A'$ as a PD dg $A$-algebra $ (A',\{0\}, 0) $. Let $ \onsf{PDdgAlg}(A)$ be the category of PD dg rings with PD dg ring homomorphisms $(A, \{0\},0) \to (R^*, I^*, \gamma)$. Morphisms of $ \onsf{PDdgAlg}(A)$ are morphisms of PD dg rings which are $A$-linear. In particular, $ \onsf{PDdgAlg}(A)$ contains the category of commutative $A$-algebras as a full subcategory.

If $ A\to A'$ is a ring homomorphism and $(A^*,I,\gamma)$ is a PD dg $A$-algebra, we define the base extension $A^*_{A'}=A^* \otimes_{A} A'$. It has an ideal $I_{A'}=\onsf{Im}(I\otimes A') \subset A^*_{A'}$. Moreover, $\gamma:I_{ev} \to I_{ev}$ extends to $\gamma_{A'}:(I_{A'})_{ev} \to (I_{A'})_{ev}$ by $(\gamma_{A'})_n(x \otimes \alpha)=\gamma_n(x) \otimes \alpha^n$ for all $x \in I_{ev}, \alpha \in A'$.

\begin{lemma}\label{lem:base_change}
If $ A\to A'$ is a ring homomorphism, then for any PD dg $A$-algebra $(A^*,I,\gamma)$, the PD dg $A'$-algebra structure $(A^*_{A'},I_{A'}, \gamma_{A'})$ is isomorphic to the tensor product of PD dg $A$-algebras $(A^*,I,\gamma)$ and $(A',\{0\},0)$.
\end{lemma}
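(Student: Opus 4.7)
The plan is to recognize $(A^*_{\msf k'}, I_{\msf k'}, \gamma_{\msf k'})$ as the output of the tensor product construction reviewed above, applied to $(A^*, I, \gamma)$ and $(\msf k', \{0\}, 0)$. At the level of underlying dg rings the identification is automatic: $A^*_{\msf k'} = A^* \otimes_{\msf k} \msf k'$ is by definition the underlying dg ring of the tensor product, and the ideal of the tensor product is $\onsf{Im}(A^* \otimes \{0\} + I \otimes_{\msf k} \msf k') = I \otimes_{\msf k} \msf k' = I_{\msf k'}$. Thus the entire problem reduces to checking that the two pd structures living on this common underlying data coincide.

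The second step is to invoke \cite[Prop.~1.7.10]{Gulliksen-Levin}: the tensor product pd structure is the unique one making both factor embeddings $\iota' : A^* \to A^*_{\msf k'}$, $x \mapsto x \otimes 1$, and $\iota'' : \msf k' \to A^*_{\msf k'}$, $\alpha \mapsto 1 \otimes \alpha$, into pd dg ring homomorphisms. The explicit computation culminating in formula (\ref{pd_tensor_first}), applied with $x' = x \in I_{ev}$ and $x'' = \alpha \in \msf k'$ (which is entirely even, since $\msf k'$ sits in degree $0$), then yields $\gamma_n(x \otimes \alpha) = \gamma_n(x) \otimes \alpha^n$. This is precisely the formula defining $(\gamma_{\msf k'})_n$ on simple tensors, so the two structures agree there.

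Finally, Axiom (\ref{axiom_4}) expresses $\gamma_n$ on a sum of two elements in terms of $\gamma_j$ ($j \leq n$) on each summand, while Axiom (\ref{axiom_7}) imposes no further constraints here because $\msf k'$ has no odd part. By induction on the number of summands in a representation of an element of $I_{\msf k'}$ as a sum of simple tensors $x_i \otimes \alpha_i$, any pd structure on $I_{\msf k'}$ is therefore completely determined by its values on simple tensors. Hence the two structures agree on all of $I_{\msf k'}$, and the identity map on $A^* \otimes_{\msf k} \msf k'$ furnishes the claimed isomorphism of pd dg $\msf k$-algebras. The only real subtlety is that the formula defining $(\gamma_{\msf k'})_n$ in the paragraph preceding the lemma is stated only on simple tensors, and its consistent extension to all of $I_{\msf k'}$ is not evident a priori; the tensor product construction delivers exactly this extension, which is the essential content of the lemma.
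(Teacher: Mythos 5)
Your proof is correct, and it follows exactly the route the paper intends: the paper states Lemma \ref{lem:base_change} without proof, as an immediate consequence of the preceding tensor-product construction for pd dg rings (the uniqueness statement cited from \cite[Prop.~1.7.10]{Gulliksen-Levin} together with the computation culminating in Eq.~\eqref{pd_tensor_first} specialized to $x''=\alpha\in\msf k'$ in degree $0$). Your closing observation — that the formula $(\gamma_{\msf k'})_n(x\otimes\alpha)=\gamma_n(x)\otimes\alpha^n$ is a priori only given on simple tensors and that the tensor-product construction is what guarantees its consistent extension to all of $I_{\msf k'}$ via Axiom~\eqref{axiom_4} — is precisely the point of the lemma and is well taken.
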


\begin{lemma}\label{lem:torsion_free}
 Let $A^*$ be a dg $\mathbb Z$-algebra, which is torsion free as $\mathbb Z$-module. Let $I \subset A^*$ be a dg ideal. Assume that there exist maps $\gamma_n:I_{ev} \to I_{ev}$ ($n \geq 1$) such that $x^n=n!\gamma_n(x)$ for all $x \in I_{ev}$. Then $\gamma_n$ satisfies the relations (1)-(6). Moreover, for any commutative $\mathbb Z$-algebra $A$, the maps $(\gamma_{A})_n$ also satisfy (1)-(6).
\end{lemma}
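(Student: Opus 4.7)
The strategy rests on the injection $A^* \hookrightarrow A^*_{\bb Q} := A^* \otimes_{\bb Z} \bb Q$ afforded by $\bb Z$-torsion-freeness. Over $\bb Q$, the relation $n!\gamma_n(x) = x^n$ uniquely determines $\gamma_n(x) = x^n/n!$; in particular the maps $\gamma_n$ are themselves unique. Each of axioms (1)--(6) is an equality between two elements of $A^*$, so by injectivity into $A^*_{\bb Q}$ it is enough to verify each identity after passing to $A^*_{\bb Q}$, where the closed form $x^n/n!$ is available.

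Setting $\gamma_0(x) = 1$ by convention and observing that $\gamma_1(x) = x$ handles axiom (1). Axiom (3) reduces to $(ax)^n = a^n x^n$, which holds because $a,x$ are even and therefore commute in the strictly graded commutative ring; axiom (2) amounts to $\gamma_n(x)\gamma_m(x) = x^{n+m}/(n!m!) = \binom{n+m}{m}\gamma_{n+m}(x)$; axiom (4) is the binomial expansion of $(x+y)^n/n!$, again using commutativity of the even part. For axiom (6), if $x \in I_r$ then $n!\gamma_n(x) = x^n \in R_{rn}$; decomposing $\gamma_n(x) = \sum_k y_k$ into graded pieces, every component $y_k$ with $k \neq rn$ vanishes in $A^*_{\bb Q}$, hence in $A^*$ by torsion-freeness.

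The only point requiring a mild combinatorial input is axiom (5): over $\bb Q$ one computes
\[
\gamma_p(\gamma_q(x)) \;=\; \gamma_q(x)^p/p! \;=\; x^{pq}/\bigl(p!(q!)^p\bigr) \;=\; \tfrac{(pq)!}{p!(q!)^p}\,\gamma_{pq}(x),
\]
and the coefficient $(pq)!/(p!(q!)^p)$ is a positive integer (it counts the unordered partitions of a $pq$-element set into $p$ blocks of size $q$), so the identity lives in $A^*$ and the descent through the injection is legitimate.

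For the base-change assertion, once $(A^*, I, \gamma)$ has been established as a pd dg $\bb Z$-algebra, Lemma \ref{lem:base_change} identifies $(A^*_{\msf k}, I_{\msf k}, \gamma_{\msf k})$ with the tensor product of pd dg algebras $(A^*, I, \gamma) \otimes_{\bb Z} (\msf k, \{0\}, 0)$. Axioms (1)--(6) for $(\gamma_{\msf k})_n$ are then automatic from the tensor-product pd construction recalled just before Lemma \ref{lem:base_change} (following \cite[Prop.~1.7.10]{Gulliksen-Levin}); crucially, no torsion-freeness of $A^*_{\msf k}$ itself is required, since all the axiomatic verification has already been carried out upstairs over the torsion-free ring $\bb Z$. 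This last point is what makes the lemma genuinely useful: it produces pd structures on base changes to rings like $\bb F_p$ where the divided powers cannot be recovered from $x^n/n!$ directly.
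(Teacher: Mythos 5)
Your proof is correct and follows the same route as the paper's (much terser) argument: embed $A^*$ into $A^*_{\bb Q}$ via torsion-freeness, verify axioms (1)--(6) there using the closed form $\gamma_n(x)=x^n/n!$ together with the integrality of $(pq)!/(p!(q!)^p)$, and handle the base change through the tensor-product pd structure. Your write-up is more detailed than the paper's two-line sketch, in particular in making explicit why the axioms survive the passage to a possibly non-torsion-free $A^*_{\msf k}$, but the underlying idea is identical.
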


\begin{proof}
The ring homomorphism $A^* \to \mathbb{Q} \otimes_{\mathbb Z}A^*$ is injective as $A^*$ is torsion free. The ring homomorphism $A^* \to A^*_{A}$ commutes with $\gamma$ and $\gamma_{A}$.
\end{proof}

\delete{\begin{proposition}
If a set $S$ of elements of odd degree generates $A$ as subalgebra over $ A^{ev}$, then Axiom \eqref{axiom_6} is a consequence of others provided $\gamma_{p}(xy)=0$ for all $ x, y \in S $ and for all $p>1$.   
\end{proposition}

\begin{proof}
    {\color{red} PROBLEM DUE TO SUM DIVIDED POWER OF A SUM.}
\end{proof}
 }

\section{Koszul-Tate Resolutions for arbitrary commutative rings}\label{sec:3}
\delete{
{\color{red}  Let $ R$ be a commutative ring and $ I\subseteq R$ be an ideal. The Koszul-Tate resolution (\cite{Tate}) is a strictly graded commutative differential $R$-algebra $R\langle T\rangle=\onsf{TS}_A(T)$ where $T$ should a complex of free $R$-modules and $\onsf{TS}_A(T)$ is the strictly commutative  differential graded $R$-algebra such that $H_*(\onsf{TS}_A(T))=R/I$ concentrated in degree $0$. In particular $B_i(\onsf{TS}_A(T))\subseteq I\onsf{TS}_A(T)_i$. 
 $T$ should have dg Lie coalgebra structure, 
 which should be the cotangent complex (in derived which defines the coalgebra structure on $ \onsf{TS}_A(T)$ and the dual of $T$ should be the tangent Lie algebra?

 It known given a vector space $V$, that $ T_A^*(V)$ has a comultiplication corresponding using shuffle. (Roby's contruction $T_A^*(V)$ with a shuffle product

 $V$ can be thought as Lie coalgebra with $ \delta: V\to (\bb C +V)\otimes (\bb C+V)$ such that $ \delta(v)=1\otimes v+v\otimes 1$.

 In the smooth case, $\onsf{TS}_A(V)=\wedge^*(V)$.   Want to know for complete intersection case. 
 This is in Andre's paper of Hopf algebra with PD structure. i.e., understand the coalgebra structure of tate resolution of $R/I$ in the complete intersection case. coderivation of a coalgebra. 

 Using the symmetric tensor construction in the grade case will clearly tell us two different versions of exterior algebras. 
 In general, given a free $\msf k$-module $V$, the exterior algebra $ \wedge^*_{\msf k}(V)$ is coinvariant algebra of the tensor algebra $ T_A^*(V)$ under the symmetric group $ S_n$ actions on $V^{\otimes n}$ by the standard permutation action tensored with the rank one sign representation. 
 Thus $\wedge^*(V)=\onsf{Sym}_{\msf k}(V[-1])$ with $ V $ regarded as a graded $ \msf k$-module concentrated in degree $0$. 
 In $\wedge^*_{\msf k}(V)$ one does not have $ v\wedge v=0$, but only has $2 v\wedge v=0$. 
 However, $v^2=0$ in  $\onsf{TS}_{\msf k}(V[-1])$ is automatic. 
 This clearly tells the difference between $\onsf{TS}_{\msf k}(V[-1]) $ and $\wedge^*_{\msf k}(V)$, despite in the literature, very often, these two were confused. 
 Here $ \onsf{TS}_{\msf k}(V[-1])$ is the invariants. See \cite{Loday-Vallette} for discussions of differences in the operadic setting, despite that the two are the same over fields of characteristic zero.  } 
}

\subsection{Symmetric tensors of differential complexes}

Let $A$ be a commutative ring. We briefly recall the symmetric strict monoidal category $\onsf{Cplx}(A)$ of differential (cochain) complexes  of $ A$-modules with morphisms being chain maps.  The tensor product over $A$ of two complexes $(X^*, d^*)\otimes (Y^*, d^*)$ is defined by 
\[ 
((X^*, d^*)\otimes(Y^*, d^*))^n=\bigoplus _{i+j=n}X^i\otimes Y^j
\]
with differential $d^{X\otimes Y}(x\otimes y)=d(x)\otimes y+(-1)^{|x|}x\otimes d(y)$ for all homogeneous $x\in X^*$ and $ y\in Y^*$. In this section we will use $\otimes=\otimes_{A} $ unless there is a confusion with other tensor products.

The braiding $b_{X, Y}: X^*\otimes Y^*\to Y^*\otimes X^*$ is defined by 
\[ b_{X,Y}(x\otimes y)=(-1)^{|x||y|}y\otimes x
\]
for all homogeneous $x\in X^*$ and $y\in Y^*$. The map 
$b_{X,Y}$ is clearly a chain map of chain complexes:
\begin{align*}
b_{X,Y}d^{X\otimes Y}(x\otimes y)&=(-1)^{(|x|+1)|y|}y\otimes d(x) +(-1)^{|x|}(-1)^{|x|(|y|+1)} d(y)\otimes x\\
&=(-1)^{|x||y|}(d(y)\otimes x+(-1)^{|y|}y\otimes d(x))=d^{Y\otimes X}b_{X,Y}(x\otimes y).
\end{align*}
An associative algebra object $(A^*, m, 1)$ (with multiplication $m: A^*\otimes A^*\to $ being a chain map) in $\onsf{Cplx}(A)$ is called a dg algebra over $A$. We use $ \onsf{DGA}(A)$ to denote the category of all dg algebras over $A$. A dg algebra $(A^*, m, 1)$ is called {\em strictly graded commutative}  if  
\begin{align}\label{diag:commutativity}
\xymatrix{A^*\otimes A^*\ar[dr]_m\ar[rr]^{b_{A, A}}&& A^*\otimes A^*\ar[dl]^{m}\\
&A^*&
}
\end{align} 
is a commutative diagram {\em and} if $m(x, x)=0$ for all homogeneous $ x\in X^{odd}$.   Let $ \onsf{scDGA}(A)$ denote the category of strictly graded commutative dg algebras in $\onsf{Cplx}(A)$. If $ (A^*, d^A)$ and $ (B^*, d^B)$ are two strictly graded commutative dg algebras, then $ A^*\otimes B^*$ is also a strictly graded commutative dg algebra. We recall the multiplication $m_{A\otimes B}$ is defined as the following:
\[ (A^*\otimes  B^*)\otimes (A^*\otimes B^*)\xrightarrow{1\otimes b_{B, A}\otimes 1}(A^*\otimes A^*)\otimes (B^*\otimes B^*)\xrightarrow{m_A\otimes m_B}A^*\otimes B^*.\]

If $\onsf{cAlg}(A)$ is the category of commutative $A$-algebras, then there are embeddings  $\onsf{cAlg}(A) \hookrightarrow \onsf{scDGA}(A) \hookrightarrow \onsf{PDdgA}(A)$ as full subcategories. This is clear from \eqref{note_ring_dg} in Section \ref{sec:pd_rings}.

 Let $ (X^*, d^*)$ be an object in $\onsf{Cplx}(A)$. Then $T_A^n(X^*)=X^*\otimes\cdots\otimes X^*$ is a differential complex. The symmetric group $ \mf S_n$ acts on $T_A^n(X^*)$ as automorphisms of chain complexes as follows:
\begin{align}\label{action_sigma}
\sigma(x_1\otimes\cdots\otimes x_n)=(-1)^{\ell(\sigma; (x_1, \dots, x_n))}(x_{\sigma^{-1}(1)}\otimes \cdots \otimes x_{\sigma^{-1}(n)}).
\end{align}
Here $\ell(\sigma;(x_1, \dots, x_n))=\sum_{(i, j)\in\onsf{Inv}(\sigma^{-1})}|x_i||x_j|$ and $ \onsf{Inv}(\sigma)=\{ (i< j) \;|\; \sigma(i)> \sigma (j)\}$ is the set of inversions of $\sigma$. This group action is generated by the braiding $(i, i+1)\mapsto b_{i, i+1}=1^{\otimes (i-1)}\otimes b_{X,X}\otimes 1^{\otimes(n-i-1)}$. As $b_{X,X}$ is a chain map, then so is $ \sigma: T_A^n(X^*)\to T_A^n(X^*)$.

Let $\onsf{TS}_A^n(X^*)=T_A^n(X^*)^{\mf S_n}$ be the $A$-submodule of $\mf S_n$-fixed points. Thus $\onsf{TS}_A^n(X^*)$
 is closed under differentials since the $ \mf S_n$-action commutes with the differentials, making $\onsf{TS}_A^n(X^*)$ a subcomplex of $T_A^n(X^*)$.  It is obvious that $ \onsf{TS}_A^1(X^*)=T_A^1(X^*)=X^*.$
 
 Using induction, one can check that, for any sequence of homogeneous elements $(x_1,\dots, x_n)$ in $X^*$ and $ \sigma, \tau\in \mf S_n$, we have
 \[\ell(\sigma\tau; (x_1, \cdots, x_n))\equiv \ell(\sigma; (x_1, \cdots, x_n))+\ell(\tau; (x_{\sigma^{-1}(1)}, \cdots, x_{\sigma^{-1}(n)})) \pmod 2.
 \]


For $m, n \in \mathbb{Z}_{>0}$, we set $\mf S(m,n)=\mf S_{m+n}/(\mf  S_m\times \mf  S_n)$. The relative trace map $\onsf{Tr}_{\on{}(m,n)}:\onsf{TS}_A^{m}(X^*) \otimes \onsf{TS}_A^{n}(X^*) \to \onsf{TS}_A^{m+n}(X^*)$ (see \cite[Chap. IV, \S 5]{Bourbaki_algebra}) is defined by
\[
\onsf{Tr}_{(m,n)}(f \otimes g)=\sum_{\sigma \in \mf S(m,n)}\sigma (f \otimes g).
\]
Note that, for any $ (\sigma_m, \sigma_n) \in \mf S_m\times \mf S_n\subseteq \mf S_{m+n}$, 
\[\sigma \circ(\sigma_m, \sigma_n) (f\otimes g)=\sigma(f\otimes g)
\]
as $f$ and $g$ are symmetric tensors themselves. Hence $\onsf{Tr}_{\mf S(m,n)}(f \otimes  g) \in \onsf{TS}_A^{m+n}(X^*)$.
With this we define a multiplication (shuffle product) 
\begin{align} \label{TS-star-product}
\star: \onsf{TS}_A^m(X^*)\otimes\onsf{TS}_A^n(X^*)\to \onsf{TS}_A^{n+m}(X^*)
\end{align} as follows: for $f\in \onsf{TS}_A^m(X^*)$ and $g\in \onsf{TS}_A^n(X^*)$,
\begin{align} \label{shuffle-product} f\star g=\onsf{Tr}_{\mf S(m,n)}(f \otimes g).
\end{align}

We note that the definition of $\star$-product in \eqref{shuffle-product} does not apply to elements in $T_A(X^*)$ directly. However, this product can be extended to all elements in $ T_A(X^*)$ as follows. 
There is a canonical choice of minimal coset representatives $ \sigma\in \mf S(m,n)$, called $(m,n)$-shuffles (see \cite[Chap. IV, \S 5.3]{Bourbaki_algebra}), such that
\[ \sigma(1)<\cdots<\sigma(m)\; \text{ and } \; \sigma(m+1)<\cdots<\sigma(m+n).
\]
Let $ \on{Sh}(m,n)\subseteq \mf S_{m+n}$ be the set of all $(m,n)$-shuffles. 
Then 
\begin{align}
  f \star g=\sum_{\sigma \in \on{Sh}(m,n)}\sigma (f \otimes g)  
\end{align}
 for all $f\in T_A^m(X^*)$ and $g\in T_A^n(X^*)$. 
 We will also refer to this product on $T_A(X^*)$ as shuffle product.  It is a standard argument that $ (T_A(X^*), \star)$ is an associative dg algebra. 
 In fact, the argument applies to any strict symmetric monoidal category in place of $ \onsf{Cplx}$ by using the obvious bijective maps $ \on{Sh}(l+m, n)\times \on{Sh}(l,m)\to \on{Sh}(l,m,n)$ and $\on{Sh}(l, m+n)\times \on{Sh}(m,n)\to \on{Sh}(l,m,n) $. To distinguish with standard tensor multiplication, we use $ T_A(X^*)_{Sh}=(T_A(X^*), \star) $ to denote the dg algebra with respect to the $\star$-product following the notation of Roby \cite{Roby3} (in fact a slightly modified notation to avoid confusion with the base ring $A$).

\begin{proposition}\label{prop:scdgal}
 Let $X^*$ be an object in $\onsf{Cplx}(A)$. Then $ \onsf{TS}_A(X^*)=\bigoplus _{n=0}^{\infty}\onsf{TS}_A^n(X^*)$ is an $ \bb N$-graded associative dg $A$-subalgebra of $ T_A(X^*)_{Sh}=(T_A(X^*), \star)$ under the multiplication $ \star$. Moreover, we have  functors
 \[
T_A(-): \onsf{Cplx}(A) \to \onsf{DGA}(A)\; \text{ and }\; T_A(-)_{Sh}: \onsf{Cplx}(A) \to \onsf{DGA}(A)
 \]
 such that $ \onsf{TS}_A(-)$ is a subfunctor of $T_A(-)_{Sh}$.
\end{proposition}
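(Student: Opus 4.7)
The plan is to treat the three assertions in order of increasing difficulty, relying on the observation already recorded just above the proposition that $(T(X^*),\star)$ is an associative dg algebra. The functoriality of $T(-)$ and $T_S(-)$ comes first. A chain map $\phi:X^*\to Y^*$ induces $\phi^{\otimes n}:T^n(X^*)\to T^n(Y^*)$, which is manifestly $\mf S_n$-equivariant with respect to the action in~\eqref{action_sigma} because the Koszul signs depend only on degrees of homogeneous inputs and $\phi$ preserves degree. Hence both the juxtaposition product and the shuffle product are intertwined by $\phi^{\otimes(m+n)}$, giving dg algebra homomorphisms $T(\phi)$ and $T_S(\phi)$, and thus the two functors into $\onsf{DGA}(\msf k)$.

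The heart of the statement is that $\onsf{TS}(X^*)$ is closed under $\star$. The approach is to observe that for $f\in\onsf{TS}^m(X^*)$ and $g\in\onsf{TS}^n(X^*)$, the element $f\otimes g\in T^{m+n}(X^*)$ is already $(\mf S_m\times\mf S_n)$-invariant: the Koszul sign of $(\sigma_m,\sigma_n)\in\mf S_m\times\mf S_n\subseteq\mf S_{m+n}$ acting on $f\otimes g$ factors as the product of the two factor signs, because there are no inversions between positions $\{1,\dots,m\}$ and $\{m+1,\dots,m+n\}$, and $f$ and $g$ are separately invariant. Consequently $f\star g=\sum_{\sigma\in\on{Sh}(m,n)}\sigma(f\otimes g)$ coincides with the relative trace $\onsf{Tr}_{\mf S(m,n)}(f\otimes g)$. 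A standard transfer argument then shows the $\mf S_{m+n}$-invariance of this trace: for any $\tau\in\mf S_{m+n}$, left translation $[\sigma]\mapsto[\tau\sigma]$ is a bijection of the coset space $\mf S(m,n)$, and the cocycle identity for $\ell$ recorded in the paragraph following~\eqref{action_sigma} ensures that each $\sigma(f\otimes g)$ is carried to another summand with the correct sign, so $\tau(f\star g)=f\star g$. Hence $f\star g\in\onsf{TS}^{m+n}(X^*)$, and the unit $1\in\onsf{TS}^0(X^*)=\msf k$, associativity, and the Leibniz rule are inherited directly from $(T(X^*),\star)$.

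The subfunctor claim then follows: for a chain map $\phi:X^*\to Y^*$, the dg algebra homomorphism $T_S(\phi)$ sends $\mf S_n$-invariants to $\mf S_n$-invariants by the equivariance established in the first paragraph, so $\onsf{TS}(\phi)$ is a well-defined dg algebra homomorphism $\onsf{TS}(X^*)\to\onsf{TS}(Y^*)$, and naturality is automatic. The main obstacle I foresee is the sign bookkeeping in the invariance argument, namely verifying that the cocycle identity for $\ell$ really does allow one to permute coset representatives without introducing stray Koszul signs; once this is confirmed, the rest of the proof is essentially the standard transfer formula applied to the $\mf S_{m+n}/(\mf S_m\times\mf S_n)$-action on $T^{m+n}(X^*)$.
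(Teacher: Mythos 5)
Your proof is correct and follows essentially the same route as the paper's: closure of $\onsf{TS}(X^*)$ under $\star$ via the $(\mf S_m\times\mf S_n)$-invariance of $f\otimes g$ and the coset/relative-trace argument, and functoriality via the $\mf S_n$-equivariance of $\phi^{\otimes n}$. The only difference is one of bookkeeping: the paper's written proof spends most of its length explicitly checking that the tensor multiplication and the $\mf S_n$-action are chain maps (so that $\star$ is compatible with the differential), whereas you inherit this from the remark preceding the proposition, which is where the paper records it anyway.
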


\begin{proof}
We know that the natural product $m_{T}: T_A^m(X^*)\otimes T_A^n(X^*)\to T_A^{m+n}(X^*)$ of the tensor algebra is a chain map by:
\begin{align*}
&dm_{T}((x_1\otimes\cdots \otimes x_m) \otimes (x_{m+1} \otimes \cdots \otimes x_{m+n})) \\
&=d(x_1\otimes\cdots \otimes x_m \otimes x_{m+1} \otimes \cdots \otimes x_{m+n}) \\
&=\sum_{i=1}^{m+n}(-1)^{\sum_{j=1}^{i-1}|x_j|}
x_1 \otimes\cdots \otimes d(x_i)\otimes \cdots \otimes x_{m+n}\\
&=\left(\sum_{i=1}^{m}(-1)^{\sum_{j=1}^{i-1}|x_j|}
x_1 \otimes\cdots \otimes d(x_i)\otimes \cdots \otimes x_{m}\right) \otimes x_{m+1} \otimes \cdots \otimes x_{m+n}\\
& \quad +(-1)^{\sum_{j=1}^m|x_j|}\sum_{i=1}^{n}
(-1)^{\sum_{j=1}^{i-1}|x_{m+j}|}
(x_1 \otimes\cdots \otimes x_m)\otimes (x_{m+1}\otimes \cdots \otimes d(x_{m+i})\otimes \cdots \otimes x_{m+n}) \\
&=d(x_1 \otimes \cdots \otimes x_m) \otimes (x_{m+1} \otimes \cdots \otimes x_{m+n})\\
& \quad +(-1)^{|x_1 \otimes \cdots \otimes x_m|}(x_1 \otimes \cdots \otimes x_m) \otimes d(x_{m+1} \otimes \cdots \otimes x_{m+n}), \\
&=m_{T}(d(x_1 \otimes \cdots \otimes x_m) \otimes (x_{m+1} \otimes \cdots \otimes x_{m+n}))\\
& \quad +m_{T}\left((-1)^{|x_1 \otimes \cdots \otimes x_m|}(x_1 \otimes \cdots \otimes x_m) \otimes d(x_{m+1} \otimes \cdots \otimes x_{m+n})\right)\\
&=m_{T}d((x_1\otimes\cdots \otimes x_m) \otimes (x_{m+1} \otimes \cdots \otimes x_{m+n})).
\end{align*}
Thus the algebra $(T_A(X^*), m_{T})$ is a dg $A$-algebra. 

Moreover, the action of $\mf S_{n}$ on $T_A^n(X^*)$ is a chain map too, hence the multiplication $\star: T_A^m(X^*)\otimes T_A^n(X^*)\to T_A^{m+n}(X^*)$ is also chain map. In particular $\star: \onsf{TS}_A^m(X^*)\otimes \onsf{TS}_A^n(X^*)\to \onsf{TS}_A^{m+n}(X^*)$ is also a chain map.
\delete{The associativity of $\star$ can be verified in exactly the same way as in \cite[Chap. IV, \S 5.3, Prop.2]{Bourbaki_algebra}. Finally, using the same reasoning as in \cite[Chap. IV, \S 5.6]{Bourbaki_algebra}, we prove the statement of functoriality.}
If $\phi: X^*\to Y^*$ is a chain map, 
then $ T_A(\phi): T_A(X^*)\to T_A(Y^*)$ is also a chain map and a dg $ A$-algebra homomorphism with respect to the standard multiplication $m_T$. 
The map $T^n(\phi): T_A^n(X^*)\to T_A^n(Y^*)$ commutes with the symmetric group $ \mf S_n$ action.  Therefore, $T_A(\phi)_{Sh}:=T_A(\phi): T_A^n(X^*)_{Sh}\to T_A^n(Y^*)_{Sh}$ is a homomorphism of dg $A$-algebras. 
Clearly we have $ T_A(\phi)_{Sh} (\onsf{TS}_A(X^*)) \subseteq \onsf{TS}_A(Y^*)$. 
Let $ \onsf{TS}_A(\phi):\onsf{TS}_A(X^*)\to \onsf{TS}_A(Y^*)$ be the restriction of $T_A(\phi)_{Sh}$. 
Then the functorialities of $T_A(-)$, $T_A(-)_{Sh}$, and $\onsf{TS}_A(-)$ follow directly. 
\end{proof}

We see that $ \onsf{TS}_A^+(X^*)=\bigoplus_{n>0}\onsf{TS}_A^n(X^*)$ is a dg ideal of the dg algebra $\onsf{TS}_A(X^*)$ and is the kernel of the augmentation map $\onsf{TS}_A(X^*)\twoheadrightarrow A$, which is also a chain map. 

\begin{remark}
If instead of working in $\onsf{Cplx}(A)$ we work in $A\onsf{-mod}^{\mathbb{Z}}$, everything carries over. Forgetting the differentials, we consider $\onsf{TS}_A^n(X^*) $ as an object in the category $A\onsf{-mod}^{\mathbb{Z}}$. We will call this grading the internal dg grading in contrast to the tensor grading $n$ of elements in $ \onsf{TS}_A^n(X^*)$. Thus $\onsf{TS}_A(X^*)$ is an 
$(\bb N \times \bb Z)$-graded algebra object in $A\onsf{-mod}$ while it is an $ \bb N$-graded algebra in $ \onsf{Cplx}(A)$. For a homogeneous element $x\in \onsf{TS}_A^n(X^*)$ having degree $ (n, |x|)$, we will use $ \deg(x)=n$ for tensor degree or polynomial degree, in contrast to $|x|$ for the differential degree. Thus the differential $d: \onsf{TS}_A(X^*)\to \onsf{TS}_A(X^*)$ has degree $ (0,1)$. Taking the total degree $ \onsf{to}(x)=\deg(x)+|x|$, $\onsf{TS}_A(X^*)$ remains an algebra object in $\onsf{Cplx}(A)$. 
\end{remark}

\delete{
\begin{lemma}\label{lem:strictly_commutative}
Let $ X=\bigoplus_{i \in \bb Z} X^i$ be a graded module over $A$. If $x=v_1\otimes \cdots\otimes v_{p}\in T_A^p(X^*)$ with $v_i\in X^*$ homogeneous such that $ |x|=\sum_{i=1}^{p}|v_i|$ is odd, then
$ x\star x=\sum_{\sigma\in\onsf{Sh}(p,p)}\sigma(x\otimes x)=0$.
\end{lemma}
\begin{proof}
Let \(x=v_1\otimes\cdots\otimes v_p\in V^{\otimes p}\) be homogeneous with total degree 
\(|x|=\sum_i|v_i|\) odd. The shuffle product is
\[
x\star x \;=\; \sum_{\sigma\in\mathrm{Sh}(p,p)} \sigma(x\otimes x).
\]

Let \(c\in \mf S_{2p}\) be the block-swap permutation exchanging the first and second \(p\)-blocks 
while preserving the internal order of each block. Then the map
\[
\sigma \;\mapsto\; \sigma c
\]
is a fixed-point-free involution on $\onsf{Sh}(p,p)$. Thus the shuffle sum decomposes into pairs 
\(\{\sigma,\,\sigma c\}\). For any \(\sigma\),
\[
(\sigma c)(x\otimes x)
= \sigma\big( c(x\otimes x)\big) 
= (-1)^{\big(\sum_{\text{first block}}|v_i|\big)\big(\sum_{\text{second block}}|v_j|\big)}
   \,\sigma(x\otimes x)
= (-1)^{|x|}\,\sigma(x\otimes x).
\]

Since \(|x|\) is odd, \((-1)^{|x|}=-1\) as an integer. Therefore each pair contributes
\[
\sigma(x\otimes x) + (\sigma c)(x\otimes x)
= \big(1+(-1)\big)\,\sigma(x\otimes x)
= 0
\] 
Thus all terms cancel in pairs, and we conclude
\[
x\star x=0 \qquad \text{in } V^{\otimes 2p}.
\]
\end{proof}
}

\begin{theorem} \label{TS-scdgalgebra}
Let  $(X^*, d^*)$ be a differential complex in $ \onsf{Cplx}(A)$ such that $X^i$ is a free $A$-module for all $i$. The dg algebra $T_A(X^*)_{Sh}$ is a connected (i.e., $(T_A(X^*)_{Sh})_0=A$) strictly graded commutative $A$-algebra. In particular, $\onsf{TS}_A(X^*)$ is a connected $\bb N$-graded strictly graded commutative dg $A$-algebra.

\delete{Furthermore, the functor $\onsf{TS}_A(-): \onsf{Cplx}(A)^{free}\to \onsf{scDGA}_{\bb N}(A)$  satisfies the following:
\[
\onsf{TS}_A(X^*\oplus Y^*)\cong \onsf{TS}_A(X^*)\otimes \onsf{TS}_A(Y^*).
\]}
\end{theorem}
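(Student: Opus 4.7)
The dg algebra structure on $T_S(X^*)$ and on $\onsf{TS}(X^*)$ is already in Proposition \ref{prop:scdgal}; connectedness in the $\bb N$-grading is immediate from $T^0(X^*) = \msf k$. What remains is to verify two properties of the shuffle product: graded commutativity $f \star g = (-1)^{|f||g|}\, g \star f$, and strict vanishing $x \star x = 0$ when $|x|$ is odd. Once both are established on $T_S(X^*)$, the statement for $\onsf{TS}(X^*)$ follows because $\onsf{TS}(X^*) \subseteq T_S(X^*)$ is a subcomplex closed under $\star$.

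For graded commutativity I would use the block-exchange permutation $c_{m,n} \in \mf S_{m+n}$ given by $c_{m,n}(i) = i + n$ for $1\le i\le m$ and $c_{m,n}(m+j) = j$ for $1\le j\le n$. The Koszul sign convention of the $\mf S_{m+n}$-action in \eqref{action_sigma} (equivalently, the symmetric monoidal braiding on $\onsf{Cplx}(\msf k)$) gives $c_{m,n}(f \otimes g) = (-1)^{|f||g|}(g \otimes f)$, hence $c_{m,n}^{-1}(g \otimes f) = (-1)^{|f||g|}(f \otimes g)$. Moreover, right multiplication by $c_{m,n}$ is a bijection $\on{Sh}(n,m) \xrightarrow{\sim} \on{Sh}(m,n)$, since it interchanges the two increasing blocks in the one-line notation of a shuffle. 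Substituting,
\[
g \star f \;=\; \sum_{\tau \in \on{Sh}(n,m)} \tau(g \otimes f) \;=\; \sum_{\sigma \in \on{Sh}(m,n)} \sigma\, c_{m,n}^{-1}(g \otimes f) \;=\; (-1)^{|f||g|}\, f \star g.
\]

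For strict commutativity I would treat elementary tensors first and then reduce. If $x = v_1 \otimes \cdots \otimes v_p \in T^p(X^*)$ is homogeneous of odd degree, let $c = c_{p,p}$ be the block swap of the two $p$-blocks. Since $c^2 = e$ and $c \ne e$, the assignment $\sigma \mapsto \sigma c$ is a fixed-point-free involution on $\on{Sh}(p,p)$. The computation above specializes to $c(x \otimes x) = (-1)^{|x|^2}(x \otimes x) = -(x \otimes x)$, so $(\sigma c)(x \otimes x) = -\sigma(x \otimes x)$, and the shuffle sum pairs to zero, giving $x \star x = 0$. For a general homogeneous $x \in T^p(X^*)$ of odd degree $d$, write $x = \sum_k \tilde{x}_k$ as a finite $\msf k$-linear combination of elementary tensors, each homogeneous of degree $d$. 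Then
\[
x \star x \;=\; \sum_k \tilde{x}_k \star \tilde{x}_k \;+\; \sum_{k<l}\bigl(\tilde{x}_k \star \tilde{x}_l + \tilde{x}_l \star \tilde{x}_k\bigr),
\]
where the diagonal terms vanish by the elementary case and each cross pair vanishes by the already established graded commutativity: $\tilde{x}_l \star \tilde{x}_k = (-1)^{d^2}\tilde{x}_k \star \tilde{x}_l = -\tilde{x}_k \star \tilde{x}_l$. Elements mixing several tensor degrees are handled by applying the same pairing argument blockwise.

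The main obstacle is strictness in characteristic $2$: graded commutativity alone forces only $2 x^2 = 0$, not $x^2 = 0$, so the shuffle-involution cancellation on $\on{Sh}(p,p)$ for elementary tensors is essential and cannot be bypassed by a symmetrization trick. The freeness of each $X^i$ does not actually enter the calculation — any element of $T^p(X^*)$ is already a finite sum of elementary tensors — but it is convenient if one wishes to perform the homogeneous decomposition against a fixed $\msf k$-basis of $X^*$, and it aligns with the standing hypothesis used elsewhere in the paper.
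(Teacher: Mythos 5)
Your argument is correct and follows essentially the same route as the paper's proof: the block-exchange permutation $c_{m,n}$ (the paper's $\sigma$) yields graded commutativity via the bijection $\on{Sh}(n,m)\circ\sigma=\on{Sh}(m,n)$, and the fixed-point-free involution $\tau\mapsto\tau\circ\sigma$ on $\on{Sh}(p,p)$ gives the characteristic-free cancellation $x\star x=0$ in odd degree, exactly as in Eq.~\eqref{eq:x*x=0}. The only cosmetic difference is that you run the involution argument on elementary tensors and then reduce the general homogeneous case by a diagonal/cross-term expansion, whereas the paper applies it directly to any homogeneous $z\in T^m(X^*)$ after noting by bilinearity that $\sigma(z\otimes z)=(-1)^{|z|^2}z\otimes z$; both are valid, and your observation that freeness of the $X^i$ is not actually needed here is accurate.
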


\delete{The commutativity \eqref{diag:commutativity} follows from direct computation as follows: Set $ X^{odd}=\bigoplus_{i}X^{2i+1}$and $ X^{ev}=\bigoplus_{i}X^{2i}$.$\bullet$  The action of $\mf S_n$ on $T_A^n(X^{ev})$ is exactly the same as that in \cite[Chap. IV, \S 5.2]{Bourbaki_algebra}. In particular, assuming each $X^i$ is a free $A$-module, then  $\onsf{TS}_A(X^{ev})$ is a strictly graded commutative dg algebra with a PD structure (\cite[Thm.3]{Roby3}) and $\onsf{TS}_A(X^{ev})=\bigotimes_{i}\onsf{TS}_A(X^{2i})$ (see \cite[Chap. IV, \S 5.6, Prop.6]{Bourbaki_algebra}) as tensor product of strictly commutative PD algebras. $\bullet$ We now consider the $X^{odd}$ case. For $x\in X^p$ with $p$ odd, we see that $ x\star x=(1+(-1)^{p^2})x\otimes x=0 $. Moreover, if $x \in X^p$ and $y \in X^q$ with $p$, $q$ odd, then we have $x \star y=- y \star x$.}

\begin{proof}
The fact that $T_A(X^*)_{Sh}$ is an $\bb N$-graded associative dg $A$-algebra has already been shown in Proposition \ref{prop:scdgal}. For given $m, n\geq 1$, define $\sigma \in \on{Sh}(m,n)$ by
\begin{align*}
&\sigma(1)=n+1, \ \sigma(2)=n+2, \ \dots \, \ , \ \sigma(m)=n+m, \\
&\sigma(m+1)=1, \ \sigma(m+2)=2, \ \dots \, \ , \ \sigma(m+n)=n.
\end{align*}
\delete{Here $\sigma$ is the block swap permutation of the proof of Lemma \ref{lem:strictly_commutative}.}
Then  $ \on{Sh}(n,m)\circ \sigma= \on{Sh}(m,n)$ and $\on{Sh}(n,m)= \on{Sh}(m,n)\circ \sigma^{-1} $.

Consider $z_1 \in T_A^m(X)$ and $z_2 \in T_A^n(X)$ of homogeneous of dg degrees $ |z_1|$ and $ |z_2|$ respectively. Then we have $ l(\sigma, (z_1\otimes z_2)) \equiv |z_1||z_2|\pmod 2$ and
\begin{align}
\sigma(z_1\otimes z_2)=(-1)^{|z_1||z_2|}(z_2\otimes z_1).
\end{align}
This can be shown by first assuming that both $z_1$ and $z_2$ are pure tensors and then one can extend linearly to both being finite sums of pure tensors. Thus 
\[
\begin{array}{rcl}
z_2 \star z_1 & = & \displaystyle\sum_{\tau\in\on{Sh}(n,m)}\tau(z_2 \otimes  z_1)=\sum_{\tau\in \on{Sh}(m,n)}\tau\sigma^{-1}(z_2 \otimes  z_1)\\
 & = & \displaystyle\sum_{\tau\in \on{Sh}(m,n)}(-1)^{|z_1|z_2|}\tau(z_1 \otimes  z_2)=(-1)^{|z_1||z_2|} (z_1 \star z_2).
\end{array}
\]
This shows that $\star$-product on $T_A(X^*)$ is graded commutative. We still need to show that $ z_1\star z_1=0$ if $ |z_1|$ is odd. When $2$ is not a zero divisor, it is a consequence of the above equality. In general, when $n=m$, the map $F:  \on{Sh}(m,m)\to \on{Sh}(m,m)$ defined by $ \tau\mapsto \tau\circ \sigma$ has no fixed point and $ F^2=\onsf{Id}$. The order two group $ \langle F\rangle$ action on $\on{Sh}(m,m)$ decomposes $\on{Sh}(m,m)=\coprod_{\tau \in (K')_{m}^2}\{\tau, \tau\circ\sigma\} $ as disjoint union of orbits. Here $(K')_{m}^2 \subseteq \on{Sh}(m,m)$ is a complete set of $F$-orbit representatives (see Lemma \ref{lem:Sh_and_c}). Then, for any $ z\in T_A^m(X^*)$ homogeneous,
\begin{align}\label{eq:x*x=0}
z\star z=\sum_{\tau\in (K')_{m}^2}
(\tau(z\otimes z)+\tau(\sigma(z\otimes z)))=\sum_{\tau\in (K')_{m}^2}(1+(-1)^{|z||z|})\tau(z\otimes z).
\end{align}
Therefore $z\star z=0$ if $ |z|$ is odd. 
\delete{
\[
\begin{array}{rcl}
z_2 \star z_1 & = & \onsf{Tr}_{\mf S_{n+m}/(\mf  S_n\times \mf  S_m)}(z_2 \otimes  z_1)\\
 & = & \onsf{Tr}_{\mf S_{n+m}/\sigma (\mf  S_m\times \mf  S_n)\sigma^{-1}}\sigma(z_1 \otimes  z_2)(-1)^{l(\sigma;(z_1,z_2))} \\
 & = &(-1)^{l(\sigma;(z_1,z_2))} \onsf{Tr}_{\mf S_{m+n}/(\mf  S_m\times \mf  S_n)}(z_1 \otimes  z_2) \text{ by \cite[Chap. IV, \S 5.1, Prop.1(i)]{Bourbaki_algebra}} \\
 & = &(-1)^{l(\sigma;(z_1,z_2))} z_1 \star z_2.
\end{array}
\]
If all the components of $z_1$ and $z_2$ are of odd degree, it follows that ${l(\sigma;(z_1,z_2))}\equiv |\on{Inv}(\sigma^{-1})| \pmod 2$.
 We can verify directly that $|\on{Inv}(\sigma^{-1})|=mn$, and so $z_2 \star z_1=(-1)^{mn}z_1 \star z_2$. 
 If $z_1=\sum (x_{1}\otimes \cdots \otimes x_{m}) \in \onsf{TS}_A^m(X^{odd})$ so $|z_1|=\sum_{i=1}^m|x_i|$ with each $x_i$ is of odd degree. Hence $|z_1| \equiv m \pmod 2$. Likewise $|z_2| \equiv n \pmod 2$. 
 It follows that $z_2 \star z_1=(-1)^{|z_1||z_2|}z_1 \star z_2$. 
 \delete{ We conclude that $\onsf{TS}_A(X^{odd})$ with the $\star$-product is the exterior algebra $\Lambda^*(X^{odd})$ of $X^{odd}$.}

\delete{
$\bullet$ We now look at the product $\onsf{TS}_A^m(X^{odd}) \otimes  \onsf{TS}_A^n(X^{ev}) \to \onsf{TS}_A^{m+n}(X^*)$. Consider $z_1 \in \onsf{TS}_A^m(X^{odd})$ and $z_2 \in \onsf{TS}_A^n(X^{ev})$. The formula $z_2 \star z_1=(-1)^{l(\sigma;(z_1,z_2))} z_1 \star z_2$ still applies. We notice that if $(i,j) \in \on{Inv}(\sigma^{-1})$, then $j$ is the index of an element of $z_2 \in \onsf{TS}_A^n(X^{ev})$, and so in the sum inside $l(\sigma;(z_1,z_2))$, we always have $|x_j|$ even. Hence $l(\sigma;(z_1,z_2))$ also even, and so $z_2 \star z_1= z_1 \star z_2$.

We again use \cite[Chap. IV, \S 5.6, Prop.6]{Bourbaki_algebra} to show that $\onsf{TS}_A(X^{odd}) \otimes  \onsf{TS}_A(X^{ev}) \cong \onsf{TS}_A(X^{odd} \oplus X^{ev}) =\onsf{TS}_A(X^*)$ using the $\star$-product. It follows that $\onsf{TS}_A(X^*)$ is a strictly graded commutative dg algebra with respect to the cohomological degree.
}}
\end{proof}

\delete{Also it follows from the construction of Andr\'e, $\onsf{TS}_A(M)$ should have a co-multiplication together with the augmentation above to make $ \onsf{TS}_A(M)$ into a $\bb N$-graded bialgebra. Thus it follows from Milner-Moore, $\onsf{TS}_A(M)$ co-enveloping algebra of dg Lie coalgebra. 

$\onsf{TS}_A(M)$ is a connected $\bb N$-graded dg-algebra in the symmetric tensor category $ \onsf{Cplx}(R)$. For any PD dg-algebra $(A, I, \gamma)$ in $\onsf{cplx}(R)$ and chain map $\phi:  M\to I$, there is a unique PD dg-algebra homomorphism $ \tilde{\phi}: \onsf{TS}_A(M)\to A$ extending $\phi$.}

 \subsection{PD algebra structure} 

 Let $(K')_{n}^p$ be the set of elements $\sigma \in \mf S_{np}$ that preserve the relative order of the terms within each of the $p$ blocks of size $n$ and such that $\sigma (n)<\sigma(2n)<\cdots <\sigma(pn)$. They form a  set of complete coset representatives in $\mf S_{pn}$ modulo the subgroup $ \mf S_p\ltimes (\mf S_n\times\cdots\times \mf S_n)$ as in \cite{Roby3}. Let us state the following well-known lemma which will be used later on.

\begin{lemma}\label{lem:Sh_and_c}
We have
\[
\on{Sh}(n,n)=(K')_{n}^2 \cup (K')_{n}^2c
\]
where $c \in \mf S_{2n}$ is defined by
\[
c:i \mapsto \left\{\begin{array}{l}
i+n \text{ if } i \leq n, \\
i-n \text{ if } i \geq n+1. 
\end{array}
\right.
\]
\end{lemma}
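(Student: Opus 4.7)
The plan is to decompose $\on{Sh}(n,n)$ according to the relative order of the two ``endpoint'' values $\sigma(n)$ and $\sigma(2n)$. First I would unpack the definition of $(K')_n^2$ in the case $p=2$: an element consists of $\sigma\in\mf S_{2n}$ preserving the internal order of each block of size $n$ (so $\sigma(1)<\cdots<\sigma(n)$ and $\sigma(n+1)<\cdots<\sigma(2n)$) and satisfying the extra condition $\sigma(n)<\sigma(2n)$. In other words, $(K')_n^2$ is exactly the subset of $\on{Sh}(n,n)$ cut out by the inequality $\sigma(n)<\sigma(2n)$.

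Next I would record that $c$ is a fixed-point-free involution ($c^2=\mathrm{id}$) which swaps the two blocks of size $n$ while preserving internal order. The key computation is that for $\sigma\in (K')_n^2$, right-multiplication gives $(\sigma c)(i)=\sigma(i+n)$ for $1\leq i\leq n$ and $(\sigma c)(i)=\sigma(i-n)$ for $n+1\leq i\leq 2n$, so each half is still an increasing sequence inherited from the shuffle property of $\sigma$; hence $\sigma c\in \on{Sh}(n,n)$. Moreover $(\sigma c)(n)=\sigma(2n)>\sigma(n)=(\sigma c)(2n)$, so $(K')_n^2\cdot c$ lies in the complementary subset of $\on{Sh}(n,n)$ determined by $\tau(n)>\tau(2n)$.

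Conversely, given $\tau\in\on{Sh}(n,n)$ with $\tau(n)>\tau(2n)$, the same computation produces $\tau c\in(K')_n^2$, and $c^2=\mathrm{id}$ then gives $\tau=(\tau c)c\in (K')_n^2\cdot c$. Since any $\tau\in\on{Sh}(n,n)$ must satisfy exactly one of $\tau(n)<\tau(2n)$ or $\tau(n)>\tau(2n)$ (equality being impossible as $\tau$ is a bijection), the two pieces $(K')_n^2$ and $(K')_n^2\cdot c$ together cover $\on{Sh}(n,n)$ and are automatically disjoint. There is essentially no real obstacle here---the argument reduces to a clean dichotomy on the sign of $\tau(2n)-\tau(n)$, with $c$ acting as the natural toggle between the two classes; the only thing to be a little careful about is verifying that right multiplication by $c$ indeed preserves the shuffle property, which is immediate from the block-swap description of $c$.
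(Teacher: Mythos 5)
Your proof is correct and follows essentially the same route as the paper's: identify $(K')_n^2$ as the shuffles with $\sigma(n)<\sigma(2n)$, observe that right multiplication by the block-swap involution $c$ preserves the shuffle property while exchanging $\sigma(n)$ and $\sigma(2n)$, and conclude by the dichotomy on the sign of $\sigma(2n)-\sigma(n)$. No issues.
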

\delete{
\begin{proof}
By definition, $\on{Sh}(n,n)$ is the subset of $\mf S_{n+n}$ such that
\[
\sigma(1)<\sigma(2)<\dots<\sigma(n) \quad \text{and} \quad \sigma(n+1)<\sigma(n+2)<\dots<\sigma(2n).
\]
Hence $(K')_n^2$ is the subset of $\on{Sh}(n,n)$ satisfying $\sigma(n)<\sigma(2n)$. \\
Consider $\sigma \in \on{Sh}(n,n) \backslash (K')_{n}^2$. Then $\sigma(n)>\sigma(2n)$. It follows that $\sigma c(n)=\sigma(2n)<\sigma(n)=\sigma c(2n)$. So $\sigma c(n)<\sigma c(2n)$. \\
If $i < j \leq n$, then $\sigma c(i)=\sigma(i+n)<\sigma(j+n)=\sigma c(j)$ as $\sigma \in \on{Sh}(n,n)$. Hence $\sigma c(i) < \sigma c(j)$. Similarly, if $n+1 \leq i < j$, then $\sigma c(i) < \sigma c(j)$.

It follows that $\sigma c$ preserves the order in each block and  $\sigma c(n)<\sigma c(2n)$. Thus $\sigma c \in (K')_n^2$ and $\sigma  \in (K')_n^2 c$.
\end{proof}
}

\begin{theorem}\label{thm:TS_pd}
If $ V=\bigoplus_i V^i$ is a graded $A$-module, then $T_A(V)_{Sh}$ is a strictly graded commutative algebra with PD structure on the ideal $ T_A^+(V)=\bigoplus_{n>0}T_A^n(X^*)$. In particular, $ \onsf{TS}_A(V)$ is a PD subalgebra of $T_A(V)_{Sh}$ with PD ideal $\onsf{TS}_A^+(V)$.
\end{theorem}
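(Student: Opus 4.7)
The plan is to adapt Roby's construction of divided powers on symmetric tensors \cite{Roby3} to the graded setting of this paper. The central formula: for a pure tensor $z=v_1\otimes\cdots\otimes v_m\in T^m(V)$ whose internal degree $|z|=\sum_i|v_i|$ is even, set
\[
\gamma_n(z) \;=\; \sum_{\sigma\in (K')_m^n} \sigma\bigl(z^{\otimes n}\bigr) \;\in\; T^{mn}(V),
\]
with the conventions $\gamma_0(z)=1$ and $\gamma_1(z)=z$. Extend $\gamma_n$ to an arbitrary element of $(T^+(V))_{ev}$ by forcing axiom \eqref{axiom_4}; this extension is internally consistent once axiom \eqref{axiom_2} is established on pure tensors, by a standard polarisation argument.

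First I would dispatch the bookkeeping axioms: \eqref{axiom_1} holds by convention; \eqref{axiom_3} by $\msf k$-linearity and $\mf S_{mn}$-equivariance; \eqref{axiom_6} because $\gamma_n(T^m(V))\subseteq T^{mn}(V)$; \eqref{axiom_8} is vacuous as $V$ carries no differential; \eqref{axiom_4} is built into the extension. For axiom \eqref{axiom_2}, the shuffle product $\gamma_n(z)\star\gamma_p(z)$ is rewritten as a sum over $\on{Sh}(mn,mp)\times(K')_m^n\times(K')_m^p$ of $\mf S_{m(n+p)}$-translates of $z^{\otimes(n+p)}$; regrouping by cosets modulo $\mf S_{n+p}\ltimes(\mf S_m)^{n+p}$ produces exactly the multiplicity $\binom{n+p}{p}$ in front of $\gamma_{n+p}(z)$. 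All signs contributed by the braidings are $+1$ since $|z|$ is even.

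For axiom \eqref{axiom_5}, the combinatorial input is the coset decomposition of $\mf S_{mpq}$ modulo $\mf S_{pq}\ltimes (\mf S_m)^{pq}$ via a nested transversal obtained from $(K')_{mq}^p$ and $p$ copies of $(K')_m^q$; the resulting redundancy produces the coefficient $(pq)!/(p!(q!)^p)$. Again the braiding signs are trivial since $|\gamma_q(z)|$ is even whenever $|z|$ is even. Axiom \eqref{axiom_7}, the genuinely graded one, is handled by an involution argument: for $x,y$ homogeneous of odd internal degree, $xy=x\star y=x\otimes y-y\otimes x$ lives in $T^2(V)$, and $\sum_{\sigma\in (K')_2^n}\sigma((xy)^{\otimes n})$ admits a fixed-point-free involution built from iterated block-swap permutations (as in Lemma \ref{lem:Sh_and_c}) that pairs summands of opposite sign, in exactly the manner of computation \eqref{eq:x*x=0}; hence $\gamma_n(xy)=0$ for $n\geq 2$.

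Finally, to show that $\onsf{TS}(V)$ is a pd subalgebra: for $z\in\onsf{TS}^m(V)_{ev}$, the tensor $z^{\otimes n}\in T^{mn}(V)$ is invariant under the block-diagonal subgroup $(\mf S_m)^n\subset \mf S_{mn}$, and since $(K')_m^n$ is a transversal for $\mf S_n\ltimes(\mf S_m)^n$ in $\mf S_{mn}$, summing over $(K')_m^n$ yields an $\mf S_{mn}$-invariant element, i.e.\ an element of $\onsf{TS}^{mn}(V)$. The hardest step I expect is axiom \eqref{axiom_5}: the nested coset bijection must be executed with uniform sign control, and although the even-internal-degree hypothesis ultimately trivialises every braiding sign on $\gamma_p(\gamma_q(z))$, setting up the bijection cleanly in the signed setting — and reconciling it with the extension by axiom \eqref{axiom_4} to non-pure even-degree tensors that mix even and odd internal-degree factors — is substantially more delicate than in Roby's ungraded original.
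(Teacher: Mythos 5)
Your strategy is genuinely different from the paper's. The paper follows Roby: it first builds the divided powers on the monoid algebra $\mathbb Z_S(\mathcal V)$ of the free graded monoid on $V$, where every element decomposes uniquely into words and the module is torsion-free over $\mathbb Z$, so that each axiom can be verified after clearing factorials via $z^{\star n}=n!\,\gamma_n(z)$ (for instance axiom \eqref{axiom_7} falls out of $(z\star w)^{\star 2}=2\gamma_2(z\star w)=0$ plus torsion-freeness, and axiom \eqref{axiom_5} out of $(pq)!\,\gamma_{pq}(z)=(z^{\star q})^{\star p}$); the structure is then transported to $\msf k$ by Lemma \ref{lem:torsion_free} and pushed onto $T_S(V)$ along the $\mf S_n$-equivariant surjection $q:\msf k\mathcal V\to T(V)$. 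You instead verify the axioms directly in $T_S(V)$ over $\msf k$ by explicit coset combinatorics and sign-cancelling involutions. That is a legitimate alternative and more self-contained where it works, but it forfeits exactly the two devices (unique word decompositions, division by integers) that make the paper's proof short.

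The genuine gap is the extension of $\gamma_n$ from pure tensors to arbitrary elements of $(T^+(V))_{ev}$. You decree additivity via axiom \eqref{axiom_4} and assert consistency ``by a standard polarisation argument.'' Over a general commutative ring $\msf k$ (possibly with torsion, with $V$ not assumed free in the statement) polarisation is unavailable, and an element of $T^m(V)$ admits many decompositions into pure tensors; you must therefore prove that $\sum_{h_1+\cdots+h_k=n}\gamma_{h_1}(z_1)\star\cdots\star\gamma_{h_k}(z_k)$ depends only on the sum $z_1+\cdots+z_k$. This well-definedness is precisely what the detour through $\mathbb Z(\mathcal V)$ buys --- words form a $\mathbb Z$-basis there, so the decomposition is unique and the structure descends along $q$ --- and it is not a formal consequence of axiom \eqref{axiom_2} on pure tensors. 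Two smaller defects: in your verification of axiom \eqref{axiom_2}, the claim that ``all signs contributed by the braidings are $+1$ since $|z|$ is even'' is false for the individual permutations $\tau\circ(\sigma_1\times\sigma_2)$, because an even-degree pure tensor can have odd-degree factors which do generate signs when interleaved; what is true, and what you need, is that under the factorization through $(K')_m^{n+p}\times\bigl(\mf S_{n+p}/(\mf S_n\times\mf S_p)\bigr)$ the block-permutation part acts with sign $+1$ (blocks carry even degree), so the signs are consistent rather than trivial. Likewise your involution argument for axiom \eqref{axiom_7} is sketched only for $x,y\in V$, whereas the axiom is required for arbitrary odd homogeneous elements of $T^+(V)$ (the paper proves it for words of arbitrary tensor lengths $n_1,n_2$ and then for all $n\ge 2$ via $\binom{n}{2}\gamma_n=\gamma_2\gamma_{n-2}$); the cancellation pattern for $\gamma_n(x\star y)$ with $n\ge 3$ and higher tensor degrees still has to be exhibited.
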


\begin{proof}
Let $\mathcal{V}$ be the free graded monoid constructed on $V$. $\mathcal{V}$ is the set of words constructed with symbols in bijection with the homogeneous elements $V^{hom}$ of $V$. Set $\mathbb Z(\mathcal V)$ the monoid algebra over the integers and set $\mathcal V_p$ the set of words of length $p$. The permutation group $\mf S_p$ acts on $\mathbb Z \mathcal V_p$ by (cf. Eq. \eqref{action_sigma}):
\[
\sigma.(v_1 \cdots v_p)=(-1)^{l(\sigma;(v_1,\dots, v_p))}v_{\sigma^{-1}(1)} \cdots v_{\sigma^{-1}(p)}
\]
and we extend to the whole of $\mathbb Z \mathcal V_p$ by linearity. For two words $m_p \in \mathcal V_p$ and $m_q  \in \mathcal V_q$, we also define
\[
m_p \star m_q = \sum_{\sigma \in \on{Sh}(p,q)} \sigma(m_p \cdot m_q)
\]
where $\cdot$ denotes the monoidal product in $\mathbb Z(\mathcal V)$,
and extend to $\mathbb Z(\mathcal V)$ by linearity. Similarly to Eq.\eqref{eq:x*x=0}, we can show that if $z=v_1 \cdots v_p \in \mathcal V_p$ with $|z|=\sum_{i=1}^p|v_i|$ odd, then $z \star z=0$. We can also show that $\star$ is graded commutative like in the proof of Theorem \ref{TS-scdgalgebra}. It follows that $(\mathbb Z(\mathcal V), \star)$ is a strictly graded commutative algebra. Following \cite{Roby3}, we denote it by $\mathbb Z_S(\mathcal V)$ where the subscript indicates the use of the shuffle product. We will write $\mathbb Z_S^+(\mathcal V)$ for the submonoid of $\mathbb Z_S(\mathcal V)$ built on $\bigoplus_{n>0}\mathcal V_{n}$, and $Z_S^+(\mathcal V)_{ev}$ will designate the $\mathbb{Z}$-submodule $Z_S^+(\mathcal V)$ built on words of even degrees.

For $z \in \mathcal V_p$ with $|z|$ even and $n \geq 1$, we define
 \[
 \gamma_n(z)=\sum_{\sigma\in (K')_{p}^{n}}\sigma(z^n)
 \]
and it satisfies $z^{\star n}=n! \gamma_n(z)$ (see \cite[Prop.1]{Roby3}). Moreover, once can check that for any $z_i \in \mathcal V_{p_i}$, we have $(z_1+\cdots+z_k)^{\star n}=n!\sum_{h_1+\dots+h_k=n}\gamma_{h_1}(z_1) \star \cdots \star \gamma_{h_k}(z_k)$. Hence we write $\gamma_n(z_1+\dots +z_k)=\sum_{h_1+\dots+h_k=n}\gamma_{h_1}(z_1) \star \cdots \star \gamma_{h_k}(z_k)$. This defines maps $\gamma_n:Z_S^+(\mathcal V)_{ev} \to \mathbb Z_S^+(\mathcal V)_{ev}$ such that for any $z \in Z_S^+(\mathcal V)_{ev}$ we have
\begin{align}\label{eq:free_power}
z^{\star n}=n! \gamma_n(z).
\end{align}

Consider $z \in \mathcal V_{n_1}$ and $w \in \mathcal V_{n_2}$ with $|z|$, $|w|$ odd. Then 
\[
(z \star w) \star (z \star w)=\sum_{\sigma \in \on{Sh}(n_1+n_2,n_1+n_2)}\sigma((z \star w) \cdot (z \star w)).
\]
Using Lemma \ref{lem:Sh_and_c}, we see that
\[
\begin{array}{rcl}
(z \star w) \star (z \star w) & = & \displaystyle \sum_{\sigma \in (K')_{n_1+n_2}^2}\sigma((z \star w) \cdot (z \star w))+\sum_{\sigma \in (K')_{n_1+n_2}^2}\sigma c ((z \star w) \cdot (z \star w)) \\[10pt]
 & = & \displaystyle \gamma_2(z \star w)+\sum_{\sigma \in (K')_{n_1+n_2}^2}(-1)^{|z \star w||z \star w|}\sigma  ((z \star w) \cdot (z \star w)) \\[5pt]
  & = & \displaystyle (1+(-1)^{|z \star w||z \star w|})\gamma_2(z \star w) \\[5pt]
  & = & 2 \gamma_2(z \star w)
\end{array}
\]
as $|z \star w|$ is even. But we know that $\star$ is associative and strictly graded commutative, so the left hand size of the above equation is $0$. It follows that $2 \gamma_2(z \star w)=0$ in $\mathbb Z_S(\mathcal V)$, which is a free $\mathbb Z$-module, and so $ \gamma_2(z \star w)=0$. Using Eq.\eqref{eq:free_power}, one can show that if $n \geq 2$, then  $0=\gamma_2(z \star w)\gamma_{n-2}(z \star w)=\binom{n}{2}\gamma_n(z \star w)$ in $\mathbb Z_S(\mathcal V)$. Again, the freeness of $\mathbb Z_S(\mathcal V)$ over $\mathbb Z$ implies $\gamma_n(x \star y)=0$ for all $n \geq 2$. Hence Axiom \eqref{axiom_7} is satisfied for $\mathbb Z_S(\mathcal V)$. Axioms \eqref{axiom_1}-\eqref{axiom_6} are easy to verify using Eq.\eqref{eq:free_power}. It follows that $\mathbb Z_S(\mathcal V)$ is a PD algebra with PD ideal $\mathbb Z_S^+(\mathcal V)$.

Set $A \mathcal V=A \otimes_{\mathbb Z} \mathbb Z(\mathcal V)$ and $A_S \mathcal V=A \otimes_{\mathbb Z} \mathbb Z_S(\mathcal V)$. By Lemma \ref{lem:torsion_free}, the PD structure of $\mathbb Z_S(\mathcal V)$ passes to $A_S \mathcal V$. Following \cite[Section 5]{Roby3}, we see that there is a surjective algebra homomorphism $q:A \mathcal V \to T_A(V)$ such that $q:A \mathcal V_n \to T_A^n(V)$ is compatible with the action of $\mf S_n$. 

Let $T_A(V)_{Sh}=(T_A(V), \star)$  with the $\star$-product given by:
\[
x \star y = \sum_{\sigma \in \on{Sh}(p,q)} \sigma(x \otimes y)
\]
for $x \in T_A^p(V), y \in T_A^q(V)$. Therefore $q$ preserves the $\star$-product by definition. Therefore $T_A(V)_{Sh}$ is a strictly graded commutative algebra with a PD structure given by:
\[
\gamma_n(x)=\sum_{\sigma\in (K')_{p}^{n}}\sigma(x \otimes \cdots \otimes x)
 \]
for $x \in T_A^p(V)$ and $|x|$ even. Moreover the ideal of the PD structure is $T_A^+(V)$.

 We know from Eq.\eqref{shuffle-product} that if $\overline{x}\in \onsf{TS}_A^{p}(V)$ and  $\overline{y}\in \onsf{TS}_A^q(V)$, then $\overline{x} \star \overline{y}\in \onsf{TS}_A^{p+q}(V)$ is homogeneous of even degree. Moreover, note that if $\overline{x} \in \onsf{TS}_A^p(V)$ is of even degree, then  $ \overline{x}\otimes \cdots\otimes \overline{x}$ is clearly invariant under the group $ \mf S_n\ltimes (\mf S_p\times\cdots\times \mf S_p)$. Thus $\overline{\gamma_n}(\overline{x})$ is in $ \onsf{TS}_A^{np}(V)$. It follows that $\onsf{TS}_A(V)$ is a PD subalgebra of $T_A(V)_{Sh}$ with PD ideal $\onsf{TS}_A^+(V)$.
\end{proof}

\delete{
Note that for any free $A$-module $V$, $ \onsf{TS}_A(V)$ is a strictly graded commutative dg algebra based on Theorem \ref{TS-scdgalgebra}. It has a PD structure as follows. 
 
 We have seen that $ x\in \onsf{TS}_A^{n}(X)$ and  $y\in \onsf{TS}_A^m(X)$, then $x\star y\in \onsf{TS}_A^{n+m}(X)$ is homogeneous of even degree. One can now verify that if $|x|$ and $|y|$ odd, then $ (x\star y)^{(p)}=0$ for all $ p\geq 2$. {\color{red} To prove this, we first assume $n=m=1$ and one argue as in \cite{Bourbaki_commu} when $x\in X$ has even degree.} If $x, y\in X$  are of odd degree, then $x\star y=x\otimes y-y\otimes x $. Take $ p=2$ we have  
 \[
 (x\star y)\otimes (x\star y)=(x\otimes y)\otimes (x\otimes y)+(y\otimes x)\otimes (y\otimes x)-(y\otimes x)\otimes (x\otimes y)-(x\otimes y)\otimes (y\otimes x).
 \]

{\color{red} Do the case for $p=2$ and $x, y \in TS^n(X)$}
 
 Note that $(K')_2^2$ consists of the following permutations 
 \[\begin{pmatrix}
     1&2&3&4\\
     1&2&3&4
\end{pmatrix},\quad
 \sigma=\begin{pmatrix}
     1&2&3&4\\
     1&3&2&4
\end{pmatrix},\quad \tau=\begin{pmatrix}
     1&2&3&4\\
     2&3&1&4
\end{pmatrix},
 \]
 and $\ell(\sigma)=1$ and $\ell(\tau)=2$.
 A direct computation shows that 
 \[
 (x\star y)^{(2)}=(x\star y)\otimes (x\star y)+\sigma((x\star y)\otimes (x\star y))+\tau((x\star y)\otimes (x\star y))=0.
 \]
}

\begin{theorem}\label{thm_pddg}
Set $(X^*, d^*)$ a differential complex in $ \onsf{Cplx}(A)^{free}$. Then $\onsf{TS}_A(X^*)$ equipped with the shuffle product is PD dg subalgebra of $T_A(X^*)_{Sh}$.
\end{theorem}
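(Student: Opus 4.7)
The plan is to combine what has already been established and then verify only the remaining differential-compatibility axiom \eqref{axiom_8}. Proposition \ref{prop:scdgal} together with Theorem \ref{TS-scdgalgebra} shows that $\onsf{TS}(X^*)$ is a strictly graded commutative dg $\msf k$-algebra under the shuffle product $\star$ inherited from $T(X^*)$. Applying Theorem \ref{thm:TS_pd} to the underlying graded $\msf k$-module of $X^*$ (forgetting the differential) equips $\onsf{TS}^+(X^*)$ with a pd structure $\gamma_n$ satisfying axioms \eqref{axiom_1}--\eqref{axiom_7}. The only requirement left to verify is axiom \eqref{axiom_8}: for every $x \in \onsf{TS}^+(X^*)_{ev}$ and every $n \geq 1$,
\[
d(\gamma_n(x)) = \gamma_{n-1}(x) \star d(x).
\]

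Over a $\bb Q$-algebra this identity is immediate. Since $\star$ is a chain map (Proposition \ref{prop:scdgal}), $(T_S(X^*), d, \star)$ is a dg algebra, so the Leibniz rule yields
\[
d(x^{\star n}) = \sum_{i=1}^n (-1)^{|x|(i-1)} x^{\star(i-1)} \star d(x) \star x^{\star(n-i)}
\]
in which every Koszul sign is trivial because $|x|$ is even. Strict graded commutativity then gives $x^{\star(i-1)} \star d(x) \star x^{\star(n-i)} = x^{\star(n-1)} \star d(x)$ for all $i$, whence $d(x^{\star n}) = n\, x^{\star(n-1)} \star d(x)$. Combining this with $x^{\star n} = n!\, \gamma_n(x)$ produces $n!\, d(\gamma_n(x)) = n!\, \gamma_{n-1}(x) \star d(x)$, after which dividing by $n!$ finishes the argument. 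The main obstacle over a general $\msf k$ is precisely this division by $n!$.

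To circumvent the $n!$-obstruction, I would reduce to a torsion-free universal model in the spirit of Lemma \ref{lem:torsion_free}. Using a homogeneous $\msf k$-basis of $X^*$, I would adapt the monoid algebra $\bb Z_S(\mathcal{V})$ from the proof of Theorem \ref{thm:TS_pd} and lift the differential word-by-word via the Leibniz rule. This produces a $\bb Z$-free dg algebra carrying a $\bb Z$-compatible pd structure and surjecting onto $T_S(X^*)$ through a map that preserves $\star$, $d$, and the $\gamma_n$. Being $\bb Z$-torsion-free, this universal algebra embeds into its rationalization, where the argument of the previous paragraph applies verbatim; the identity \eqref{axiom_8} therefore holds there. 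Torsion-freeness descends it to $\bb Z_S(\mathcal{V})$ itself, Lemma \ref{lem:base_change} propagates it along base change to $\msf k \otimes_{\bb Z} \bb Z_S(\mathcal{V})$, and the surjection onto $T_S(X^*)$ restricts it to $\onsf{TS}(X^*) \subseteq T(X^*)$. This establishes axiom \eqref{axiom_8} and exhibits $\onsf{TS}(X^*)$ as a pd dg subalgebra of $T(X^*)$.
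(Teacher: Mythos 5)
Your proposal is correct and follows essentially the same route as the paper: reduce Axiom \eqref{axiom_8} to the $\mathbb{Z}$-free monoid algebra $\mathbb{Z}_S(\mathcal{X})$ with differential lifted by $d(u_x)=u_{d(x)}$, use the Leibniz rule together with $x^{\star n}=n!\,\gamma_n(x)$ to obtain $n!\bigl(d(\gamma_n(x))-\gamma_{n-1}(x)\star d(x)\bigr)=0$, cancel $n!$ by $\mathbb{Z}$-torsion-freeness, and then transport the identity through the surjection $q$ onto $T_S(X^*)$ and restrict to the subalgebra $\onsf{TS}(X^*)$. Your phrasing via embedding into the rationalization is just a restatement of the torsion-freeness cancellation the paper uses directly.
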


\begin{proof}
Axioms \eqref{axiom_1}-\eqref{axiom_7} have been verified in Theorem \ref{thm:TS_pd}. It remains to check Axiom \eqref{axiom_8}.

Let $\mathcal{X}$ be the free graded monoid on the symbols $\{u_x \ | \ x \in X^{hom} \}$. Then the monoid ring $\mathbb Z(\mathcal X)$ is a dg algebra with differential $d(u_x)=u_{d(x)}$ for $x \in X^*$. Based on the proof of Theorem \ref{thm:TS_pd}, we can define the $\star$-product and see that $\mathbb Z_S(\mathcal X)$ ($=\mathbb Z(\mathcal X)$ with $\star$-product) has a PD structure and a dg structure compatible with the action of $\mf S_n$.

Consider $x \in \mathbb Z_S^+(\mathcal X)_{ev}$. Then
\[
n!\gamma_n(x)=x^{\star n}.
 \]
But we know that $\mathbb Z_S(\mathcal X)$ is a dg algebra and so 
\[
\begin{array}{rcl}
n!d(\gamma_n(x)) & = &\displaystyle d(x^{\star n})\\
& = &\displaystyle  \sum_{i=0}^{n-1} (-1)^{i|x|}x^{\star i} \star  d(x) \star x^{\star (n-1-i) } \\
& = &\displaystyle  \left(\sum_{i=0}^{n-1} x^{\star (n-1)}\right)\star  d(x) \text{ as } |x| \text{ even}   \\
& = &\displaystyle  n x^{\star (n-1)} \star d(x) \\
& = & n! \gamma_{n-1}(x) \star d(x)
\end{array}
\]
It follows that $n! \big(d(\gamma_n(x))-\gamma_{n-1}(x) \star d(x)\big)=0$ in $\mathbb Z_S(\mathcal X)$, which is free over $\mathbb Z$. Hence $d(\gamma_n(x))=\gamma_{n-1}(x) \star d(x) $ and Axiom \eqref{axiom_8} is satisfied.

The map $q$ defined in the proof of Theorem \ref{thm:TS_pd} commutes with $d$. It follows that $T_A(X^*)_{Sh}$ has a dg structure and a PD structure coming from those of $\mathbb Z_S(\mathcal X)$. But Axiom \eqref{axiom_8} is satisfied in $\mathbb Z_S(\mathcal X)$, and so it is also satisfied in $T_A(X^*)_{Sh}$.

We have already seen in Proposition \ref{prop:scdgal} and Theorem \ref{thm:TS_pd} that $\onsf{TS}_A(X^*)$ is a dg subalgebra ($d$ commutes with the action of $\mf S_n$) and a PD subalgebra of $T_A(X^*)_{Sh}$. As Axiom \eqref{axiom_8} is satisfied in $T_A(X^*)_{Sh}$, it will also be satisfied in $\onsf{TS}_A(X^*)$. Hence $\onsf{TS}_A(X^*)$ is a PD dg subalgebra of $T_A(X^*)_{Sh}$.
\end{proof}

\delete{
\subsection{Strictly graded commutative dg algebras} We now claim that $ \onsf{TS}_A(X)$ is strictly graded commutative differential graded algebra. 
In fact for any $x$ homogeneous, we have 
$x\star x=x\otimes x+(-1)^{|x|^2}x\otimes x$. 
If $x$ has odd degree, then we have $ x\star x=0$. 
In fact,   take $\tau\in \mf S_{m+n}$ by 
\[ \tau(i)= \begin{cases}
    n+i &\text{ if } i\leq m, \\
    i-m &\text{ if } i> m,
\end{cases}
\]
we have $ \tau (x_1\otimes x_2)=(-1)^{|x_1||x_2|}x_2\otimes x_1$  for any $ x_1\in T_A^m(M)$ and $x_2\in T_A^n(V)$ 
homogeneous of degree $(m, |x_1|)$ and $(n, |x_2|)$ respectively. 
Note that $ \mf S_m\times \mf S_n$ and $ \mf S_n\times \mf S_m$ are two different subgroups of $ \mf S_{m+n}$ and they are related by $\mf S_n\times \mf S_m=\tau \mf (\mf S_m\times \mf S_n)\tau^{-1} $. 
If $F\subseteq \mf S_{m+n}$ is  a complete set of coset representatives of $ \mf S_{m+n}/(\mf S_{m}\otimes \mf S_{n})$, then $ \tau F \tau ^{-1}$ is a complete set of coset representatives of $ \mf S_{m+n}/\tau(\mf S_{m}\otimes \mf S_{n})\tau^{-1}$.
Thus we have
\[
x_1\star x_2=(-1)^{|x_1||x_2|} x_2\star x_1.
\]

Let $\onsf{scDGA}_{\bb N}(R)$ be the category of $\bb N$-graded strictly graded commutative dg algebras over $ R$. Then $\onsf{scDGA}_{\bb N}(R)$ is a also symmetric monoidal category naturally. 
}

 By using Theorem \ref{thm_pddg}, we see that, as graded algebras,  
 \[
\onsf{TS}_A(X^*)=\bigotimes_{i\in \bb Z}\onsf{TS}_A(X^i[-i])
\]
where the $A$-modules $X^i$ are regarded as complexes concentrated in degree $0$. This also gives a PBW type basis for the algebra  $\onsf{TS}_A(X^*)$ if each $ X^i$ is a free $A$-module with basis $B_n$ (with an argument similar to that of \cite[Chap. IV, \S 5.5, Prop.4]{Bourbaki_algebra}). Set $\bb N_s=\bb N \coprod \mathbb{Z}/2\mathbb{Z}$ and let $ f: B=\coprod_{n}B_n\to \bb N_s$ be a function with finite support where $f(B_n) \subset \bb N$ if $n$ is even and $f(B_n) \subset \mathbb{Z}/2\mathbb{Z}$ if $n$ is odd. We use $ \bb N_s^{(B)}$ to denote the set of all such maps $f$. For each $ f \in \bb N_s^{(B)}$, set
\begin{align}\label{PBW_TS}
x^{(f)}=\prod^\star_{b\in B} b^{(f(b))}
\end{align}
where $b^{(f(b))}=\gamma_{f(b)}(b)$. Here $ \prod^\star$ means a finite $\star$-product with a fixed ordering of the set $ B$.
Note that $x^{(f)}\in \onsf{TS}_A^{\sum_{b\in B}f(b)}(X^*)$ and its dg degree is
\[|x^{(f)}|=\sum_{b\in B}f(b)|b|.
\]
In the following, we consider the differential $d$. As $T_A(X^*)_{Sh}$ is a dg algebra (cf. Proposition \ref{prop:scdgal}), it follows that for each element $f$ we have:
\begin{align}\label{eq:differential}
d(x^{(f)})=\sum_{b\in B}\left ( \prod^\star_{b'<b}(-1)^{f(b')|b'|} {b'}^{(f(b'))}\right ) \star d(b^{(f(b))}) \star \left (\prod^\star_{b'>b} {b'}^{(f(b'))}\right ).
\end{align}

Given two complexes $X^*$ and $Y^*$, we have PD dg algebra homomorphisms $\iota_{\onsf{TS}_A(X)}:\onsf{TS}_A(X) \to \onsf{TS}_A(X) \otimes \onsf{TS}_A(Y)$ and $\iota_{\onsf{TS}_A(Y)}:\onsf{TS}_A(Y) \to \onsf{TS}_A(X) \otimes \onsf{TS}_A(Y)$ (see Section \ref{sec:2}).

\begin{theorem}\label{thm_pddgfunctor}
The assignment $ \onsf{TS}_A(-): \onsf{Cplx}(A)^{free} \to \onsf{PDdgAlg}(A)$ defines a functor. Moreover, for a direct sum $X^* \oplus Y^*$ with injections $\iota_X:X^* \to X^* \oplus Y^*$ and $\iota_Y:Y^* \to X^* \oplus Y^*$, we have an isomorphism of PD dg algebras
\[
h: \onsf{TS}_A(X^*) \otimes_A \onsf{TS}_A(Y^*) \stackrel{\cong}{\to} \onsf{TS}_A(X^* \oplus Y^*)
\]
such that $h \circ \iota_{\onsf{TS}_A(X)}=\onsf{TS}_A(\iota_X)$ and $h \circ \iota_{\onsf{TS}_A(Y)}=\onsf{TS}_A(\iota_Y)$.

\delete{
where for any $m_X \in \onsf{TS}_A(X^*), m_Y \in \onsf{TS}_A(Y^*)$, we have
\[
h(m_X \otimes m_Y)=\onsf{TS}_A(\iota_X)(m_X) \star \onsf{TS}_A(\iota_Y)(m_Y).
\]
In particular, for $x \in X^*$, $y \in Y^*$,
\[
h\left(\sum_{p_X+p_Y=p}\gamma_{p_X}(x) \otimes \gamma_{p_Y}(y)\right)=\gamma_{p}(x+y).
\]
}

\delete{
{\color{red} Based on Bourbaki, it should be a morphism of algebras. But what is the product on $\onsf{TS}_A(X^*) \otimes \onsf{TS}_A(Y^*)$? Is it \\
$(x \otimes y)(x' \otimes y')=(-1)^{|x'||y|}(x \star x') \otimes (y \star y') $ ? If it is, then it works.}
}
\end{theorem}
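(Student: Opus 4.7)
The plan is to split the verification into four stages: functoriality, the algebra-isomorphism property of $h$, bijectivity, and the pd-level claims. For functoriality, Theorem~\ref{thm_pddg} already supplies the pd dg algebra structure on $\onsf{TS}(X^*)$ and Proposition~\ref{prop:scdgal} gives dg-algebra functoriality, so the only new point is that a chain map $\phi: X^*\to Y^*$ induces $\onsf{TS}(\phi)$ commuting with $\gamma_n$. This is immediate from the explicit formula $\gamma_n(z)=\sum_{\sigma\in (K')_p^n}\sigma(z^{\otimes n})$ of Theorem~\ref{thm:TS_pd} together with the $\mf S_n$-equivariance of $T(\phi)$.

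For the map $h$, the chain-map property is clear since $\iota_X$, $\iota_Y$ and $\star$ are all chain maps. Multiplicativity amounts to the observation that the Koszul sign $(-1)^{|m_Y||m'_X|}$ in the twisted tensor-product multiplication is exactly the sign produced when strict graded commutativity of $\star$ in $\onsf{TS}(X^*\oplus Y^*)$ (Theorem~\ref{TS-scdgalgebra}) is used to swap $\iota_Y(m_Y)$ past $\iota_X(m'_X)$. For bijectivity I would invoke the PBW basis of \eqref{PBW_TS}: fix ordered bases $B_X,B_Y$ and order $B_X\sqcup B_Y$ with $B_X$ first. Then every $f\in\bb N_s^{(B_X\sqcup B_Y)}$ splits uniquely as $f_X+f_Y$ with disjoint supports, and the chosen ordering yields $x^{(f)}=x^{(f_X)}\star x^{(f_Y)}=h(x^{(f_X)}\otimes x^{(f_Y)})$, identifying the two PBW bases.

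For the pd assertions, the divided-power formula $h\bigl(\sum_{p_X+p_Y=p}\gamma_{p_X}(x)\otimes\gamma_{p_Y}(y)\bigr)=\gamma_p(x+y)$ follows by applying Axiom~\eqref{axiom_4} to $\iota_X(x)+\iota_Y(y)\in\onsf{TS}^1(X^*\oplus Y^*)_{ev}$ (so $x,y$ are of even dg degree) and using the functoriality just established to rewrite $\onsf{TS}(\iota_?)(\gamma_n(\cdot))=\gamma_n(\iota_?(\cdot))$. Preservation of the pd structure on $\onsf{TS}(X^*)_{ev}\otimes\onsf{TS}(Y^*)_{ev}$ reduces to pure tensors $a\otimes b$ with $a\in\onsf{TS}^+(X^*)_{ev}$, $b\in\onsf{TS}^+(Y^*)_{ev}$: Axiom~\eqref{axiom_3} applied inside the tensor product yields $\gamma_n(a\otimes b)=n!\,\gamma_n(a)\otimes\gamma_n(b)$, and the same axiom applied inside $\onsf{TS}(X^*\oplus Y^*)$ to the product $\iota_X(a)\star\iota_Y(b)$ yields $\gamma_n(\iota_X(a)\star\iota_Y(b))=n!\,\gamma_n(\iota_X(a))\star\gamma_n(\iota_Y(b))$. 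Applying $h$ identifies the two, and Axiom~\eqref{axiom_4} extends the identification to sums.

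The main obstacle I expect is the negative final assertion. The pd ideal of the tensor product has an even-dg-degree part strictly larger than $\onsf{TS}(X^*)_{ev}\otimes\onsf{TS}(Y^*)_{ev}$, containing mixed elements such as $x\otimes y$ with $|x|,|y|$ odd; on these, the tensor-product pd structure is the universal extension of the block-wise pd formulas recalled earlier in Section~\ref{sec:pd_rings}, whereas the pd structure inherited from $\onsf{TS}(X^*\oplus Y^*)$ is computed via shuffling with its attendant Koszul signs, and the two need not agree on higher divided powers of sums. I would complete this point by exhibiting a small explicit example (involving a couple of odd-degree generators on each side, so that Axiom~\eqref{axiom_7} and Axiom~\eqref{axiom_4} produce genuinely different cross-term contributions) on which $h\circ\gamma_n\neq \gamma_n\circ h$, which would confirm the failure claimed in the statement.
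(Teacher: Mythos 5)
Most of your proposal tracks the paper's proof closely: the functoriality argument via $\mf S_n$-equivariance of $T^n(\phi)$ is exactly the paper's, your PBW/sign argument for $h$ being a dg algebra isomorphism fills in what the paper delegates to Bourbaki, the derivation of $h\bigl(\sum\gamma_{p_X}(x)\otimes\gamma_{p_Y}(y)\bigr)=\gamma_p(x+y)$ from Axiom~\eqref{axiom_4} is correct, and your verification of pd-preservation on the even part is a legitimate variant of the paper's computation (the paper writes $\gamma_n(m_X\otimes m_Y)=\gamma_n(m_X)\otimes m_Y^{\star n}$ via Eq.~\eqref{pd_tensor_first} and compares both sides using that $\onsf{TS}(\iota_X),\onsf{TS}(\iota_Y)$ are pd homomorphisms; your route through $n!\,\gamma_n(a)\otimes\gamma_n(b)$ gives the same identity, though you should also cover the pure tensors $a\otimes b$ with only one factor in the augmentation ideal, which is the generic form of an element of the pd ideal).

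The genuine gap is the final negative assertion. You do not prove that $h$ fails to be a pd homomorphism; you only announce that you ``would'' exhibit a counterexample, and the mechanism you describe is unlikely to produce one. On any element of the pd ideal of $\onsf{TS}(X^*)\otimes\onsf{TS}(Y^*)$ of even total dg degree whose pure-tensor constituents have odd factors, Axiom~\eqref{axiom_7} forces $\gamma_n=0$ for $n\ge 2$ on the tensor-product side, and the image under $h$ is a $\star$-product of two odd elements of the augmentation ideal of $\onsf{TS}(X^*\oplus Y^*)$, so Axiom~\eqref{axiom_7} kills $\gamma_n$ there as well; the cross terms arising from Axiom~\eqref{axiom_4} are ordinary products, which $h$ preserves. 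So wherever both sides are defined and you only use the axioms, they agree, and no ``genuinely different cross-term contributions'' appear. The paper's point is not a numerical discrepancy but a mismatch of domains: the pd ideal of the tensor product of pd algebras contains elements $m_X\otimes m_Y$ with $|m_X|$ or $|m_Y|$ odd on which the tensor-product $\gamma_n$ is declared, whereas $h(m_X\otimes m_Y)=\iota_X(m_X)\star\iota_Y(m_Y)$ need not lie in the even part of the pd ideal of $\onsf{TS}(X^*\oplus Y^*)$, so the target has no divided powers there at all; hence $h$ cannot be a morphism in the category of pd algebras even though its restriction to $\onsf{TS}(X^*)_{even}\otimes\onsf{TS}(Y^*)_{even}$ intertwines the $\gamma_n$. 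You need to replace your planned counterexample with this domain-of-definition argument (or else justify very carefully what pd structure on the tensor product you are comparing against and where it is defined).
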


\begin{proof}
Let $u:X^* \to Y^*$ be a chain map of complexes and $T_A(u):T_A(X^*) \to T_A(Y^*)$ the graded dg algebra homomorphism. In fact, $T_A(u)$ is also a PD dg algebra homomorphism $T_A(X^*)_{Sh} \to T_A(Y^*)_{Sh}$.  Since  $|u|=0$, we have $\ell(\sigma;(u(x_1), \cdots, u(x_n))) = \ell(\sigma;(x_1, \cdots, x_n))$. Thus, the chain map $T^n(u):T_A^n(X^*)\to T_A^n(Y^*) $ commutes with the action of $\mf S_n$.  Therefore $T_A(u)(\onsf{TS}_A(X^*)) \subset \onsf{TS}_A(Y^*)$. Moreover the restriction $\onsf{TS}_A(u):\onsf{TS}_A(X^*) \to \onsf{TS}_A(Y^*)$ of $T_A(u)$ is a homomorphism of PD dg algebras. 

The statement about direct sums and tensor products and the fact that $h$ is an isomorphism of dg algebras is done similarly to \cite[Chap. IV, \S 5.6]{Bourbaki_algebra}. To see that $h$ preserves the PD structure for even factors, consider $m_X \in \onsf{TS}_A(X^*)$, $m_Y \in \onsf{TS}_A(Y^*)$ with $|m_X|,|m_Y|$ even. Then
\[
\begin{array}{rcl}
h (\gamma_n(m_X \otimes m_Y)) & = & h(\gamma_n(m_X) \otimes m_Y^{\star n}) \quad \text{by Eq.}\eqref{pd_tensor_first} \\[5pt]
& = & \onsf{TS}_A(\iota_X)(\gamma_n(m_X)) \star \onsf{TS}_A(\iota_Y)(m_Y^{\star n}) \\[5pt]
& = & \onsf{TS}_A(\iota_X)(\gamma_n(m_X)) \star \onsf{TS}_A(\iota_Y)(m_Y)^{\star n} \text{ as } \onsf{TS}_A(\iota_Y) \text{ is a homomorphism} \\
   &&\text{ of PD dg algebras}
\end{array}
\]
and
\[
\begin{array}{rcl}
\gamma_n (h(m_X \otimes m_Y)) & = & \gamma_n(\onsf{TS}_A(\iota_X)(m_X) \star \onsf{TS}_A(\iota_Y)(m_Y)) \\[5pt]
 & = & \gamma_n(\onsf{TS}_A(\iota_Y)(m_Y) \star \onsf{TS}_A(\iota_X)(m_X)) \text{ as }\star \text{ is graded commutative}\\[5pt]
  & = & \onsf{TS}_A(\iota_Y)(m_Y)^{\star n} \star \gamma_n( \onsf{TS}_A(\iota_X)(m_X)) \text{ because of Axiom \eqref{axiom_3}}\\[5pt]
   & = &  \onsf{TS}_A(\iota_X)(\gamma_n(m_X)) \star \onsf{TS}_A(\iota_Y)(m_Y)^{\star n} \text{ as } \onsf{TS}_A(\iota_X) \text{ is a homomorphism} \\
   &&\text{ of PD dg algebras}.
\end{array}
\]
Thus $h (\gamma_n(m_X \otimes m_Y))=\gamma_n (h(m_X \otimes m_Y))$ so $h$ preserves the divided power structure. \delete{However, if either $m_X$ or $m_Y$ is of odd degree, then $\gamma_n(m_X \otimes m_Y)$ is well-defined, but $h(m_X \otimes m_Y)$ is of odd degree in $\onsf{TS}_A(X^* \oplus Y^*)$ so its divided powers are not defined. Hence only the restriction of $h$ to $\onsf{TS}_A(X^*)_{even} \otimes \onsf{TS}_A(Y^*)_{even}$ preserves the PD structure.}
\end{proof}

\subsection{Hopf algebra structures} 

\delete{
which is a coproduct of graded strictly  commutative algebra with PD structures. It is also a graded commutative Hopf algebra with PD structure. However it is not a PD Hopf algebra since comultiplication is not a Chain map in general.

It follows the same argument as in Bourbaki, $\onsf{TS}_A(-): \onsf{Cplx}(R)^{free}\to \onsf{scDGA}_{\bb N}(R)$ is  functorial. 
If $ \phi: X\to Y$ is a chain map, then $ \onsf{TS}_A(\phi): \onsf{TS}_A(X)\to \onsf{TS}_A(Y)$ is a homomorphism of DGA.
}

Consider $X^* \in \onsf{Cplx}(A)^{free}$. By using the isomorphism $h$ in Theorem \ref{thm_pddgfunctor}, the maps $+: X^*\oplus X^*\to X^* $, and $\delta: X^*\to X^*\oplus X^*$ with $\delta(x)=(x, x)$ induce homomorphisms $ \mu=\onsf{TS}_A(+) \circ h: \onsf{TS}_A(X^*)\otimes \onsf{TS}_A(X^*)\to \onsf{TS}_A(X^*) $ and $\Delta=h^{-1} \circ \onsf{TS}_A(\delta) \ :\onsf{TS}_A(X^*)\to \onsf{TS}_A(X^*)\otimes \onsf{TS}_A(X^*) $ in $ \onsf{PDdgAlg}(A)$. In fact, $\mu$ is exactly the $\star$-product in $\onsf{TS}_A(X^*)$. Indeed, for $x,x' \in \onsf{TS}_A(X^*)$, we have
\[
\begin{array}{rcl}
\mu(x \otimes x') & = & \onsf{TS}_A(+) \circ h(x \otimes x') \\
& = & \onsf{TS}_A(+)\big(\onsf{TS}_A(\iota_1)(x) \star \onsf{TS}_A(\iota_2)(x') \big) \\
 & = & \onsf{TS}_A(+)(\onsf{TS}_A(\iota_1)(x)) \star \onsf{TS}_A(+)(\onsf{TS}_A(\iota_2)(x')) \text{ as } \onsf{TS}_A(+) \text{ is a morphism} \\
  & & \text{of algebras} \\
  & = & x \star x' \text{ as } +\! \circ \iota_i=\on{id}_X \text{ for } i=1,2.
 \end{array}
 \]
As $\onsf{TS}_A(\{0\})=A$, the maps $ \{0\}\to X^*$ and $ X^*\to \{0\}$ define morphisms $u:A \to \onsf{TS}_A(X^*)$ and $\varepsilon: \onsf{TS}_A(X^*)\to A$, making $\onsf{TS}_A(X^*)$ into a commutative and cocommutative bialgebra with $u$ and $\epsilon$ the unit and counit respectively (see \cite[Chap. IV, \S 5.7]{Bourbaki_algebra}). In fact, the map $ \onsf{neg}: X^*\to X^*$ sending $ \onsf{neg}(x)=-x$ defines the antipode $S: \onsf{TS}_A(X^*)\to \onsf{TS}_A(X^*)$ making  $\onsf{TS}_A(X^*)$ into a connected commutative and cocommutative Hopf algebra. 
 The antipode property follows easily from the functoriality of $\onsf{TS}_A(-)$ and the following commutative diagrams
 \[
 \xymatrix{
 X^*\oplus X^*\oplus X^* \ar[rr]^{(+, \onsf{Id})}\ar[d]_{( \onsf{Id}, +)}&&X^*\oplus X^*\ar[d]^{+} \\
 X^*\oplus X^* \ar[rr]_{+}&& X^*}
 \quad \quad \xymatrix{
 X^*\oplus X^*\oplus X^* &&X^*\oplus X^*\ar[ll]_{(\onsf{Id},\delta)} \\
 X^*\oplus X^* \ar[u]^{(\delta, \onsf{Id})}&& \ar[ll]^{\delta}X^*\ar[u]_{\delta}}
 \]
 \[
 \xymatrix{
 X^*\ar[dr]^{\varepsilon}\ar[rr]^{\delta}\ar[d]_{\delta}&& X^*\oplus X^* \ar[d]^{(\onsf{Id}, \onsf{neg})}\\
 X^*\oplus X^*\ar[d]_{( \onsf{neg},\onsf{Id})}&\{0\}\ar[dr]^{u} &X^*\oplus X^*\ar[d]^{+}\\
 X^*\oplus X^* \ar[rr]_{+}&& X^*
 }.\]
 \delete{
 Here $ \onsf{TS}_A(\{0\})=A$, $ \varepsilon$ and $u$ correspond to counit and unit of the dg Hopf algebra $ \onsf{TS}_A(X^*)$.
}

\begin{theorem}\label{thm:Hopf}
For any $X^* \in \onsf{Cplx}(A)^{free}$, then $ \onsf{TS}_A(X^*)$ is a cocommutative dg Hopf algebra with a PD structure.
\end{theorem}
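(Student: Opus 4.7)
The plan is to assemble the Hopf algebra structure entirely by functoriality, using Theorem \ref{thm_pddgfunctor} to transport every morphism and every commutative diagram from $\onsf{Cplx}(\msf k)^{free}$ into $\onsf{PDdgAlg}(\msf k)$. The key tool is the natural isomorphism $h: \onsf{TS}(X^*) \otimes \onsf{TS}(X^*) \stackrel{\cong}{\to} \onsf{TS}(X^* \oplus X^*)$ of dg algebras (together with its obvious iterated version $h^{(n)}: \onsf{TS}(X^*)^{\otimes n} \stackrel{\cong}{\to} \onsf{TS}((X^*)^{\oplus n})$), which lets us read the bialgebra axioms inside $\onsf{TS}(-)$ off the corresponding identities between $+$ and $\delta$ in $\onsf{Cplx}(\msf k)^{free}$.

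Concretely, I would set $\mu = \onsf{TS}(+) \circ h$ (which coincides with the $\star$-product by the computation displayed just before the statement), $\Delta = h^{-1} \circ \onsf{TS}(\delta)$, $u = \onsf{TS}(0 \to X^*)$, $\varepsilon = \onsf{TS}(X^* \to 0)$, and $S = \onsf{TS}(\onsf{neg})$. Each is a morphism of pd dg algebras by Theorem \ref{thm_pddgfunctor}, hence in particular a chain map. Associativity of $\mu$ and coassociativity of $\Delta$ follow by applying $\onsf{TS}(-)$ to the first two diagrams shown before the statement; the antipode identity $\mu \circ (S \otimes \onsf{Id}) \circ \Delta = u \circ \varepsilon = \mu \circ (\onsf{Id} \otimes S) \circ \Delta$ follows from the third diagram. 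The bialgebra axiom is extracted by applying $\onsf{TS}$ to the standard equality $(+ \oplus +) \circ (\onsf{Id} \oplus \tau \oplus \onsf{Id}) \circ (\delta \oplus \delta) = \delta \circ +$ in $\onsf{Cplx}(\msf k)^{free}$ (where $\tau$ is the swap) and then transporting along $h^{(2)}$ to rewrite the result as $\Delta \circ \mu = (\mu \otimes \mu) \circ (\onsf{Id} \otimes b_{\onsf{TS}, \onsf{TS}} \otimes \onsf{Id}) \circ (\Delta \otimes \Delta)$. Cocommutativity of $\Delta$ comes from $\tau \circ \delta = \delta$ in $\onsf{Cplx}(\msf k)^{free}$ together with the naturality of $h$ with respect to $\tau$, which intertwines the braiding $b_{\onsf{TS}(X^*), \onsf{TS}(X^*)}$ with $\onsf{TS}(\tau)$.

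Finally, the pd structure on $\onsf{TS}(X^*)$ is exactly the one established in Theorem \ref{thm_pddg}, so nothing new is required on that side; the theorem asserts only that the Hopf algebra and the pd structure coexist on the same underlying dg algebra, not that $\Delta$ or $\varepsilon$ are themselves pd maps. The main obstacle I expect is precisely this interaction: $h$ preserves divided powers only on the even part by Theorem \ref{thm_pddgfunctor}, so I would carefully phrase the statement to avoid claiming a \emph{pd Hopf algebra}, and I would verify the Hopf identities at the level of the underlying strictly graded commutative dg algebra, where $h$ is an outright isomorphism. Keeping track of Koszul signs when transporting the bialgebra axiom through $h^{(2)}$, and making sure the braiding $b_{\onsf{TS}(X^*), \onsf{TS}(X^*)}$ on the tensor side really does correspond to the swap $\tau$ on the direct sum side under $h^{(2)}$, is the one place where a careful bookkeeping argument is needed.
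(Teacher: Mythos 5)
Your proof is correct and follows essentially the same route as the paper: the paper likewise obtains $\mu$, $\Delta$, $u$, $\varepsilon$, and $S$ by applying the functor $\onsf{TS}(-)$ to $+$, $\delta$, $\{0\}\to X^*$, $X^*\to\{0\}$, and $\onsf{neg}$, transports the (co)algebra axioms through the isomorphism $h$ of Theorem \ref{thm_pddgfunctor} (citing Bourbaki for the bialgebra part), and verifies the antipode identity from the same commutative diagrams. Your added caution that $h$ preserves divided powers only on even factors, so that $\Delta$ need not be a pd map and the conclusion is a dg Hopf algebra that merely carries a pd structure, is exactly the reading the paper intends.
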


\subsection{Freeness of PD dg algebra \texorpdfstring{$\onsf{TS}_A(X^*)$}{Lg}} 
We now show that the functor $ \onsf{TS}_A(-): \onsf{Cplx}(A) \to \onsf{PDdgAlg}(A)$ is the left adjoint of the forgetful functor $ \onsf{PDdgAlg}(A)\to \onsf{Cplx}(A)$ in the sense of the following theorem:

\begin{theorem}\label{thm:freeness_TS}
Set $X^* \in \onsf{Cplx}(A)^{free}$ and $(A^*,I,\gamma) \in \onsf{PDdgAlg}(A)$.  Let $ \rho: X^*\to I$ be a morphism in $ \onsf{Cplx}(A)$, i.e., $\rho$ is a chain map. Then there is a unique 
 PD dg algebra homomorphism $ \tilde\rho: \onsf{TS}_A(X^*)\to A^*$ extending $ \rho$.  
\end{theorem}
  
\begin{proof}
 We note that $\onsf{TS}_A^1(X^*)=X^*$ is a subcomplex of $ \onsf{TS}_A(X^*)$. We fix a basis $B$ of $ X^* $ and consider the PBW basis of $\onsf{TS}_A(X^*)$ as done in Eq.\eqref{PBW_TS}.
 We now define $ \tilde\rho$ as follows: for $ f \in \bb N_s^{(B)}$
 \begin{align}
\tilde \rho (x^{(f)})=\prod_{b\in B}\gamma_{f(b)}(\rho(b)).
\end{align}
The application $\tilde\rho$ is clearly a graded PD algebra homomorphism. We only need to show that $ \tilde\rho$ is a chain map. This is clear using the fact that $ d^A$ satisfies the Leibniz rule and Eq.\eqref{eq:differential}.
The uniqueness is obvious.
\end{proof}

We remark that $\onsf{TS}_A(X^*)$ is not free as strictly commutative dg algebra in general unless $\bb Q\subseteq A$. In there are definitions of semi-free resolutions in the literature, in which each step is assumed to be free commutative algebra. In that sense, the resolution we are going to construct will not be semi-free. However, one could understand that in the PD dg sense, the construction we will get is indeed semi-free. 

\subsection{Lifting properties}

\delete{
Let $A^*$ be an object in $ \onsf{scDGA}(A)$ and $ I\subseteq A^*$ be a dg ideal. Let $ \tilde A^*$ be a PD dg algebra in $ \onsf{scDGA}(A)$ with dg ideal $ \tilde I\subseteq \tilde A^*$ and PD structure $ \gamma_*: \tilde I_{ev}\to \tilde I_{ev}$. Assume that $ \pi: \tilde A^*\to A^*$ is a surjective dg algebra homomorphism and a quasi-isomorphism with $ \pi^{-1}(I)=\tilde I$. Set $X^* \in \onsf{Cplx}(A)^{free}$. 
Then for any homomorphism $\rho: X^*\to I$ of complexes, which extends to a dg algebra homomorphism $ \hat{\rho}: \onsf{TS}_A(X^*)\to A^*$, there is a homomorphism of PD dg algebras $\tilde \phi: \onsf{TS}_A(X^*)\to \tilde A^*$ such that $\hat\rho=\pi\circ \tilde \phi$ as a dg algebra homomorphism. 

{\color{red}Problem: $\rho=\pi\circ \tilde \rho$ makes no sense because the domains are not compatible. We would need something like $f: \onsf{TS}_A(X^*)\to A^*$ and then $f=\pi \circ \tilde\rho$. But $A^*$ has no PD structure so where would $f$ come from?}

\begin{proof}
 Assuming that $X^*$ is free ($K$-projective) and is bounded above (i.e., $X^n=0$ for $n>>0$.)
 Note $ \pi$ is quasi-isomorphism, and $ \pi(\tilde I)=I. $ We note that $ H^*(\pi): H^*(\tilde A)\to H^*(A)$ is an isomorphism and the maps  $ H^*(\tilde I)\to H^*(\tilde A)$ and $ H^*( I)\to H^*( A)$ have the image isomorphic under $H^*(\pi)$.  Thus $ \pi: \tilde I\to I$ is quasi-isomorphism. The $K$-projectivity implies that there is chain map $ \phi: X^*\to \tilde I$ such that $ \rho=\pi^*\phi$. Let $ \tilde{\phi}: \onsf{TS}_A(X^*)\to \tilde A$ be the PD dg algebra homomorphism. Then $ \hat \rho=\pi\circ \tilde \phi$, which is a dg algebra homomorphism. 
\end{proof}
}

\begin{proposition}
Let $A'^*, A^*$ be two PD dg algebras with PD dg ideals $I', I$, and let $\pi: A'^* \to A^*$ be a morphism of PD dg algebras with the restriction $\pi: I' \to I$ being surjective. Then for any $X^* \in \onsf{Cplx}(A)^{free}$ and any homomorphism $\rho: X^*\to I$ of complexes, there is a homomorphism of PD dg algebras $\tilde \rho': \onsf{TS}_A(X^*)\to  A'^*$ such that $\tilde\rho=\pi\circ \tilde \rho'$. 
\end{proposition}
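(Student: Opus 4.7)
The plan is to reduce the problem to the universal property of $\onsf{TS}(X^*)$ established in Theorem~\ref{thm:freeness_TS}. Specifically, I first lift the chain map $\rho: X^* \to I$ through $\pi$ to a chain map $\rho': X^* \to I'$ with $\pi \circ \rho' = \rho$, and then apply Theorem~\ref{thm:freeness_TS} to the pd dg algebra $A'^*$ (with pd ideal $I'$) to obtain a pd dg algebra homomorphism $\tilde \rho': \onsf{TS}(X^*) \to A'^*$ extending $\rho'$. The desired equality $\tilde \rho = \pi \circ \tilde \rho'$ will follow from the uniqueness clause in Theorem~\ref{thm:freeness_TS}, since both $\tilde \rho$ and $\pi \circ \tilde \rho'$ are pd dg algebra homomorphisms $\onsf{TS}(X^*) \to A^*$ whose restrictions to $\onsf{TS}^1(X^*) = X^*$ agree with $\rho$ (using that $\pi$ is a morphism of pd dg algebras, hence the composite respects both the dg and pd structures).

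To construct $\rho'$, fix a homogeneous $\msf k$-basis $\{b_\alpha\}$ of $X^*$ (available because each $X^i$ is free) and proceed by induction on the cohomological degree, starting from the top of $X^*$. For a basis element $b$ of degree $i$, the surjectivity of $\pi: I' \to I$ yields an initial lift $\tilde b \in I'^i$ with $\pi(\tilde b) = \rho(b)$. The discrepancy $\rho'(d_X b) - d_{I'}(\tilde b)$ lies in $\ker(\pi) \cap I'^{i+1}$ and is a cocycle there (using the chain map property of $\rho$ together with the inductive hypothesis that $\rho'$ has already been defined as a chain map on strictly higher-degree basis elements). One then corrects $\tilde b$ by adding $c \in \ker(\pi) \cap I'^i$ satisfying $d(c) = \rho'(d_X b) - d_{I'}(\tilde b)$; after extending $\msf k$-linearly along the chosen basis, this produces the required $\rho'$.

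The genuine obstacle is the existence of the correction $c$: it amounts to the vanishing of a class in $H^{i+1}(\ker(\pi) \cap I'^*)$, which is not automatic from surjectivity alone and requires an additional acyclicity hypothesis on $\ker(\pi)\cap I'^*$ (for instance, that $\pi$ restrict to a quasi-isomorphism on the ideals, as was assumed in the commented-out predecessor of this statement), or else that $X^*$ carry the zero differential, which is the typical setting in the stepwise Koszul--Tate constructions where this lifting property is actually invoked. Once such a hypothesis is in force, the induction closes and $\rho'$ exists; Theorem~\ref{thm:freeness_TS} then supplies the pd dg algebra homomorphism $\tilde\rho': \onsf{TS}(X^*) \to A'^*$, and the uniqueness in that theorem forces $\pi \circ \tilde \rho' = \tilde \rho$, completing the argument.
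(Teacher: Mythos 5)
Your overall route is the paper's: lift $\rho$ through $\pi$ to a chain map $\rho'\colon X^*\to I'$, feed $\rho'$ into Theorem~\ref{thm:freeness_TS} to obtain $\tilde\rho'$, and deduce $\pi\circ\tilde\rho'=\tilde\rho$ from agreement on $\onsf{TS}^1(X^*)$ (the paper argues via generation of $\onsf{TS}(X^*)$ by $\onsf{TS}^1(X^*)$ through the PBW basis; your appeal to the uniqueness clause of Theorem~\ref{thm:freeness_TS} is the same argument, and that part is fine). Where you diverge is that you stop to ask whether the chain-map lift $\rho'$ exists, and you are right to: degreewise surjectivity of $\pi\colon I'\to I$ together with freeness of each $X^i$ only lifts $\rho$ as a map of graded modules, and the correction you describe is obstructed by a class in $H^{*+1}(\ker(\pi)\cap I')$, which need not vanish. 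The paper's own proof silently conflates the two kinds of lifting (``As $X^*$ is assumed to be free, there exists a lift $\rho'$\dots'') and therefore contains exactly the gap you flagged.

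The gap is genuine and the statement as written is in fact false without a further hypothesis. Take $A'^*=\msf k\oplus\msf k u\oplus \msf k t'$ with $|u|=-1$, $|t'|=-2$, $d(t')=u$, $I'=\msf k u\oplus\msf k t'$ square-zero with $\gamma_n=0$ for $n\ge 2$; take $A^*=\msf k\oplus\msf k t$ with zero differential and $I=\msf k t$ square-zero; let $\pi(t')=t$, $\pi(u)=0$, and let $X^*=\msf k b$ be concentrated in degree $-2$ with $\rho(b)=t$. All hypotheses hold ($\pi$ is a surjective pd dg morphism on ideals), yet any pd dg algebra homomorphism $\tilde\rho'$ must send $b$ into $Z^{-2}(I')=0$, so $\pi\circ\tilde\rho'\neq\tilde\rho$. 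Note this example also defeats your proposed fallback that $X^*$ carry the zero differential ($X^*$ is concentrated in a single degree here), so that escape route does not work either; what is actually needed is that $\pi$ be surjective on cycles of the ideals, or (for $X^*$ bounded above and degreewise free, hence K-projective) that $\ker(\pi|_{I'})$ be acyclic, e.g.\ that $\pi|_{I'}$ be a surjective quasi-isomorphism as in the commented-out predecessor of this proposition. Under such a hypothesis your induction closes and the remainder of your argument is correct.
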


\begin{proof}
We have a chain map $ \rho:X^*\to  I\subseteq A^*$ and a surjection $\pi: I' \to I$. As $X^*$ is assumed to be free, there exists a lift $\rho':X^* \to I'$ such that $\rho=\pi \circ \rho'$. We then apply Theorem \ref{thm:freeness_TS} to $\rho'$ and $\rho$ to construct their respective lifts $\tilde\rho'$ and $\tilde\rho$. We obtain the following diagram:
\[
\begin{tikzpicture}[scale=0.9,  transform shape]
  \tikzset{>=stealth}
  
\node (1) at (0,4){$X^*$};
\node (2) at (0,2){$\onsf{TS}_A(X^*)$};
\node (3) at (-2,0){$A'^*$};
\node (4) at (2,0) {$A^*$};

\draw [decoration={markings,mark=at position 1 with
    {\arrow[scale=1.2,>=stealth]{>}}},postaction={decorate}] (1) --  (2) node[midway, right] {$\iota$};
\draw [decoration={markings,mark=at position 1 with
    {\arrow[scale=1.2,>=stealth]{>}}},postaction={decorate}] (1)  --  (3) node[midway, left] {$\rho'$};
\draw [decoration={markings,mark=at position 1 with
    {\arrow[scale=1.2,>=stealth]{>}}},postaction={decorate}] (1)  --  (4) node[midway, right] {$\rho$};
\draw [decoration={markings,mark=at position 1 with
    {\arrow[scale=1.2,>=stealth]{>}}},postaction={decorate}] (3)  --  (4) node[midway, above] {$\pi$};
\draw [decoration={markings,mark=at position 1 with
    {\arrow[scale=1.2,>=stealth]{>}}},postaction={decorate}] (2)  --  (3) node[midway, right] {$\tilde\rho'$};
\draw [decoration={markings,mark=at position 1 with
    {\arrow[scale=1.2,>=stealth]{>}}},postaction={decorate}] (2)  --  (4) node[midway, left] {$\tilde\rho$};
\end{tikzpicture}
\]
It follows that $(\pi \circ \tilde\rho'-\tilde\rho)\circ \iota=0$. As $\iota$ is an injection, it implies that $\pi \circ \tilde\rho'=\tilde\rho$ on the image of $\iota$, with is $\onsf{TS}_A^1(X^*)$. But as all morphisms involved are PD dg algebras homomorphisms, the equality remains valid on the PD dg subalgebra of $\onsf{TS}_A(X^*)$ generated by $\onsf{TS}_A^1(X^*)$, which is equal to $\onsf{TS}_A(X^*)$ by the PBW basis of Eq.\eqref{PBW_TS}.
\end{proof}

\delete{
\subsection{Base Change} Assume that $ \phi: A\to A'$ is a flat extension of commutative rings. Then the functor $ -\otimes_{A} A': A\Mod\to A'\Mod$
is an exact tensor functor {\color{red}(why do we need that?){\bf ZL: We don't need this. 
The isomorphism should be as a PD dg algebras. We have used this before in the proving the PD structure $\gamma_{n}(x\star y)=0$ for via first stating over $\bb Z \mc V$. 
Maybe this should be moved to earlier. So we don't need to use Roby argument, when $ X^*$ is assumed to be free. 
But when we are dealing with lift differentials, we do need exactness to make things simpler, otherwise, we have to use derived version.}} satisfying $ (X^*\otimes_{A} Y^*)\otimes_{A}A'=(X^*\otimes_{A}A')\otimes_{A'}(Y^*\otimes_{A}A')$, which induces an exact tensor functor $\onsf{Cplx}(A) \to \onsf{Cplx}(A') $.
\begin{theorem}
    If $ A\to A'$ is a flat homomorphism of commutative rings, then \\
    $ \onsf{TS}_{A'}(X^*\otimes_{A}A')=\onsf{TS}_{A}(X^*)\otimes_{A} A'$ {\color{red} AS PD DG ALGEBRAS. For the PD structure, we use the tensor product and a trivial PD structure on $A'$. For the dg structure, $A'$ is concentrated in degree $0$}.
\end{theorem}

\begin{proof}
There is an isomorphism $\psi:T_A^*(X^* \otimes_{A} A') \to T_A^*(X^*) \otimes_{A} A'$ given for any $n \in \mathbb N$ by
\[
\begin{array}{ccc}
T_A^n(X^* \otimes_{A} A') & \stackrel{\psi}{\to} & T_A^n(X^*) \otimes_{A} A' \\
(x_1 \otimes_{A} k'_1) \otimes_{A'} \cdots \otimes_{A'} (x_n \otimes_{A} k'_n) & \mapsto & (x_1 \otimes_{A} \cdots \otimes_{A} x_n) \otimes_{A} \prod_{i=1}^n k'_i
\end{array}
\]
For any $\sigma \in \mf S_n$, we have
\begin{align*}
& \psi \circ (\sigma.\left((x_1 \otimes_{A} k'_1) \otimes_{A'} \cdots \otimes_{A'} (x_n \otimes_{A} k'_n) \right) \\
&=(-1)^{l(\sigma; (x_1 \otimes k'_1,\dots,x_n \otimes k'_n))}\psi \left((x_{\sigma^{-1}(1)} \otimes_{A} k'_{\sigma^{-1}(1)}) \otimes_{A'} \cdots \otimes_{A'} (x_{\sigma^{-1}(n)} \otimes_{A} k'_{\sigma^{-1}(n)})\right) \\
&=(-1)^{l(\sigma; (x_1,\dots,x_n))}(x_{\sigma^{-1}(1)} \otimes_{A} \cdots \otimes_{A}x_{\sigma^{-1}(n)}) \otimes_{A} \prod_{i=1}^n k'_{\sigma^{-1}(i)} \text{ as }A' \text{ is concentrated in degree }0 \\
&=\sigma.(x_1 \otimes_{A} \cdots \otimes_{A}x_n) \otimes_{A} \prod_{i=1}^n k'_{i} \text{ as }A' \text{ is commutative} \\
&= (\sigma \otimes_{A} \on{id}_{A'}) \circ \psi\left((x_1 \otimes_{A} k'_1) \otimes_{A'} \cdots \otimes_{A'} (x_n \otimes_{A} k'_n)\right)
\end{align*}
Hence if $m \in \onsf{TS}_A^n(X^* \otimes_{A} A')$, then $\psi(m)=\psi \circ \sigma (m)=(\sigma \otimes_{A} \on{id}_{A'}) \circ \psi(m)$. It follows that $\psi(m) \in \onsf{TS}_A^n(X^*) \otimes A'$, and the restriction of $\psi$ gives an injective morphism
\[
\onsf{TS}_A(X^* \otimes_{A} A') \to \onsf{TS}_A(X^*) \otimes_{A} A'.
\]
We know that $\psi$ is an isomorphism, hence for any $m \in \onsf{TS}_A(X^*)$ and $k' \in A'$, we have
\[
\psi^{-1}(m \otimes_{A}k')  =  \psi^{-1} \circ (\sigma \otimes_{A} \on{id}_{A'})(m \otimes k') 
  =  \sigma \circ \psi^{-1}(m \otimes_{A}k').
\]
Hence $\psi^{-1}(m \otimes_{A}k') \in \onsf{TS}_A(X^* \otimes_{A} A')$. Therefore the above injection is also a surjection, and thus an isomorphism.
\end{proof}
}

\subsection{PD dg algebra extensions}\label{sec:pd_dg_algebra}

Let $F=V[-p]$ be a complex concentrated in degree $p$ ($p \leq 0$) for a free $A$-module $V$. Then we have $\Hom_{\onsf{Cplx}}(F, X^*)=\Hom_{A\Mod}(V, Z^p(X^*))$. Let $(A^*,d^A)$ be an object in $\onsf{scDGA}(A)$ and $ \phi: F\to A^*$ be a chain map. We define a map $d^\phi: \onsf{TS}_A(F[1])\otimes A^*\to \onsf{TS}_A(F[1])\otimes A^*$ as follows:
\begin{enumerate}
    \item $d^\phi(x \otimes 1)=1 \otimes \phi(x)$ for all $x \in \onsf{TS}_A^1(F[1])$;
    \item $d^\phi(1 \otimes a)=1 \otimes d^A(a)$ for all $a \in A^*$;
    \item $d^\phi(x^{(f)} \otimes a)=d(x^{(f)})\otimes a + (-1)^{|x^{(f)}|}x^{(f)} \otimes d^A(a)$ for all $x^{(f)} \in \onsf{TS}_A(F[1])$ and $a \in A^*$;
\end{enumerate}
where
\begin{align*}
d(x^{(f)}) \otimes a=\sum_{b\in B}\left ( \prod^\star_{b'<b}(-1)^{f(b')|b'|} {b'}^{(f(b'))}\right ) \star b^{(f(b)-1)} \star \left (\prod^\star_{b'>b} {b'}^{(f(b'))}\right) \otimes \phi(b)a .
\end{align*}
Then $d^\phi$ is derivation extending $d^A$ such that $\onsf{TS}_A(F[1])\otimes A^*$ is a dg algebra with derivation $d^\phi$ and $ A^*\to \onsf{TS}_A(F[1])\otimes A^*$ is a homomorphism of dg algebras.

\begin{theorem}\label{thm:Tate_one_step}
Assume that $A^*$ is a PD dg cocommutative Hopf $A$-algebra with $A^{>0}=\{0\}$ and $A^0=A$. Then the pair $(\onsf{TS}_A(F[1])\otimes A^*, d^\phi)$ is PD dg cocommutative Hopf $A$-algebra, and it satisfies $Z^{p}(\onsf{TS}_A(F[1])\otimes A^*)=Z^{p}(A^*)$ and $B^{p}(\onsf{TS}_A(F[1])\otimes A^*)=B^{p}(A^*)+\phi(V)$. In particular, $H^n(\onsf{TS}_A(F[1])\otimes A^*)=H^n(A^*)$ for all $ n>p$ and $H^p(\onsf{TS}_A(F[1])\otimes A^*)=H^p(A^*)/\overline{\phi(V)}$. Finally, the embedding $A^* \to \onsf{TS}_A(F[1])\otimes A^*$ is a PD dg Hopf algebra homomorphism.
\end{theorem}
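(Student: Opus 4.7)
The plan is to verify the claimed algebraic structure on $\onsf{TS}(F[1])\otimes A^*$ and then extract the cohomology from a sharp degree count that exploits $A^{>0}=0$ and the fact that $F[1]$ is concentrated in the single degree $p-1\leq -1$.

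\emph{Derivation and $(d^\phi)^2=0$.} First I would check that rules (1)--(3) define a pd dg derivation of degree $+1$. Because $F[1]$ has zero differential, $\onsf{TS}(F[1])$ carries trivial internal differential, and the nontrivial content of $d^\phi$ on the $\onsf{TS}(F[1])$ factor comes from Leibniz-extending the prescription $x\mapsto 1\otimes\phi(x)$ on $V=\onsf{TS}^1(F[1])$. Compatibility with the pd structure (Axiom~\eqref{axiom_8}) reduces, via the PBW description \eqref{PBW_TS} and formula \eqref{eq:differential}, to checking $d^\phi(\gamma_n(x))=\gamma_{n-1}(x)\star d^\phi(x)$ for a single even-degree generator $x\in V$, which is direct. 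To see $(d^\phi)^2=0$, note that $(d^\phi)^2=\tfrac{1}{2}[d^\phi,d^\phi]$ is again a derivation; on generators one computes $(d^\phi)^2(x\otimes 1)=1\otimes d^A(\phi(x))=0$ since $\phi$ is a chain map into $A^*$ and $F[1]$ has trivial differential, and $(d^\phi)^2(1\otimes a)=1\otimes (d^A)^2(a)=0$.

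\emph{Hopf structure.} Equip $\onsf{TS}(F[1])\otimes A^*$ with the tensor Hopf structure coming from Theorem~\ref{thm:Hopf} and the given Hopf structure on $A^*$; cocommutativity is inherited from both factors, and the embedding $A^*\hookrightarrow\onsf{TS}(F[1])\otimes A^*$ is tautologically a Hopf homomorphism. The delicate point is that $d^\phi$ must be a coderivation. On a primitive $x\in V\subseteq\onsf{TS}^1(F[1])$ this reduces to $\Delta_A(\phi(x))=\phi(x)\otimes 1+1\otimes\phi(x)$, i.e.\ that $\phi(V)\subseteq\operatorname{Prim}(A^*)$. I expect this to be the main obstacle of the proof; it is implicit in the Tate-style application for which this theorem is intended, where $\phi$ is chosen with primitive image. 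Once the coderivation property is verified on the set of primitives $V\otimes 1 \oplus 1\otimes\operatorname{Prim}(A^*)$, it extends to all of $\onsf{TS}(F[1])\otimes A^*$ because the Hopf algebra is connected and cogenerated by its primitives, so a derivation taking primitives to primitives automatically coincides with the unique coderivation extension of its restriction.

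\emph{Cohomology.} The remaining claims follow from a direct degree count. Since $\onsf{TS}^k(F[1])$ sits in degree $k(p-1)$ and $A^{>0}=\{0\}$, a bihomogeneous piece $\onsf{TS}^k(F[1])\otimes A^{|a|}$ sits in total degree $n$ only when $k(p-1)+|a|=n$ with $k\geq 0$ and $|a|\leq 0$. For $n>p$ one has, for any $k\geq 1$, the estimate $|a|=n-k(p-1)\geq n-(p-1)\geq 2>0$, hence only $k=0$ contributes and $(\onsf{TS}(F[1])\otimes A^*)^n=A^n$ with $d^\phi$ restricting to $d^A$; this gives $H^n(\onsf{TS}(F[1])\otimes A^*)=H^n(A^*)$ as well as $Z^p(\onsf{TS}(F[1])\otimes A^*)=Z^p(A^*)$ in degree $p$. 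In total degree $p-1$ exactly two components survive: $A^{p-1}$ from $k=0$ and $V=\onsf{TS}^1(F[1])\otimes A^0$ from $k=1$ (any $k\geq 2$ forces $|a|\geq 1-p>0$); by definition of $d^\phi$ these map to $B^p(A^*)$ and $\phi(V)\subseteq Z^p(A^*)$ respectively, yielding $B^p(\onsf{TS}(F[1])\otimes A^*)=B^p(A^*)+\phi(V)$ and thus the stated $H^p=H^p(A^*)/\overline{\phi(V)}$. The final assertion about the embedding is immediate once $d^\phi$ restricts to $d^A$ on $1\otimes A^*$ and the Hopf inclusion has been checked.
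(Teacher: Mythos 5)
Your degree count for the cohomology statements is exactly the paper's argument and is correct: for $n\geq p$ only the $k=0$ component of $\onsf{TS}^k(F[1])\otimes A^*$ survives, so $(\onsf{TS}(F[1])\otimes A^*)^n=A^n$, and in degree $p-1$ only $A^{p-1}$ and $V\otimes A^0=V$ contribute, giving $B^p=B^p(A^*)+\phi(V)$. Two caveats on the algebra part. First, your reduction of Axiom \eqref{axiom_8} to ``a single even-degree generator $x\in V$'' is too strong: the even part of the pd ideal of the tensor product also contains $x\otimes a$ with $a\in I_{ev}$ (whose divided powers are $x^{\star n}\otimes\gamma_n(a)$, so the verification leans on $A^*$ being pd dg, not on $\onsf{TS}(F[1])$), and products of two odd elements, where the only nontrivial case is $n=2$ and one must check $d^\phi(x\otimes a)\,\gamma_1(x\otimes a)=0$ using strict graded commutativity. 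The paper runs exactly this case analysis, using the identity $x\gamma_{n-1}(x)=n\gamma_n(x)$; your version omits the mixed and odd cases, which are routine but not subsumed by a check on $V$. Also, $(d^\phi)^2=\tfrac12[d^\phi,d^\phi]$ is not available over an arbitrary commutative $\msf k$; the direct cancellation of cross terms in $d^\phi(d^\phi(ab))$ gives the same conclusion without dividing by $2$.

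The genuine gap is in the Hopf part, and you have in fact put your finger on a real issue rather than resolved it. Making $d^\phi$ a coderivation requires $\Delta_A(\phi(x))=\phi(x)\otimes 1+1\otimes\phi(x)$, i.e.\ $\phi(V)\subseteq\operatorname{Prim}(A^*)$, which is not among the hypotheses of the theorem. Your proposal to assume it is not ``implicit in the Tate-style application'': in the proof of Theorem~\ref{thm:Tate} the map $\phi_{n+1}$ is chosen to surject onto all of $Z^{-n}(P_n)$, which in general properly contains the primitive cocycles (it contains products of lower-degree generators), so the primitivity hypothesis fails exactly where the theorem is used. The paper's own proof silently omits any verification that the comultiplication is compatible with $d^\phi$ — it only establishes the pd dg algebra structure and the cohomology — so your proof does not close this gap, but it does correctly locate it; as written, neither argument establishes that $(\onsf{TS}(F[1])\otimes_{\msf k}A^*,d^\phi)$ is a dg Hopf algebra in the sense that $\Delta$ is a chain map, as opposed to a pd dg algebra that happens to carry a (non-differential) Hopf structure.
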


\begin{remark}
The embedding $\onsf{TS}_A(F[1]) \to \onsf{TS}_A(F[1])\otimes A^*$ is PD Hopf algebra homomorphism but not a dg homomorphism due to the way $d^\phi$ is defined.
\end{remark}

\delete{
{\color{red} We need $\onsf{TS}_A(F[1])$ to be only in negative degrees and $A^{>0}=\{0\}$ so that in $\onsf{TS}_A(F[1])\otimes A^*$, the part with $n>p$ is not impacted by the tensor product, and so the cohomology in unchanged. Hence we need $p-1 < 0$. If $A^*$ had some positive part, then for example: $p=-1$, $F[1]$ concentrated in degree $-2$. So $F[1] \otimes A^2$ is in degree $0>p$, hence the complex $\onsf{TS}_A(F[1])\otimes A^*$ in degree $0$ is different from $A^*$, and their cohomology will likely be different too.

We need $A^0=A$ because in $B^p$ we get $1 \otimes \phi(V)A^0$.
negatively graded connected}
}

\begin{proof}
We have seen that $\onsf{TS}_A(F[1])\otimes A^*$ is a PD algebra (cf. Theorem \ref{thm:TS_pd} and Section \ref{sec:pd_rings}). We also know from Theorem \ref{thm:Hopf} that $\onsf{TS}_A(F[1])$ has a cocommutative Hopf algebra structure. It follows that $\onsf{TS}_A(F[1])\otimes A^*$ has a natural cocommutative Hopf algebra structure. Then we also see that $(\onsf{TS}_A(F[1])\otimes A^*, d^\phi)$ is a dg algebra because of the dg structure on the tensor product and the definition of $d^\phi$. It remains to check Axiom \eqref{axiom_8}.

Let $x \in \onsf{TS}_A^+(F[1])_{ev}$ and $a \in A_{ev}$. Then
\[
\begin{array}{rcl}
d^\phi(\gamma_n(x \otimes a)) &=&\displaystyle d^\phi(\gamma_n(x) \otimes a^n)  \\
&=&\displaystyle d(\gamma_n(x)) \otimes a^n+\gamma_n(x) \otimes d^A(a^n) \\
&=&\displaystyle d(x) \gamma_{n-1}(x) \otimes a^n+n\gamma_n(x) \otimes d^A(a)a^{n-1}.
\end{array}
\]
On the other hand,
\[
\begin{array}{rcl}
d^\phi(x \otimes a)\gamma_{n-1}(x \otimes a) &=&\left(d(x) \otimes a+x \otimes d^A(a)\right) \left(\gamma_{n-1}(x) \otimes a^{n-1}\right) \\
&=&d(x)\gamma_{n-1}(x) \otimes a^n+x\gamma_{n-1}(x) \otimes d^A(a)a^{n-1}.
\end{array}
\]

We know that $\onsf{TS}_A(F[1])$ is a PD algebra, and so it satisfies Axioms \eqref{axiom_1} and \eqref{axiom_2}. But then for any element in $x \in \onsf{TS}_A^+(F[1])_{ev}$, we have
\begin{align}\label{eq:new_equation}
x \gamma_{n-1}(x)=\gamma_1(x)\gamma_{n-1}(x)=\binom{n}{1}\gamma_n(x)=n\gamma_n(x).
\end{align}
\delete{
Let $\mathcal{F}$ be the free graded monoid constructed on $F[1]$. We know from Eq.\eqref{eq:free_power} that $z^{\star n}=n! \gamma_n(z)$ for any $z \in \mathbb Z_S^+(\mathcal F)_{ev}$. But then we have $z^{\star n}=(n-1)!z \gamma_{n-1}(z)=(n-1)!n\gamma_n(z)$. It follows that
\[
(n-1)!(z \gamma_{n-1}(z)-n\gamma_n(z))=0
\]
in $\mathbb Z_S(\mathcal F)$. However, $\mathbb Z_S(\mathcal F)$ is torsion free over $\mathbb Z$, which implies
\begin{align}\label{eq:new_equation}
z \gamma_{n-1}(z)=n\gamma_n(z).
\end{align}
Moreover, we know from the proof of Theorem \ref{thm:TS_pd} that the PD structure of $\mathbb Z_S(\mathcal F)$ passes to $\onsf{TS}_A(F[1])$. Hence Eq.\eqref{eq:new_equation} is also valid for $z \in \onsf{TS}_A^+(F[1])_{ev}$.}
It follows that $d^\phi(\gamma_n(x \otimes a)) =d^\phi(x \otimes a)\gamma_{n-1}(x \otimes a)$.

Now let $x \in \onsf{TS}_A(F[1])_{ev}$ and $a \in I_{ev}$. Then
\[
\begin{array}{rcl}
d^\phi(\gamma_n(x \otimes a)) &=&\displaystyle d^\phi(x^{\star n} \otimes \gamma_n(a))  \\
&=&\displaystyle d(x^{\star n}) \otimes \gamma_n(a)+x^{\star n} \otimes d^A(\gamma_n(a)) \\
&=&\displaystyle d(x)x^{\star n-1} \otimes n \gamma_n(a)+x^{\star n} \otimes d^A(a)\gamma_{n-1}(a).
\end{array}
\]
On the other hand,
\[
\begin{array}{rcl}
d^\phi(x \otimes a)\gamma_{n-1}(x \otimes a) &=&\left(d(x) \otimes a+x \otimes d^A(a)\right) \left(x^{\star n-1} \otimes \gamma_{n-1}(a)\right) \\
&=&d(x)x^{\star n-1} \otimes a \gamma_{n-1}(a)+x^{\star n} \otimes d^A(a)\gamma_{n-1}(a).
\end{array}
\]
As $A^*$ is a PD dg algebra, Eq.\eqref{eq:new_equation} is also valid in $I_{ev}$. It follows that $d^\phi(\gamma_n(x \otimes a)) =d^\phi(x \otimes a)\gamma_{n-1}(x \otimes a)$.

If $x$ and $a$ are odd and $n \geq 3$, then Axiom \eqref{axiom_8} is obvious.
If $n=2$, we have $d^\phi(\gamma_2(x \otimes a))=0$ and
\[d^\phi(x \otimes a)\gamma_{1}(x \otimes a)=d(x)x \otimes a^2+x^{\star 2} \otimes d^A(a)a=0
\]
because of the strict graded commutativity of both $A^*$ and $\onsf{TS}_A(F[1])$.

Finally, using Axiom \eqref{axiom_4} and the above computations, we show that Axiom \eqref{axiom_8} is satisfied for any element in $\onsf{TS}_A^+(F[1])_{ev} \otimes A_{ev} +\onsf{TS}_A(F[1])_{ev} \otimes I_{ev}$. Therefore $(\onsf{TS}_A(F[1])\otimes A^*, d^\phi)$ is PD dg algebra.

We know that $F[1]$ is concentrated in degree $p-1 < 0$ and $A^*$ only has non-positive degrees. Hence $\onsf{TS}_A(F[1])\otimes A^*$ only has non-positive degrees. It follows that for any $n \leq 0$,  
\[
\begin{array}{rcl}
(\onsf{TS}_A(F[1])\otimes A^*)^n & = &\displaystyle \sum_{n_1+n_2=n}\onsf{TS}_A(F[1])^{n_1}\otimes A^{n_2} \\
& = &\displaystyle \sum_{n_2=n}^0\onsf{TS}_A(F[1])^{n-n_2}\otimes A^{n_2}
\end{array}
\]
But we also know that the homogeneous components of $\onsf{TS}_A(F[1])$ are in degree $k(p-1)$ with $k \in \bb N$. Hence if $n \geq p$, then $0 \geq n-n_2 \geq n \geq p> k(p-1)$ for all $k \geq 1$ as $p-1<0$. It follows that if $n \geq p$, then
\[
(\onsf{TS}_A(F[1])\otimes A^*)^n = A^{n},
\]
implying that for all $n>p$, we have
\[
H^n(\onsf{TS}_A(F[1])\otimes A^*)=H^n(A^*).
\]

As $(\onsf{TS}_A(F[1])\otimes A^*)^p = A^{p}$, it is natural that $Z^{p}(\onsf{TS}_A(F[1])\otimes A^*)=Z^{p}(A^*)$. We have $(\onsf{TS}_A(F[1])\otimes A^*)^{p-1}=F[1] \otimes A + A \otimes A^{p-1}$. If $x \in F[1]$ and $a \in A^{p-1}$, then $d^\phi(x \otimes 1)=1 \otimes \phi(x)$ and $d^\phi(1 \otimes a)=1 \otimes d^A(a)$. Hence $B^{p}(\onsf{TS}_A(F[1])\otimes A^*)=B^{p}(A^*)+\phi(V)$. It then follows that $H^p(\onsf{TS}_A(F[1])\otimes A^*)=H^p(A^*)/\overline{\phi(V)}$ where $\overline{\phi(V)}$ is the image of $\phi(V)$ modulo $B^p(A^*)$.

The last statement is a direct consequence of the definition of $d^\phi$ and the definition of the PD structure on the tensor product (see Section \ref{sec:pd_rings}).
\end{proof}

\begin{theorem}\label{thm:Tate}
Let $A$ be any commutative ring and $I \subset A$ an ideal. There exists a PD dg  algebra with a cocommutative Hopf structure $P^*=\bigoplus_{i=0}^\infty P^{-i}$ satisfying the following conditions:
\begin{itemize}
 \item $\mc I=\bigoplus_{i>0}P^{-i}$ is the PD dg ideal;
 \item $P^*$ is K-projective, i.e., $\mc Hom_A^* (P^*,-)$ sends acyclic complexes to acyclic complexes (see \cite{Spaltenstein});
 \item There is a quasi-isomorphism $P^* \to A/I$ of PD dg $A$-algebras, where $A/I$ is seen as a PD dg $A$-algebra with PD dg ideal $\{0\}$.
\end{itemize}
\end{theorem}

\begin{proof}
We set $P_0=A$ concentrated in degree $0$, $F_1$ free $A$-module with surjective $A$-module homomorphism $F_1 \to I$. Thus we have $\phi_1:F_1 \to P_0$ is a chain map. Then define $P_1=\onsf{TS}_A(F_1[1])\otimes_A P_0$ using Theorem \ref{thm:Tate_one_step}. It is a PD dg cocommutative Hopf algebra with PD dg ideal $\mathcal I_1=\onsf{TS}_A^+(F_1[1])\otimes_A P_0=P_1^-$ with the properties:  $H^n(P_1)=H^n(P_0)$ for all $ n>0$ and $H^0(P_1)=H^0(P_0)/\overline{\phi_1(F_1)}=A/I$. 

Assume that $P_n$ and $\mathcal I_n$ have been constructed, with the property: $P_n \to A/I$ such that
\[
H^{-i}(P_n)=\begin{cases}
    A/I & \text{ if }i=0, \\
    0 & \text{ if } 0<i<n.
\end{cases}
\]
and
\[
\mathcal I_n=P_n^-.
\]
Then let $F_{n+1}$ be a free $A$-module with surjective $A$-module homomorphism $F_{n+1} \to Z^{-n}(P_n)$. Thus $\phi_{n+1}:F_{n+1}[n+1] \to P_n$ is a chain map. Then define
\[
P_{n+1}=\onsf{TS}_A(F_{n+1}[n+1])\otimes_A P_n
\]
using Theorem \ref{thm:Tate_one_step}. We have
\[
H^{-i}(P_{n+1})=\begin{cases}
    A/I & \text{ if }i=0, \\
    0 & \text{ if } 0<i<n+1.
\end{cases}
\]
and
\[
\mathcal I_{n+1}=\onsf{TS}_A^+(F_{n+1}[n+1])\otimes_R P_n+\onsf{TS}_A(F_{n+1}[n+1]) \otimes_R P_n^-=P_{n+1}^-.
\]
There is a PD dg algebra homomorphism $P_n \to P_{n+1}$ and it commutes with the maps $P_n \to A/I$ and $P_{n+1} \to A/I$.

Finally, take $P^*=\varinjlim_{n}P_n$ and $\mathcal I=\varinjlim_{n}\mathcal I_n=P^-$. We get a quasi-isomorphism $P^* \to A/I$. Every cohomological degree of $P^*$ is projective and $P^*$ has no positive cohomological degree, so it is K-projective.
\end{proof}

\begin{remark}
Note that in Theorem \ref{thm:Tate}, the dg structure and the Hopf structure both exist, but they are not compatible with one another. More particularly, the comultiplication $\Delta:P^* \to P^* \otimes P^*$ is not a chain map. Indeed, based on the construction, we know that elements in $F_1$ are primitive. Consider $T \in F_1$ with $dT=\phi(T)=a \in A$. It follows that 
\[
d\Delta(T)=d(T \otimes 1 +1 \otimes T)=a \otimes 1+1 \otimes a=2a(1 \otimes 1)=2\Delta(a)=2\Delta(d(T))\]
So $\Delta$ will never be a chain map.
\end{remark}

\begin{theorem}\label{thm:3.15}
Let $A$ be any $\msf k$-algebra, $I \subset A$ an ideal, $Q^*$ a PD dg algebra with PD ideal $ J_{Q}^*=\oplus_{n>0}Q^{-n}\subseteq Q^*$ and satisfying
\begin{itemize}
    \item [(i)] $Q^n=0$ for all $ n>0$;
    \item [(ii)]    $p^*: Q^*\to A/I$ is homomorphism of PD dg $A$-algebras and a quasi-isomorphism as complex of $A$-modules (in particular, $p^*$ is componentwise surjective).
\end{itemize}
Let $P^*$ be the PD dg algebra constructed in Theorem \ref{thm:Tate}. Then there exists a PD dg algebra homomorphism $\theta:P^* \to Q^*$ making the following triangle commute:
\[
\begin{tikzpicture}[scale=1,  transform shape]
  \tikzset{>=stealth}
  
\node (1) at (0,0){$P^*$};
\node (2) at (3,0){$Q^*$};
\node (3) at (1.5,-1.5){$A/I$};

\draw [decoration={markings,mark=at position 1 with
    {\arrow[scale=1.2,>=stealth]{>}}},postaction={decorate}] (1) --  (2) node[above] {};
\draw [decoration={markings,mark=at position 1 with
    {\arrow[scale=1.2,>=stealth]{>}}},postaction={decorate}] (2)  --  (3) node[midway, left] {};
\draw [decoration={markings,mark=at position 1 with
    {\arrow[scale=1.2,>=stealth]{>}}},postaction={decorate}] (1)  --  (3) node[midway, right] {};
\end{tikzpicture}
\]
\end{theorem}

\begin{proof} We will proceed by induction using the PD dg algebras $P^*_n$ constructed in the proof of Theorem \ref{thm:Tate} by constructing PD dg algebra homomorphisms $ \theta_{n}: P_n^*\to Q^*$ making the following diagram of PD dg $A$-algebras \[
\begin{tikzpicture}[scale=1,  transform shape]
  \tikzset{>=stealth}
  
\node (1) at (0,0){$P_n^*$};
\node (2) at (3,0){$Q^*$};
\node (3) at (1.5,-1.5){$A/I$};

\draw [decoration={markings,mark=at position 1 with
    {\arrow[scale=1.2,>=stealth]{>}}},postaction={decorate}] (1) --  (2) node[midway, right] {};
\draw [decoration={markings,mark=at position 1 with
    {\arrow[scale=1.2,>=stealth]{>}}},postaction={decorate}] (2)  --  (3) node[midway, left] {};
\draw [decoration={markings,mark=at position 1 with
    {\arrow[scale=1.2,>=stealth]{>}}},postaction={decorate}] (1)  --  (3) node[midway, right] {};
\end{tikzpicture}
\]
commute and $ \theta_{n+1}|_{P_n^*}=\theta_n$.

For $ n=0$, we have $P_0^*=A$ and define $\theta_0: P^*_0 \to Q^*$ as the homomorphism of the $A$-algebra structure. Since $Q^{1}=0$, then $ \theta_0$ is a chain map, and it also preserves the algebra structure as it comes from the structure of $Q^*$. Hence $\theta_0$ is a dg algebra homomorphism.

We know that $\epsilon_Q:Q^0 \to A/I$ is an $A$-module homomorphism and we can thus check that $\epsilon_Q \circ \theta_0=\epsilon_Q$.

Let $\phi_1: F_1\to I$ be the surjective $A$-module homomorphism introduced in Theorem \ref{thm:Tate}. We thus have
\[
F_1 \overset{\phi_1}{\longrightarrow} I \overset{\theta_0}{\longrightarrow} h^*_0(I) \subset \on{Ker}(\epsilon_Q)=\on{Im}(d_Q^{-1})
\]
as $Q^*$ is an exact complex. We can thus lift $\theta_0(I)$ to an $A$-module $\widetilde{F_1} \subset Q^{-1}$. We naturally get an $A$-module homomorphism $\widetilde{
\phi_1}:F_1 \to \widetilde{F_1}$ through the lift of the image of a basis of the free $A$-module $F_1$. By shifting, it can be seen as an $A$-module homomorphism $F_1[1] \to Q^{-1}$. Then using Eq.\eqref{PBW_TS}, we obtain a PD dg algebra homomorphism $\psi_1:\onsf{TS}(F_1[1]) \to Q^*$, and finally we set
\[
\begin{array}{rccc}
   P_1^*=& \onsf{TS}(F_1[1]) \otimes_A P_0^*  & \overset{\theta_1}{\longrightarrow} & Q^* \\
    &x \otimes p & \longmapsto & \psi_1(x)\theta_0(p). 
\end{array}
\]
One can verify through explicit computations that $\theta_1$ is indeed a PD dg algebra homomorphism. Moreover, we have $\theta_{1}|_{P_0^*}=\theta_0$ so the triangle commutes.

Assume we have a PD dg algebra homomorphism $\theta_{n}:P^*_{n} \to Q^*$ making the triangle commute. We want to extend $\theta_{n}$ to $P_{n+1}=\onsf{TS}(F_{n+1}[n+1]) \otimes_A P_{n}$. As done for the first step, consider $\phi_{n+1}: F_{n+1}[n]\to Z^{-n}(P_n^*)$ as in Theorem \ref{thm:Tate}. We thus have
\[
F_{n+1}[n] \overset{\phi_{n+1}}{\longrightarrow} Z^{-n}(P_n^*) \overset{\theta_n}{\longrightarrow} \theta_n(Z^{-n}(P_n^*)) \subset \on{Ker}(d^{-n}_Q)=\on{Im}(d_Q^{-(n+1)}).
\]
As before, we then define an $A$-module homomorphism $F_{n+1}[n+1] \to Q^{-(n+1)}$, which then leads to a PD dg algebra homomorphism $\psi_{n+1}:\onsf{TS}(F_{n+1}[n+1]) \to Q^*$. We finally set
\[
\begin{array}{rccc}
   P_{n+1}^*=& \onsf{TS}(F_{n+1}[n+1]) \otimes_A P_n^*  & \overset{\theta_{n+1}}{\longrightarrow} & Q^* \\
    &x \otimes p & \longmapsto & \psi_{n+1}(x)\theta_n(p). 
\end{array}
\]
We verify explicitly that $\theta_{n+1}$ is a PD dg algebra homomorphism. As $P^*_{n+1}$ and $P^*_0$ differ only in degree strictly positive, the triangle
\[
\begin{tikzpicture}[scale=1,  transform shape]
  \tikzset{>=stealth}
  
\node (1) at (0,0){$P_{n+1}^*$};
\node (2) at (3,0){$Q^*$};
\node (3) at (1.5,-1.5){$A/I$};

\draw [decoration={markings,mark=at position 1 with
    {\arrow[scale=1.2,>=stealth]{>}}},postaction={decorate}] (1) --  (2) node[midway, right] {};
\draw [decoration={markings,mark=at position 1 with
    {\arrow[scale=1.2,>=stealth]{>}}},postaction={decorate}] (2)  --  (3) node[midway, left] {};
\draw [decoration={markings,mark=at position 1 with
    {\arrow[scale=1.2,>=stealth]{>}}},postaction={decorate}] (1)  --  (3) node[midway, right] {};
\end{tikzpicture}
\]
commutes. We conclude the proof by taking $P^*=\varinjlim_{n}P^*_n$.
\end{proof}

\begin{corollary}\label{cor:3.16} 
Let $P^*$ and $Q^*$ be  two resolutions of $A/I$ obtained using the construction of Theorem \ref{thm:Tate}. Let $\theta: P^* \to Q^*$ and $\tau:Q^* \to P^*$ be the two PD dg algebra homomorphisms constructed from Theorem \ref{thm:3.15}. Then as homomorphisms of $A$-modules, $\theta \circ \tau$ and $\tau \circ \theta$ are homotopic to the identity, i.e.,
\[
\tau \circ \theta \simeq \on{id}_{P^*} \quad \text{and} \quad \theta \circ \tau \simeq \on{id}_{Q^*}.
\]
\end{corollary}

\begin{proof}
 To fix notation, we write $\epsilon_P:P^0 \to A/I$ and $\epsilon_Q:Q^0 \to A/I$ for the augmentation maps. Using the commutation of the diagram in Theorem \ref{thm:3.15}, we see that $\epsilon_P \circ \tau^0 \circ \theta^0=\epsilon_Q \circ \theta^0=\epsilon_P$, and so we obtain the following commutative diagram
\[
\begin{tikzpicture}[scale=1,  transform shape]
  \tikzset{>=stealth}
  
\node (1) at (0,0){$P^0$};
\node (2) at (3,0){$P^0$};
\node (3) at (1.5,-1.5){$A/I$};

\draw [decoration={markings,mark=at position 1 with
    {\arrow[scale=1.2,>=stealth]{>}}},postaction={decorate}] (1) --  (2) node[above, midway] {$\tau^0 \circ \theta^0$};
\draw [decoration={markings,mark=at position 1 with
    {\arrow[scale=1.2,>=stealth]{>}}},postaction={decorate}] (2)  --  (3) node[midway, right] {$\epsilon_P$};
\draw [decoration={markings,mark=at position 1 with
    {\arrow[scale=1.2,>=stealth]{>}}},postaction={decorate}] (1)  --  (3) node[midway, left] {$\epsilon_P$};
\end{tikzpicture}
\]
We thus have a chain map $\tau \circ \theta :P^* \to P^*$ such that $\epsilon_P \circ (\tau \circ \theta)=\epsilon_P$. We can then set the chain map $h^*=\tau \circ \theta -\on{id}_{P^*}$ and we have $\epsilon_P \circ h^0=0$. Through a standard induction, one can verify that $h^*$ is null homotopic, and so $\tau \circ \theta \simeq \on{id}_{P^*}$. We prove similarly that $\theta \circ \tau \simeq \on{id}_{Q^*}$.
   
\end{proof}

Let $\theta: P^* \to Q^*$ and $\tau: Q^* \to P^*$ be PD dg $A$-algebra homomorphisms such that $\theta \circ \tau \simeq \on{id}_{Q^*}$ and $\tau \circ \theta \simeq \on{id}_{P^*}$. Define $\alpha: \mc Hom_A^*(P^*, P^*) \to \mc Hom_A^*(Q^*, Q^*)$ by $\alpha(D)=\theta \circ D \circ \tau$. We note that $ \alpha$ is chain map. But $\alpha$ does not preserve the composition multiplication unless $ \theta$ and $ \tau $ are inverse to each other on the chain level.  However, we have the following: 

\begin{corollary}\label{cor:3.17} $\alpha$  induces an $A$-algebra isomorphism
\[
H^*(\alpha):H^*(\mc Hom_A^*(P^*, P^*)) \to H^*(\mc Hom_A^*(Q^*, Q^*)).
\]
with inverse $H^*(\alpha')$ with $\alpha'(D')=\tau \circ D' \circ \theta$.
\end{corollary}

\begin{proof}
Set $P^*$, $Q^*$, $\alpha$ and $\alpha'$ as in the statement of the corollary. We know that $\psi \circ \varphi$ is a chain map so it is a cocycle of $\mc Hom_A^*(P^*, P^*)$. Then, we know that there exists $h:P^* \to P^*$ of degree $1$ such that $\tau \circ \theta =\on{id}_{P^*}+h \circ d_{P^*}+d_{P^*} \circ h=\on{id}_{P^*}+d_{\on{End}_{P^*}}(h)$. Hence $[\tau \circ \theta ]=[\on{id}_{P^*}]$, where $[\cdot]$ denote the class in $H^*(\mc Hom_A^*(P^*, P^*))$. Likewise, $[\theta \circ \tau ]=[\on{id}_{Q^*}]$.

We can see that $\alpha$ and $\alpha'$ are chain maps. Moreover, $H^*(\alpha)$ is an isomorphism of $A$-modules with inverse $H^*(\alpha')$. Indeed, for any $D \in Z^*(\mc Hom_A^*(P^*, P^*))$, we have $H^*(\alpha' \circ \alpha)([D])=[\tau \circ \theta \circ D \circ \tau \circ \theta ]=[\tau \circ \theta ] [D] [\tau \circ \theta ]=[D]$ as $[\tau \circ \theta ]=[\on{id}_{P^*}]$. Similarly, we obtain $H^*(\alpha \circ \alpha')([D'])=[D']$ for any $D' \in Z^*(\mc Hom_A^*(Q^*, Q^*))$.

It remains to verify that $H^*(\alpha)$ is an algebra morphism. Consider $D_1, D_2 \in Z^*(\mc Hom_A^*(P^*, P^*))$. Then we can verify that
\[
\alpha(D_1) \circ \alpha(D_2)-\alpha(D_1 \circ D_2)=(-1)^{|D_1]}d_{\mc Hom_A^*(P^*, P^*)}(\varphi \circ D_1 \circ h \circ D_2 \circ \psi).
\]
Hence $\alpha$ induces an algebra isomorphism $H^*(\alpha) : H^*(\mc Hom_A^*(P^*, P^*)) \to H^*(\mc Hom_A^*(Q^*, Q^*))$ with inverse $H^*(\alpha')$.
\end{proof}

\delete{\begin{remark}
    {\color{red} Model category structure TO DO}
\end{remark}
}

\delete{
We will need the following lemma section 4. 
\begin{lemma} \label{lem:homotopy-identity}
If $ (X^*, d_X)$ is a complex and $\sigma: (X^*, d_X)\to (X^*, d_X)$ is a chain map which is homotopy equivalent to identity, i.e., there is an $h\in \mc Hom^{-1}(X^*, X^*)$ such that $ \sigma=\on{id}_X+d_X\circ h+h\circ d_X$, then the chain map $ \sigma\circ (-):\mc Hom^{*}(X^*, X^*) \to \mc Hom^{*}(X^*, X^*)$ induces 
\[H^*(\sigma\circ (-))=\on{id}: H^*(\mc Hom^{*}(X^*, X^*))\to H^*(\mc Hom^{*}(X^*, X^*)).\]
\end{lemma}
\begin{proof}
    For $D\in Z^*(\mc Hom^{*}(X^*, X^*))$ homogeneous, we have $d_{X}\circ D=(-1)^{|D|}D\circ d_{X}$. Thus
    \[
    \sigma\circ D=D+(d_X\circ h+h\circ d_X)\circ D=D+d_X\circ(h\circ D)-(-1)^{|D|-1}(h\circ D)\circ d_X=D+d_{\mc Hom^*}(h\circ D).
    \]
    Hence $[\sigma\circ D]=[D]$ in $H^*(\mc Hom^{*}(X^*, X^*)) $.
\end{proof}
}

\section{Derivations of PD dg algebras}\label{sec:4}

Let $A$ be a commutative ring. A quick consequence of $P^*$ in Theorem \ref{thm:Tate} being K-projective is that the functor $\mc Hom_A^*(P^*, -)$ sends quasi-isomorphisms to quasi-isomorphisms. To see this, consider a quasi-isomorphism $f: M^*\to N^*$. We know that $f$ is a quasi-isomorphism if and only if $ \onsf{Cone}(f)$ is acyclic \cite[Cor.1.5.4]{Weibel}. It follows that $\mc Hom_A^*(P^*,\onsf{Cone}(f))$ is also acyclic. But one can show that $\mc Hom_A^*(P^*,\onsf{Cone}(f)) \cong \onsf{Cone}(\mc Hom_A^*(P^*,f))$ (\cite[p.271]{Kashiwara-Schapira}), implying that $\mc Hom_A^*(P^*,f)$ is a quasi-isomorphism (using  \cite[Cor.1.5.4]{Weibel} once more). Finally, it can be shown that any complex of projective $A$-modules bounded above is K-projective (cf. \cite[Lemma 13.31.4]{Stacks-Project}).

Set $I \subset A$ an ideal and consider the $A$-algebra $A/I$. In this section, we will rely on the $A$-algebra structure of $A/I$ in order to make use of Theorem \ref{thm:Tate}. Moreover, this structure naturally makes $A/I$ into an $A$-module, and we will be able to consider the Yoneda algebra $\on{Ext}_A^*(A/I,A/I)$. Let $ \epsilon: P^*\to A/I$ be an $A$-projective resolution. Given an element $\widetilde{\varphi} \in \mc Hom_A^{n}(P^*,P^*)$, we write $\widetilde{\varphi}_{-k} :P^{-k} \to P^{-k+n}$ for each homogeneous component. We say that $\widetilde{\varphi}$ is a cocycle if $d(\widetilde{\varphi})=0$, i.e, if $d \circ \widetilde{\varphi}_{-k-1} -(-1)^{n}\widetilde{\varphi}_{-k} \circ d=0$ for all $k$. Notice that $|d(\widetilde{\varphi})|=|\widetilde{\varphi}|+1$, hence $\mc Hom_A^{*}(P^*,P^*)$ is a cochain complex.

Set $\varphi:P^{-n} \to A/I$ a morphism of $A$-modules. We say that a morphism of complexes $\widetilde{\varphi}:P^* \to P^*$ is a lift of $\varphi$ if the following conditions are satisfied:
\begin{itemize}[nosep]
    \item $|\widetilde{\varphi}|=n$;
    \item $\widetilde{\varphi}$ is a cocycle;
    \item $\epsilon \circ \widetilde{\varphi}_{-n}=\varphi$.
\end{itemize}
This can be summarized in the following commutative diagram:

 \begin{center}
  \begin{tikzpicture}[scale=0.9,  transform shape]
  \tikzset{>=stealth}
  
\node (1) at ( 0,0){$P^{-n-2}$};
\node (2) at ( 3,0){$P^{-n-1}$};
\node (3) at ( 6,0){$P^{-n}$};
\node (4) at ( 0,-3) {$P^{-2}$};
\node (5) at ( 3,-3){$P^{-1}$};
\node (6) at ( 6,-3){$P^0$};
\node (7) at ( 9,-3){$A/I$};

\node (8) at ( -3,0){$\dots$};
\node (9) at ( -3,-3){$\dots$};

\draw [decoration={markings,mark=at position 1 with
    {\arrow[scale=1.2,>=stealth]{>}}},postaction={decorate}] (1) --  (2) node[midway, above] {$d$};
\draw [decoration={markings,mark=at position 1 with
    {\arrow[scale=1.2,>=stealth]{>}}},postaction={decorate}] (2) --  (3) node[midway, above] {$d$};
\draw [decoration={markings,mark=at position 1 with
    {\arrow[scale=1.2,>=stealth]{>}}},postaction={decorate}] (4) --  (5) node[midway, below] {$(-1)^nd$};
\draw [decoration={markings,mark=at position 1 with
    {\arrow[scale=1.2,>=stealth]{>}}},postaction={decorate}] (5) --  (6) node[midway, below] {$(-1)^nd$};
\draw [decoration={markings,mark=at position 1 with
    {\arrow[scale=1.2,>=stealth]{>}}},postaction={decorate}] (6) --  (7) node[midway, below] {$\epsilon$};

\draw [decoration={markings,mark=at position 1 with
    {\arrow[scale=1.2,>=stealth]{>}}},postaction={decorate}] (1) --  (4) node[midway, left] {$\widetilde{\varphi}_{-n-2}$};
\draw [decoration={markings,mark=at position 1 with
    {\arrow[scale=1.2,>=stealth]{>}}},postaction={decorate}] (2) --  (5) node[midway, left] {$\widetilde{\varphi}_{-n-1}$};
\draw [decoration={markings,mark=at position 1 with
    {\arrow[scale=1.2,>=stealth]{>}}},postaction={decorate}] (3) --  (6) node[midway, left] {$\widetilde{\varphi}_{-n}$};
\draw [decoration={markings,mark=at position 1 with
    {\arrow[scale=1.2,>=stealth]{>}}},postaction={decorate}] (3) --  (7) node[midway, right] {$\varphi$};

\draw [decoration={markings,mark=at position 1 with
    {\arrow[scale=1.2,>=stealth]{>}}},postaction={decorate}] (8) --  (1) node[midway, above] {};
\draw [decoration={markings,mark=at position 1 with
    {\arrow[scale=1.2,>=stealth]{>}}},postaction={decorate}] (9) --  (4) node[midway, above] {};
\end{tikzpicture}
 \end{center}

The Yoneda algebra $\on{Ext}_A^*(A/I,A/I)$ is equipped with two distinct products, which we differentiate here to avoid sign problems. To define them, take $\varphi:P^{-n} \to A/I$ and $\psi:P^{-m} \to A/I$ be representatives of elements in $\on{Ext}_A^*(A/I,A/I)$. Using the lifts defined above, the composition product is given as $\psi \circ \varphi=\epsilon \circ \widetilde{\psi} \circ \widetilde{\varphi}$. To compute the other product, called Yoneda product (see \cite{CTVZ}), one needs a different lift from the one defined above. For a map $\varphi:P^{-n} \to A/I$, we define $\varphi_{-n-i} :P_{-n-i} \to P_{-i}$ ($i \geq 0$) as we did $\widetilde{\varphi}_{-n-i}$ but the bottom row of the diagram is given by $d$ and not $(-1)^nd$. The Yoneda product is then $\psi \cdot \varphi=\psi \circ \varphi_{-m-n}$.
 
\begin{lemma}\label{lem:Yoneda_prod}
The Yoneda product and the composition products differ by a sign change. More precisely, for any $\varphi, \psi \in \on{Ext}_A^*(A/I,A/I)$, we have
\[
\psi \cdot \varphi=(-1)^{|\varphi||\psi|}\psi \circ \varphi.
\]
\end{lemma}

\begin{proof}
Set $\varphi:P^{-n} \to A/I$. Using the notations introduced above, we have $\widetilde{\varphi}_{-n-i}=(-1)^{ni}\varphi_{-n-i}$ up to boundaries.

Consider $\varphi:P^{-n} \to A/I$ and $\psi:P^{-m} \to A/I$. Then, by definition, $\psi \cdot \varphi=\psi \circ \varphi_{-m-n}:P^{-m-n} \to A/I$.
But $\psi \circ \varphi_{-m-n}=\epsilon \circ \widetilde{\psi}_{-m} \circ \varphi_{-m-n}=(-1)^{mn}\epsilon \circ \widetilde{\psi}_{-m} \circ \widetilde{\varphi}_{-m-n}$ as $\on{Ker}d=\on{Im}\epsilon$. We see that $\epsilon \circ \widetilde{\psi}_{-m} \circ \widetilde{\varphi}_{-m-n}$ is the component $P^{-m-n} \to A/I$ of the map of complexes $\epsilon \circ \widetilde{\psi} \circ \widetilde{\varphi}$. But as $A/I$ is concentrated in degree $0$, all the other components of this map are zero. It follows that $\psi \cdot \varphi$ is the only (possibly) non-zero component of the map of complexes $(-1)^{mn}\epsilon \circ \widetilde{\psi} \circ \widetilde{\varphi}$.
\end{proof}

\begin{remark}
In the remainder of the paper, when speaking about the product in $\on{Ext}_A^*(A/I,A/I)$, if not explicitly stated, we will always consider the Yoneda product.    
\end{remark}

The $A$-projective resolution $ \epsilon: P^*\to A/I$ is a quasi-isomorphism of complexes of $ A$-modules with $ A/I$ regarded as a complex concentrated in degree $0$. 
Moreover, $ \mc Hom_A^*(P^*, P^*)$ is a dg algebra over $A$. 
Post-composition with $ \epsilon$ defines a chain map $\mc Hom_A^*(P^*,\epsilon): \mc Hom_A^*(P^*, P^*)\to \mc Hom_A^*(P^*, A/I) $, which is surjective because of the projectivity of $P^*$. As each summand of $P^*$ is projective and they are bounded above, it follows that $P^*$ is $K$-projective. Then, following the discussion at the beginning of the section, $\mc Hom_A^*(P^*,\epsilon)$ is a quasi-isomorphism.
The natural dg algebra structure on $\mc Hom_A^*(P^*,P^*)$ defines the algebra structure on $\onsf{Ext}^*_A(A/I,A/I) $:
\delete{, which is exactly the Yoneda algebra structure (cf. \cite[Thm.4.2.2]{CTVZ}), as seen below.}

\begin{proposition}\label{prop:comult_Yoneda}
The map $\mc Hom_A^*(P^*,\epsilon)$ induces an isomorphism of graded algebras
\[
H^*(\mc Hom_A^*(P^*, P^*))\stackrel{\cong}{\to} H^*(\mc Hom_A^*(P^*, A/I))=\onsf{Ext}^*_A(A/I,A/I)
\]
for the composition product structure $\onsf{Ext}^*_A(A/I,A/I)$ (see Lemma \ref{lem:Yoneda_prod}). Moreover, since $P^*$ is a Hopf algebra and $A/I$ is an algebra, there is a convolution algebra structure on $\mc Hom_A^*(P^*, A/I)$. \delete{which is the dual of the comultiplication on $P^*$, which is the Yoneda product.
}
\end{proposition}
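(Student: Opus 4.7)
The plan is to argue in three steps. First, establish that $\pi$ is a quasi-isomorphism. Second, verify that $\pi_*$ intertwines the composition product on $H^*(\mc Hom_A^*(P^*,P^*))$ with the Yoneda product on $\onsf{Ext}^*_A(M,M)$, so that it is an isomorphism of graded $\msf k$-algebras. Third, identify the convolution product on $\mc Hom_A^*(P^*,M)$ induced by the Hopf coproduct with that same Yoneda product.

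For the first step: by Theorem~\ref{thm:Tate} each $P^{-i}$ is $A$-projective and $P^*$ is bounded above, so $P^*$ is K-projective \cite[Lemma 13.31.4]{Stacks-Project}. As recalled in the paragraph preceding the proposition, $\mc Hom_A^*(P^*,-)$ then preserves quasi-isomorphisms; applied to $\epsilon:P^*\to M$ this yields that $\pi$ is a quasi-isomorphism. Since $P^*\to M$ is an $A$-projective resolution, $H^*(\mc Hom_A^*(P^*,M))=\onsf{Ext}^*_A(M,M)$ by definition.

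For the second step: composition makes $\mc Hom_A^*(P^*,P^*)$ a dg $\msf k$-algebra and $\mc Hom_A^*(P^*,M)$ a right dg module over it via $\phi\cdot\alpha=\phi\circ\alpha$, with $\pi$ equivariant. Passing to cohomology, composition transports through $\pi_*$ to the product sending cocycles $[\tilde f],[\tilde g]$ to $[\epsilon\circ\tilde f\circ\tilde g]=[\pi(\tilde f\circ\tilde g)]$, which matches the standard lifting recipe for the Yoneda product along a projective resolution (cf. \cite[Thm.~4.2.2]{CTVZ}).

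For the third step: the convolution $f\star g := m_M\circ(f\otimes_A g)\circ\Delta$ is a chain map on $\mc Hom_A^*(P^*,M)^{\otimes 2}$ since $\Delta$ and $m_M$ are, and the counit axioms together with the fact that $\epsilon$ is an $A$-algebra homomorphism make $[\epsilon]$ a two-sided unit for $\star$. Given lifts $\tilde f,\tilde g\in\mc Hom_A^*(P^*,P^*)$ of $f,g$ through $\pi$, the identity $m_M\circ(\epsilon\otimes\epsilon)=\epsilon\circ m_{P^*}$ gives the chain-level equation
\[
f\star g \;=\; \pi\bigl(m_{P^*}\circ(\tilde f\otimes\tilde g)\circ\Delta\bigr),
\]
so it suffices to show that convolution $(\tilde f,\tilde g)\mapsto m_{P^*}\circ(\tilde f\otimes\tilde g)\circ\Delta$ on $\mc Hom_A^*(P^*,P^*)$ induces the same product on cohomology as composition. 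Both products are associative and share the same cohomology unit, since $\mathrm{Id}_{P^*}$ and $\eta_{P^*}\circ\epsilon_{P^*}$ have equal image under $\pi$, hence equal class once $\pi_*$ is an isomorphism. The main obstacle is establishing the interchange relation between the two products at the chain-homotopy level; this follows from $\Delta$ being an algebra homomorphism, and an Eckmann--Hilton style argument then forces the two products to coincide and be graded-commutative. Combined with the second step, this identifies convolution with the Yoneda product on $\onsf{Ext}^*_A(M,M)$.
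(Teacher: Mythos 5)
Your first two steps are sound and are essentially what the paper does: K-projectivity of the bounded-above complex of projectives makes $\pi$ a quasi-isomorphism, and composition transports to the Yoneda product via lifts (though note that the paper's Lemma~\ref{lem:Yoneda_prod} records a sign $(-1)^{|\varphi||\psi|}$ coming from the two different lifting conventions, which your formula $[\epsilon\circ\tilde f\circ\tilde g]$ suppresses). The genuine gap is in step~3. The Eckmann--Hilton argument cannot work here, and the fastest way to see this is that its conclusion is false: Eckmann--Hilton would force the Yoneda product to be graded-commutative, whereas Theorem~\ref{thm:HomotopyLiealgebra} and Corollary~\ref{cor:PresentationofExt} of this same paper produce complete intersections (e.g.\ $R=\msf k[x,y]/(xy)$, where $\alpha_x\alpha_y+\alpha_y\alpha_x=\beta\neq 0$) whose Yoneda algebra is not graded-commutative. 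Concretely, the interchange law $(a\star b)\circ(c\star d)=(a\circ c)\star(b\circ d)$ does \emph{not} follow from $\Delta$ being an algebra map: expanding the left-hand side with the bialgebra axiom $\Delta\circ m=(m\otimes m)\circ(1\otimes b\otimes 1)\circ(\Delta\otimes\Delta)$ leaves a term $(\Delta\otimes\Delta)\circ(c\otimes d)$ that cannot be pushed past $c\otimes d$ unless $c$ and $d$ are coalgebra maps, which lifts of Ext classes are not. Already $a=b=\mathrm{Id}$ violates the law, since $m\circ\Delta$ is far from the identity on $\onsf{TS}$.

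The standard way to close this gap is not Eckmann--Hilton but the comparison theorem, which is what the paper's citation of \cite[Thm.~4.2.2]{CTVZ} (see also \cite[Ch.~VIII, \S 4]{MacLane}) is carrying: $P^*\otimes_A P^*$ is a projective resolution of $M\otimes_A M\cong M$, any two chain maps $P^*\to P^*\otimes_A P^*$ over this isomorphism are chain homotopic, and comparing $\Delta$ with a diagonal approximation built from a lift $\tilde g$ identifies $m_M\circ(f\otimes g)\circ\Delta$ with the composition class --- no commutativity statement ever enters. Two further points you should make explicit if you pursue this: first, for the convolution to be defined on the complex $\mc Hom_A^*(P^*,M)$ at all, $\Delta$ must be a chain map on $P^*$, which is part of what Theorems~\ref{thm:Tate_one_step} and~\ref{thm:Tate} assert and is not automatic for divided-power constructions; second, your chain-level identity $f\star g=\pi\bigl(m_{P^*}\circ(\tilde f\otimes\tilde g)\circ\Delta\bigr)$ is correct and is the right starting point, but reducing the comparison of $[m_{P^*}\circ(\tilde f\otimes\tilde g)\circ\Delta]$ with $[\tilde f\circ\tilde g]$ to the injectivity of $\pi_*$ alone is circular without the comparison-theorem input.
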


\begin{remark}\label{rem:conv_not_Yoneda}
    The convolution algebra structure in the above proposition is different from the Yoneda product and composition product on $\onsf{Ext}^*_A(A/I,A/I)$. Indeed, as the comultiplication on $P^*$ is cocommutative (Theorem \ref{thm:Tate}), it follows that the convolution product, being the dual of the comultiplication, is graded commutative. However, the Yoneda product (and the composition product) is not necessarily graded commutative. For example, consider $A=\mathbb{C}[x,y]/(xy)$ and $I=(x,y)$. Then $\onsf{Ext}^*_A(A/I,A/I)$ is generated by two generators of degree $1$, written $\alpha_x$ and $\alpha_y$, and by one generator of degree $2$, written $\beta$. The graded commutator of the Yoneda product is then given by $[\alpha_x,\alpha_y]=\beta$, $[\alpha_x,\alpha_x]=[\alpha_y,\alpha_y]=0$, and $\beta$ is central. It follows that the Yoneda product is not graded commutative while the convolution product is. We will not consider the convolution product in the remainder of this paper.
\end{remark}

\delete{
$\alpha \cup \beta=m_M \circ (\alpha \otimes \beta) \circ \Delta_P $.

$\alpha \cup \beta(x)=m_M \circ (\alpha \otimes \beta) \circ \Delta_P(x)=m_M \circ (\alpha \otimes \beta) \circ \sum x_1 \otimes x_2= \sum \alpha(x_1) \beta(x_2)$.

$\beta \cup \alpha(x)= \sum \beta(x_1) \alpha(x_2)$.

But $\tau \Delta=\Delta$, then $\sum x_1 \otimes x_2=\sum(-1)^{|x_1||x_2|} x_2 \otimes x_1$. So $\alpha \cup \beta(x)=\sum \alpha(x_1) \beta(x_2)=\sum(-1)^{|x_1||x_2|}\alpha(x_2)\beta(x_1)=\sum(-1)^{|\alpha||\beta|}\alpha(x_2)\beta(x_1)=(-1)^{|\alpha||\beta|}\sum\alpha(x_2)\beta(x_1)=(-1)^{|\alpha||\beta|}\beta \cup \alpha(x)$.
}

\delete{
Use the map $P^* \stackrel{\Delta}{\to} P^* \otimes P^* \stackrel{\on{id} \otimes \beta}{\to} P^* \otimes M$ corresponding to the lift $\tilde{\beta}$ of $\beta$. Then the composition $\alpha \circ \tilde{\beta}$ is $P^* \stackrel{\Delta}{\to} P^* \otimes P^* \stackrel{\alpha \otimes \beta}{\to} M \otimes M \stackrel{m_M}{\to} M$. However, we know that $\alpha \cdot \beta=\alpha \circ \tilde{\beta}$ is the Yoneda product, and the cup product is $\alpha \cup \beta= m_M \circ (\alpha \otimes \beta) \circ \Delta$. So we see they are the same, i.e., $\alpha \cdot \beta=\alpha \cup \beta$. 
}

We now consider $P^*$ the resolution of $A/I$ obtained in Theorem \ref{thm:Tate}. It is a PD dg algebra. We define the set of PD dg derivations on $P^*$:
\[
\begin{array}{rl}
\mc Der_A^{*,\on{pd}}(P^*,P^*)=\displaystyle\bigoplus_{n \in \mathbb Z} & \{  \varphi \in \mc Hom_A^n(P^*,P^*) \ | \ \varphi(ab)=\varphi(a)b+(-1)^{|a|n}a\varphi(b) \\ 
& \quad \text{and } \varphi(a^{(k)})=\varphi(a)a^{(k-1)} \text{ if } |a| \text{ even} \}
\end{array}
\]
where we write $a^{(k)}=\gamma_k(a)$ to lighten notation. It is a routine to verify that $\mc Der_A^{*,\on{pd}}(P^*,P^*)$ is a subcomplex of $\mc Hom_A^*(P^*,P^*)$.

\begin{proposition}\label{prop:dg_Lie_algebra}
The cochain complex $\mc Der_A^{*,\on{pd}}(P^*,P^*)$ is a dg Lie algebra with the bracket given by the graded commutator in $\mc Hom_A^{*}(P^*,P^*)$, i.e., $[\phi, \psi]=\phi\circ \psi-(-1)^{|\phi||\psi|}\psi\circ \phi$ for homogeneous $ \phi, \psi \in \mc Hom_A^{*}(P^*,P^*)$.
\end{proposition}

\begin{proof}
 The Jacobi identity and the skew-symmetry of the bracket are direct consequences of the use of the graded commutator. We first show that $\mc Der_A^{*,\on{pd}}(P^*,P^*)$ is closed under the bracket.  For any $a,b \in P^*$ with $a$ homogeneous, we have
\[
[\varphi, \psi](ab)=[\varphi, \psi](a)b+(-1)^{(|\varphi|+|\psi|)|a|}a[\varphi, \psi](b).
\]
However, we know that for any homogeneous $x \in P^*$, $|[\varphi, \psi](x)|=|\varphi|+|\psi|+|x|$, and so $|[\varphi, \psi]|=|\varphi|+|\psi|$. It follows that $[\varphi, \psi](ab)=[\varphi, \psi](a)b+(-1)^{|[\varphi, \psi]||a|}a[\varphi, \psi](b)$ and $[\varphi, \psi]$ is a derivation.

Using the fact that $\varphi$ and $\psi$ are PD dg derivations and that $P^*$ is graded commutative, we can show that $[\varphi, \psi](a^{(k)})=a^{(k-1)}[\varphi, \psi](a)$ if $|a|$ is even. Hence $[\varphi, \psi]$ preserves the PD structure.

It remains to check that there exists a differential $d:\mc Der_A^{*,\on{pd}}(P^*,P^*) \to \mc Der_A^{*,\on{pd}}(P^*,P^*)$ compatible with the bracket. This differential is induced from the one on $\mc Hom_A^*(P^*,P^*)$ and reads $d(\varphi)(a)=d(\varphi(a))-(-1)^{|\varphi|}\varphi(d(a))$. We then check explicitly that:
\begin{itemize}
    \item $d(\varphi)(ab)=d(\varphi)(a)b+(-1)^{|d(\varphi)||a|}ad(\varphi)(b)$ for any $a, b \in P^*$ with $a$ homogeneous;
    \item $d(\varphi)(a^{(k)})=a^{(k-1)}d(\varphi)(a)$ if $|a|$ is even;
    \item $d([\varphi, \psi])=[d(\varphi), \psi]+(-1)^{|\varphi|}[\varphi, d(\psi)]$.
\end{itemize}
\end{proof}

An immediate consequence of the previous proposition is the following corollary:

\begin{corollary}\label{cor:der_Lie}
The complex $H^*(\mc Der_A^{*,\on{pd}}(P^*,P^*))$ is a graded Lie algebra.
\end{corollary}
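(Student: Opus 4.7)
The plan is to show that Corollary~\ref{cor:der_Lie} is the standard fact that the cohomology of a dg Lie algebra is a graded Lie algebra, applied to the dg Lie algebra structure furnished by Proposition~\ref{prop:dg_Lie_algebra}. Since Proposition~\ref{prop:dg_Lie_algebra} already establishes graded skew-symmetry, the graded Jacobi identity, and the graded Leibniz rule
\[
d([\varphi,\psi]) = [d(\varphi),\psi] + (-1)^{|\varphi|}[\varphi, d(\psi)],
\]
the only real work is to verify that the bracket descends to cohomology.

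First I would check that the bracket of two cocycles is a cocycle: if $d(\varphi) = 0$ and $d(\psi) = 0$, then the Leibniz rule above immediately gives $d([\varphi,\psi]) = 0$. Next I would show that the bracket of a cocycle with a coboundary is itself a coboundary. If $\psi$ is a cocycle and $\varphi = d(\alpha)$, then
\[
[\varphi,\psi] = [d(\alpha),\psi] = d([\alpha,\psi]) - (-1)^{|\alpha|}[\alpha, d(\psi)] = d([\alpha,\psi]),
\]
and a symmetric computation handles the case when $\psi$ is a coboundary. It follows that the bracket induces a well-defined bilinear pairing
\[
[-,-]\colon H^p(\mc Der_A^{*,\on{pd}}(P^*,P^*)) \otimes H^q(\mc Der_A^{*,\on{pd}}(P^*,P^*)) \to H^{p+q}(\mc Der_A^{*,\on{pd}}(P^*,P^*)).
\]

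Finally, the graded skew-symmetry $[\varphi,\psi] = -(-1)^{|\varphi||\psi|}[\psi,\varphi]$ and the graded Jacobi identity both pass from representatives to cohomology classes automatically, since these identities hold at the chain level by Proposition~\ref{prop:dg_Lie_algebra}. Together these facts show that $H^*(\mc Der_A^{*,\on{pd}}(P^*,P^*))$ carries a graded Lie algebra structure, completing the proof. There is no substantial obstacle here: the argument is a formal consequence of Proposition~\ref{prop:dg_Lie_algebra}, and the only point requiring care is the sign bookkeeping in the Leibniz rule, which has already been handled in the previous proof.
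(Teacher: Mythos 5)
Your proposal is correct and follows essentially the same route as the paper: both arguments use the chain-level Leibniz rule from Proposition~\ref{prop:dg_Lie_algebra} to show that the bracket of cocycles is a cocycle and that bracketing with a coboundary produces a coboundary, so the bracket descends to cohomology, with skew-symmetry and Jacobi holding automatically on classes. The only cosmetic difference is that the paper expands $[\varphi+d(f),\psi+d(g)]$ in one step rather than treating the two coboundary cases separately; the content is identical.
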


\delete{
\begin{proof}
Consider $\varphi, \psi \in Z^*(\mc Der_A^{*,\on{pd}}(P^*,P^*))$ and $f, g \in \mc Der_A^{*,\on{pd}}(P^*,P^*)$. Then we have
\[
[\varphi+d(f),\psi+d(g)]=[\varphi,\psi]+[\varphi,d(g)]+[d(f),\psi]+[d(f),d(g)].
\]
But we know that $d([\varphi,g])=[d(\varphi),g]+(-1)^{|\varphi|}[\varphi,d(g)]=(-1)^{|\varphi|}[\varphi,d(g)]$ as $d(\varphi)=0$. Thus $[\varphi,d(g)]=(-1)^{|\varphi|}d([\varphi,g]) \in B^*(\mc Der_A^{*,\on{pd}}(P^*,P^*))$. Similarly, we show that $[d(f),\psi],[d(f),d(g)] \in B^*(\mc Der_A^{*,\on{pd}}(P^*,P^*))$. Hence
\[
[\varphi+d(f),\psi+d(g)]=[\varphi,\psi] \mod B^*(\mc Der_A^{*,\on{pd}}(P^*,P^*)).
\]
Moreover, for any $\varphi, \psi \in Z^*(\mc Der_A^{*,\on{pd}}(P^*,P^*))$, we have $d([\varphi, \psi])=0$ so $[\varphi, \psi]$ is a cocycle. It follows that we can define a bracket on $H^*(\mc Der_A^{*,\on{pd}}(P^*,P^*))$. For an element $\varphi \in Z^*(\mc Der_A^{*,\on{pd}}(P^*,P^*))$, we write $\overline{\varphi}$ for its class in $H^*(\mc Der_A^{*,\on{pd}}(P^*,P^*))$ and set
\[
[\overline{\varphi},\overline{\psi}]=\overline{[\varphi,\psi]}.
\]
The fact that the above bracket is a graded Lie bracket is then a direct consequence of Proposition \ref{prop:dg_Lie_algebra}.
\end{proof}
}

Consider two  PD dg $A$-algebras $P^*$ and $Q^*$ equipped with a PD dg algebra morphism $\theta:P^* \to Q^*$. Then $\mc Hom_A^{*}(P^*,Q^*)$ is a cochain complex with differential $d(f)=d_Q\circ f-(-1)^{|f|}f\circ d_P$. In fact $\mc Hom_A^{*}(P^*,Q^*)$ is a right dg $\mc Hom_A^{*}(P^*,P^*)$-module and left dg  $\mc Hom_A^{*}(Q^*,Q^*)$-module with the actions defined by
\begin{align}\label{eq:action_dg}
 f\cdot \varphi=f \circ \varphi \quad \text{ and } \quad 
 \psi\cdot f=\psi\circ f 
\end{align}
for any $\varphi \in \mc Hom_A^{|\varphi|}(P^*,P^*)$, $\psi \in \mc Hom_A^{|\psi|}(Q^*,Q^*)$ and $f \in \mc Hom_A^{|f|}(P^*,Q^*)$. 

Let $ \theta_*:\mc Hom_A^{*}(P^*,P^*)\to \mc Hom_A^{*}(P^*,Q^*) $ and $  \theta^*:\mc Hom_A^{*}(Q^*,Q^*)\to \mc Hom_A^{*}(P^*,Q^*) $ be the maps defined by 
\[
\theta_*(\varphi)=\theta\circ \varphi \quad \text{and } \quad \theta^*(\psi)=\psi\circ \theta
\]
for all $ \varphi \in \mc Hom_A^{*}(P^*,P^*)$ and $\psi\in \mc Hom_A^{*}(Q^*,Q^*)$.
Then both $\theta^*$ and $ \theta_*$ are chain maps. 

We can then define
\[
\begin{array}{rl}
\mc Der_{A, \theta}^{*,\on{pd}}(P^*,Q^*)=\displaystyle\bigoplus_{n \in \mathbb Z} & \{  f \in \mc Hom_A^n(P^*,Q^*) \ | \ f(ab)=f(a)\theta(b)+(-1)^{|a|n}\theta(a)f(b) \\ 
& \quad \text{and } f(a^{(k)})=f(a)\theta(a^{(k-1)}) \text{ if } |a| \text{ even} \}.
\end{array}
\]
It can be similarly verified that $\mc Der_{A,\theta}^{*,\on{pd}}(P^*,Q^*)$ is a subcomplex of $\mc Hom_A^*(P^*,Q^*)$ and  that 
\begin{align}\label{eq:derivation-transfer}
\theta_*(\mc Der_A^{*,\on{pd}}(P^*,P^*))\subseteq \mc Der_{A,\theta}^{*,\on{pd}}(P^*,Q^*)   \ \text{ and } \  \theta^*(\mc Der_A^{*,\on{pd}}(Q^*,Q^*))\subseteq \mc Der_{A,\theta}^{*,\on{pd}}(P^*,Q^*) 
 \end{align}

\begin{proposition}\label{prop:der_hom}
Let $P^*$, $Q^*$ and $\theta$ be as above. Then   \[
\mc Der_{A,\theta}^{*,\on{pd}}(P^*,Q^*)=\mc Hom_A^*(\bigoplus_{n \geq 0} F_{n+1}[n+1],Q^*)
\]
as graded $A$-modules. 
\end{proposition}

\begin{proof}
\delete{
By Theorem \ref{thm:Tate}, an element $\widetilde{\varphi} \in \mc Der_{A,\theta}^{*,\on{pd}}(P^*,Q^*)$ is defined by its values on $F_{n+1}[n+1]$ for all $n \geq 0$ \textcolor{red}{How does the maps dependent on $\theta$}. Hence it defines an element in $\mc Hom_A^*(\bigoplus_{n \geq 0} F_{n+1}[n+1],Q^*)$. The converse is treated similarly.
}

By Theorem \ref{thm:Tate}, each element $\widetilde{\varphi} \in \mc Der_{A,\theta}^{*,\on{pd}}(P^*,Q^*)$ is uniquely determined by its values on $F_{n+1}[n+1]$ for all $n \geq 0$, and they define an element in $\mc Hom_A^*(\bigoplus_{n \geq 0} F_{n+1}[n+1],Q^*)$. Clearly, each element in $\mc Hom_A^*(\bigoplus_{n \geq 0} F_{n+1}[n+1],Q^*)$ uniquely extends to a derivation in $\mc Der_{A,\theta}^{*,\on{pd}}(P^*,Q^*)$.
\end{proof}

\delete{
We know that $P^*  \to A/I$ is a quasi-isomorphism, where $A/I$ is seen as a PD dg algebra concentrated in degree $0$. Hence, with the same reasoning as the beginning of the section, the map $\mc Hom_A^*(\bigoplus_{n \geq 0} F_{n+1}[n+1],P^*) \to \mc Hom_A^*(\bigoplus_{n \geq 0} F_{n+1}[n+1],A/I)$ is also a {\color{red}quasi-isomorphism (PROBLEM: what is the differential on $\bigoplus_{n \geq 0} F_{n+1}[n+1]$?)}. By making use of Proposition \ref{prop:der_hom}, it follows that
\[
H^{*}(\mc Der_A^{*,\on{pd}}(P^*,P^*)) \cong H^{*}(\mc Der_A^{*,\on{pd}}(P^*,A/I)).
\]
}

\begin{conjecture}\label{conj:3.20}
Let $Q^*$ be a PD dg algebra constructed as in Theorem \ref{thm:Tate} and let $\sigma:Q^* \to Q^*$ be a PD dg algebra homomorphism that is homotopic to the identity. Then the following commutative diagram
 \begin{center}
  \begin{tikzpicture}[scale=0.9,  transform shape]
  \tikzset{>=stealth}

\node (3) at (0,-4){$\mc Der_{A,\sigma}^{*,\on{pd}}(Q^*,Q^*)$};

\node (6) at (4,-4){$\mc Hom_A^{*}(Q^*,Q^*)$};

\node (10) at (0,-6){$\mc Der_{A}^{*,\on{pd}}(Q^*,Q^*)$};
\node (11) at (4,-6){$\mc Hom_A^{*}(Q^*,Q^*)$};

\draw[{Hooks[right]}-Stealth] (3) -- (6) node[midway, above] {$\iota_\sigma$};
\draw[{Hooks[right]}-Stealth] (10) -- (11) node[midway, below] {$\iota$};

\draw [decoration={markings,mark=at position 1 with
    {\arrow[scale=1.2,>=stealth]{>}}},postaction={decorate}] (10) --  (3) node[midway, left] {$\sigma \circ (-)$};

\draw [decoration={markings,mark=at position 1 with
    {\arrow[scale=1.2,>=stealth]{>}}},postaction={decorate}] (11) --  (6) node[midway, right] {$\sigma \circ (-)$};

\end{tikzpicture}
 \end{center}
induces an equality of $A$-modules
\[
H^*(\iota_\sigma)(H^*(\mc Der_{A,\sigma}^{*,\on{pd}}(Q^*,Q^*))) = H^*(\iota)(H^*(\mc Der_{A}^{*,\on{pd}}(Q^*,Q^*)))
\]
inside of $H^*(\mc Hom_A^*(Q^*,Q^*))$.
\end{conjecture}

The homomorphism $\sigma \circ (-):\mc Hom_A^{*}(Q^*,Q^*) \to \mc Hom_A^{*}(Q^*,Q^*)$ induces the identity on $H^*(\mc Hom_A^{*}(Q^*,Q^*))$ since $\sigma - \on{id}_{Q^*}=d_{\mc Hom}(h)$ for some $h \in \mc Hom_A^{-1}(Q^*,Q^*)$.

The composition of chain maps:
\[
\mc Der_A^{*,\on{pd}}(P^*,P^*) \hookrightarrow \mc Hom_A^{*}(P^*,P^*) \to \mc Hom^{*}(P^*,A/I)
\]
induces an $A$-linear map of graded Lie algebras
\begin{align}\label{eq:morphism}
\Psi_P: H^*(\mc Der_A^{*,\on{pd}}(P^*,P^*)) \to \on{Ext}_A^*(A/I,A/I)
\end{align}
where the Lie bracket is the graded commutator.

Set $\varphi \in Z^0(\mc Der_A^{*,\on{pd}}(P^*,P^*))$, then $\varphi_0:P^0 \to P^0$ is an $A$-module morphism defined by $\varphi_0(1)$ as $P^0=A$. But $\varphi_0(1)=0$ due to the Leibniz rule, and so $\varphi_0=0$. As $P^*$ is exact and $\varphi$ is a chain map, one can construct by induction on $n$ an $A$-module homomorphism $h_n:P^{n} \to P^{n-1}$ such that $\varphi_n=d_{P^*} \circ h_n+h_{n-1} \circ d_{P^*}=d_{\mc Hom}(h)_n$. Hence the cohomology class of $\varphi$ is zero in $H^0(\mc Hom_A^{*,\on{pd}}(P^*,P^*))$. Combining that with the result of Lemma \ref{lem:homotopy} with $X^*=P^*$, we get that

\delete{
Set $\varphi \in Z^0(\mc Der_A^{*,\on{pd}}(P^*,A/I))$, then $\varphi_0:P^0 \to A/I$ is an $A$-module morphism defined by $\varphi_0(1)$ as $P^0=A$. But $\varphi_0(1)=0$ due to the Leibniz rule, and so $\varphi_0=0$. 
It follows that $H^0(\mc Der_A^{*,\on{pd}}(P^*,A/I))=0$. Combining that with the result of Lemma \ref{lem:homotopy} with $X^*=P^*$, we get that
}

\begin{align}\label{eq:neg_hom}
\Psi_P(H^{\leq 0}(\mc Der_A^{*,\on{pd}}(P^*,P^*)))=0.
\end{align}

\begin{definition}\label{def:restr_Lie}
Let $A$ be a commutative ring. A \textbf{restricted graded Lie algebra} $\mathfrak{g}$ over $A$ is a strictly positively graded $A$-module $\bigoplus_{n \geq 1}\mathfrak{g}_n$ equipped with a bilinear pairing
\[
[\cdot, \cdot]: \mathfrak{g}_m \times \mathfrak{g}_n \longrightarrow \mathfrak{g}_{m+n}
\]
and a quadratic operator
\[
q:\mathfrak{g}_{2n+1} \longrightarrow \mathfrak{g}_{4n+2}
\]
satisfying the following properties: for any $a \in \mathfrak{g}_{|a|}$, $b \in \mathfrak{g}_{|b|}$ and $c \in \mathfrak{g}_{|c|}$, then
\begin{itemize}
    \item[{\crtcrossreflabel{(1)}[def:Lie:cond1]}] $[a, b]=-(-1)^{|a||b|}[b, a]$; 
    \item[{\crtcrossreflabel{(1$\frac{1}{2}$)}[def:Lie:cond11/2]}] $[a,a]=0$ for $|a|$ even;
    \item[{\crtcrossreflabel{(2)}[def:Lie:cond2]}] $(-1)^{|a||c|}[a,[b,c]]+(-1)^{|b||a|}[b,[c,a]]+(-1)^{|c||b|}[c,[a,b]]=0$;
    \item[{\crtcrossreflabel{(2$\frac{1}{3}$)}[def:Lie:cond21/3]}] $[a,[a,a]]=0$ for $|a|$ odd;
    \item[{\crtcrossreflabel{(3)}[def:Lie:cond3]}] $q(\lambda a)=\lambda^2 q(a)$ for $\lambda \in A$ and $|a|$ odd;
    \item[{\crtcrossreflabel{(4)}[def:Lie:cond4]}] $[a, b]=q(a+b)-q(a)-q(b)$ for $|a|=|b|$ odd;
    \item[{\crtcrossreflabel{(5)}[def:Lie:cond5]}] $[a,[a,b]]=[q(a),b]$ for $|a|$ odd.
\end{itemize}
\end{definition}

\begin{remark}
    \begin{itemize}[wide]
        \item Notice that when $2$ is invertible in $A$, then condition \ref{def:Lie:cond11/2} is a direct consequence of \ref{def:Lie:cond1}. Likewise, when $3$  is invertible in $A$, then condition \ref{def:Lie:cond21/3} is a direct consequence of \ref{def:Lie:cond2}. This explain the choice of notation. 
        \item If $2$ is invertible in  $A$, then \ref{def:Lie:cond3} and \ref{def:Lie:cond4} imply that $q(a)=\frac{1}{2}[a,a]$ for any $|a|$ odd, and all relations except \ref{def:Lie:cond21/3} are consequences of \ref{def:Lie:cond1} and \ref{def:Lie:cond2}. Hence if  $2$ and $3$ are invertible in  $A$, then Definition \ref{def:restr_Lie} is the usual definition of a graded Lie algebra.
        \item The above definition of a restricted graded Lie algebra was introduced in \cite{Avramov} as a graded Lie algebra over a field. It also appears in \cite{Sjodin80} as an adjusted Lie algebra and in \cite{Milnor-Moore} as a restricted Lie algebra following Jacobson (\cite{Jacobson}). We choose the terminology ``restricted graded algebra" to put the emphasis both the graded structure and the additional map $q$.
    \end{itemize}
   
\end{remark}

\begin{proposition}\label{prop:homotopy_restricted}
With $P^*$, $A$ and $M$ as above, the cohomology space $H^{*}(\mc Der_A^{*,\on{pd}}(P^*,P^*))$ is a restricted graded Lie algebra.    
\end{proposition}

\begin{proof}
Use Corollary \ref{cor:der_Lie} for the graded Lie bracket, Eq.\eqref{eq:neg_hom} for the grading, and Appendix \ref{appendix:sec:square_pd_der} for the quadratic map.
\end{proof}

We now introduce the following definition:

\begin{definition}\label{def:homotopy_Lie}
Let $A$ and $I$ be as above and $P^*$ be the PD dg algebra obtained in Theorem \ref{thm:Tate}. The \textbf{homotopy Lie algebra} of the pair $(A,I)$, written $\pi^*(A,I)$, is the graded Lie algebra given by
\[
\pi^*(A,I)=\Psi_P(H^{*}(\mc Der_A^{*,\on{pd}}(P^*,P^*))) \subset \on{Ext}_A^*(A/I,A/I).
\]
\end{definition}

\delete{
\begin{remark}
We expect this $\pi^*(A,I)$ to be independent of the construction of $P^*$. {\color{red} TO DO}.  
\end{remark}
}

\begin{theorem}
Let $P^*$ and $Q^*$ be two PD dg algebras obtained through the construction of Theorem \ref{thm:Tate}. Then if Conjecture \ref{conj:3.20} is true, then
    \[
    \on{Im}\Psi_P=\on{Im}\Psi_Q.
    \]
\end{theorem}

\begin{proof}
Consider $\theta: P^* \to Q^*$ and $\tau :Q^* \to P^*$ the PD dg algebras homomorphisms obtained in Theorem \ref{thm:3.15} and Corollary \ref{cor:3.16}. Then we have following commutative diagram of complexes
 \begin{center}
  \begin{tikzpicture}[scale=0.9,  transform shape]
  \tikzset{>=stealth}
  
\node (1) at ( 0,0){$\mc Der_A^{*,\on{pd}}(P^*,P^*)$};
\node (2) at ( 0,-2){$\mc Der_{A,\tau}^{*,\on{pd}}(Q^*,P^*)$};
\node (3) at (0,-4){$\mc Der_{A,\theta\circ\tau}^{*,\on{pd}}(Q^*,Q^*)$};
\node (4) at (4,0) {$\mc Hom_A^{*}(P^*,P^*)$};
\node (5) at (4,-2){$\mc Hom_A^{*}(Q^*,P^*)$};
\node (6) at (4,-4){$\mc Hom_A^{*}(Q^*,Q^*)$};

\node (7) at (9,0){$\mc Hom_A^{*}(P^*,A/I)$};
\node (8) at (9,-2){$\mc Hom_A^{*}(Q^*,A/I)$};
\node (9) at (9,-4){$\mc Hom_A^{*}(Q^*,A/I)$};
\node (10) at (0,-6){$\mc Der_{A}^{*,\on{pd}}(Q^*,Q^*)$};
\node (11) at (4,-6){$\mc Hom_A^{*}(Q^*,Q^*)$};

\draw[{Hooks[right]}-Stealth] (1) -- (4);
\draw[{Hooks[right]}-Stealth] (2) -- (5);
\draw[{Hooks[right]}-Stealth] (3) -- (6);
\draw[{Hooks[right]}-Stealth] (10) -- (11);
\draw [decoration={markings,mark=at position 1 with
    {\arrow[scale=1.2,>=stealth]{>}}},postaction={decorate}] (1) --  (2) node[midway, left] {$(-)\circ \tau$};
\draw [decoration={markings,mark=at position 1 with
    {\arrow[scale=1.2,>=stealth]{>}}},postaction={decorate}] (2) --  (3) node[midway, left] {$\theta \circ (-)$};
\draw [decoration={markings,mark=at position 1 with
    {\arrow[scale=1.2,>=stealth]{>}}},postaction={decorate}] (10) --  (3) node[midway, left] {$\theta\circ\tau \circ (-)$};
\draw [decoration={markings,mark=at position 1 with
    {\arrow[scale=1.2,>=stealth]{>}}},postaction={decorate}] (4) --  (5) node[midway, right] {$(-)\circ \tau$};
\draw [decoration={markings,mark=at position 1 with
    {\arrow[scale=1.2,>=stealth]{>}}},postaction={decorate}] (5) --  (6) node[midway, right] {$\theta \circ (-)$};
\draw [decoration={markings,mark=at position 1 with
    {\arrow[scale=1.2,>=stealth]{>}}},postaction={decorate}] (11) --  (6) node[midway, right] {$\theta\circ \tau \circ (-)$};

    \draw [decoration={markings,mark=at position 1 with
    {\arrow[scale=1.2,>=stealth]{>}}},postaction={decorate}] (4) --  (7) node[midway, above] {$\epsilon_P \circ (-)$};
\draw [decoration={markings,mark=at position 1 with
    {\arrow[scale=1.2,>=stealth]{>}}},postaction={decorate}] (5) --  (8) node[midway, above] {$\epsilon_P \circ (-)$};
\draw [decoration={markings,mark=at position 1 with
    {\arrow[scale=1.2,>=stealth]{>}}},postaction={decorate}] (6) --  (9) node[midway, below] {$\epsilon_Q \circ (-)$};
\draw [decoration={markings,mark=at position 1 with
    {\arrow[scale=1.2,>=stealth]{>}}},postaction={decorate}] (7) --  (8) node[midway, right] {$(-) \circ \tau$};
\draw [decoration={markings,mark=at position 1 with
    {\arrow[scale=1.2,>=stealth]{>}}},postaction={decorate}] (8) --  (9) node[midway, right] {$=$};
\end{tikzpicture}
 \end{center}

The top right square obviously commutes, and the bottom right triangle commutes because of Theorem \ref{thm:3.15}.  The left two squares commute following \eqref{eq:derivation-transfer}. Using Conjecture \ref{conj:3.20}, we have that the image of $H^*(\mc Der_{A}^{*,\on{pd}}(Q^*,Q^*))$ in $  H^*(\mc Hom_A^*(Q^*,Q^*))$ is equal to that
of $H^*(\mc Der_{A, \theta\circ \tau}^{*,\on{pd}}(Q^*,Q^*))$. We note that composition of the top row is $ \Psi_P$ when they are all mapped into $ \on{Ext}_A^*(A/I,A/I)$ after taking the cohomology. Thus we have $\on{Im}(\Psi_{P})\subseteq \on{Im}(\Psi_{Q}) $. One can use the other direction of the commutative diagram to get $\on{Im}(\Psi_{Q})\subseteq \on{Im}(\Psi_{P}) $ using the fact that $ \tau\circ \theta$ is homotopy equivalent to identity of $P^*$.

\delete{
 \begin{center}
  \begin{tikzpicture}[scale=0.9,  transform shape]
  \tikzset{>=stealth}
  
\node (1) at ( 0,0){$\mc Der_{A, \tau\circ\theta}^{*,\on{pd}}(P^*,P^*)$};
\node (2) at ( 0,-2){$\mc Der_{A,\theta}^{*,\on{pd}}(P^*,Q^*)$};
\node (3) at (0,-4){$\mc Der_A^{*,\on{pd}}(Q^*,Q^*)$};
\node (4) at (4,0) {$\mc Hom_A^{*}(P^*,P^*)$};
\node (5) at (4,-2){$\mc Hom_A^{*}(P^*,Q^*)$};
\node (6) at (4,-4){$\mc Hom_A^{*}(Q^*,Q^*)$};

\node (7) at (9,0){$\mc Hom_A^{*}(P^*,A/I)$};
\node (8) at (9,-2){$\mc Hom_A^{*}(P^*,A/I)$};
\node (9) at (9,-4){$\mc Hom_A^{*}(Q^*,A/I)$};

\draw[{Hooks[right]}-Stealth] (1) -- (4);
\draw[{Hooks[right]}-Stealth] (2) -- (5);
\draw[{Hooks[right]}-Stealth] (3) -- (6);

\draw [decoration={markings,mark=at position 1 with
    {\arrow[scale=1.2,>=stealth]{>}}},postaction={decorate}] (2) --  (1) node[midway, left] {$ \tau\circ (-)$};
\draw [decoration={markings,mark=at position 1 with
    {\arrow[scale=1.2,>=stealth]{>}}},postaction={decorate}] (3) --  (2) node[midway, left] {$(-)\circ\theta $};

    \draw [decoration={markings,mark=at position 1 with
    {\arrow[scale=1.2,>=stealth]{>}}},postaction={decorate}] (5) --  (4) node[midway, right] {$ \tau\circ (-)$};
\draw [decoration={markings,mark=at position 1 with
    {\arrow[scale=1.2,>=stealth]{>}}},postaction={decorate}] (6) --  (5) node[midway, right] {$(-)\circ\theta$};

    \draw [decoration={markings,mark=at position 1 with
    {\arrow[scale=1.2,>=stealth]{>}}},postaction={decorate}] (4) --  (7) node[midway, above] {$\epsilon_P \circ (-)$};
\draw [decoration={markings,mark=at position 1 with
    {\arrow[scale=1.2,>=stealth]{>}}},postaction={decorate}] (5) --  (8) node[midway, above] {$\epsilon_P \circ (-)$};

\draw [decoration={markings,mark=at position 1 with
    {\arrow[scale=1.2,>=stealth]{>}}},postaction={decorate}] (8) --  (7) node[midway, right] {$=$};
\draw [decoration={markings,mark=at position 1 with
    {\arrow[scale=1.2,>=stealth]{>}}},postaction={decorate}] (9) --  (8) node[midway, right] {$(-)\circ \theta$};

\draw [decoration={markings,mark=at position 1 with
    {\arrow[scale=1.2,>=stealth]{>}}},postaction={decorate}] (6) --  (9) node[midway, below] {$\epsilon_Q \circ (-)$};
\end{tikzpicture}
 \end{center}

    {\color{red}TO DO}
    }
\end{proof}

We will thus omit the subscript and write $\Psi$ in the rest of this paper.

\begin{remark}
We expect that the graded Lie algebra $H^*(\mc Der^{*,\on{pd}}_{A} (P^*, P^*))$ is independent of the construction of $P^*$. This would be a homotopy theory question with respect to a model category structure on the category of PD dg $A$-algebras. Even though $P^*$ is a cofibrant replacement of $A/I$, the literature still could not automatically let us conclude. Hopefully with the work of \cite{Richter}, one can pass to the simplicial world to draw this conclusion.         
\end{remark}

\delete{
Set $\varphi \in \mc Der_A^{n,\on{pd}}(P^*,P^*)$ with $n<0$. Then $\varphi(P^*) \subset \bigoplus_{k \geq 1}P^{-k}$ and so $\epsilon \circ \varphi=0$. Hence 
\[
H^n(\mc Der_A^{*,\on{pd}}(P^*,P^*))=0 \text{ for all } n <0.
\]
}

A complex $X^*$ of $A$-modules is called $I$-minimal if $d(X^*) \subseteq I X^*$. In this case, we have $H^*(\mc Hom_A(X^*,A/I))=\mc Hom_A(X^*,A/I)$ and $H^*(X^* \otimes A/I)=X^* \otimes A/I$.

We now assume that $P^*$. We also assume that $F_n$ is a free $A$-module of finite rank for each $n \geq 0$. It follows that as a PD dg algebra, $P^*$ might have infinitely many generators, but for each degree, there will be only finitely many of them. 

Following the proof of Theorem \ref{thm:Tate}, let $\mathcal B(-(n+1))$ be an $A$-basis of $F_{n+1}[n+1]$ and set $\mathcal B(0)=0$. We also set an order on each $\mathcal B(-n)$. It follows that $\mathcal B=\bigcup_{n \geq 0} \mathcal B(-n)$ is a set of generators of the PD dg algebra $P^*$. Finally we set an order on $\mathcal B$ such that $b'<b$ for any $b \in \mathcal B(-n)$ and $b' \in \mathcal B(-(n+1))$. 

A basis of $P^*$ as an $A$-module is given through
\begin{align}\label{eq:basis_P}
\mathcal{X}=\{f : \mathcal{B} \to \bb N \ | \ f \text{ has finite support and }f(b)=0, 1 \text{ if } |b| \text{ odd for }b \in \mathcal{B} \}.
\end{align}
For $f \in \mathcal{X}$, we write $\on{deg}f=\sum_{b \in \mathcal{B}}f(b)|b|$ for the degree of $f$. We also write $\on{poldeg}b^{(f)}=\sum_{b \in \mathcal B} f(b)$ for its polynomial degree. There is a decomposition $\mathcal{X}=\bigcup_{n \in \bb N}\mathcal{X}(-n)$ where $\mathcal{X}(-n)=\{f \in \mathcal{X} \ | \ \on{deg}f=-n\}$. 
The basis element of $P^*$ corresponding to $f \in \mathcal{X}$ will be written $b^{(f)}=\prod_{b \in \mathcal B} b^{(f(b))}$ where the $b$'s are in descending order and $b^{(f(b))}=\gamma_{f(b)}(b)$. Set $b^{(f)} \in \mathcal X$ and define the $A$-linear map $(b^{(f)})^\vee:P^{\on{deg} f} \to A/I$ that sends all elements of $\mathcal{X}(\on{deg}f)$ to zero except for $b^{(f)}$ which is sent to $1$ ($A/I$ is an algebra so it has a unit). 

Similarly, we write $(b^\vee)^{f}=\prod_{b \in \mathcal B} (b^\vee)^{f(b)}$ for the ordered product in $\on{Ext}_A^*(A/I,A/I)$ (notice that $(b^\vee)^{f}$ uses the product without divided powers).

\begin{theorem}\label{thm:lift_pd}
With $P^*$ and $A$ as above, let $b \in \mathcal B$  be a generator of $P^*$ as a PD dg algebra. Then there exists a cocycle $\widetilde{\varphi} \in Z^{-|b|}(\mc Der_A^{*,\on{pd}}(P^*,P^*))$ such that $\epsilon \circ \widetilde{\varphi}_{| P^{|b|}} =b^\vee$.
\end{theorem}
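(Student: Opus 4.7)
The plan is to construct $\widetilde{\varphi}$ by specifying its values on each generator in $\mathcal{B}$ and then invoking Proposition~\ref{prop:der_hom} to extend uniquely to a pd derivation of $P^*$. Set $n = -|b|$, so $b \in \mathcal{B}(-n)$ and the target $\widetilde{\varphi}$ has cohomological degree $n$, sending $P^k$ to $P^{k+n}$.

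First I would pin down the forced values. For any $b' \in \mathcal{B}(-m)$ with $m < n$ the codomain $P^{-m+n}$ is zero (since $P^*$ is non-positively graded), so necessarily $\widetilde{\varphi}(b') = 0$. For $b' \in \mathcal{B}(-n)$, declare $\widetilde{\varphi}(b) = 1 \in A = P^0$ and $\widetilde{\varphi}(b') = 0$ when $b' \neq b$. Then proceed by induction on $m > n$: assume $\widetilde{\varphi}$ has been defined as a pd derivation on the pd subalgebra $P^*_{<m} \subset P^*$ generated by $\bigcup_{m'<m}\mathcal{B}(-m')$ and satisfies the cocycle condition $d\circ\widetilde{\varphi} = (-1)^n\widetilde{\varphi}\circ d$ on $P^*_{<m}$. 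For each $b''' \in \mathcal{B}(-m)$ the element $d(b''')$ lies in $P^{-m+1} \subseteq P^*_{<m}$, so $\widetilde{\varphi}(d(b''')) \in P^{-m+1+n}$ is already defined, and I would choose $\widetilde{\varphi}(b''') \in P^{-m+n}$ so that $d(\widetilde{\varphi}(b''')) = (-1)^n\widetilde{\varphi}(d(b'''))$.

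The main input is that $\widetilde{\varphi}(d(b'''))$ is automatically a cocycle: the graded commutator $[d,\widetilde{\varphi}]$ is itself a pd derivation (an instance of the bracket in Proposition~\ref{prop:dg_Lie_algebra}), vanishes on generators of $P^*_{<m}$ by the inductive hypothesis, and hence vanishes on all of $P^*_{<m}$, whence $d(\widetilde{\varphi}(d(b''')))=(-1)^n\widetilde{\varphi}(d^2(b''')) = 0$. When $-m+1+n<0$ the resolution property of $P^*$ supplies the required preimage. The hard part will be the edge case $m = n+1$, where $\widetilde{\varphi}(d(b''')) \in Z^0(P^*) = A$ and one needs this value to lie in $B^0(P^*) = I$. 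Here minimality is essential: $d(P^*) \subseteq I\cdot P^*$ forces every $A$-coefficient in the PBW expansion of $d(b''')$ to lie in $I$. Separately, a degree count shows that the only PBW basis monomial of degree $-n$ on which $\widetilde{\varphi}$ can be nonzero is $b$ itself, because any other basis monomial is a pd product of generators all of degree $> -n$ (on each of which $\widetilde{\varphi}$ was declared to vanish); a factor of degree $< -n$ is impossible since the other factors cannot compensate upward in a non-positively graded algebra. Consequently $\widetilde{\varphi}(d(b'''))$ collapses to the single $A$-coefficient of $b$ in $d(b''')$, which by minimality lies in $I = B^0(P^*)$, and a lift exists.

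Once $\widetilde{\varphi}$ has been specified on every generator, Proposition~\ref{prop:der_hom} packages it into a unique pd derivation of $P^*$. The cocycle condition $d(\widetilde{\varphi})=0$ on all of $P^*$ then follows because $[d,\widetilde{\varphi}]$ is a pd derivation vanishing on all generators, hence identically zero. Finally, I would verify $\epsilon\circ\widetilde{\varphi}|_{P^{-n}} = b^\vee$ by the same degree analysis in degree $-n$: every PBW basis monomial of $P^{-n}$ other than $b$ itself is built from generators of degree $> -n$ and is killed by $\widetilde{\varphi}$, while $\widetilde{\varphi}(b) = 1 \in A$ is sent by $\epsilon$ to $1 \in M$. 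Throughout, the principal obstruction is the degree-zero lifting at $m = n+1$, which is precisely where the minimality of the Tate resolution is indispensable.
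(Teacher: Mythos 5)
Your proposal is correct and follows essentially the same strategy as the paper's proof: an inductive lift degree by degree, with minimality of $P^*$ forcing the degree-zero obstruction $\widetilde{\varphi}(d(b'''))$ into $I=B^0(P^*)$ and acyclicity supplying preimages in all lower degrees. The only difference is organizational — you prescribe values on the generators $\mathcal B$ and invoke Proposition~\ref{prop:der_hom} (plus the fact that $[d,\widetilde{\varphi}]$ is a pd derivation vanishing on generators) to get the pd-derivation and cocycle properties for free, whereas the paper defines $\widetilde{\varphi}$ on the full PBW basis and verifies the derivation and pd relations by explicit case analysis.
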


\begin{proof}
Set $s=|b|<0$ and define $\widetilde{\varphi}_n=0$ for $n >s$ and $\widetilde{\varphi}_s:P^s \to P^0$ sending $b$ to $1$ and any other element of $\mathcal{X}(s)$ to zero. So $\epsilon \circ \widetilde{\varphi}_s=b^\vee$.

We know that $P^*$ is an $I$-minimal resolution so
\[
\epsilon \circ \widetilde{\varphi}_s \circ d(P^{s-1})=b^\vee(d(P^{s-1})) \subset b^\vee(I P^s)=0.
\]
Hence $\on{Im}(\widetilde{\varphi}_s \circ d) \subset \on{Ker}\epsilon = \on{Im}((-1)^sd)$, and so we can construct an $A$-linear map $\widetilde{\varphi}_{s-1}:P^{s-1} \to P^{-1}$ making the diagram commute as follows.

 The PD generators of $P^*$ of degree $-1$ will be written as $T_i$. Any  $b^{(f)} \in \mathcal{X}(s-1)$ such that $b$ does not appear in $d(b^{(f)})$ is sent to zero by $\widetilde{\varphi}_{s-1}$. But $b$ appears in $d(b^{(f)})$ only if $b^{(f)}=T_ib$ or $b^{(f)}=b' \in \mathcal B(s-1)$. In the first case, $\widetilde{\varphi}_{s}d(T_ib)=d(T_i)=(-1)^sd((-1)^sT_i)$ so we set $\widetilde{\varphi}_{s-1}(T_ib)=(-1)^sT_i$. In the second case, there exists $x \in I$ such that $xb$ appears in $d(b')$. Thus $\widetilde{\varphi}_{s}d(b')=\widetilde{\varphi}_{s}(xb)=x \in I=\on{Ker}\epsilon=\on{Im}(-1)^sd$. Hence there exists $y \in P^{-1}$ such that $(-1)^s dy=x$, and we set $\widetilde{\varphi}_{s-1}(b')=y$.

We know check that $\widetilde{\varphi}_{s-1}$ satisfies the PD relations. The elements in $\mathcal{X}(s-1)$ are of the following forms:
\begin{enumerate}[nosep]
    \item elements of $\mathcal B(s-1)$;
    \item monomials $b^{(f)}$ with $\on{poldeg}f \geq 2$ and with $f(b)=1$, i.e., $b^{(f)}=T_ib$;
    \item monomials $b^{(f)}$ with $\on{poldeg}f \geq 2$ and with $f(b)=0$.
\end{enumerate}
In case (1), there is nothing to verify. In case (2), we have $\widetilde{\varphi}_{s-1}(T_ib)=(-1)^sT_i=\widetilde{\varphi}_{-1}(T_i)b+(-1)^{s|T_i|}T_i\widetilde{\varphi}_{s}(b)$. Lastly, in case (3), all variables in $b^{(f)}$ have degree $\geq s$ and $f(b)=0$, so $b$ does not appear in $d(f^{(b)})$. As explained in the previous paragraph, we set $\widetilde{\varphi}_{s-1}(b^{(f)})=0$, and it follows that $\widetilde{\varphi}_{s-1}$ satisfies the PD relation on this monomial.

We now assume that we have lifted $\widetilde{\varphi}_{s}$ up to $\widetilde{\varphi}_{N}$ and each $\widetilde{\varphi}_{k}$ ($N \leq k \leq s$) satisfies the PD relations. Consider an element in $\mathcal X(N-1)$. There are three cases:

$\bullet$ Case 1. The element is of the form $b^{(f_1)}b^{(f_2)}$ such that there exist $b_1$ and $b_2$ satisfying $f_1(b_1) \neq 0$ and $f_2(b_1)=0$, and satisfying $f_1(b_2) = 0$ and $f_2(b_2) \neq 0$. We write $Y=b^{(f_1)}$ and $Z=b^{(f_2)}$. Then
\begin{align*}
\widetilde{\varphi}_{N}d(YZ) & =  \widetilde{\varphi}_{N}\left(d(Y)Z+(-1)^{|Y|}Yd(Z)\right) \\[5pt]
 & =  \widetilde{\varphi}_{|Y|+1}(d(Y))Z+(-1)^{s(|Y|+1)}d(Y)\widetilde{\varphi}_{|Z|}(Z) \\[5pt]
 & \quad  +(-1)^{|Y|}\left(\widetilde{\varphi}_{|Y|}(Y)d(Z)+(-1)^{s|Y|}Y\widetilde{\varphi}_{|Z|+1}(d(Z))\right) \\[5pt]
 & =  (-1)^sd(\widetilde{\varphi}_{|Y|}(Y))Z+(-1)^{|Y|}\widetilde{\varphi}_{|Y|}(Y)d(Z) \\[5pt]
& \quad  +(-1)^{s(|Y|+1)}d(Y)\widetilde{\varphi}_{|Z|}(Z)+(-1)^{|Y|(s+1)}(-1)^sY d( \widetilde{\varphi}_{|Z|}(Z)) \\[5pt]
&=(-1)^sd\left(\widetilde{\varphi}_{|Y|}(Y)Z+(-1)^{s|Y|}Y\widetilde{\varphi}_{|Z|}(Z)\right).
\end{align*}
Hence we set $\widetilde{\varphi}_{N-1}(b^{(f_1)}b^{(f_2)})=\widetilde{\varphi}_{|b^{(f_1)}|}(b^{(f_1)})b^{(f_2)}+(-1)^{s|b^{(f_1)}|}b^{(f_1)}\widetilde{\varphi}_{|b^{(f_2)}|}(b^{(f_2)})$. As $\widetilde{\varphi}_{|b^{(f_1)}|}$ and $\widetilde{\varphi}_{|b^{(f_2)}|}$ satisfy the PD relations, then so does $\widetilde{\varphi}_{N-1}$ on $b^{(f_1)}b^{(f_2)}$.

$\bullet$ Case 2. The element is of the form $b'^{(k)}$ where $b' \in \mathcal B(-2n)$ for some $n>0$. Using the fact that $\widetilde{\varphi}_{N}$ satisfies the PD relations and that $|b'|$ is even, we can show that
\[
\begin{array}{rcl}
\widetilde{\varphi}_{N}d(b'^{(k)}) & = & (-1)^sd\left(\widetilde{\varphi}_{|b'|}(b')b'^{(k-1)}\right)
\end{array}
\]
so we can set $\widetilde{\varphi}_{N-1}(b'^{(k)})=\widetilde{\varphi}_{|b'|}(b')b'^{(k-1)}$ and it satisfies the PD relations on this element.

$\bullet$ Case 3. The element is a variable $b' \in \mathcal B(N-1)$. Then $(-1)^sd \circ \widetilde{\varphi}_{N} \circ d(b')=\widetilde{\varphi}_{N+1} \circ d \circ d(b')=0 $. So $\widetilde{\varphi}_{N} d(b') \in \on{Ker}(-1)^sd=\on{Im}(-1)^sd$. Hence there exists an element $Z \in P^{N-1-s}$ such that $(-1)^sd(Z)=\widetilde{\varphi}_{N}d(b')$. We can set $\widetilde{\varphi}_{N-1}(b')=Z$.

We have thus defined an $A$-linear map $\widetilde{\varphi}_{N-1}:P^{N-1}\to P^{N-1-s}$ that satisfies the PD relations on every element of $\mathcal X(N-1)$. By induction, it follows that the map $\widetilde{\varphi}:P^* \to P^*$ is an element of $Z^{-|b|}(\mc Der_A^{*,\on{pd}}(P^*,P^*))$.
\end{proof}

\begin{example}\label{ex:not_minimal}
The fact that the resolution $P^*$ is $I$-minimal is not automatic. For example, take $A=\mathbb{C}[x,y]/(c_1,c_2)$ where $c_1=x^3,c_2=x^2(1+y)$. Set $\mathfrak{m}=(x,y) \subset \mathbb{C}[x,y]$ and $\mathfrak{m}_x$ its image in $A$. Notice that $A$ is not a complete intersection as $xc_2=(1+y)c_1$ (see Section \ref{sec:Yoneda_CI} for the definition of complete intersection). 
The free variables in degrees $-1$ and $-2$ of the partial resolution $P_2^*$ are given by  $dT_x=x,dT_y=y,dS_1=x^2T_x,dS_2=(1+y)xT_x$. 
But we see that $d\big((1+y)S_1\big)=(1+y)x^2T_x=0$ in $P_2^{-1}$, 
so $(1+y)S_1 \in \on{Ker}d$ and obviously $(1+y)S_1 \notin \mathfrak{m}_xP_2^{-2}=\mathfrak{m}_xP^{-2}$. 
But then it means that $(1+y)S_1 \in d(P^{-3})$ as $P^*$ is exact, so we cannot have $d(P^{-3}) \subset \mathfrak{m}_x P^{-2}$. Hence the resolution $P^*$ is not $I$-minimal.
\end{example}

We cannot lift morphisms corresponding to elements of the form $(b^{(f)})^\vee$ if there exist $b_1 \neq b_2$ in $\mathcal B$ with $f(b_1) \neq 0 \neq f(b_2)$, as they would not satisfy the PD relations. For example, consider $b^{(k)}$ for an even variable $b$ and $k \geq 2$. We want to lift $(b^{(k)})^\vee$. As in Theorem \ref{thm:lift_pd}, we define $\widetilde{\varphi}_n=0$ for $n >|b^{(k)}|$ and $\widetilde{\varphi}_{|b^{(k)}|}:P^{|b^{(k)}|} \to P^0$ where $\widetilde{\varphi}_{|b^{(k)}|}(b^{(k)})=1$. But we want $\widetilde{\varphi}_{|b^{(k)}|}$ to satisfy the PD relations, so $1=\widetilde{\varphi}_{|b^{(k)}|}(b^{(k)})=\widetilde{\varphi}_{|b|}(b)b^{(k-1)}=0$ as $\widetilde{\varphi}_{|b|}=0$, which is a contradiction. Likewise, if $Z=b^{(f_1)}b^{(f_2)}$ as in the previous proof. Then $\widetilde{\varphi}_{|b^{(f_1)}b^{(f_2)}|}(b^{(f_1)}b^{(f_2)})=1$, $\widetilde{\varphi}_{|b^{(f_1)}|}(b^{(f_1)})=0$, and $\widetilde{\varphi}_{|b^{(f_2)}|}(b^{(f_2)})=0$. Hence $1=\widetilde{\varphi}_{|b^{(f_1)}b^{(f_2)}|}(b^{(f_1)}b^{(f_2)})=\widetilde{\varphi}_{|b^{(f_1)}|}(b^{(f_1)})b^{(f_2)}+(-1)^{|b^{(f_1)}b^{(f_2)}||b^{(f_1)}|}b^{(f_1)}\widetilde{\varphi}_{|b^{(f_2)}|}(b^{(f_2)})=0$, which is also a contradiction. \delete{It follows that an element in $\on{Ext}_A^*(A/I,A/I)$ corresponding to monomial $f \in \mathcal X$ with $\on{poldeg}f \geq 2$ cannot be reached by the morphism \eqref{eq:morphism}.}

\begin{theorem}\label{thm:lin_comb}
Let $A$ and $P^*$ be as above. Then any element in $H^{n}(\mc Der_A^{*,\on{pd}}(P^*,P^*))$ with $n >0$ is the image of an $A$-linear combination of lifts of $b^\vee$ with $b \in \mathcal B(-n)$.
\end{theorem}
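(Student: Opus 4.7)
The plan is to show that every cohomology class in $H^n(\mc Der_A^{*,\on{pd}}(P^*,P^*))$ is represented by an $A$-linear combination of the lifts $\widetilde{b^\vee}$ produced in Theorem~\ref{thm:lift_pd}, by first reading off the coefficients from the value of a representative cocycle on the degree-$(-n)$ generators, and then showing the difference is a coboundary.

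First, I would pick any cocycle $\widetilde{\varphi} \in Z^n(\mc Der_A^{*,\on{pd}}(P^*,P^*))$ and examine its restriction to the generating set $\mathcal{B}$. Since $P^*$ is concentrated in non-positive cohomological degrees and $n>0$, for every $b \in \mathcal{B}(-k)$ with $k<n$ we have $\widetilde{\varphi}(b) \in P^{n-k} = 0$, so $\widetilde{\varphi}$ vanishes on all such generators. For $b \in \mathcal{B}(-n)$, $\widetilde{\varphi}(b)$ lies in $P^{0}=A$, so I would set $a_b := \widetilde{\varphi}(b) \in A$ (finitely many nonzero terms since $F_n$ has finite rank) and form
\[
\widetilde{\chi} := \sum_{b \in \mathcal{B}(-n)} a_b\,\widetilde{b^\vee}.
\]
Because every $a_b \in P^0$ is central, of even degree, and satisfies $d(a_b)=0$, this combination is itself a pd dg derivation and a cocycle of degree $n$, and by construction it coincides with $\widetilde{\varphi}$ on every element of $\mathcal{B}(-k)$ for $k \leq n$. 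Thus $\widetilde{\psi} := \widetilde{\varphi} - \widetilde{\chi}$ is a cocycle vanishing on every generator of degree $\geq -n$.

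The main work will then be to show $\widetilde{\psi}$ is a coboundary, which will give $[\widetilde{\varphi}]=[\widetilde{\chi}]$ as claimed. I plan to build $\eta \in \mc Der_A^{n-1,\on{pd}}(P^*,P^*)$ with $d\eta = \widetilde{\psi}$ by specifying its values on $\mathcal{B}$ (using Proposition~\ref{prop:der_hom}) and extending uniquely as a pd dg derivation. Set $\eta(b)=0$ for $b \in \mathcal{B}(-k)$ with $k \leq n$, and proceed by induction on $k \geq n+1$. For $b \in \mathcal{B}(-k)$, the desired equality $d\eta(b) = \widetilde{\psi}(b)$ rewrites as $d(\eta(b))=c(b)$, where
\[
c(b) := \widetilde{\psi}(b) + (-1)^{n-1}\eta(d(b)) \in P^{n-k}.
\]
Since $k>n$ the target degree $n-k$ is strictly negative, and as $P^*$ is a resolution of $M$ concentrated in degree $0$, one has $H^{n-k}(P^*)=0$. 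Hence it will suffice to verify that $c(b)$ is a cocycle in $P^*$, after which any preimage under $d$ will serve as $\eta(b)$.

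The hard step will be exactly this closedness check. Expanding $d(c(b))$ using the cocycle property of $\widetilde{\psi}$ reduces the question to $d\eta(d(b)) = \widetilde{\psi}(d(b))$. Both $d\eta$ and $\widetilde{\psi}$ are pd dg derivations of degree $n$, and by the inductive hypothesis they agree on every generator $b' \in \mathcal{B}(-k')$ with $k'<k$; since a pd dg derivation is determined by its values on the generating set, their difference vanishes on the whole pd subalgebra generated by those lower-degree generators. Because $d(b) \in P^{-k+1}$ is built from such generators via ordinary and divided-power products, the equality holds, closing the induction. Once $\eta$ has been defined on all of $\mathcal{B}$, extending uniquely yields a pd dg derivation with $d\eta - \widetilde{\psi}=0$ on every generator, hence globally, so $\widetilde{\psi}=d\eta$ is a coboundary and $[\widetilde{\varphi}] = \bigl[\sum_{b \in \mathcal{B}(-n)} a_b\,\widetilde{b^\vee}\bigr]$, as claimed.
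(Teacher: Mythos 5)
Your proof is correct and follows essentially the same strategy as the paper: read off the coefficients $\varphi(b)\in A$ from the degree-$(-n)$ component of a representative cocycle and compare it with the corresponding $A$-linear combination of the lifts $\widetilde{\varphi}^b$ furnished by Theorem~\ref{thm:lift_pd}. The only difference is that the paper concludes by asserting that two cocycles lifting the same $\varphi_{-n}$ are cohomologous, whereas you make this step explicit by constructing the homotopy $\eta$ as a pd derivation, by induction on the degree of the generators and using the acyclicity of $P^*$ in negative degrees together with the fact that a pd derivation is determined by its values on $\mathcal B$ — a worthwhile verification that the paper leaves implicit.
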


\begin{proof}
Consider $\varphi \in Z^{n}(\mc Der_A^{*,\on{pd}}(P^*,P^*))$ with $n>0$. Then we have $\varphi_{-n}:P^{-n} \to P^0$ and as $\varphi$ is a PD derivation, we see that  $\varphi_{-n}(b^{(f)})=0$ if $\on{poldeg}b^{(f)} \geq 2$, and so $\varphi_{-n}=\sum_{b \in \mathcal B(-n)}\varphi(b)\widetilde{\varphi^b}_{|b|}$, where $\varphi(b) \in P^0=A$ and $\widetilde{\varphi^b}_{|b|}:P^{|b|} \to P^0$ is the map sending $b$ to $1$ and any other element of $\mathcal X(-n)$ to zero. So $\epsilon \circ \widetilde{\varphi^b}_{|b|}=b^\vee$. We have seen in Theorem \ref{thm:lift_pd} that $b^\vee$ can be lifted to $\widetilde{\varphi^b} \in Z^{n}(\mc Der_A^{*,\on{pd}}(P^*,P^*))$. It follows that $\sum_{b \in \mathcal B(-n)}\varphi(b)\widetilde{\varphi^b}$ is a lift of $\varphi_{-n}$. But we also know that $\varphi$ is a cocycle so it is also a lift of $\varphi_{-n}$. It follows that in $H^{n}(\mc Der_A^{*,\on{pd}}(P^*,P^*))$, we have
\[
\overline{\varphi}=\sum_{b \in \mathcal B(-n)}\varphi(b)\overline{\widetilde{\varphi^b}}
\]
where $\overline{\varphi}$ indicate the cohomology class of $\varphi$.
\end{proof}

Based on Theorem \ref{thm:lin_comb} and Eq.\eqref{eq:neg_hom}, we have the following situation:
\[
\begin{array}{cccc}
\Psi: & H^*(\mc Der_A^{*,\on{pd}}(P^*,P^*)) & \to&  H^*(\mc Hom_A^{*}(P^*,P^*)) \\
&\{\text{degree }\leq 0\} & \mapsto  & 0 \\
&\text{degree }n \geq 1 & \mapsto &  A/I\text{-linear span of }\{b^\vee \ | \ b \in \mathcal B(-n)\} \subset H^n(\mc Hom_A^{*}(P^*,P^*)).
\end{array}
\]
where $H^{<0}(\mc Hom_A^{*}(P^*,P^*))=0$ by Lemma \ref{lem:homotopy}.

We have seen in Theorem \ref{thm:lin_comb} that $\Psi$ is not surjective as products of $b^\vee$ cannot be reached. Moreover, $\Psi$ might not be injective. Indeed, using Theorem \ref{thm:lin_comb}, if we write $\widetilde{\varphi^{b}}$ for the lift of $b^\vee$ and set $a \in A$, then we see that $\Psi(a \widetilde{\varphi^{b}})=\overline{a}b^\vee$. Hence if $\overline{a}=\overline{a'}$ with $a \neq a'$, then $\Psi(a \widetilde{\varphi^{b}})=\Psi(a' \widetilde{\varphi^{b}})$ but $a \widetilde{\varphi^{b}} \neq a' \widetilde{\varphi^{b}}$. 

It was stated in Theorem \ref{prop:homotopy_restricted} that $H^*(\mc Der_A^{*,\on{pd}}(P^*,P^*))$ is a restricted graded Lie algebra with $q(D)=D \circ D$. However, we see from Lemma \ref{lem:Yoneda_prod} that $\Psi(q(D))=\Psi(D) \circ \Psi(D)=-\Psi(D) \cdot \Psi(D)$ as $|D|$ odd. On $\on{Im}(\Psi)$ we define two quadratic maps $q_{yon}:\Psi(D) \mapsto \Psi(D) \cdot \Psi(D)$ and $q_{comp}:\Psi(D) \mapsto \Psi(D) \circ \Psi(D)$. We also write $[\cdot,\cdot]_{yon}$ and $[\cdot,\cdot]_{comp}$ for the super commutator with respect to the Yoneda and composition products, respectively. Then we can verify that $([\cdot,\cdot]_{yon}, q_{yon})$ and $([\cdot,\cdot]_{comp}, q_{comp})$ provide $\pi^*(A,I)$ with two different restricted graded Lie algebra structures. In what follows, we will only consider the structure coming from the Yoneda product and write only $([\cdot,\cdot], q)$ for the Lie bracket and the quadratic map.

\delete{
We set $\pi^*(A,I)=H^*(\mc Der_A^{*,\on{pd}}(P^*,P^*)) \subset \on{Ext}_A^*(A/I,A/I)$, and see that
\begin{align}\label{eq:Im_Psi}
\on{Im}\Psi =\mathcal L.
\end{align}
We can now use the morphism $\Psi$ to provide a description of the Yoneda algebra.

{\color{red}
$\on{Im}\Psi$ is a graded Lie algebra
}

\begin{proposition}\label{prop:L_Lie}
Let $A$ and $P^*$ be as above. Then $\mathcal{L}$ is a graded Lie algebra.
\end{proposition}

\begin{proof}
Consider $\alpha,\beta \in \on{Ext}_A^*(A/I,A/I)$ duals of PD variables in $P^*$. We know from Theorem \ref{thm:lift_pd} that there exist $\widetilde{\varphi}, \widetilde{\psi} \in Z^*(\mc Der_A^{*,\on{pd}}(P^*,P^*))$ such that $\alpha=\epsilon \circ \widetilde{\varphi}$ and $\beta=\epsilon \circ \widetilde{\psi}$. We have seen in Lemma \ref{lem:Yoneda_prod} that the Yoneda product satisfies $\alpha\beta=(-1)^{|\alpha||\beta|}\epsilon \circ \widetilde{\varphi} \circ \widetilde{\psi}$ (we do not write the $\cdot$ of the Yoneda product to lighten notations). Consider the graded commutator
\[
[\alpha, \beta]=\alpha\beta-(-1)^{|\alpha||\beta|}\beta\alpha.
\]
Then $[\alpha, \beta]=(-1)^{|\alpha||\beta|}\epsilon \circ [\widetilde{\varphi}, \widetilde{\psi}]$. We know that $[\widetilde{\varphi}, \widetilde{\psi}] \in \mc Der_A^{|\alpha|+|\beta|,\on{pd}}(P^*,P^*)$ by Proposition \ref{prop:dg_Lie_algebra}. It follows that $[\widetilde{\varphi}, \widetilde{\psi}]_{-n}=0$ for $n<|\alpha|+|\beta|$, and that  $[\widetilde{\varphi}, \widetilde{\psi}]_{-|\alpha|-|\beta|}:P^{-|\alpha|-|\beta|} \to P^0$. But as $[\widetilde{\varphi}, \widetilde{\psi}]_{-|\alpha|-|\beta|}$ needs to satisfy the PD relations, it follows that $[\widetilde{\varphi}, \widetilde{\psi}]_{-|\alpha|-|\beta|}=\displaystyle \sum_{X \text{ variable} \atop |X|=-|\alpha|-|\beta|}[\widetilde{\varphi}, \widetilde{\psi}](X)\widetilde{\chi}^X_{|X|}$ as in the proof of Theorem \ref{thm:lin_comb}, and that $\epsilon \circ \widetilde{\chi}^X=X^\vee$. It follows that $[\alpha, \beta]=\displaystyle(-1)^{|\alpha||\beta|}\sum_{X \text{ variable} \atop |X|=-|\alpha|-|\beta|}[\widetilde{\varphi}, \widetilde{\psi}](X)X^\vee \in \mathcal{L}$. It is then clear that we have a bilinear bracket
\[
[\cdot,\cdot]: \mathcal{L} \otimes \mathcal{L} \to \mathcal{L}.
\]

We have seen in Proposition \ref{prop:dg_Lie_algebra} that $[\cdot,\cdot]$ is a Lie bracket for the (dg) graded Lie algebra $\mc Der_A^{*,\on{pd}}(P^*,P^*)$. Hence it is also a Lie bracket for the graded structure on $\mathcal{L}$.
\end{proof}
}

\begin{proposition}\label{prop:decomp_dual}
Set $A$, $P^*$ as above, and set $b^{(f)} \in \mathcal{X}$. Then  $(b^{(f)})^\vee \in \on{Ext}_A^*(A/I,A/I)$ can be expressed as a finite $A/I$-linear combination of elements of the form $(b^\vee)^{h}$ with $h \in \mathcal{X}(\on{deg}f)$. In particular, $\{b^\vee \ | \  b \in \mathcal B\}$ is a set of generators of $\on{Ext}_A^*(A/I,A/I)$ as an $A/I$-algebra.
\end{proposition}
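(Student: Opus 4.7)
The plan is to combine minimality of $P^*$ with the cocommutative Hopf algebra structure on $P^*$ (Theorem \ref{thm:Hopf}) to identify each $(b^{(f)})^\vee$ with the Yoneda product $(b^\vee)^f$, yielding an $M$-linear combination of length one in the cleanest case.

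\textbf{Step 1 (Minimality).} Since $P^*$ is minimal, $d(P^*) \subseteq IP^*$, and any $A$-linear $\varphi \colon P^* \to M$ satisfies $\varphi \circ d = 0$ (because $\varphi(IP^*) \subseteq IM = 0$). Hence the differential on $\mc Hom_A^*(P^*, M)$ vanishes, and $\on{Ext}_A^n(M,M) = \mc Hom_A(P^{-n}, M)$ as $M$-modules. The duals $\{(b^{(f)})^\vee : f \in \mathcal{X}(-n)\}$ therefore form an $M$-basis of $\on{Ext}_A^n(M,M)$.

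\textbf{Step 2 (Convolution interpretation).} By Proposition \ref{prop:comult_Yoneda}, the Yoneda product on $\on{Ext}_A^*(M,M)$ agrees with the convolution product dual to the comultiplication $\Delta$ on $P^*$: for $\phi, \psi \in \mc Hom_A(P^*, M)$,
\[
(\phi \cdot \psi)(x) \;=\; \sum (-1)^{|\psi||x_{(1)}|}\, \phi(x_{(1)})\,\psi(x_{(2)}), \qquad \Delta(x) = \sum x_{(1)} \otimes x_{(2)}.
\]
By the construction in Theorem \ref{thm:Tate} combined with Theorem \ref{thm:Hopf}, every generator $b \in \mathcal{B}$ is primitive, and the divided-power coproduct on an even generator is $\Delta(b^{(k)}) = \sum_{i+j=k} b^{(i)} \otimes b^{(j)}$, while $\Delta$ is extended as a graded algebra morphism.

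\textbf{Step 3 (Main computation).} Compute $(b^\vee)^f(b^{(g)})$ for $f, g \in \mathcal{X}(-n)$ by iterated convolution. Since $b^\vee$ vanishes on $1$ and on every $A$-basis element of polynomial degree $\geq 2$, the only contributions come from decompositions of $b^{(g)}$ (via the $\on{poldeg}(f)$-fold iterated coproduct) into factors that are each a single generator in $\mathcal{B}$. Expanding $\Delta^{(n)}(b^{(g)})$ explicitly using the primitive/divided-power formulas and tallying the surviving contributions yields $(b^\vee)^f(b^{(g)}) = \pm\delta_{f,g}$. This identifies $(b^{(f)})^\vee = \pm (b^\vee)^f$ in $\on{Ext}_A^{-\on{deg} f}(M,M)$, giving $(b^{(f)})^\vee$ as the required $M$-linear combination (of length one) of products $(b^\vee)^h$ with $h = f \in \mathcal{X}(\on{deg} f)$.

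\textbf{Step 4 (Generation).} The ``in particular'' claim is immediate: every element of $\on{Ext}_A^n(M,M)$ is an $M$-linear combination of dual basis elements (Step 1), and each of these is an $M$-linear combination of products of $b^\vee$'s (Step 3); hence $\{b^\vee : b \in \mathcal{B}\}$ generates $\on{Ext}_A^*(M,M)$ as an $M$-algebra.

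The main technical obstacle lies in Step 3: the combinatorial identification of the iterated convolution $(b^\vee)^f$ with the dual basis element $(b^{(f)})^\vee$. Conceptually this is the familiar duality between the cofree conilpotent pd coalgebra $\onsf{TS}(V)$ and the free graded-commutative algebra $\onsf{Sym}(V^\vee)$, but making it precise requires careful tracking of signs from the symmetric braiding (especially for odd-degree generators, where $f(b) \in \{0,1\}$), as well as consistency with the fixed ordering on $\mathcal{B}$ used to define both $b^{(f)}$ and the ordered Yoneda product $(b^\vee)^f$.
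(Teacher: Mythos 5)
Your Step 1 and Step 4 are fine, but Step 3 — the heart of the argument — asserts something false, namely that $(b^\vee)^f(b^{(g)})=\pm\delta_{f,g}$, i.e. that $(b^{(f)})^\vee=\pm(b^\vee)^f$ on the nose with no lower-order terms. This fails already in the complete intersection case of Section 5.1: there the Yoneda product computed via lifts (Lemma \ref{lem:Yoneda_prod}) gives
\[
\alpha_j\alpha_i \;=\; (T_jT_i)^\vee+\sum_{p}n^p_{i,j}\,S_p^\vee ,
\]
and the coefficients $n^p_{i,j}$ are generically nonzero (for $R=\msf k[x]/(x^2)$ one gets $\alpha^2=\beta=S^\vee\neq 0$, which is why $\Ext_R^*(\msf k,\msf k)$ is a polynomial ring on a degree-one class rather than an exterior algebra). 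The source of these correction terms is that the Yoneda product is computed by composing lifts $\widetilde{\varphi}$, and the lift of $b^\vee$ necessarily interacts with the differential of $P^*$, which does not preserve polynomial degree (e.g. $\widetilde{\alpha_i}(S_p)=C_{p,i}$ has polynomial degree $1$). Your convolution computation in Step 3 would only reproduce the Yoneda product if the coproduct on $P^*$ were a chain map with every generator primitive; the displayed example shows these two requirements are incompatible, so the naive primitive coproduct you use in Step 2 cannot be the one relevant to Proposition \ref{prop:comult_Yoneda}. (You also do not actually carry out the "tallying" you defer to; but the real problem is that its claimed outcome is wrong.)

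The paper's proof avoids this by not claiming an exact duality. It peels off the maximal generator $b'$ of $b^{(f)}$, lifts $b'^\vee$ to a pd derivation, and shows that the only basis elements on which $(b^{(f-\delta_{b'})})^\vee\circ\widetilde{\varphi}'$ can be nonzero are $b^{(f)}$ itself and monomials of strictly smaller polynomial degree. This yields
\[
(b^{(f)})^\vee=(-1)^{|b'||b^{(f-\delta_{b'})}|}(b^{(f-\delta_{b'})})^\vee\cdot b'^\vee-\sum_{\on{poldeg}b^{(g)}<\on{poldeg}b^{(f)}}\overline{a_g}\,(b^{(g)})^\vee ,
\]
and an induction on polynomial degree (plus the Lie-algebra structure on $\on{Im}\Psi$ to reorder the resulting unordered products) finishes the argument. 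To repair your proof you would need to replace the exact identification of Step 3 with an upper-triangularity statement of this kind and add the induction on $\on{poldeg}$.
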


\begin{proof}
Set $\mathcal B(b^{(f)})=\{b \in \mathcal B \ | \ f(b) \neq 0\}$. We know that $\mathcal B(b^{(f)})$ is a ordered set coming from the order on $\mathcal B$ and it is finite as $f$ has finite support. Set $b'=\on{min} \mathcal B(b^{(f)})$. Hence $b'$ is the right-most variable of the ordered monomial $b^{(f)}$. We lift $b'^\vee$ and call $\widetilde{\varphi}'$ the resulting PD dg derivation. 

For any $b'' \in \mathcal{B}$, define $\delta_{b''} : \mathcal{B} \to \bb N$ such that $\delta_{b''}(b)=\delta_{b'', b}$ (Kronecker coefficient). Consider $\widetilde{\varphi}'_{|b^{(f)}|} :P^{|b^{(f)}|} \to P^{|b^{(f-\delta_{b'})}|}$ and set $b^{(g)}$ an element in $\mathcal X(b^{(f)})$, i.e., $|b^{(g)}|=|b^{(f)}|$. Then for $b^{(f-\delta_{b'})}$ to appear in $\widetilde{\varphi}'_{|b^{(f)}|}(b^{(g)})$, there are two possibilities:
\begin{enumerate}
    \item $b^{(g)}=b^{(f)}$;
    \item $g(b) \leq f(b)$ for all $|b| \leq |b'|$ and there exists a unique $b''$ such that $|b''|>|b'|$ and $g(b'')=1$. As $|b^{(g)}|=|b^{(f)}|$, this is only possible if $\on{poldeg}b^{(g)}<\on{poldeg}b^{(f)}$.
\end{enumerate}
It follows that
\[
(b^{(f-\delta_{b'})})^\vee \circ \widetilde{\varphi}'_{|b^{(f)}|}=(b^{(f)})^\vee+\sum_{\substack{|b^{(g)}|=|b^{(f)}| \\ \on{poldeg}b^{(g)}<\on{poldeg}b^{(f)}}} \overline{a_g}(b^{(g)})^\vee
\]
where $\overline{a_g} \in A/I$. Then, using Lemma \ref{lem:Yoneda_prod}, we can rewrite the above equation into
\begin{align}\label{eq:decomp}
(b^{(f)})^\vee=(-1)^{|b'||b^{(f-\delta_{b'})}|}(b^{(f-\delta_{b'})})^\vee \cdot b'^\vee-\sum_{\substack{|b^{(g)}|=|b^{(f)}| \\ \on{poldeg}b^{(g)}<\on{poldeg}b^{(f)}}} \overline{a_g} (b^{(g)})^\vee.
\end{align}
But we know that $\on{poldeg}b^{(f-\delta_{b'})}<\on{poldeg}b^{(f)}$, and so Eq.\eqref{eq:decomp} states that $(b^{(f)})^\vee$ is an $A/I$-linear combination of Yoneda products of $(b^{(h)})^\vee$ with $\on{poldeg}b^{(h)}<\on{poldeg}b^{(f)}$. By doing an induction on this polynomial degree, we see that $(b^{(f)})^\vee$ is an $A/I$-linear combination of unordered products of elements in $\{b^\vee \ | \ b \in \mathcal B\}$. However, we know that $\on{Im}\Psi$ is a graded Lie algebra whose Lie bracket is the graded commutator in $\on{Ext}_A^*(A/I,A/I)$. Hence we can reorder the monomials so that $(b^{(f)})^\vee$ is an $A/I$-linear combination of elements in $\{(b^\vee)^h \ | \ h \in \mathcal X(\on{deg}f)\}$. 
\end{proof}

\delete{
{\color{red}
Consider a commutative ring $A$ and $A$ an $\bb N$-graded connected Hopf $A$-algebra. We also assume that $A^n$ is a free $A$-module for all $n$. Then based on \cite{Andre} where we replace the field $K$ by the ring $A$, we have $A \cong U^c(Q(A))$ as coalgebras (\cite[Thm.17]{Andre}), where $Q(A)=A^+/J(A)$ (see \cite{Andre} for notations) is the set of PD indecomposable elements of $A$. If in addition $A^n$ finitely generated projective over $A$, then $A' \cong U(Q(A)')=U(\onsf{Prim}(A'))$ as algebras (cf. \cite[Prop.3.10]{Gulliksen-Levin}), where $(-)'$ denotes the restricted dual. For a Noetherian ring $R$ with ideal $I$, we set $A=R/I$ and $A=P^* \otimes_R A$ where $P^*$ is the $I$-minimal PD dg resolution of the $R$-module $A$ (cf. Theorem \ref{thm:Tate}). Then $A=\on{Tor}^R_*(A,A)$ is a Hopf algebra and the homotopy Lie algebra $\pi^*(R, A)$ is defined as
\[
\pi^*(R, A)=\onsf{Prim}(A')=\onsf{Prim}(\onsf{Ext}_R^*(A, A)),
\]
where the Lie bracket is the graded commutator inside of $\on{Ext}_R^*(A,A)$.}
}

We give below the last theorem of this section, providing a connection between the homotopy Lie algebra of $A$ and the PD dg derivations of the resolution $P^*$.

\begin{theorem}\label{thm:image_homotopy}
Let $A$ be a Noetherian ring, $I \subset A$ an ideal, and $P^*$ is the $I$-minimal PD dg resolution of the $A$-module $A/I$ with $P^n$ a free $A$-module for all $n$. Then the Yoneda algebra is equipped with a comultiplication satisfying
\begin{align}\label{eq:inclusion}
\pi^*(A,I) \subseteq \on{Prim}(\on{Ext}_A^*(A/I,A/I)),
\end{align}
where $\pi^*(A,I)$ is a restricted graded Lie algebra with an ordered basis $\{b^\vee \ | \ b \in \mathcal B\}$.  In particular $\on{Ext}_A^*(A/I,A/I)$ is generated as an $A/I$-algebra by $\pi^*(A,I)$. Moreover, we have
\begin{enumerate}
    \item If $\bb Q \subset A$, then Eq.\eqref{eq:inclusion} in an equality.
    \item If $\bb Q \not\subset A$, then Eq.\eqref{eq:inclusion} is a strict inclusion.
\end{enumerate}
\end{theorem}

\begin{proof}
We know (\cite[Ch. 8, Sec. 1]{MacLane})
that $\on{Tor}^A_*(A/I,A/I)=H^*(P^* \otimes_A A/I)$ is equipped with a product
\[
\begin{array}{cccc}
\varpi: & H^*(P^* \otimes_A A/I) \otimes_{A/I} H^*(P^* \otimes_A A/I) & \to & H^*(P^* \otimes_A A/I) \\
& \overline{u} \otimes \overline{v} & \mapsto & \overline{uv}.
\end{array}
\]
But as $P^*$ is assumed to be $I$-minimal, we have $H^*(P^* \otimes_A A/I)=P^* \otimes_A A/I$ and the above product is then the regular product of the algebra $P^* \otimes_A A/I$. The above product is a morphism of $A/I$-modules and for all $n \geq 0$, its dual gives a map
\[
\Delta_n: \mc Hom_{A/I}(P^{-n} \otimes_A A/I,A/I) \to \mc Hom_{A/I}\left(\bigoplus_{i+j=n}(P^{-i} \otimes_A A/I) \otimes_{A/I} (P^{-j} \otimes_A A/I),A/I\right).
\]
However, we have $\mc Hom_{A/I}(P^{-n} \otimes_A A/I,A/I)=\mc Hom_A(P^{-n},A/I)$ by base change. Moreover, we can rewrite $(P^{-i} \otimes_A A/I) \otimes_{A/I} (P^{-j} \otimes_A A/I)=P^{-i} \otimes_A P^{-j} \otimes_A A/I$. The right hand side becomes $\bigoplus_{i+j=n}\mc Hom_A(P^{-i} \otimes_A P^{-j},A/I)$ using the commutation of $\mc Hom$ with finite direct sums and the base change. Then, we know that $P^{-i}$ and $P^{-j}$ are free $A$-modules of finite rank due to $A$ being Noetherian, so we can split the tensor product. Finally, using the fact that $\on{Ext}_A^n(A/I,A/I)=\mc Hom_A(P^{-n},A/I)$ for all $n \geq 0$ due to $P^*$ being $I$-minimal, we can rewrite $\Delta_n$ as
\[
\Delta_n: \on{Ext}_A^n(A/I,A/I) \to \bigoplus_{i+j=n}\on{Ext}_A^i(A/I,A/I) \otimes_A \on{Ext}_A^j(A/I,A/I).
\]
We thus have a comultiplication  $\Delta:\on{Ext}_A^*(A/I,A/I) \to \on{Ext}_A^*(A/I,A/I) \otimes_A \on{Ext}_A^*(A/I,A/I)$. 

\delete{Moreover we know from Proposition \ref{prop:comult_Yoneda} the dual of the comultiplication in $P^*$ gives the Yoneda product. But we also know that the product in $P^*$ is a coalgebra morphism as $P^*$ is a bialgebra. Hence the comultiplication in $\on{Ext}_A^*(M,M)$ becomes an algebra morphism, where the algebra structure is given by the Yoneda product. We want to determine the primitive elements of $\Delta$.}

Let $b \in \mathcal B$ and consider $b^\vee \in \on{Ext}_A^{-|b|}(A/I,A/I)$. Then by definition $\Delta(b^\vee)=b^\vee \circ \varpi$. As $b$ is a generator of $P^*$ as a PD algebra and $\varpi$ is the product in $P^* \otimes_AA/I$, the only possibility for $b$ to appear as an element in $\on{Im}\varpi$ is as $b=\varpi(b \otimes 1)=\varpi(1 \otimes b)$. Hence $\Delta(b^\vee)=b^\vee \otimes_A 1 +1 \otimes_A b^\vee$ and $b^\vee \in \on{Prim}(\on{Ext}_A^{*}(A/I,A/I))$. As $\Delta$ is linear, it follows that $\pi^*(A,I) \subseteq \on{Prim}(\on{Ext}_A^*(A/I,A/I))$.

We have assumed $P^*$ $I$-minimal, and so $\on{Ext}^*_A(A/I,A/I)$ is spanned by elements $(b^{(f)})^\vee$ with $f \in \mathcal{X}$ (cf. Eq.\eqref{eq:basis_P}). We have seen above that if $\on{poldeg}b^{(f)}=1$, then $(b^{(f)})^\vee$ is primitive. Now assume that $\on{poldeg}b^{(f)} \geq 2$. We have two cases:\\
\underline{Case 1:} there exist $b \neq b' \in \mathcal{B}$ such that $f(b) \neq 0 \neq f(b')$. Hence we can decompose $b^{(f)}=b^{(g)}b^{(h)}$ with $g,h \in \mathcal{X}$ having disjoint supports. But then we have $\Delta((b^{(f)})^\vee)(b^{(g)} \otimes_A b^{(h)})=(b^{(f)})^\vee \circ \varpi(b^{(g)} \otimes_A b^{(h)})=(b^{(f)})^\vee(b^{(f)})=1$, hence $\Delta((b^{(f)})^\vee)$ contains $(b^{(g)})^\vee \otimes_A (b^{(h)})^\vee$ so $(b^{(f)})^\vee$ cannot be primitive.

\noindent \underline{Case 2:} we have $b^{(f)}=b^{(k)}$ for some $b \in \mathcal{B}$ with $|b|$ even and $k \geq 2$. We need to consider two subcases:
\begin{enumerate}
    \item Assume $\bb Q \subset A$. Then we have $kb^{(k)}=bb^{(k-1)}$, and so $\Delta((b^{(k)})^\vee)(b \otimes_A b^{(k-1)})=(b^{(k)})^\vee(kb^{(k)})=k$. Hence we see that $\Delta((b^{(k)})^\vee)$ contains $k (b^\vee \otimes_A (b^{(k-1)})^\vee)$, implying that $(b^{(k)})^\vee$ is not primitive.
    \item Assume $\bb Q \not\subset A$. Then there exists a prime $p \in \bb N$ such that for any $1 \leq m \leq p-1$, we have $b^{(m)}b^{(p-m)}=\binom{p}{m}b^{(p)}=0$ as $p$ divides $\binom{p}{m}$. Hence $b^{(p)}$ cannot be obtained using elements of lower degrees. It follows that $(b^{(p)})^\vee$ is primitive.
\end{enumerate}

In the case where $\bb Q \subset A$, we have seen that $(b^{(f)})^\vee$ is primitive if and only if $b^{(f)} \in \mathcal{B}$. As the comultiplication is linear, it follows that $\on{Prim}(\on{Ext}_A^*(A/I,A/I))=\pi^*(A,I)$. Moreover, in the case where $\bb Q \not\subset A$, there exists a prime $p \in \mathbb{N}$ such that $(b^{(p)})^\vee \in \on{Prim}(\on{Ext}_A^*(A/I,A/I))$ but we know that $(b^{(p)})^\vee \notin \pi^*(A,I)$. As a consequence, we have $\pi^*(A,I) \subsetneq \on{Prim}(\on{Ext}_A^*(A/I,A/I))$. The remaining statements follow from Proposition \ref{prop:decomp_dual}.

\delete{
We have seen in Proposition \ref{prop:decomp_dual} that $\on{Ext}_A^{*}(A/I,A/I)$ is generated by $\{b^\vee \ | \ b \in \mathcal B\}$. So if $b, b' \in \mathcal B$, we have $\Delta(b^\vee b'^\vee)=\Delta(b^\vee)\Delta(b'^\vee)=(b^\vee \otimes_A 1 +1 \otimes_A b^\vee) (b'^\vee \otimes_A 1 +1 \otimes_A b'^\vee)=b^\vee b'^\vee \otimes_A 1 +1 \otimes_A b^\vee b'^\vee+(-1)^{|b||b'|}b'^\vee \otimes_A b^\vee+b^\vee \otimes_A b'^\vee$, which is not primitive. By doing an induction on $\on{poldeg}b^{(f)}$, we see that $(b^\vee)^{f}$ is primitive if and only if $\on{poldeg}b^{(f)}=1$, i.e., $(b^\vee)^{f} \in \{b^\vee \ | \ b\in \mathcal B\}$. It follows from the discussion before Proposition \ref{prop:decomp_dual} that $\on{Prim}(\on{Ext}_A^{*}(A/I,A/I))=\pi^*(A,I)$ as $A/I$-modules. We can thus equip $\on{Prim}(\on{Ext}_A^{*}(A/I,A/I))$ with a restricted graded Lie algebra structure. The rest of the proof follows from Proposition \ref{prop:decomp_dual}.}
\end{proof}

\delete{
If $ A$ is a commutative ring and $ A/I$ is an $A$-algebra. 
Suppose that $ P_*$ has a dg algebra structure and the resolution $ \epsilon:P_*\to A/I$ is a dg algebra homomorphism, 
then the algebra $\mc Hom_A^*(P_*,P_*)$ has subcomplex of vector fields 
\[\mc Der_A^*(P_*, P_*)= \displaystyle\bigoplus_n\{\varphi \in Hom_A^n(P_*,P_*) \ | \ \varphi(ab)=\varphi(a)b+(-1)^{|a||\varphi|}a\varphi(b) \}
\]
which is a dg Lie subalgebra of $\mc Hom_A^*(P_*,P_*)$ {\color{red}(checked)}. The cohomology $ H^*(\mc Der_A^*(P_*, P_*))$ has a graded Lie algebra structure {\color{red}(checkedBerthelot Ogus)} and the induced map $H^*(\mc Der_A^*(P_*, P_*))\to H^*(\mc Hom_A^*(P_*, P_*))=\onsf{Ext}_A^*(A/I,A/I) $, which is a graded Lie subalgebra. 
We {\color{red} expect} that this Lie subalgebra generates $\onsf{Ext}^*_A(A/I,A/I)$ as a graded associative algebra and this Lie algebra is independent of the choice of the dg algebra resolution $P_*$ of $A/I$ {\color{red}(CHECK for the complete intersection)}.

And also there are $A$-subcomplexes $ \mc D^{n,*}_A(P_*, P_*)\subseteq \mc Hom_A^*(P_*,P_*)$ for $ n\geq 0$ (differentials).
They satisfy

\[
\mc D^{n,*}_A(P_*, P_*)\circ \mc D^{m,*}_A(P_*, P_*)\subseteq \mc D^{n+m,*}_A(P_*, P_*)
\]
making $\mc Hom_A^*(P_*,P_*)$ a filtered dg algebra {\color{red}(Not really as the union is not the whole space, but a subalgebra)}. The dg subalgebra $\mc D_A^*(P_*,P_*)=\bigcup_n \mc D^{n,*}_A(P_*, P_*)$ is called the algebra of differential operators of $ P_*$. 
Thus we have the homomorphism $\pi$
\[
H^*(\pi): H^*(\mc D_A^*(P_*,P_*))\to \onsf{Ext}_A^*(A/I,A/I).
\]
{\bf Conjecture} is that $ H^*(\pi)$ is surjective. {\color{red}(CHECK for the complete intersection)}

It should be standard argument to show that 
\begin{align}
    \label{almost-commutative}
[\mc D^{n,*}_A(P_*, P_*), \mc D^{m,*}_A(P_*, P_*)]_{gr}\subseteq \mc D^{n+m-1,*}_A(P_*, P_*).
\end{align}
and the associative graded dg algebra 
\[\onsf{gr}(\mc D_A^*(P_*,P_*)) =\bigoplus_{n\geq 0}\mc D^{n,*}_A(P_*, P_*)/\mc D^{n-1,*}_A(P_*, P_*)\]
is a strictly graded commutative associative algebra and generated by $\mc Der_A^*(P_*,P_*)=\onsf{gr}_1(\mc D_A^*(P_*,P_*)) $. In fact $ \mc D^*_A(P_*,P_*)$ should be generated by $\mc Der_A^*(P_*,P_*) $ over $ \mc Hom_A^0(A/I,A/I)$.

The commutative dg $A$-algebra $\onsf{gr}(\mc D_A^*
(P_*, P_*)) $ has a Poisson structure by \eqref{almost-commutative}.

It should be a standard argument that there is a surjective filtered dg algebra homomorphism $ U_A(\mc Der_A^*(P_*,P_*))\to \mc D^*_A(P_*,P_*)$.
  If the conjecture holds, then $\onsf{Ext}_A^*(A/I,A/I) $ should have an associated filtered algebra structure (the image of $H^*(\mc D_A^{\cdot ,*}(P_*, P_*))$ in $ \onsf{Ext}^{*}_A(\onsf{k}_x, \onsf{k}_x)$) such that the associated graded algebra is graded commutative with a Poisson structure. 
  Thus there is a homomorphism of graded dg algebras $ \onsf{gr}(U_A(\mc Der_A^*(P_*,P_*)))\to \onsf{gr}\mc D_A^{\cdot ,*}(P_*, P_*)$ inducing 
  \[
  H^*(\onsf{gr}(U_A(\mc Der_A^*(P_*,P_*))))\to H^*(\onsf{gr}\mc D_A^{\cdot ,*}(P_*, P_*))\to \onsf{gr} \onsf{Ext}^*_A(A, A)  \]
  where $\onsf{gr}(U_A(\mc Der_A^*(P_*,P_*))))=\onsf{Sym}^*(\mc Der_A^*(P_*,P_*))$.  

We note that $\mc Hom^*(P_*, A/I)$ module for the dg Lie algebra
$\mc Der^{*}(P_*,P_*)$.
  
  When $ A=R(V)$ with $V$ being a vertex algebra over the commutative ring $\onsf{k}$, for $ x: A\to A_*$, we have a $\onsf{Ext}^*_A(A, A)$ that has an associated graded commutative algebra $\onsf{gr}_*\onsf{Ext}^*_A(A_x, A_x)$. We now defined the superscheme $ \onsf{spec}(\onsf{gr}_*\onsf{Ext}^*_A(A_x, A_x))$ as the cohomological variety of a vertex algebra at $ x.$ when of $ A=C_2(V)$ (with $V$ being a vertex algebra over commutative ring $ R$) and point $ x: A\to R$.)

 Compare the algebra $\onsf{gr}_*\onsf{Ext}^*_A(A_x, A_x)$ with  $\onsf{Ext}^*_A(A_x, A_x)^{ab}$. If $ x$ is a smooth point they should be the same. What about the complete interest case when $ \onsf{Ext}^*_A(A_x, A_x)$ is graded commutative (and the Lie algebra $ H^*(\mc Der_A^*(P_*,P_*))$ is graded commutative)? Here $\onsf{Ext}^*_A(A_x, A_x)^{ab}$ should be in the graded commutative sense. What happens if $P_*$ has a PD structure? 
 
{\color{red} do some explicit examples in the complete intersection case}

 if $ A/I$ is an $A$-module and $ A/I_x=A/I\otimes _A\onsf{k}_x$ is $A$-module. 

 Consider $\onsf{Ext}^*_A(\onsf{k}_x,A/I)$ is right  module for  $\onsf{Ext}^*_A(\onsf{k}_x,\onsf{k}_x)$. Also  $ \mc Hom^*(P_*, A/I)$ is an right dg-module of the dg algebra $\mc Hom^*(P_*, P_*)$ by precomposition. Define a filtered $ \mc D^{\cdot,*}(P_*,P_*)$-module structure on $\mc Hom^*(P_*, A/I) $ (there are many ways to define such filtered structure (Bernstein filtration). Thus there is an associated grade complex $ \onsf{gr}\mc Hom^*(P_*, A/I) $ which is graded dg module for $\onsf{gr}(\mc D^{\cdot, *}(P_*, P_*)$. Taking cohomology $ H^*(\onsf{gr}\mc Hom^*(P_*, A/I))$, which is a graded module of $\onsf{gr}\onsf{Ext}^*_A(\onsf{k}_x,\onsf{k}_x)$. $\onsf{Ann}_{\onsf{gr}\onsf{Ext}^*_A(\onsf{k}_x,\onsf{k}_x)}(H^*(\onsf{gr}\mc Hom^*(P_*, A/I))$ defines a closed graded subscheme  in $
 \onsf{spec}(\onsf{gr}\onsf{Ext}^*_A(\onsf{k}_x,\onsf{k}_x))$ as 
 \[\onsf{spec}(\onsf{gr}\onsf{Ext}^*_A(\onsf{k}_x,\onsf{k}_x)/\onsf{Ann}_{\onsf{gr}\onsf{Ext}^*_A(\onsf{k}_x,\onsf{k}_x)}(H^*(\onsf{gr}\mc Hom^*(P_*, A/I)).\]
 Thus it will be called homotopy associated variety of $A/I$ at $x$.

 Recall that we plan to define cohomology support variety of $A$-module $A/I$ to be the quotient $$ \onsf{Ext}\to \onsf{Ext}/\onsf{Ann}_{\onsf{Ext}}(\onsf{Ext(\onsf{k}_x, A/I)},$$ then taking maximal commutative quotient. Thus there is a homomorphism $\onsf{Ext}^{ab}\to (\onsf{Ext}/\onsf{Ann}_{\onsf{Ext}}(\onsf{Ext(\onsf{k}_x, A/I)}))^{ab}$. 
 So the cohomological support scheme of $A/I$ is the morphism 
 \[
 \onsf{spec}(\onsf{Ext}/\onsf{Ann}_{\onsf{Ext}}(\onsf{Ext(\onsf{k}_x, A/I)})^{ab}) \to \onsf{spec}(\onsf{Ext}^{ab})
 \]
}
\delete{
\section{Take finitely generated algebras }

\section{Koszul-Tate Resolutions for arbitrary commutative rings}
Let $ R$ be a commutative ring and $ I\subseteq R$ be an ideal. The Koszul-Tate resolution is a strictly graded commutative differential $R$- algebra $R\langle T\rangle=\onsf{TS}_R(T)$ where $T$ is a free graded $R$-module and $\onsf{TS}_R(T)$ is the strictly commutative  differential graded $R$-algebra such that $H_*(TS(T))=R/I$ concentrated in degree 1. In particular $B_i(\onsf{TS}_A(T))\subseteq I\onsf{TS}_A(T))_i$. 

\subsection{Symmetric tensors of differential complexes} Let $\onsf{Cplx}(R)$ be the symmetric differential chain complexes of $R$-modules with differential $d$ of degree $-1$. Here the tensor product of two complexes $(X_*, d_*)\otimes_R(Y_*, d_*)$ is defined by 
\[ (X_*, d_*)\otimes_R(Y_*, d_*)_n=\oplus _{i+j=n}X_i\otimes_{R}Y_j
\]
with differential $d^{X\otimes Y}(x\otimes y)=d(x)\otimes_R y+(-1)^{|x|}x\otimes d(y)$ for all homogeneous $x\in X$ and $ y\in Y$. 

The braiding $b_{X, Y}: X\otimes_R Y\to Y\otimes_R X$ is defined by 
\[ b_{X,Y}(x\otimes y)=(-1)^{|x||y|}y\otimes x
\]
for all homogeneous $x\in X$ and $y\in Y$.  
$b_{X,y}$ is clearly a chain map of chain complexes.
An associative algebra $(A, m, 1)$ (with multiplication $m$ of homogeneous of degree 0) in $ \onsf{Cplx}(R)$ is called strictly graded commutative if  
\[\xymatrix{A\otimes A\ar[dr]_m\ar[rr]^{b_{A, A}}&& A\otimes A\ar[dl]^{m}\\
&A&
}
\] 
and $m(x, x)=0$ for all homogeneous $ x\in X$ of odd degree.   Let $ \onsf{scDGA}(R)$ denote the category of strictly graded commutative dg algebras in $ \onsf{Cplx}(R)$. If $ (A, d)$ and $ (B, d_)$ are two strictly graded commutative algebras, then $ A\otimes_R B$ is also strictly graded commutative dg algebra. We recall the multiplication $m_{A\otimes_R B}$ is defined as the composition of the map:
\[ (A\otimes_R B)\otimes_R(A\otimes_R B)\stackrel{1\otimes b_{B, A}\otimes 1}{\longrightarrow}(A\otimes_R A)\otimes_R(B\otimes_R B)\stackrel{m_A\otimes m_B}{\longrightarrow} A\otimes B.\]

 Let $ (X_*, d_*)$ be an object in $\onsf{Cplx}(R)$, then $T_A^n(X)=X\otimes_R\cdots\otimes_R X$ be the differential complex. The symmetric group $ \mf S_n$ acts on $T_A^n(X)$ as automorphism of chain complexes as follows. 
\[ \sigma(x_1\otimes\cdots\otimes x_n)=(-1)^{\ell(\sigma; (x_1, \cdots, x_n)}(x_{\sigma^{-1}(1)}\otimes \cdots \otimes x_{\sigma^{-1}(n)}).
\]
Here $\ell(\sigma;(x_1, \cdots, x_n))=\sum_{(i, j)\in\onsf{Inv}(\sigma^{-1})}|x_i||x_j|$ and $ \onsf{Inv}(\sigma)=\{ (i< j) \;|\; \sigma(i)> \sigma (j)\}$ is the set of inversions of $\sigma$. 

Let $\onsf{TS}_A^n(X)=T_A^n(X)^{\mf S_n}$ the $R$-submodule of $\mf S_n$-fixed points. It is straight forward to verify that $\onsf{TS}_A^n(X)$
 is closed under the differentials since $ \mf S_n$-action commutes with the differentials since the action of $\sigma\in \mf S_n$ on $ T_A^n(X)$ are chain maps. One can check, for any sequence of homogeneous elements $(x_1\cdots, x_n)$ in $X$ and $ \sigma, \tau\in \mf S_n$
 \[\ell(\tau\sigma; (x_1, \cdots, x_n))\equiv \ell(\sigma; (x_1, \cdots, x_n)+\ell(\tau; (x_{\sigma^{-1}(1)}, \cdots, x_{\sigma^{-1}(n)}) \mod 2.
 \]
 
We now define the multiplication 
\[ \onsf{TS}_A^m(X)\otimes_R \onsf{TS}_A^n(X)\to \onsf{TS}_A^{n+m}(X)
\]
by using the trace map $\onsf{tr}_{\mf S_{n+m}/(\mf  S_m\times \mf  S_n)}$ (see Bourbaki Algebra Ch IV, 5) defined by

\[ f\star g=\sum_{\sigma\in {\mf S_{n+m}/(\mf  S_m\times \mf  S_n)}}\sigma (f\otimes_R g)
\]
for all any coset representatives of $\sigma\in {\mf S_{n+m}/(\mf  S_m\times \mf  S_n)}$. One can verify directly that, 
for any $ (\sigma_m, \sigma_n) \in \mf S_m\times \mf S_n\subseteq \mf S_{m+n}$, 
\[\sigma \circ(\sigma_m, \sigma_n) (f\otimes_R g)=\sigma(f\otimes_r g).
\]

There is a canonical choice of minimal coset representatives $ \sigma\in \mf S_{m+n}/{(\mf  S_m\times \mf  S_n)}$ such that
\[ \sigma(1)<\cdots<\sigma(m)\; \text{ and } \; \sigma(m+1)<\cdots<\sigma(m+n).
\]

Then $\onsf{TS}_A(X)=\oplus _{n=0}^{\infty}\onsf{TS}_A^n(X)$ is an $ \bb N$-graded differential algebra under the multiplication $ \star$. This is a generalization of the shuffle product. Let $ \onsf{TS}_A^+(A/I)=\oplus_{n>0}\onsf{TS}_A^n(A/I)$ is an dg-ideal of the dg algebra.  

We now claim that $ \onsf{TS}_A(X)$ is strictly graded commutative differential graded algebra.  In fact for any $x$ homogeneous, we have 
$x\star x=x\otimes x+(-1)^{|x|^2}x\otimes x$. If $x$ has odd degree, then we have $ x\star x=0$. 

\begin{theorem} For differential complex $(X_*, d_*)$ in $ \onsf{cplx}(R)$, $ \onsf{TS}_R(X)$ is a strictly graded commutative dg algebra (forgetting the $\bb N$-graded structure. Furthermore, $\onsf{TS}_R: \onsf{Cplx}(R)\to \onsf{scDGA}(R)$ is a functor that satisfies the following
$\onsf{TS}_R(M\oplus M')\cong \onsf{TS}_A(M)\otimes_R\onsf{TS}_R(M')$
\end{theorem}

If follows the same argument as in Bourbaki, $\onsf{TS}_R(-): \onsf{Cplx}(R)\to \onsf{scGDA}(R)$ is  functorial. If $ \phi: X\to Y$ is Chain map, then $ \onsf{TS}_A(\phi): \onsf{TS}_A(X)\to \onsf{TS}_A(Y)$ is a homomorphism of DGA.  But this is not the left adjoint of the forgetful functor $ \onsf{scDGA}(R)\to \onsf{Cplx}(R)$.  The correct left adjoint to this forgetful functor the functor $ \onsf{Sym}_R(-): \onsf{Cplx}(R)\to \onsf{scDGA}(R)$ which defined as the coinvariant of $\mf S_n$ for the $T_A^n(M)$. 

We note that there is a natural chain map $ X\to \onsf{TS}_A^1(X)\subseteq \onsf{TS}_A(X)$. 

If $ X$ is free over $ R$, then $\onsf{TS}_A(X)$ has a divided power structure. 

(Stack project discusses only the divided power structure on the event degree part). Thus $\onsf{TS}_A(X)$ is universal strictly graded commutative dg algebra with a power structure.  The functor $M\mapsto \onsf{TS}_A(M) $ is determined by an operad $\scr P$ defined by $ \scr P_n(M)=\Hom_{R\mf S_n}(R, M^{\otimes n})$.  

\subsection{Divided power structure and $D$-module structure of $G_a$}

Let $(A, d) $ strictly graded commutative DG algebra over $R$. A PD -structure on $A$ is $(A, I, \gamma)$ with $I \subseteq A$ being a dg ideal and $\gamma_n: I\to A$ being a sequences of maps satisfying the following conditions
\begin{enumerate}
\item $\gamma_1(x)=x$ and $\gamma_0(x)=1$ for all $x \in I$;
\item $\gamma_n(x)\gamma_m(x)=\binom{n+m}{m} \gamma_{n+m}(x)$;
\item $\gamma_n(ax)=a^n\gamma_n(x)$ for all $ a\in A$ and $x\in I$;
\item $\gamma_n(x+y)=\sum_{i=0}^n\gamma_i(x)\gamma_{n-i}(y)$ for all $ x, y\in I$;
\item $\gamma_n(\gamma_m(x))=\frac{(mn)!}{n! (m!)^n}\gamma_{nm}(x)$;
\item $ \gamma_n(I_r)\subseteq A_{rn}$;
\item $\gamma_n(xy)=0$ for all $ x, y\in I$ homogeneous of odd degrees and $ n\geq 2$;
\item $d(\gamma_n(x))=\gamma_{n-1}(x)d(x)$.
\end{enumerate}
The condition (7) on the odd degree was imposed by Andr\'e in \cite{Andre}. We will simply write $x^{(n)}=\gamma_n(x)$. We recall that the PD structure heavily dependent on the ideal $I$. Given and  graded commutative dg ring, we take $I=0$ and thus have a PD structure. 

Let $(A, I, \gamma)$ be a dg PD ring. Let $(M, d)$ be a dg-module over $(A, d)$. A PD module structure is consists of a subcomplex $J\subseteq M$ such that $IJ\subseteq J$ and $\gamma_n : J\to J $ for all $ n\geq 1$ such that 
\begin{itemize}
\item $\gamma_1(v)=x$  for all $x \in J$;
\item $\gamma_n(ax)=a^n\gamma_n(x)$ for all $ a\in A$ and $x\in J$;
\item $\gamma_n(x+y)=\sum_{i=0}^n\gamma_i(x)\gamma_{n-i}(y)$ for all $ x, y\in J$;
\item $\gamma_n(\gamma_m(x))=\frac{(mn)!}{n! (m!)^n}\gamma_{nm}(x)$;
\item $ \gamma_n(J_r)\subseteq M_{rn}$;
\item $d(\gamma_n(x)=\gamma_{n-1}(x)d(x)$???.
\end{itemize}
Let $ G_a$ be the $R$-group scheme and $\onsf{Dist}$ be the algebra of distributions of $G_a$. We can regard $\onsf{Dist}$ be the differential complex concentrated in even degrees and thus have zero differential. 

Recall that $\onsf{Dist}=R\langle \mc D\rangle$ has a $R$-basis $\{\mc D^{(n)}\;|\; n\geq 0\}$ with $\mc D^{(0)}=1$ and  has an ideal $\onsf{Dist}^+$. Thus $(\onsf{Dist}, \onsf{Dist}^+, \gamma)$  has a PD structure defined by 
\[
\gamma_n(\mc D^{(d)})=\frac{(dn)!}{n! (d!)^n}\mc D^{(nd)}\frac{(dn)!}{n! (d!)^n}\gamma_{dn}(\mc D^{(1)}).
\]
Furthermore, $\onsf{Dist}$ is a Hopf algebra with comultiplication $\Delta: \onsf{Dist}\to \onsf{Dist}\otimes_R \onsf{Dist}$, counit $\varepsilon: \onsf{Dist}\to R$, and antipode $ S: \onsf{Dist}\to \onsf{Dist}$ defined by 
\begin{align}
\Delta(\mc D^{(n)})&=\sum_{i=0}^n \mc D^{(i)}\otimes_R \mc D^{(n-i)};\\
S(\mc D^{(n)})&=(-1)^n\mc D^{(n)};\\
\varepsilon(\mc D^{(n)})&=\delta_{0, n}.
\end{align}

In fact this is just one special case of the following. 
\begin{theorem} 
Let $ R$ be any commutative ring and $ M$ is any object in $\onsf{Cplx}(R)$, then $(\onsf{TS}_A(M), \onsf{TS}_A^+(M), \gamma)$ is a PD structure with $\gamma_n$ defined as follows. If $x\in \onsf{TS}_A^m(M)$ is homogeneous, then 
\[ \gamma_n(x)=x\otimes\cdots \otimes x
\]
\end{theorem}

Let $(A, I, \delta)$ be a dg  commutative dg ring a PD structure. We assume that $A$ is also $\onsf{Dist}$-algebra if $ A$ is a 
$\onsf{Dist}$-module and the multiplication $A\otimes_R A\to A$ is a $\onsf{Dist}$-module homomorphism and the differential $d: A\to A$ is also a $\onsf{Dist}$-module homomorphism. Thus the homology $H(A, d)$ is also a $\onsf{Dist}$-module homomorphism. 
 
 We say the PD structure $(A, I, \gamma)$ and the PD structure $(\onsf{Dist}, \onsf{Dist}^,\gamma)$ are compatible if
 $\onsf{Dist}I\subseteq I$ and ???

A $\onsf{Dist}$-module $X$ in $ \onsf{Cplx}(R)$ is such that the action $\onsf{Dist}\otimes_R X\to X$ is a differential chain map. 
If $ X$ and $ Y$ are $\onsf{Dist}$-modules, so is $X\otimes_R Y$ with the diagonal action.  If $ A$ is DGA in $ \onsf{Cplx}(R) $ and $ A$ is also a $\onsf{Dist}$-algebra in the sense that the algebra structure $ A\otimes A\to A$ is also a $\onsf{Dist}$-module homomorphism.

Recall that from Stack project Ch23

\begin{definition} 
A strictly graded commutative dg algebra $A$ is called resolving algebra if there is differential complex $X$ in $\in\onsf{cplx}(R)$ such that $ A \cong \onsf{TS}_A(X)$ and that each $ X_n$ is a free $R$-module. 
\end{definition}

The following lemma is from Stack-Project. 
\begin{lemma} Let $S\to R$ be a homomorphism of commutative rings. There exists a factorization

\[ S\to A\to R\]
with the following properties:

$(A, d,\gamma)$ is as in Definition 23.6.5,

$A\to R$ is a quasi-isomorphism (if we endow S with the zero differential),

$A_0=S[xj : j\in J]\to R$ is any surjection of a polynomial ring onto $R$, and

$A=\onsf{TS}_A(X)$  is a resolving $S$-algebra.

The last condition means that $A$ is constructed out of $A_0$ by successively adjoining a set of variables $T$ in each degree $>0$. Moreover, if $S$ is Noetherian and $S\to R$ is of finite type, then $A$ can be taken to have only finitely many generators in each degree.
\end{lemma}

The universal property of the resolving algebra $A$. If $(C, d, \delta)$ is another differential graded $S$-algebra with a divided power structure, then with a subjective homomorphism of $ C\to R$, and there are maps from $X\to C$ as a differential complex, then there is a unique homomorphism $(A, d, \gamma)\to (C, d, \delta)$ of differential graded algebra that factorize to homomorphism $A\to R$.

\section{dg version of operator product expansion over PD rings}

Let $\mf g$ be a dg Lie algebra in the category $ \onsf{Dist}_R$. A homogeneous element $K$ in $\mf g$ is called central if $ [K, a]=0$ for all $a\in \mf g$. 

We assume that $\mf g$ has a filtration of subcomplex 
\begin{align}
\cdots\supseteq \mf g_{(-1)}\supseteq \mf g_{(0)}\supseteq \mf g_{(1)}\cdots 
\end{align}
with $ \cup_i \mf g_{(i)}=\mf g$, $\cap_i \mf g_{(i)}=0$,  $[\mf g_{(i)}, \mf g_{(j)}]\subseteq \mf g_{(i+j)}$.
 
 Let $U(\mf g)$ be the universal enveloping algebra of $\mf g$ in the category of $\onsf{Cplx}(R)$. Then $U(\mf g)$ has a filtrations of left ideals $U(\mf g)\mf g_{(N)}\supseteq U(\mf g)\mf g_{(N+1)}$ and thus defines a linear topology on $U(\mf g)$. For any let $U(\mf g)$-module $ (M, d)$, an element $v$ is called smooth if $\mf g_{(N)}v=0$ for some $N$. Clearly the set of smooth vectors of $M$ is an $\mf g$-submodule. In fact, if $\mf g_{(N)}v=0$, for any $ a\in \mf g_{(n)}$, take $l\geq \max\{N, N-n\}$, then $\mf g_{(l)}av\subseteq a\mf g_{(l)}v+\mf g_{(l+n)}v=0$. We denote by $ M^{(sm)}$ the set of all smooth vectors. We call $M$-smooth if $M^{(sm)}=M$. The category of smooth modules are exactly the category of smooth modules of the completion $ \hat U(\mf g)=\varprojlim_{N}U(\mf g)/U(\mf g)\mf g_{(N)}$, which is a dg-algebra. 
 
 We consider the $ \hat U(\mf g)$-valued field as a homogeneous element
 \[
 a(z)=\sum_{n\in \bb Z}a_{(n)}z^{-n-1} \in \Hom^*_{R}(R[t, t^{-1}], \hat U(\mf gL))
 \]
 where $a_{(n)}=a(z)(t^{n})$ such that for each $N$, $ a_{(n)}\in \hat U(\mf g)\mf g_{(N)}$ for all $ n>>0$.
 
 $\onsf{Fie}(\hat U(\mf g))$ be the set of all fields. 
 
More generally, assume that $(A, d)$ is a dg-algebra with a filtration of left subcomplexes 
\begin{align}
\cdots\supseteq A_{(-1)}\supseteq A_{(0)}\supseteq A_{(1)}\cdots 
\end{align}
with $ \cup_i A_{(i)}=A$, $ \cap_i A_{(i)}=0$. We further assume that for any $ a\in A$, any $n$,   $A_{(n)}a\subseteq A_{(l)} $ for some $l$ (depending on $a$ and $n$).  
 
Let $\hat A=\varprojlim_n A/A_{(n)}$. The completion should be taking degree wise.  This is an  $A$-modules. In fact, it is  s dg-algebra since the map $A\to \Hom^*_R(\hat A, \hat A)$ defined by $a: \hat A\to \hat A$ and $ d(a \hat a)=d(a)\hat a+(-1)^{|a| |\hat a|}ad(\hat a)$. { \color{red} Do I need the completion?}

We can define the smooth vectors in an $A$-module (with respect to filtered structure) and smooth modules. We note that there is chain map 
\[
R[t, t^{-1}]\to R[t, t^{-1}]\hat \otimes_R R[t, t^{-1}]
\]
defined by $\Delta(t^n)=\sum_{i\in \bb Z}(t^i\otimes_{R} t^{n+1-i}$. Here we take 
\[
R[t, t^{-1}]\hat \otimes_R R[t, t^{-1}]=\varprojlim_N (R[t, t^{-1}]\otimes R[t, t^{-1}]/t^nR[t])
\]

We define $\onsf{Fie}(A)$ to be a homogenous element  $a\in Z\Hom_R^*(R[t, t^{-1}], A)$ satisfying the following condition. For each $N\in \bb Z$, one has $a(t^{n})\in A_{(N)}$ for $n>> 0$. 

Given two fields  $a, b\in \onsf{Fie}(A)$ then $a\star b=m(a\otimes)\circ \Delta$ is not defined in general. Thus we define the normal order

We now assume the $A$ is an $ \onsf{Dist}$ Lie algebra, thus $\onsf{Dist}$ acts on $ \hat U(A)$ continuously. $R[t, t^{-1}]$ is also a $\onsf{Dist}$-module. Hence $\onsf{Dist}$ acts on $ \onsf{Fie}(\hat U(A))$. 

The complex $R[t, t^{-1}]$ has a natural decomposition  Rota-Baxter decomposition $R[t]\oplus t^{-1}R[t^{-1}]$. Thus every field $a(z)$ has a natural decomposition $ a(z)=a(z)_++a(z)_-$. Then the norm order is defined by 
\begin{align}
:a(z)b(z):=a(z)_b(z)+b(z)a(z)_+
\end{align}
}

\delete{
\section{The Ext algebras of a ring and of its localisation}
Let $R$ be a commutative Noetherian ring. If $\mf{p}\in \spec(R)$ is a prime ideal $R$, we denote by $R_{\msf p}$ the local ring of $R$ at $\mf p$ and $\mf m_{\mf p}$ the maximal ideal of $R_{\mf p}$ and $ A_{\mf p}=R_{\msf p}/\mf m_{\mf p}$ the residue field.  We want to compare the Ext algebra of $R$ for the $R$-module $R/\mf{p}$ with that of the localisation of $R_{\mf p}$ at $\mf{m}$. We will need the following proposition:

\begin{proposition}\emph{\textbf{(\cite[3.3.10]{Weibel}).}}\label{PropWeibel}
Let $M$ be a finitely generated module over a commutative Noetherian ring $R$. Then for every multiplicative set $S$, all modules $N$, and all $n \in \bb N$, we have
\begin{align*}
S^{-1}\Ext_R^n(M,N) \cong \Ext_{S^{-1}R}^n(S^{-1}M,S^{-1}N).
\end{align*}
\end{proposition}

Using the above proposition, it is then straightforward to prove the following result:

\begin{theorem}\label{ExtofLocalisation}
Let $R$ be a commutative Noetherian ring, and take $\mf{p} \in \on{Spec}(R)$. Denote by $\msf{k}_{\mf{p}}$ the field of fractions of $R/\mf{p}$, and by $\epsilon$ the composition $R \longrightarrow R/\mf{p} \myhookrightarrow \msf{k}_{\mf{p}}$. Finally let $R_{\mf{p}}$ be the localisation of $R$ at $\mf{p}$. Then we have an isomorphism:
\begin{align*}
\Ext_R^*(\msf{k}_{\mf{p}},\msf{k}_{\mf{p}}) \cong \Ext_{R_{\mf{p}}}^*(\msf{k}_{\mf{p}},\msf{k}_{\mf{p}}).
\end{align*}
where $\msf{k}_{\mf{p}}$ is seen as an $R$-module through $\epsilon$.
\end{theorem}

\delete{
\begin{proof}
Set $S=R\backslash \mf{p}$. We see that every element of $\epsilon(S)$ is non-zero in $\msf{k}_{\mf{p}}$, so every element of $\epsilon(S)$ is invertible in $\msf{k}_{\mf{p}}$. The elements of $S^{-1}\msf{k}_{\mf{p}}$ are of the form $\frac{k}{s}$ where $s \in S$ and $r \in \msf{k}_{\mf{p}}$. But we saw that $\epsilon(s)$ is invertible so $\frac{k}{s}=\frac{\epsilon(s)^{-1}k}{1}$. Therefore there is an isomorphism $S^{-1}\msf{k}_{\mf{p}} \cong \msf{k}_{\mf{p}}$. Furthermore, there exists $\epsilon_{\mf{p}}:R_{\mf{p}} \longrightarrow S^{-1}\msf{k}_{\mf{p}} \cong \msf{k}_{\mf{p}}$ such that we have a factorisation $\epsilon=\epsilon_{\mf{p}} \circ \pi$, where $\pi: R  \longrightarrow  R_{\mf{p}}$ sends $r$ to $\frac{r}{1}$. 

By seeing $\msf{k}_{\mf{p}}$ as an $R$-module (respectively $R_{\mf{p}}$-module) through $\epsilon$ (respectively $\epsilon_{\mf{p}}$), Proposition \ref{PropWeibel} and the previous paragraph give:
\begin{align*}
S^{-1}\Ext_R^n(\msf{k}_{\mf{p}},\msf{k}_{\mf{p}}) \cong \Ext_{R_{\mf{p}}}^n(\msf{k}_{\mf{p}},\msf{k}_{\mf{p}}),
\end{align*}
for all $n \in \bb N$. The morphism $\pi$ induces an algebra morphism $\pi_{\mf{p}}: \Ext_{R_{\mf{p}}}^*(\msf{k}_{\mf{p}},\msf{k}_{\mf{p}}) \longrightarrow \Ext_R^*(\msf{k}_{\mf{p}},\msf{k}_{\mf{p}})$, and there is an algebra morphism $\iota_{\mf{p}}: \Ext_{R}^*(\msf{k}_{\mf{p}},\msf{k}_{\mf{p}})  \longrightarrow  S^{-1}\Ext_{R}^*(\msf{k}_{\mf{p}},\msf{k}_{\mf{p}})$ sending $\alpha$ to $1 \otimes \alpha$. We are interested in the composition:
\begin{align*}
\Ext_{R}^*(\msf{k}_{\mf{p}},\msf{k}_{\mf{p}}) \stackrel{\iota_{\mf{p}}}{\longrightarrow} S^{-1}\Ext_{R}^*(\msf{k}_{\mf{p}},\msf{k}_{\mf{p}}) \cong \Ext_{R_{\mf{p}}}^*(\msf{k}_{\mf{p}},\msf{k}_{\mf{p}}) \stackrel{\pi_{\mf{p}}}{\longrightarrow} \Ext_{R}^*(\msf{k}_{\mf{p}},\msf{k}_{\mf{p}}).
\end{align*}

Let $X=0 \longrightarrow \msf{k}_{\mf{p}} \longrightarrow X_n \longrightarrow \dots \longrightarrow X_1 \longrightarrow \msf{k}_{\mf{p}} \longrightarrow 0$ be an $n$-fold exact sequence of $R$-modules. Then $S^{-1}X=0 \longrightarrow \msf{k}_{\mf{p}} \longrightarrow S^{-1}X_n \longrightarrow \dots \longrightarrow S^{-1}X_1 \longrightarrow \msf{k}_{\mf{p}} \longrightarrow 0$ is an $n$-fold exact sequence of $R_{\mf{p}}$-modules. The morphism $\pi_{\mf{p}}$ then sends it to:
\[
0 \longrightarrow \msf{k}_{\mf{p}} \longrightarrow R_{\mf{p}} \otimes_R X_n \longrightarrow \dots \longrightarrow R_{\mf{p}} \otimes_R X_1 \longrightarrow \msf{k}_{\mf{p}} \longrightarrow 0
\]
as $R$-modules. However, we have a natural morphism of exact sequences of $R$-modules:
\begin{center}
  \begin{tikzpicture}[scale=0.9,  transform shape]
  \tikzset{>=stealth}
  
\node (1) at (-6,0){$0$};
\node (2) at (-4,0){$\msf{k}_{\mf{p}}$};
\node (3) at (-2,0){$X_n$};
\node (4) at (0,0){$\dots \dots$};
\node (5) at (2,0){$X_1$};
\node (6) at (4,0){$\msf{k}_{\mf{p}}$};
\node (7) at (6,0){$0$};

\node (8) at (-6,-2){$0$};
\node (9) at (-4,-2){$\msf{k}_{\mf{p}}$};
\node (10) at (-2,-2){$R_{\mf{p}} \otimes_R X_n$};
\node (11) at (0,-2){$\dots \dots$};
\node (12) at (2,-2){$R_{\mf{p}} \otimes_R X_1$};
\node (13) at (4,-2){$\msf{k}_{\mf{p}}$};
\node (14) at (6,-2){$0$};

\node (15) at (-6.8,0){$X=$};
\node (16) at (-7.5,-2){$\pi_{\mf{p}} \circ \iota_{\mf{p}}(X)=$};

\draw [->] (1) --  (2);
\draw [->] (2) --  (3);
\draw [->] (3) --  (4);
\draw [->] (4) --  (5);
\draw [->] (5) --  (6);
\draw [->] (6) --  (7);

\draw [->] (8) --  (9);
\draw [->] (9) --  (10);
\draw [->] (10) --  (11);
\draw [->] (11) --  (12);
\draw [->] (12) --  (13);
\draw [->] (13) --  (14);

\draw [->] (2) --  (9) node[midway, left] {$\on{id}$};
\draw [->] (3) --  (10);
\draw [->] (5) --  (12);
\draw [->] (6) --  (13) node[midway, right] {$\on{id}$};
\end{tikzpicture}
 \end{center}
Therefore the two sequences are equivalent $n$-fold exact sequences of $R$-modules (cf. \cite[Proposition VII.3.1]{Mitchell}). It follows that $\pi_{\mf{p}} \circ \iota_{\mf{p}}(X)=X$, and so $\iota_{\mf{p}}$ is injective.

Now let $Y=0 \longrightarrow \msf{k}_{\mf{p}} \longrightarrow Y_n \longrightarrow \dots \longrightarrow Y_1 \longrightarrow \msf{k}_{\mf{p}} \longrightarrow 0$ be an $n$-fold exact sequence of $R_{\mf{p}}$-modules. Each module can be seen as an $R$-module through the morphism $\pi$. Furthermore, it is known that we have isomorphisms of $R_{\mf{p}}$-modules (cf. \cite[Ch. II, \S 2 Proposition 3]{Bourbaki_commu}):
\[
S^{-1}Y_n \cong Y_n, \quad \dots \dots  \quad ,  S^{-1}Y_1 \cong Y_1, \quad S^{-1}\msf{k}_{\mf{p}} \cong \msf{k}_{\mf{p}}.
\]
Hence by seeing $Y$ as a sequence of $R$-modules, we get $\iota_{\mf{p}}(Y)=Y$, and so $\iota_{\mf{p}}$ is surjective. It follows that $\iota_{\mf{p}}$ is an isomorphism of algebras, proving the theorem.
\end{proof}
}

This theorem tells us that when considering the Yoneda algebra of $R$ for a fraction field $\msf{k}_{\mf{p}}$, we can assume that $R$ is local. Furthermore, this theorem can be used to decompose the Yoneda algebra of the tensor product of commutative rings.

\begin{proposition}
Let $A_1, A_2$ be two finitely generated algebras over an algebraically closed field $\msf{k}$. Let $\mf{m}_1$ (respectively $\mf{m}_2$) be a maximal ideal of $A_1$ (respectively $A_2$). Consider $\msf{k}$ as an $A_1$-module (respectively $A_2$-module) through by $\mf{m}_1$ (respectively $\mf{m}_2$). The space $\msf{k}=A_1/\mf{m}_1 \otimes_{\msf{k}} A_2/\mf{m}_2$ is an $A_1\otimes_{\msf{k}}  A_2$-module, and there exists an algebra isomorphism:
\[
\Ext_{A_1}^*(\msf{k},\msf{k}) \otimes_{\msf{k}}  \Ext_{A_2}^*(\msf{k},\msf{k}) \cong \Ext_{A_1 \otimes_{\msf{k}}  A_2}^*(\msf{k},\msf{k}).
\] 
\end{proposition}

\begin{proof}
We first consider $\msf{k}$ as an $(A_1)_{\mf{m}_1}$-module and an $(A_2)_{\mf{m}_2}$-module. Using K\"unneth tensor product formula, one can directly determine that the tensor product of the $I$-minimal projective resolutions of $\msf{k}$ as $(A_i)_{\mf{m}_i}$-module ($i=1,2$) is an $I$-minimal projective resolution of $\msf{k}$ as an $(A_1)_{\mf{m}_1} \otimes (A_2)_{\mf{m}_2}$-modules. The wedge product (cf. \cite[\textrm{VIII}.4]{MacLane}) is given by the following morphism: 
\[
\psi: \Ext_{(A_1)_{\mf{m}_1}}^*(\msf{k},\msf{k}) \otimes_{\msf{k}}  \Ext_{(A_2)_{\mf{m}_2}}^*(\msf{k},\msf{k}) \longrightarrow \Ext_{(A_1)_{\mf{m}_1} \otimes_{\msf{k}}  (A_2)_{\mf{m}_2}}^*(\msf{k},\msf{k}).
\]
The rings $(A_1)_{\mf{m}_1}$ and $(A_2)_{\mf{m}_2}$ are local, so there exists an $I$-minimal free resolution $X_1$ (respectively $X_2$) of $\msf{k}$ as $(A_1)_{\mf{m}_1}$-module (respectively as an $(A_2)_{\mf{m}_2}$-module) (cf. \cite[Chapter 19]{Eisenbud}). We then use the facts that $X_1 \otimes X_2$ is an $I$-minimal projective resolution of $\msf{k}$ as $(A_1)_{\mf{m}_1} \otimes (A_2)_{\mf{m}_2}$-module, and that each space $X_j^n$, $j=1,2$, has a basis to compute $\psi$ and see that it is an isomorphism of algebras. 
Then by applying Theorem \ref{ExtofLocalisation}, we obtain the following isomorphism of algebras:
\[
\Ext_{A_1}^*(\msf{k},\msf{k}) \otimes_\msf{k} \Ext_{A_2}^*(\msf{k},\msf{k}) \cong \Ext_{(A_1)_{\mf{m}_1} \otimes (A_2)_{\mf{m}_2}}^*(\msf{k},\msf{k}).
\]
Consider the maximal ideal $I=\mf{m}_1 \otimes_\msf{k} A_2 + A_1 \otimes_\msf{k} \mf{m}_2$ of $A_1 \otimes_\msf{k} A_2$. As $\mf{m}_i$ is the unique maximal ideal of $(A_i)_{\mf{m}_i}$, we can define $I_{(\mf{m}_1,\mf{m}_2)}=\mf{m}_1 \otimes_\msf{k} (A_2)_{\mf{m}_2} + (A_1)_{\mf{m}_1} \otimes_\msf{k} \mf{m}_2$ a maximal ideal of $(A_1)_{\mf{m}_1} \otimes (A_2)_{\mf{m}_2}$. We then define the following morphism:
\[
\begin{array}{cccc}
\varphi: & ((A_1)_{\mf{m}_1} \otimes (A_2)_{\mf{m}_2})_{I_{(\mf{m}_1,\mf{m}_2)}} & \longrightarrow &  (A_1 \otimes A_2)_{I} \\[5pt]
             &\displaystyle \frac{\frac{a_1}{s_1}\otimes \frac{a_2}{s_2}}{\frac{r_1}{b_1} \otimes \frac{r_2}{b_2}} & \longmapsto & \displaystyle \frac{a_1b_1 \otimes a_2b_2}{s_1r_1 \otimes s_2r_2}.
\end{array}
\]
By direct computations, one can verify that $\varphi$ is a ring isomorphism. By using the above isomorphism, as well as $\varphi$ and Theorem \ref{ExtofLocalisation}, we obtain the following:
\[
\begin{array}{ccl}
\Ext_{A_1}^*(\msf{k},\msf{k}) \otimes_{\msf{k}}  \Ext_{A_2}^*(\msf{k},\msf{k}) & \cong & \Ext_{(A_1)_{\mf{m}_1} \otimes_{\msf{k}}  (A_2)_{\mf{m}_2}}^*(\msf{k}, \msf{k}), \\
& \cong & \Ext_{((A_1)_{\mf{m}_1} \otimes_{\msf{k}}  (A_2)_{\mf{m}_2})_{I_{(\mf{m}_1, \mf{m}_2)}}}^*(\msf{k},\msf{k}), \\
& \cong & \Ext_{(A_1 \otimes_{\msf{k}}  A_2)_{I}}^*(\msf{k}, \msf{k}), \\
& \cong & \Ext_{A_1 \otimes_{\msf{k}}  A_2}^*(\msf{k}, \msf{k}).
\end{array}
\]
\end{proof}
}

\delete{

\section{Comparison between the Yoneda algebra and its even subalgebra}

In this section, we will determine the conditions on $R$ for the even part $\Ext_R^{ev}(A_x,A_x)=\bigoplus_{m=0}^{\infty}\Ext_R^{2m}(A_x,A_x)$ to be commutative, and then compare them with the commutativity conditions of Corollary \ref{cor:PresentationofExt} for the Yoneda algebra. We have seen in Lemma \ref{AlphaBetaCommute} that the $\beta_p$'s commute with one another and with the $\alpha_i$'s. The problem is then to consider the commutation of $\alpha_i \alpha_j$ with $\alpha_l \alpha_m$.


\begin{lemma}\label{4alphas}
Let $R=A[x_1,\dots, x_n]/(c_1,\dots, c_k)$ be a complete intersection ring, where $c_1, \dots, c_k$ are polynomials in the variables $x_1, \dots, x_n$. We assume that for all $1 \leq p \leq k$, we have $c_p \in \mf{m}_x^2$. Assume that $n \geq 4$. Let $ 1 \leq i, j, l, m \leq n$ be pairwise distinct. Then:
\begin{center}
$\begin{array}{cccl}
\alpha_m \alpha_l \alpha_j \alpha_i: & P_4 & \longrightarrow & \cc \\
           & T_m T_l T_j T_i & \longmapsto & 1, \\
           & S_p S_q & \longmapsto & n_j^{q, i}n_m^{p, l}+n^p_{i, j}n_m^{q, l}+n_m^{p, i}n_l^{q, j}-n_l^{p, i}n_m^{q, j}+n_m^{q, i}n_l^{p, j}-n_l^{q, i}n_m^{p, j}, \\
          & T_{i_1} T_{i_2} S_p & \longmapsto & \left\{\begin{array}{cl}
        -n_l^{p, j} & \text{ if } i_1=i, i_2=m,\\\
       n_m^{p, j} & \text{ if } i_1=i, i_2=l,\\
       n_m^{p, l} & \text{ if } i_1=j, i_2=i,\\
        n_l^{p, i} & \text{ if } i_1=j, i_2=m,\\
       -n_m^{p, i} & \text{ if } i_1=j, i_2=l,\\
        n^p_{i, j} & \text{ if } i_1=m, i_2=l,\\
         0 & \text{otherwise},
        \end{array}\right. \\
           & \text{other} & \longmapsto & 0.
\end{array}$ 
\end{center}
\end{lemma}

\begin{proof}
We have already determined $f=\alpha_j \alpha_i$ before Lemma \ref{RelationsinExt}. We lift this morphism using the following diagram:
\begin{center}
  \begin{tikzpicture}[scale=0.9,  transform shape]
  \tikzset{>=stealth}
  
\node (1) at ( 0,0){$P_3$};
\node (2) at ( 3,0){$P_2$};
\node (3) at ( 0,-2){$P_1$};
\node (4) at ( 3,-2) {$R$};
\node (5) at ( 6,-2){$\cc$};
\node (6) at (8,-2){0};

\node (7) at ( -3,0){$P_4$};
\node (8) at ( -3,-2){$P_2$};

\node (9) at ( -6,0){$\dots$};
\node (10) at ( -6,-2){$\dots$};

\draw [decoration={markings,mark=at position 1 with
    {\arrow[scale=1.2,>=stealth]{>}}},postaction={decorate}] (1) --  (2) node[midway, above] {$\partial_3$};
\draw [decoration={markings,mark=at position 1 with
    {\arrow[scale=1.2,>=stealth]{>}}},postaction={decorate}] (1)  --  (3) node[midway, left] {$\widetilde{f}_1$};

\draw [decoration={markings,mark=at position 1 with
    {\arrow[scale=1.2,>=stealth]{>}}},postaction={decorate}] (2)  --  (4) node[midway, left] {$\widetilde{f}_0$};
\draw [decoration={markings,mark=at position 1 with
    {\arrow[scale=1.2,>=stealth]{>}}},postaction={decorate}] (3)  --  (4) node[midway, above] {$\partial_1$};

\draw [decoration={markings,mark=at position 1 with
    {\arrow[scale=1.2,>=stealth]{>}}},postaction={decorate}] (2)  --  (5) node[midway, above right] {$\alpha_j \alpha_i$};
\draw [decoration={markings,mark=at position 1 with
    {\arrow[scale=1.2,>=stealth]{>}}},postaction={decorate}] (4)  --  (5) node[midway, above] {$\epsilon$};
    
\draw [decoration={markings,mark=at position 1 with
    {\arrow[scale=1.2,>=stealth]{>}}},postaction={decorate}] (5)  --  (6);
    
\draw [decoration={markings,mark=at position 1 with
    {\arrow[scale=1.2,>=stealth]{>}}},postaction={decorate}] (7)  --  (1) node[midway, above] {$\partial_4$};
\draw [decoration={markings,mark=at position 1 with
    {\arrow[scale=1.2,>=stealth]{>}}},postaction={decorate}] (8)  --  (3) node[midway, above] {$\partial_2$};
    
\draw [decoration={markings,mark=at position 1 with
    {\arrow[scale=1.2,>=stealth]{>}}},postaction={decorate}] (9)  --  (7) node[midway, above] {$\partial_5$};
\draw [decoration={markings,mark=at position 1 with
    {\arrow[scale=1.2,>=stealth]{>}}},postaction={decorate}] (10)  --  (8) node[midway, above] {$\partial_3$};    
\draw [decoration={markings,mark=at position 1 with
    {\arrow[scale=1.2,>=stealth]{>}}},postaction={decorate}] (7)  --  (8) node[midway, left] {$\widetilde{f}_2$};
\end{tikzpicture}
 \end{center}

We can define $\widetilde{f}_0$ such that $\widetilde{f}_0(T_j T_i)=1$, $\widetilde{f}_0(S_p)=n^p_{i, j}$ for all $p$, and $\widetilde{f}_0(\text{other})=0$. Through direct computations, one can show that the other morphisms of this diagram are given by:
\begin{center}
$\begin{array}[t]{cccl}
\widetilde{f}_1: & P_3 & \longrightarrow & P_1 \\
       & T_{i_1} T_j T_i & \longmapsto & T_{i_1} \text{ for all } {i_1} \neq i, j, \\
       & T_{i_1} S_p & \longmapsto & \left\{\begin{array}{cl}
        n^p_{i, j}T_i +C_{p, j} & \text{ if } {i_1}=i,\\[5pt]
        n^p_{i, j}T_j -C_{p, i} & \text{ if } {i_1}=j, \\[5pt]
        n^p_{i, j}T_{i_1}  & \text{ if } {i_1} \neq i, j,
        \end{array}\right. \\
        & \text{other} & \longmapsto & 0,
\end{array}$
\end{center}
and
\[ 
\begin{array}[t]{cccl}
\widetilde{f}_2: & P_4 & \longrightarrow & P_2 \\
       & T_{i_1} T_{i_2} T_j T_i & \longmapsto & T_{i_1} T_{i_2} \text{ for all } i_1, i_2 \neq i, j, \\
       & S_q S_p & \longmapsto & n^p_{i, j}S_q+n_{i,j}^{q}S_p+C_{q, i} C_{p, j}+C_{p, i} C_{q, j}, \\
       & T_{i_1} T_{i_2} S_p & \longmapsto & \left\{\begin{array}{cl}
        n^p_{i, j}T_j T_i +T_j C_{p, j}+T_i C_{p, i}+S_p & \text{ if } i_1=j, i_2=i,\\[5pt]
        n^p_{i, j}T_i T_{i_2} -T_{i_2} C_{p, j} & \text{ if } i_1=i, i_2 \neq j, \\[5pt]
        n^p_{i, j}T_j T_{i_2}  +T_{i_2} C_{p, i}& \text{ if } i_1=j, i_2 \neq i, \\[5pt]
        n^p_{i, j}T_{i_1} T_{i_2}  & \text{ if } i_1, i_2 \neq i, j,
        \end{array}\right. \\
        & \text{other} & \longmapsto & 0.
\end{array} 
\]
By our choice of notation, we have $C_{p, i}=\sum_{j=1}^n n^p_{i, j}T_j+\sum_{j=1}^n m_{i,j}^{p}T_j$, so the part of $C_{p, i} C_{q, j}$ with scalar coefficients is $\sum_{i_1 \neq i_2} n_{i,i_1}^{p} n_{j,i_2}^{q}T_{i_1} T_{i_2}$. Likewise we can express $T_j C_{p, j}$ using the $T_p T_q$'s. By computing the composition $\alpha_m \alpha_l \circ \widetilde{f}_2$ on each basis element of $P_4$ and taking into account that $i, j, l, m$ are pairwise distinct, we obtain the desired result.
\end{proof}

We can now investigate the commutativity of the even part of the Yoneda algebra.

\begin{proposition}\label{ExtCommutativeVariablesBigger3}
Let $R=A[x_1,\dots, x_n]/(c_1,\dots, c_k)$ be a complete intersection ring, where $c_1, \dots, c_k$ are polynomials in the variables $x_1, \dots, x_n$. We assume that for all $1 \leq p \leq k$, we have $c_p \in \mf{m}_x^2$, and that $n \geq 3$. Then:
\begin{align*}
\begin{array}{rcl}
\Ext_R^{ev}(\msf k,\msf k) \text{ is commutative } & \Longleftrightarrow & \left\{\begin{array}{l}
c_p \in \mf{m}_x^3 \text{ for all } 1 \leq p \leq k \text{ if } \on{char}\msf k \neq 2, \\[5pt]
c_p \in (\mathfrak{m}_x^3,x_i^2 \ | \ 1 \leq i \leq n) \text{ if } \on{char}\msf k = 2.
\end{array}\right.
\end{array}
\end{align*}

\end{proposition}

\begin{proof}
Let us assume that $n \geq 4$. Then there exist $1 \leq i, j, l, m \leq n$ pairwise distinct, and by Lemma \ref{4alphas}, we have:
\[
\begin{cases}
\alpha_m \alpha_l \alpha_j \alpha_i(T_i T_m S_p)=-n_{j,l}^{p}, \\[5pt]
\alpha_m \alpha_l \alpha_j \alpha_i(T_j T_l S_p)=-n_{i,m}^{p}.
\end{cases}
\]
By applying the permutations $m \leftrightarrow j$ and $l \leftrightarrow i$ in the previous equalities, we obtain:
\[
\begin{cases}
\alpha_j \alpha_i \alpha_m \alpha_l(T_l T_j S_p)=-n_{m,i}^{p}, \\[5pt]
\alpha_j \alpha_i \alpha_m \alpha_l(T_m T_i S_p)=-n_{l,j}^{p}.
\end{cases}
\]
As we want the algebra $\Ext_R^{ev}(\msf k,\msf k)$ to be commutative, we therefore need $n_{j,l}^{p}=-n_{l,j}^{p}$ and $n_{i,m}^{p}=-n_{m,i}^{p}$ for all pairwise distinct integers $i, j, l, m$ and for all $p$. It follows that $n_{i,j}^{p}+n_{j,i}^{p}=0$ for all $i \neq j$ and for all $p$.

Now let $1 \leq i, j, m \leq n$ be three pairwise distinct integers. We need to compare $\alpha_m \alpha_j \alpha_j \alpha_i$ and $\alpha_j \alpha_i \alpha_m \alpha_j$. Based on the computations of Lemma \ref{4alphas}, we have:
\[
\begin{cases}
\begin{array}{cccc}
\alpha_m \alpha_j \alpha_j \alpha_i: & T_i T_m S_p & \longmapsto & -n_{j,j}^{p},\\
                                                          & \text{other}    & \longmapsto & 0,
\end{array} \\
\begin{array}{cccc}
\alpha_j \alpha_i \alpha_m \alpha_j: & T_m T_i S_p & \longmapsto & -n_{j,j}^{p},\\
                                                          & \text{other}    & \longmapsto & 0.
\end{array}
\end{cases}
\]
To get the commutativity of $\Ext_R^{ev}(\msf k_x,\msf k_x)$ we need $2n^p_{j,j}=0$ for all $j$ and all $p$. If $\on{char}\msf k=2$, it is true, and if $2$ is invertible in $\msf k$, then we get $n^p_{j,j}=0$ for all $j$. Hence, for all $p$ and $i \neq j$, we have $n_{i,j}^{p}+n^p_{j,i}=0$ and if $\on{char}\msf k \neq 2$ we have $n^p_{j, j}=0$. This proves one direction of the equivalence. The other direction is a direct consequence of Corollary \ref{cor:PresentationofExt}.

Let us now assume that $n=3$, and let $1 \leq i, j, m \leq 3$ be three distinct integers. In the proof of Lemma \ref{4alphas}, if we compose $\alpha_m \alpha_i$ with $g_3$, we obtain:
\[ 
\begin{array}[t]{cccl}
\alpha_m \alpha_i \alpha_j \alpha_i: & P_4 & \longrightarrow & \cc \\
       & T_j T_i S_p & \longmapsto & 0, \\
       & T_i T_m S_p & \longmapsto & -n^p_{i, j}-n^p_{j, i} , \\
       & T_j T_m S_p & \longmapsto & n^p_{i, i}, \\
       & S_p S_q & \longmapsto & n_{i,j}^{q}n_{i,m}^{p}+n^p_{i, j}n_{i,m}^{q}+n_{i,m}^{p}n_{j,i}^{q}-n^p_{i, i}n_{j,m}^{q}+n_{i,m}^{q}n^p_{j, i}-n_{i,i}^{q}n_{j,m}^{p}, \\
        & \text{other} & \longmapsto & 0.
\end{array} 
\]
By symmetry, we have
\[ 
\begin{array}[t]{cccl}
\alpha_j \alpha_i \alpha_m \alpha_i: & P_4 & \longrightarrow & \cc \\
       & T_m T_i S_p & \longmapsto & 0, \\
       & T_i T_j S_p & \longmapsto & -n_{i,m}^{p}-n_{m,i}^{p} , \\
       & T_m T_j S_p & \longmapsto & n^p_{i, i}, \\
       & S_p S_q & \longmapsto & n_{i,m}^{q}n^p_{i, j}+n_{i,m}^{p}n_{i,j}^{q}+n^p_{i, j}n_{m,i}^{q}-n^p_{i, i}n_{m,j}^{q}+n_{i,j}^{q}n_{m,i}^{p}-n_{i,i}^{q}n_{m,j}^{p}, \\
        & \text{other} & \longmapsto & 0.
\end{array} 
\]
If $\Ext_R^{ev}(\msf k_x,\msf k_x)$ is commutative, then we have $n^p_{i, j}+n^p_{j, i}=0=2n^p_{i, i}$ for all $i \neq j$ and $p$, which proves the first direction of the equivalence. As above, the other direction is a direct consequence of Corollary \ref{cor:PresentationofExt}.
\end{proof}

\begin{proposition}
Let $R=\msf k[x_1, \dots, x_n]/(c_1,\dots, c_k)$ be a complete intersection ring, where $c_1, \dots, c_k$ are polynomials in the variables $x_1, \dots, x_n$. We assume that for all $1 \leq p \leq k$, we have $c_p \in \mf{m}_x^2$.  If $n \leq 2$, then $\Ext_R^{ev}(\msf k_x,\msf k_x)$ is commutative.
\end{proposition}

\begin{proof}
Assume that $n=2$. Let $1 \leq i \neq j \leq 2$. The elements of degree $2$ are $\alpha_i \alpha_j$, $\alpha_j \alpha_i$, $\alpha_i^2$, $\alpha_j^2$ and the $\beta_p$'s. We know from Lemma \ref{AlphaBetaCommute} that the $\beta_p$'s commute with everything. We can then verify by direct computations that the other elements of degree $2$ commute with one another. According to Corollary \ref{cor:PresentationofExt}, $\Ext_R^{2}(\msf k_x,\msf k_x)$ generates $\Ext_R^{ev}(\msf k_x, \msf k_x)$. As these generators commute, it follows that $\Ext_R^{ev}(\msf k_x, \msf k_x)$ is commutative.

If there is only one variable in $R$, then $R=\msf k[x]$ or $R=\msf k[x]/(f(x))$ where $f(x) \in (x^2)$. In both cases, we can directly check that the even part of the Yoneda algebra is commutative. 
\end{proof}

We can regroup all we found into the following theorem:

\begin{theorem}\label{ExtevCommutativeExtgradedCommutative}
Let $R=\msf k[x_1,\dots, x_n]/(c_1,\dots, c_k)$ be a complete intersection ring, where $c_1, \dots, c_k$ are polynomials in the variables $x_1, \dots, x_n$. We assume that for all $1 \leq p \leq k$, we have $c_p \in \mf{m}_x^2$. Then:
\begin{itemize}[leftmargin=*]\setlength\itemsep{5pt}
\item If $n=1, 2$, then $\Ext_R^{ev}(\msf k_x, \msf k_x)$ is commutative. 
\item If $n \geq 3$, $\begin{array}[t]{rcl}
\Ext_R^{ev}(\msf k_x, \msf k_x) \text{ is commutative } & \Longleftrightarrow & \left\{\begin{array}{l}
c_p \in \mf{m}_x^3 \text{ for all } 1 \leq p \leq k \text{ if } \on{char}\msf k \neq 2, \\[5pt]
c_p \in (\mathfrak{m}_x^3,x_i^2 \ | \ 1 \leq i \leq n) \text{ if } \on{char}\msf k = 2.
\end{array}\right. \\[20pt]
 &\Longleftrightarrow & \Ext_R^{*}(\msf k_x, \msf k_x) \text{ is graded commutative}.
\end{array}$
\end{itemize}
\end{theorem}

In particular, we see that the graded commutativity of $\Ext_R^*(\msf k_x, \msf k_x)$ when there are three or more variables is completely determined by the commutativity of the even degree subalgebra. However this is not true when there are less than three variables. For example, if $R=\msf k[x,y]/(x^2)$, then Corollary \ref{cor:PresentationofExt} states that
\[
\Ext_R^*(\msf k_x, \msf k_x)=\msf k \langle \alpha_x, \alpha_y \rangle / \left(\begin{array}{c} \alpha_x\alpha_y+\alpha_y\alpha_x  \\ \alpha_y^2 \end{array}\right).
\]
While $\Ext_R^{ev}(\msf k, \msf k_x)=\msf k[\alpha_x^2, \alpha_x\alpha_y]$ is commutative, we see that in $\on{char}k \neq 2$, the algebra $\Ext_R^*(\msf k_x, \msf k_x)$ is not graded commutative as $\alpha_x^2 \neq 0$.

\section{The dimensions of the summands of the Yoneda algebra}

We start with the same hypothesis as the previous section, i.e., $R=\cc[x_1,\dots, x_n]/(c_1,\dots, c_k)$ is a complete intersection ring such that for all $1 \leq p \leq k$, we have $c_p \in \mf{m}_x^2$. We are interested in the dimension of $\Ext_R^m(\cc,\cc)$, and how it is affected by the number of variables and the number of relations in $R$.

\begin{proposition}
Let $R$ be as above. We have the to following cases:
\begin{itemize}\setlength\itemsep{0.5em}
\item If $k=1$, then $\on{dim} \Ext_R^m(\cc,\cc)$ strictly increases until $m=n-1$, then it is constant.
\item If $k \geq 2$, then $\on{dim} \Ext_R^m(\cc,\cc)$ strictly increases everywhere.
\end{itemize}
\end{proposition}

\begin{proof}
We know that a basis of $\Ext_R^m(\cc,\cc)$ is given by the $(T_1^{a_1} \dots T_n^{a_n}S_1^{(b_1)} \cdots S_k^{(b_k)})^*$ where $a_i \leq 1$, $\sum_{i=1}^n a_i+2\sum_{p=1}^k b_p=m$. We define the following maps of sets:
\begin{align*}
\text{For }m \geq 1, \quad  \begin{array}[t]{ccc}
\text{Basis of } \Ext_R^{2m}(\cc,\cc) & \longrightarrow & \text{Basis of } \Ext_R^{2m+1}(\cc,\cc) \\
(T_{i_1} \dots T_{i_{2r}}S_1^{(b_1)} \cdots S_k^{(b_k)})^* & \longmapsto & \left\{\begin{array}{cl}
        (T_1 T_{i_1} \dots T_{i_{2r}}S_1^{(b_1)} \cdots S_k^{(b_k)})^* & \text{ if } i_1 \geq 2,\\[5pt]
        (T_{i_2} \dots T_{i_{2r}}S_1^{b_1+1} \dots S_k^{b_k})^* & \text{ if } i_1=1, 
         \end{array}\right. \\
 (S_1^{(b_1)} \cdots S_k^{(b_k)})^* & \longmapsto & (T_1 S_1^{(b_1)} \cdots S_k^{(b_k)})^*.
\end{array}
\end{align*}
and
\begin{align*}
\text{for }m \geq 0, \quad  \begin{array}[t]{ccc}
\text{Basis of } \Ext_R^{2m+1}(\cc,\cc) & \longrightarrow & \text{Basis of } \Ext_R^{2m+2}(\cc,\cc) \\
(T_{i_1} \dots T_{i_{2r+1}}S_1^{(b_1)} \cdots S_k^{(b_k)})^* & \longmapsto & \left\{\begin{array}{cl}
        (T_1 T_{i_1} \dots T_{i_{2r+1}}S_1^{(b_1)} \cdots S_k^{(b_k)})^* & \text{ if } i_1 \geq 2,\\[5pt]
        (T_{i_2} \dots T_{i_{2r+1}}S_1^{b_1+1} \dots S_k^{b_k})^* & \text{ if } i_1=1.
         \end{array}\right.
\end{array}
\end{align*}
We see that every element of the basis of $\Ext_R^{m}(\cc,\cc)$ contributes to a unique element of the basis of $\Ext_R^{m+1}(\cc,\cc)$, meaning that $\Ext_R^{m+1}(\cc,\cc) \geq \Ext_R^{m}(\cc,\cc)$ for all $m$. We now need to look at two cases separately. 

Assume $k=1$. If $1 \leq m \leq n-2$, then $\Ext_R^{m+1}(\cc,\cc)$ contains $(T_{2} \dots T_{m+2})^*$, which does not come from any element of the basis of $\Ext_R^{m}(\cc,\cc)$. Furthermore, $\on{dim} \Ext_R^{1}(\cc,\cc)=n > 1 = \on{dim} \Ext_R^{0}(\cc,\cc)$, so until $m=n-1$, the dimension is strictly increasing. If $m \geq n-1$, then any element of the basis of $\Ext_R^{m+1}(\cc,\cc)$ is either of the form $(T_1 \dots T_n)^*$, or of the from $(T_{i_1} \dots T_{i_{r}}S_1^{b_1})^*$ with $b_1 \geq 1$. In the first case, it comes from the element $(T_2 \dots T_n)^* \in \Ext_R^{m}(\cc,\cc)$. In the second case, if $i_1=1$, it comes from $(T_{i_2} \dots T_{i_{r}}S_1^{b_1})^* \in \Ext_R^{m}(\cc,\cc)$, and if $i_1 \geq 2$, then it comes form $(T_1T_{i_1} \dots T_{i_{r}}S_1^{b_1-1})^* \in \Ext_R^{m}(\cc,\cc)$. Therefore the maps given above are bijections between the basis of $\Ext_R^{m}(\cc,\cc)$ and $\Ext_R^{m+1}(\cc,\cc)$. The first statement of the proposition follows.

Assume $k \geq 2$. Depending on the parity of $m \geq 1$, $\Ext_R^{m+1}(\cc,\cc)$ contains either $T_2S_2^{b_2}$ are $S_2^{b_2}$ for some $b_2 \geq 1$. However, this element does not come from any element of the basis of $\Ext_R^{m}(\cc,\cc)$, meaning that $\on{dim} \Ext_R^{m}(\cc,\cc)$ strictly increases when $m \geq 1$. As $\on{dim} \Ext_R^{1}(\cc,\cc)=n > 1 = \on{dim} \Ext_R^{0}(\cc,\cc)$, the second statement of the proposition is proved.
\end{proof}

We now give a formula for the dimension of the summands of the Yoneda algebra.


\begin{theorem}\label{DimOfExt}
Let $R=\cc[x_1,\dots, x_n]/(c_1,\dots, c_k)$ be a complete intersection ring such that for all $1 \leq p \leq k$, we have $c_p \in \mf{m}_x^2$. We have:
\[
\sum_{m=0}^\infty (\on{dim} \Ext_R^m(\cc, \cc))t^m=\frac{(1+t)^n}{(1-t^2)^k}.
\]
The Gelfand-Kirillov dimension of $\Ext_R^*(\cc, \cc)$ is:
\[
\on{GKdim} \Ext_R^*(\cc, \cc)=k.
\]
For any $m \in \bb N$, we have:
\begin{align*}
\on{dim} \Ext_R^{m}(\cc,\cc)= \displaystyle \sum_{l=0}^{\lfloor \frac{m}{2} \rfloor} \binom{n}{2l+\overline{m}} \binom{\lfloor \frac{m}{2} \rfloor-l+k-1}{k-1},
\end{align*}
where $\overline{m}$ is the remainder of $m$ modulo $2$.
\end{theorem}

\begin{proof}
Let $R$ be as in the statement. We have already seen at the beginning of Section \ref{sec:Yoneda_CI} that $\Ext_R^m(\cc, \cc)=\on{Hom}_R(P_m, \cc)$. Therefore a basis of $\Ext_R^m(\cc, \cc)$ is given by the $(T_1^{a_1} \dots T_n^{a_n}S_1^{(b_1)} \cdots S_k^{(b_k)})^*$ where $0 \leq a_i \leq 1$, $b_p \geq 0$, and $\sum_{i=1}^n a_i+2\sum_{p=1}^k b_p=m$. It follows that we have the following isomorphism of vector spaces:
\[
\begin{array}{rcl}
\Ext_R^{*}(\cc,\cc) & \cong & \cc[S_1, \dots, S_k] \otimes \cc[T_1, \dots, T_n]/(T_1^2, \dots, T_n^2), \\[5pt]
 & \cong & \cc[S]^{\otimes k} \otimes (\cc[T]/(T^2))^{\otimes n}.
 \end{array}
\]
We write $g_A(t)$ for the generating function of a graded algebra $A$. It follows that $g_{\Ext_R^*(\cc, \cc)}(t)=g_{\cc[S]}(t)^k . g_{\cc[T]/(T^2)}(t)^n=\frac{(1+t)^n}{(1-t^2)^k}$ as $|T|=1$ and $|S|=2$. Finally, in the case of finitely generated algebras, the Gelfand-Kirillov dimension transforms tensor products into sums (cf. \cite[Proposition 3.11]{Krause-Lenagan}), so $\on{GKdim} \Ext_R^*(\cc, \cc)=k \on{GKdim} \cc[S]+n \on{GKdim} (\cc[T]/(T^2))=k$.

We first look at $\Ext_R^{2m}(\cc, \cc)$. As $2m$ is even, there is an even number of $T_i$'s in each element of the basis. 

If there are no $T_i$'s, then we need to find all solutions to $\sum_{p=1}^k b_p=m$, which is equivalent to finding all monomials $X_1^{b_1} \dots X_k^{b_k}$ of degree $m$ in $k$ variables. It is known that this number is $\binom{m+k-1}{k-1}$. 

If there are two $T_i$'s, then there are $\binom{n}{2}$ ways to choose them. Then we need to find the solutions to $\sum_{p=1}^k b_p=m-1$, of which there are $\binom{m-1+k-1}{k-1}$. 

If there are $2l$ $T_i$'s, $l \geq 1$, then there are $\binom{n}{2l}$ ways to choose them. Then we need to find the solutions to $\sum_{p=1}^k b_p=m-l$, of which there are $\binom{m-l+k-1}{k-1}$. 

We cannot have $l >m$, otherwise we would obtain a monomial of degree at least $2l > 2m$, which is not possible because we are in $\Ext_R^{2m}(\cc, \cc)$. We have thus found that $\on{dim} \Ext_R^{2m}(\cc, \cc)=\sum_{l=0}^m \binom{n}{2l} \binom{m-l+k-1}{k-1}=\sum_{l=0}^{\lfloor \frac{2m}{2} \rfloor} \binom{n}{2l} \binom{\lfloor \frac{2m}{2} \rfloor-l+k-1}{k-1}$.

The same strategy is used for $\Ext_R^{2m+1}(\cc, \cc)$. As $2m+1$ is odd, there is an odd number of $T_i$'s in each element of the basis. Thus if there are $2l+1$ $T_i$'s, $l \geq 0$, there are $\binom{n}{2l+1}$ ways to choose them. Then we need to find the solutions to $\sum_{p=1}^k b_p=m-l$, of which there are $\binom{m-l+k-1}{k-1}$. We cannot have $l >m$, otherwise we would obtain a monomial of degree at least $2l+1 > 2m+1$, which is not possible because we are in $\Ext_R^{2m+1}(\cc, \cc)$. We have thus found that $\on{dim} \Ext_R^{2m+1}(\cc, \cc)=\sum_{l=0}^{m} \binom{n}{2l+1} \binom{m-l+k-1}{k-1}=\sum_{l=0}^{\lfloor \frac{2m+1}{2} \rfloor} \binom{n}{2l+1} \binom{\lfloor \frac{2m+1}{2} \rfloor-l+k-1}{k-1}$.
\end{proof}

Using the formula obtained in the theorem, we can give a more precise characterisation of the evolution of $\Ext_R^m(\cc, \cc)$ for small values of $k$. We will need the following lemma.

\begin{lemma}\label{FormulasBinom}
Let $n$ be a positive integer. We have the following formulas:
\begin{itemize}\setlength\itemsep{0.5em}
\item For $n \geq 2$, $\displaystyle \sum_{l \geq 0}\left(\binom{n}{2l}-\binom{n}{2l+1} \right)l=2^{n-2}$.
\item For $n \geq 3$, $\displaystyle \sum_{l \geq 0}\left(\binom{n}{2l}-\binom{n}{2l+1} \right)l^2=(n-1)2^{n-3}$.
\end{itemize}
\end{lemma}

\begin{proof}
$\bullet$ We know that for a variable $x$ we have $(1+x)^n=\sum_{i=0}^n\binom{n}{i}x^i$. The derivative of this expression gives $n(1+x)^{n-1}=\sum_{i=0}^n\binom{n}{i}ix^{i-1}$. So for $x=-1$, we get 
\begin{align*}
\begin{array}{rcl}
0=\sum_{i=0}^n\binom{n}{i}i(-1)^{i-1} & = &\sum_{l \geq 0}\binom{n}{2l+1}(2l+1)-\sum_{l \geq 0}\binom{n}{2l}2l,\\[5pt]
 & = & 2\sum_{l \geq 0}\left(\binom{n}{2l+1}-\binom{n}{2l}\right)l+\sum_{l \geq 0}\binom{n}{2l+1}, \\[5pt]
 & = &2\sum_{l \geq 0}\left(\binom{n}{2l+1}-\binom{n}{2l}\right)l+2^{n-1}. 
\end{array}
\end{align*}
Therefore we obtain $\sum_{l \geq 0}\left(\binom{n}{2l}-\binom{n}{2l+1}\right)l=2^{n-2}$. By doing the same reasoning for $x=1$, we get $n2^{n-1}=2\sum_{l \geq 0}\left(\binom{n}{2l+1}+\binom{n}{2l}\right)l+2^{n-1}$. So $\sum_{l \geq 0}\left(\binom{n}{2l+1}+\binom{n}{2l}\right)l=(n-1)2^{n-2}$. It follows that $\sum_{l \geq 0}\binom{n}{2l+1}l=(n-2)2^{n-3}$.

$\bullet$ By differentiating the expression given above twice, we obtain $n(n-1)(1+x)^{n-2}=\sum_{i=0}^n\binom{n}{i}i(i-1)x^{i-2}$.  So for $x=-1$, we have $0=\sum_{i=0}^n\binom{n}{i}i(i-1)(-1)^{i-2}=\sum_{i=0}^n\binom{n}{i}i^2(-1)^{i}-\sum_{i=0}^n\binom{n}{i}i(-1)^{i}=\sum_{i=0}^n\binom{n}{i}i^2(-1)^{i}$, according to the previous paragraph. We can verify that:
\begin{align*}
\begin{array}{rcl}
0=\sum_{i=0}^n\binom{n}{i}i^2(-1)^i & = &\sum_{l \geq 0}\binom{n}{2l}(2l)^2-\sum_{l \geq 0}\binom{n}{2l+1}(2l+1)^2,\\[5pt]
 & = & 4\sum_{l \geq 0}\left(\binom{n}{2l}-\binom{n}{2l+1}\right)l^2-4\sum_{l \geq 0}\binom{n}{2l+1}l-\sum_{l \geq 0}\binom{n}{2l+1}, \\[5pt]
 & = & 4\sum_{l \geq 0}\left(\binom{n}{2l}-\binom{n}{2l+1}\right)l^2-4(n-2)2^{n-3}-2^{n-1}. 
\end{array}
\end{align*}
Therefore we obtain $\sum_{l \geq 0}\left(\binom{n}{2l}-\binom{n}{2l+1}\right)l^2=(n-1)2^{n-3}$. 

\end{proof}

\begin{lemma}\label{m>n-1}
For $m \geq n-1$, the integer $2 \lfloor \frac{m}{2} \rfloor +\overline{m}$ is bigger or equal to the biggest integer $s$ such that $s \leq n$ and $s \equiv \overline{m} \ \on{mod} 2$. 
\end{lemma}

\begin{proof}
The proof is done by direct computations. We split the question into four subproblems, depending on the parities of $m$ and $n$. We check directly that in those three cases, the statement is verified.
\end{proof}

A consequence of the previous lemma, is that, when $m \geq n-1$, the formula given in Theorem \ref{DimOfExt} can be taken over all $l \geq 0$. This is because $n >0$, so $\binom{n}{r}=0$ when $r >n$.

\begin{corollary}\label{FormulasFork=1,2,3}
Let $R$ be as in Theorem \ref{DimOfExt}. Then:
\begin{itemize}\setlength\itemsep{0.5em}
\item If $k=1$ and $m \geq n-1$, then $\on{dim} \Ext_R^{m}(\cc,\cc)=2^{n-1}$.
\item If $k=2<n$ and $m \geq n-3$, then $\on{dim} \Ext_R^{m}(\cc,\cc)=2^{n-2}(m-n+3)+\on{dim} \Ext_R^{n-3}(\cc,\cc)$. \\
 If $k=2=n$ and $m \geq n-2=0$, then $\on{dim} \Ext_R^{m}(\cc,\cc)=m+1$.
\item If $k=3$ and $m \geq n-2$, then $\on{dim} \Ext_R^{m}(\cc,\cc)=2^{n-3}\frac{m(m-1)-(n-2)(n-3)}{2} +(m-n+2)(7-n)2^{n-4}+\on{dim} \Ext_R^{n-2}(\cc,\cc)$.
\end{itemize}
\end{corollary}

\begin{proof}
To lighten notation, we will write $f(m)=\on{dim} \Ext_R^{m}(\cc,\cc)$ during this proof.

\underline{First case:} $k=1$. Based on Theorem \ref{DimOfExt}, we have $f(m)=\sum_{l=0}^{\lfloor \frac{m}{2} \rfloor} \binom{n}{2l+\overline{m}}$. If $m \geq n-1$, then by Lemma \ref{m>n-1} we have $f(m)=\sum_{l \geq 0} \binom{n}{2l+\overline{m}}=2^{n-1}$, whether $m$ is even or odd. 

\underline{Second case:} $k=2$ and $n >k$. According to Theorem \ref{DimOfExt}, we have $f(m)=\sum_{l=0}^{\lfloor \frac{m}{2} \rfloor} \binom{n}{2l+\overline{m}}(\lfloor \frac{m}{2} \rfloor-l+1)$. 

$\bullet$ When $m \geq n-1$, by Lemma \ref{m>n-1} we see that $f(m)=\sum_{l \geq 0} \binom{n}{2l+\overline{m}}(\lfloor \frac{m}{2} \rfloor-l+1)=2^{n-1}(\lfloor \frac{m}{2} \rfloor+1)-\sum_{l \geq 0} \binom{n}{2l+\overline{m}}l$. It follows that $f(m+1)-f(m)=2^{n-1}(\lfloor \frac{m+1}{2} \rfloor-\lfloor \frac{m}{2} \rfloor)+\sum_{l \geq 0}\left(\binom{n}{2l+\overline{m}}-\binom{n}{2l+\overline{m+1}}\right) l$. \\
If $m=2m'$, then $\lfloor \frac{m+1}{2} \rfloor-\lfloor \frac{m}{2} \rfloor=m'-m'=0$. Thus $f(m+1)-f(m)=\sum_{l \geq 0}\left(\binom{n}{2l}-\binom{n}{2l+1}\right) l=2^{n-2}$ according to Lemma \ref{FormulasBinom}. \\
If $m=2m'+1$, then $\lfloor \frac{m+1}{2} \rfloor-\lfloor \frac{m}{2} \rfloor=m'+1-m'=1$. Thus $f(m+1)-f(m)=2^{n-1}+\sum_{l \geq 0}\left(\binom{n}{2l+1}-\binom{n}{2l}\right) l=2^{n-1}-2^{n-2}=2^{n-2}$.

$\bullet$ Set $m=n-3$. We first handle the case $n=2n'$. In this case, we have $\lfloor \frac{m+1}{2} \rfloor =\lfloor \frac{2n'-2}{2} \rfloor=n'-1$, and $\lfloor \frac{m}{2} \rfloor =\lfloor \frac{2n'-3}{2} \rfloor=n'-2$. We thus obtain:
\begin{align*}
\begin{array}{rcl}
f(m+1)-f(m) & = &\displaystyle \sum_{l=0}^{n'-1} \binom{n}{2l}(n'-1-l+1)-\sum_{l=0}^{n'-2} \binom{n}{2l+1}(n'-2-l+1),\\[10pt]
 & = &\displaystyle n'\sum_{l=0}^{n'-1} \binom{n}{2l}-(n'-1)\sum_{l=0}^{n'-2} \binom{n}{2l+1} \\
  & &\displaystyle +\sum_{l=0}^{n'-2} \binom{n}{2l+1}l-\sum_{l=0}^{n'-1} \binom{n}{2l}l, \\[10pt]
 & = &\displaystyle n'(2^{n-1}-1)-(n'-1)(2^{n-1}-n) \\
  & &\displaystyle +\sum_{l \geq 0}\left(\binom{n}{2l+1}-\binom{n}{2l}\right)l-n(n'-1)+n', \\[10pt]
 & = &\displaystyle 2^{n-1}+\sum_{l \geq 0}\left(\binom{n}{2l+1}-\binom{n}{2l}\right)l, \\[10pt]
 & = &\displaystyle 2^{n-1}-2^{n-2}, \\[10pt]
  & = &\displaystyle 2^{n-2}.
\end{array}
\end{align*}
We now assume $n=2n'+1$. We have verify that $\lfloor \frac{m+1}{2} \rfloor =\lfloor \frac{2n'-1}{2} \rfloor=n'-1$, and $\lfloor \frac{m}{2} \rfloor =\lfloor \frac{2n'-2}{2} \rfloor=n'-1$. We thus obtain:
\begin{align*}
\begin{array}{rcl}
f(m+1)-f(m) & = &\displaystyle \sum_{l=0}^{n'-1} \binom{n}{2l+1}(n'-1-l+1)-\sum_{l=0}^{n'-1} \binom{n}{2l}(n'-1-l+1),\\[10pt]
 & = &\displaystyle n'\sum_{l=0}^{n'-1} \binom{n}{2l+1}-n'\sum_{l=0}^{n'-1} \binom{n}{2l} \\
  & &\displaystyle +\sum_{l=0}^{n'-1} \binom{n}{2l}l-\sum_{l=0}^{n'-1} \binom{n}{2l+1}l, \\[10pt]
 & = &\displaystyle n'(2^{n-1}-1)-n'(2^{n-1}-n) \\
  & &\displaystyle +\sum_{l \geq 0}\left(\binom{n}{2l}-\binom{n}{2l+1}\right)l-n'n+n', \\[10pt]
 & = &\displaystyle \sum_{l \geq 0}\left(\binom{n}{2l}-\binom{n}{2l+1}\right)l, \\[10pt]
  & = &\displaystyle 2^{n-2}.
\end{array}
\end{align*}

$\bullet$ For $m=n-2$, we proceed as the above case and obtain the same result.

We have therefore found that for $n>k=2$ and $m \geq n-3$, we have $f(m+1)-f(m)=2^{n-2}$. Thus for such $m$ we get the formula $f(m)=2^{n-2}(m-n+3)+f(n-3)$.

For $k=2=n$, we compute directly that $\begin{array}[t]{rcl}
f(m)& = & \left\{\begin{array}{cl}
        2\lfloor \frac{m}{2} \rfloor +2 & \text{ if } m \text{ odd},\\[5pt]
        2\lfloor \frac{m}{2} \rfloor +1 & \text{ if } m \text{ even},
         \end{array}\right. \\
& = &m+1.
\end{array}$

\underline{Third case:} $k=3$. First assume that $m \geq n-1$ and $m$ even. According to Theorem \ref{DimOfExt} and Lemma \ref{m>n-1}, we have: 
\begin{align*}
\begin{array}{rcl}
f(m+1)-f(m) &= &\displaystyle \sum_{l \geq 0} \binom{n}{2l+1}\frac{( \frac{m}{2} -l+2)(\frac{m}{2}-l+1)}{2}-\sum_{l \geq 0} \binom{n}{2l}\frac{( \frac{m}{2}-l+2)(\frac{m}{2}-l+1)}{2}, \\
&= &\displaystyle \frac{( \frac{m}{2} +2)(\frac{m}{2}+1)}{2}\sum_{l \geq 0} \left(\binom{n}{2l+1} -\binom{n}{2l}\right) \\
 & & \displaystyle -\frac{( \frac{m}{2} +2+\frac{m}{2}+1)}{2}\sum_{l \geq 0}\left( \binom{n}{2l+1}-\binom{n}{2l}\right)l \\
  & & \displaystyle +\frac{1}{2}\sum_{l \geq 0}\left( \binom{n}{2l+1}-\binom{n}{2l}\right)l^2, \\
& = & \displaystyle 2^{n-2}\frac{m+3}{2}-2^{n-3}\frac{n-1}{2}, \\
& = & \displaystyle 2^{n-4}(2m-n+7).
\end{array}
\end{align*} 
If $m$ is odd, a similar procedure leads to the expected result. For $m=n-2$ and $n-3$, we compute explicitly and check that the equality is satisfied.

\end{proof}

For any map $f: \bb N \longrightarrow \bb N$, we can define the discrete derivatives of $f$ recursively by $\Delta^0f(m)=f(m)$ and $\Delta^{r+1} f(m)=\Delta^r f(m+1)-\Delta^r f(m)$ for all $r \geq 0$. Consider the function $f(m)=\Ext_R(\cc,\cc)(m)$ (we write $\Ext_R(\cc,\cc)(m)$ instead of $\Ext_R^{m}(\cc,\cc)$ to emphasise that $m$ is a variable). From Corollary \ref{FormulasFork=1,2,3}, we see that:
\begin{itemize}\setlength\itemsep{0.5em}
\item If $k=1$, then $\Delta^{0}(\on{dim} \Ext_R(\cc,\cc)(m))=2^{n-1}$ for $m \geq n-1$.
\item If $k=2$ we have $\Delta^{1}(\on{dim} \Ext_R(\cc,\cc)(m))=2^{n-2}$ if $m \geq n-2$.
\item If $k=3$ we have $\Delta^{2}(\on{dim} \Ext_R(\cc,\cc)(m))=2^{n-3}$ if $m \geq n-3$.
\end{itemize}

After looking numerically at the behaviour of the dimensions of the summands of the Yoneda algebra for larger values of $k$, we make the following conjecture:

\begin{conjecture}
Let $R$ be a complete intersection ring with $k$ relations and $n$ variables, and with $k \geq 1$. We have the following:
\begin{itemize}\setlength\itemsep{0.5em}
\item If $k=1$, then for any $m \geq n-k$, we have $\Delta^{k-1}(\on{dim} \Ext_R(\cc,\cc)(m))=2^{n-k}$.
\item If $k \geq 2$, then for any $m \geq n-(k+1)$, we have $\Delta^{k-1}(\on{dim} \Ext_R(\cc,\cc)(m))=2^{n-k}$.
\end{itemize} 
\end{conjecture}
}

\section{Homotopy Lie algebras and complete intersections}\label{sec:5}

\subsection{The Yoneda algebra of a complete intersection algebra}\label{sec:Yoneda_CI}

Let $\msf k$ be a commutative Noetherian ring and set $A=\msf k[x_1,\dots, x_n]/(c_1,\dots, c_k)$ a complete intersection $\msf k$-algebra, i.e.,  $c_1, \dots, c_k \in \msf k[x_1, \dots, x_n]$ is a regular sequence. This means that $c_{i+1}$ is not a zero divisor in $\msf k[x_1,\dots, x_n]/( c_1,\dots, c_i)$ for all $i$. Set $\mf{m}=(x_1, \dots , x_n) \subset k[x_1, \dots, x_n]$. The polynomial ring $\msf k[x_1, \dots, x_n]$ is graded with $|x_i|=1$, and so is $\mathfrak{m}$. We will take about polynomial degree (not to be confused with the polynomial degree of Section \ref{sec:4}, but the difference will be clear by the context). We assume that for all $1 \leq p \leq k$, we have $c_p \in \mf{m}^2$ and write $c_p=\sum_{i=1}^n c_{p, i}x_i$ with $c_{p, i} \in \mf{m}$. Let $\mf m_x=(\overline{x_1}, \dots, \overline{x_{n}})$ be the ideal of $A$ which is the image of $ \mf m$. We are interested in the $A$-module $\msf k=A/\mf{m}_x$. Using a reasoning similar to that of \cite[Theorem 4]{Tate}, a corollary of Theorem \ref{thm:Tate} in the complete intersection case states that the free resolution $P^*$ of the $A$-module $\msf k$ is given by $P_2$, i.e., the complex $P_2$ is already exact. Hence $P^*$ is generated as a PD dg algebra by the set
\[
\{T_i, S_p \ | \ 1 \leq i \leq n, \, 1 \leq p \leq k, \, \on{deg}T_i=-1, \, \on{deg}S_p=-2\}
\]
and the differential $d$ is given by:
\begin{itemize}
\item $d(T_i) = \overline{x_i}$ for all $1 \leq i \leq n$,
\item $d(S_p) = \displaystyle \sum_{i=1}^n \overline{c_{p, i}} \, T_i$ for all $1 \leq p \leq k$.
\end{itemize}
To lighten notations, we will remove the $\overline{\color{white}x}$ so it is assumed that we are in the quotient ring. Moreover, we will write $S_i^{(k)}=\gamma_k(S_i)$ for the divided power structure. We see that $d(P^*) \subseteq \mf{m}_xP^{*}$, and so the resolution is $\mf m_x$-minimal, implying that $\Ext_A^*(\msf k,\msf k)=\mc Hom_{A}^*(P^*, \msf k)$.

For all  $1 \leq i \leq n$ and  $1 \leq p \leq k$, we define $\alpha_i=T_i^\vee$ and $\beta_p=S_p^\vee$ (see before Theorem \ref{thm:lift_pd} for the notation).

\delete{
\begin{align*}
\begin{array}[t]{cccc}
\alpha_i: & P_1 & \longrightarrow & \msf k \\
           & T_j & \longmapsto & \delta_{i, j}
\end{array} \quad \text{ and } \quad \begin{array}[t]{cccc}
\beta_p: & P_2 & \longrightarrow & \msf k \\
           & S_q & \longmapsto & \delta_{p, q}, \\
           & T_{i}T_{j} & \longmapsto & 0.
\end{array}
\end{align*}

In the remaining of this article, for a monomial $X \in P_m$, we will write $X^*$ for the dual of this element in $\on{Hom}_R(P_m,\msf k)$.

The proof of the next proposition can be found in the Appendix \ref{appendix:sec:Yoneda_CI}.

\begin{proposition}\label{BasisofExtdegree2} 
For any $m \in \mathbb{Z}_{>0}$, let $T_1^{a_1}\cdots T_n^{a_n}S_1^{(b_1)}\cdots S_k^{(b_k)} \in P_m$. Then the dual of $T_1^{a_1}\cdots T_n^{a_n}S_1^{(b_1)}\cdots S_k^{(b_k)}$ is a linear combination of elements of the form $\alpha_1^{a'_1}\cdots \alpha_n^{a'_n}\beta_1^{b'_1} \cdots \beta_k^{b'_k} \in \Ext_R^m(\msf k, \msf k)$. Moreover, for any $m \in \bb N$, the set $\{\alpha_1^{a_1}\cdots \alpha_n^{a_n}\beta_1^{b_1} \cdots \beta_k^{b_k} \ | \ a_1+\dots+a_n+2b_1+\dots+2b_k=m\}$ is a basis of $\Ext_R^m(\msf k, \msf k)$.
\end{proposition}

We can now give a presentation of the Yoneda algebra when the relations of $R$ are in $\mf{m}_x^2$.
}

\begin{theorem}\label{thm:HomotopyLiealgebra}
Let $A=\msf k[x_1,\dots, x_n]/(c_1,\dots, c_k)$ be a complete intersection $\msf k$-algebra with $\msf k$ is a commutative Noetherian ring. Consider the $A$-module $\msf k=A/\mf{m}_x$.  We assume that for all $1 \leq p \leq k$, we have $c_p \in \mf{m}^2$. By writing $c_p \equiv \sum_{i < j}(n^p_{i, j}+n^p_{j, i})x_ix_j+\sum_{i}n^p_{i, i}x_i^2 \mod \mathfrak{m}^3$, we have that
\begin{align*}
\pi^*(A, \msf k)=\bigoplus_{i=1}^n\msf k \alpha_i \oplus \bigoplus_{p=1}^k \msf k \beta_p
\end{align*}
is a restricted graded Lie algebra with $\on{deg}\alpha_i=1$, $\on{deg}\beta_p=2$, and the restricted Lie structure is given by:
\begin{empheq}[left = \empheqlbrace]{align}
    [\alpha_i, \alpha_j] & =  \displaystyle\sum_{p=1}^k (n^p_{i, j}+n^p_{j, i})\beta_p  \text{ for all } 1 \leq i \neq j \leq n ; \label{thm:HomotopyLiealgebra:eq:1}\\[5pt]
[\alpha_i, \alpha_i] & =   \sum_{p=1}^k 2n^p_{i, i}\beta_p \text{ for all } 1 \leq i \leq n ; \label{thm:HomotopyLiealgebra:eq:2}\\[5pt]
[\beta_p, \pi^*(A, \msf k)] & =  0 \text{ for all } 1 \leq p \leq k; \label{thm:HomotopyLiealgebra:eq:3}\\[5pt]
q(\alpha_i) & =\sum_{p=1}^k n^p_{i, i}\beta_p \text{ for all } 1 \leq i \leq n.\label{thm:HomotopyLiealgebra:eq:4}
\end{empheq}
\end{theorem}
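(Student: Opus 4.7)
The plan is to work with the minimal pd dg resolution $P^*$ of $\msf k = R/\mathfrak{m}_x$ produced by Theorem~\ref{thm:Tate}. Because $c_1,\dots,c_k$ is a regular sequence, this construction terminates after two steps (cf.~\cite[Thm.~4]{Tate}): one adjoins degree $-1$ generators $T_i$ with $d(T_i)=x_i$, then degree $-2$ generators $S_p$ with $d(S_p)=\sum_l c_{p,l}T_l$ for some choice $c_{p,l}\in\mathfrak m$ satisfying $c_p=\sum_l c_{p,l}x_l$. Thus $\mathcal B=\{T_i\}\cup\{S_p\}$ with $\mathcal B(-n)=\emptyset$ for $n\geq 3$, so Theorems~\ref{thm:lin_comb} and~\ref{thm:image_homotopy} identify $\pi^*(R,\msf k)=\bigoplus_i\msf k\,\alpha_i\oplus\bigoplus_p\msf k\,\beta_p$, concentrated in degrees $1$ and $2$. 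Since the super-commutator preserves degree and $\pi^{\geq 3}(R,\msf k)=0$, every bracket $[\beta_p,-]$ lands in the zero summands, giving~\eqref{thm:HomotopyLiealgebra:eq:3}.

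For the remaining relations I would compute the super-commutator directly in $\on{Ext}_R^*(\msf k,\msf k)$ via chain-map lifts as in Lemma~\ref{lem:Yoneda_prod}. Take the lift $\varphi^i:P^*\to P^*$ of $\alpha_i$ with $\varphi^i_{-1}(T_l)=\delta_{il}$; the cocycle condition then forces $\varphi^i_{-2}(T_lT_m)=\delta_{im}T_l-\delta_{il}T_m$ on the basis $\{T_lT_m\}_{l<m}$, and requires $\varphi^i_{-2}(S_p)\in P^{-1}$ with $d(\varphi^i_{-2}(S_p))=c_{p,i}$. To expose the $n^p_{i,j}$'s, choose the decomposition $c_{p,l}\equiv\sum_k n^p_{k,l}x_k\pmod{\mathfrak m^2}$, which is compatible with the assumed expansion of $c_p$ modulo $\mathfrak m^3$; then $c_{p,i}\equiv\sum_l n^p_{l,i}x_l\pmod{\mathfrak m^2}$, so one can take $\varphi^i_{-2}(S_p)=\sum_l n^p_{l,i}T_l+(\text{terms with coefficients in }\mathfrak m)$. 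Lemma~\ref{lem:Yoneda_prod} then gives
\[
(\alpha_j\cdot\alpha_i)(T_lT_m)=\delta_{im}\delta_{jl}-\delta_{il}\delta_{jm},\qquad (\alpha_j\cdot\alpha_i)(S_p)\equiv n^p_{j,i}\pmod{\mathfrak m}.
\]

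The super-commutator $[\alpha_i,\alpha_j]=\alpha_i\alpha_j+\alpha_j\alpha_i$ (odd $\times$ odd) consequently annihilates each $T_lT_m$ and evaluates to $n^p_{i,j}+n^p_{j,i}$ on every $S_p$. Because $[\alpha_i,\alpha_j]$ lies in $\Psi(\pi^*(R,\msf k))=\on{Prim}(\on{Ext}_R^*(\msf k,\msf k))$ (primitives form a Lie subalgebra), the only contributions to it in the basis of primitives of $\on{Ext}^2$ are along the $\beta_p$'s, yielding~\eqref{thm:HomotopyLiealgebra:eq:1}; specializing $j=i$ gives~\eqref{thm:HomotopyLiealgebra:eq:2}. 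For~\eqref{thm:HomotopyLiealgebra:eq:4} I would identify the quadratic operation from Appendix~\ref{appendix:sec:square_pd_der} with the associative square $q(\alpha_i)=\alpha_i\cdot\alpha_i$ in the Yoneda algebra; evaluating on $S_p$ gives $q(\alpha_i)=\sum_p n^p_{i,i}\beta_p$, and $[\alpha_i,\alpha_i]=2q(\alpha_i)$ re-derives~\eqref{thm:HomotopyLiealgebra:eq:2} in a $2$-torsion-safe manner.

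The main obstacle will be the sign bookkeeping: one has two natural lifts, the pd dg derivation lift $\widetilde\varphi^i$ of Theorem~\ref{thm:lift_pd} and the chain-map lift $\varphi^i$ of Lemma~\ref{lem:Yoneda_prod}, which differ by $(-1)^{ni}$ in each cochain degree, so combining the super-commutator computation with Lemma~\ref{lem:Yoneda_prod} requires care. A secondary subtlety is that the individual $n^p_{i,j}$ with $i\neq j$ are not determined by $c_p$ alone; only the sums $n^p_{i,j}+n^p_{j,i}$ and the diagonals $n^p_{i,i}$ are, but these are precisely the quantities appearing in the conclusion, so the output is independent of the ambient choices.
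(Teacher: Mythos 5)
Your proposal is correct and follows essentially the same route as the paper: the paper likewise uses the two-step resolution $P^*=R\langle T_i,S_p\rangle$ to conclude $\pi^*(R,\msf k)$ is concentrated in degrees $1$ and $2$ (whence $[\beta_p,-]=0$ by degree reasons), and obtains \eqref{thm:HomotopyLiealgebra:eq:1}--\eqref{thm:HomotopyLiealgebra:eq:2} from exactly the explicit lift computation you describe, recorded as Lemma~\ref{lem:RelationsinExt} in Appendix~\ref{appendix:sec:Yoneda_CI}, with $q(\alpha_i)=\alpha_i^2$ giving \eqref{thm:HomotopyLiealgebra:eq:4}. Your transposed index convention for $n^p_{i,j}$ relative to the paper's ($c_{p,i}=\sum_j n^p_{i,j}x_j$ there) is immaterial since only the symmetrizations $n^p_{i,j}+n^p_{j,i}$ and the diagonals enter the conclusion, as you note.
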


\begin{proof}
We know from the discussion before Proposition \ref{prop:decomp_dual} and from Theorem \ref{thm:image_homotopy} that $\pi^*(A,\msf k)$ is a restricted graded Lie algebra spanned by the $\alpha_i$ and $\beta_p$, and the restricted map is given by the square. The Lie brackets $[\alpha_i,\alpha_j]$ and the quadratic map $q(\alpha_i)$ are obtained by Lemma \ref{lem:RelationsinExt}. The Lie brackets $[\beta_p,\alpha_i]$ and $[\beta_p,\beta_q]$ are zero as they would be elements of degree $3$ and $4$ respectively, and the highest degree in $\pi^*(A,\msf k)$ is $2$.

\end{proof}

\delete{
\begin{proof}
We verify directly that $\bigoplus_{i=1}^n\msf k \alpha_i \oplus \bigoplus_{p=1}^k \msf k \beta_p$ is indeed a restricted graded Lie algebra. Then using the relations \eqref{thm:HomotopyLiealgebra:eq:1}-\eqref{thm:HomotopyLiealgebra:eq:4} as well as the presentation given in Corollary \ref{cor:PresentationofExt}, we obtain the desired result.
\end{proof}
}

\delete{
\begin{corollary}\label{cor:PresentationofExt}
Let $R=\msf k[x_1,\dots, x_n]/(c_1,\dots, c_k)$ be a complete intersection ring with $\msf k$ a commutative Noetherian ring. Consider the $R$-module $\msf k=R/\mf{m}_x$. We assume that for all $1 \leq p \leq k$, we have $c_p \in \mf{m}^2$. By writing $c_p \equiv \sum_{i < j}(n^p_{i, j}+n^p_{j, i})x_ix_j+\sum_{i}n^p_{i, i}x_i^2 \text{ mod } \mathfrak{m}^3$, we have:
\begin{align*}
\Ext_R^*(\msf k, \msf k)=\msf k[\beta_1, \dots, \beta_k] \otimes_{\msf k} \msf k \langle \alpha_{1}, \dots, \alpha_{n} \rangle / \mathcal{I}, 
\end{align*}
where $\mathcal{I}$ is the two sided ideal generated by $\alpha_i \alpha_j +\alpha_j \alpha_i-\sum_{p=1}^k (n^p_{i, j}+n^p_{j, i})\beta_p$ for all $1 \leq i \neq j \leq n$, and by $\alpha_i^2-\sum_{p=1}^k n^p_{i, i}\beta_p$ for all $1 \leq i \leq n$. In particular, the Yoneda algebra is finitely generated. Furthermore, it is strictly graded commutative with $\on{deg}\alpha_i=1$, $\on{deg}\beta_p=2$ if and only if $c_p \in \mf{m}^3$ for all $1 \leq p \leq k$. In this case, we obtain:
\begin{align*}
\Ext_R^*(\msf k, \msf k)=\msf k[\beta_1, \dots, \beta_k] \otimes \Lambda[ \alpha_{1}, \dots, \alpha_{n}]. 
\end{align*}
\end{corollary}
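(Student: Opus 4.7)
The plan is to extract the presentation from Theorem~\ref{thm:image_homotopy} and Theorem~\ref{thm:HomotopyLiealgebra}, and then match Hilbert series to rule out further relations. First, Theorem~\ref{thm:image_homotopy} tells us that $\Ext_R^*(\msf k,\msf k)$ is generated as an $\msf k$-algebra by the duals $\{b^\vee\mid b\in\mathcal B\}$ of the pd-generators of $P^*$. Because $R$ is a complete intersection and $c_p\in\mf m^2$, the Tate--Assmus construction terminates after one step: as in \cite{Tate}, the pd dg algebra $P^*=R\langle T_1,\ldots,T_n,S_1,\ldots,S_k\rangle$ with the stated differential is already acyclic in negative degrees, so $\mathcal B=\{T_1,\dots,T_n,S_1,\dots,S_k\}$, and the generators of $\Ext_R^*(\msf k,\msf k)$ are exactly $\alpha_i=T_i^\vee$ and $\beta_p=S_p^\vee$.

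Next, I would read off the relations among these generators from the restricted graded Lie algebra structure on $\Psi(\pi^*(R,\msf k))$ provided by Theorem~\ref{thm:HomotopyLiealgebra}. The bracket on $\pi^*(R,\msf k)$ is induced by the super commutator in the associative algebra $\Ext_R^*(\msf k,\msf k)$ (Proposition~\ref{prop:dg_Lie_algebra} and its transport under $\Psi$), so the identity
\[
[\alpha_i,\alpha_j]=\sum_{p=1}^k(n^p_{i,j}+n^p_{j,i})\beta_p
\]
becomes the defining relation $\alpha_i\alpha_j+\alpha_j\alpha_i=\sum_p(n^p_{i,j}+n^p_{j,i})\beta_p$ in $\Ext_R^*(\msf k,\msf k)$. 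For the squares of the odd-degree generators I would use that the quadratic operator $q$ on a primitive odd-degree element of a Hopf algebra coincides with the associative square (Appendix~\ref{appendix:sec:square_pd_der}), so that $\alpha_i^2=q(\alpha_i)=\sum_p n^p_{i,i}\beta_p$; this handles the characteristic two case uniformly. The vanishing of $[\beta_p,-]$ says that the $\beta_p$ lie in the graded center of the subalgebra generated by $\{\alpha_i,\beta_p\}$, which is the whole of $\Ext_R^*(\msf k,\msf k)$.

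The main step is then to prove that no further relations are needed, i.e., the evident surjection
\[
\msf k[\beta_1,\ldots,\beta_k]\otimes_{\msf k}\msf k\langle\alpha_1,\ldots,\alpha_n\rangle/\mathcal I\twoheadrightarrow\Ext_R^*(\msf k,\msf k)
\]
is injective. Using the relations of $\mathcal I$, every element of the left-hand side is an $\msf k$-linear combination of ordered monomials $\alpha_{i_1}\cdots\alpha_{i_r}\beta_1^{b_1}\cdots\beta_k^{b_k}$ with $i_1<\cdots<i_r$, and the graded-dimension count of this spanning set yields Hilbert series $(1+t)^n/(1-t^2)^k$. On the other hand, minimality of $P^*$ gives $\Ext_R^*(\msf k,\msf k)=\Hom_R^*(P^*,\msf k)$ with $\msf k$-basis the duals of the PBW monomials $T_{i_1}\cdots T_{i_r}S_1^{(b_1)}\cdots S_k^{(b_k)}$ ($i_1<\cdots<i_r$), producing the same Hilbert series $(1+t)^n/(1-t^2)^k$. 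Equality of graded dimensions in each degree forces the surjection above to be an isomorphism. This is the step I expect to be the main obstacle, since it requires a careful identification of the ordered PBW-type spanning set with the dual basis of $P^*\otimes_R\msf k$.

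Finally, the strictly graded commutative criterion follows by inspection: the defining relations of $\mathcal I$ collapse to $\alpha_i\alpha_j+\alpha_j\alpha_i=0$ and $\alpha_i^2=0$ precisely when all the quadratic coefficients $n^p_{i,j}$ vanish, that is, when each $c_p$ lies in $\mf m^3$. In this case $\mathcal I$ becomes the ideal defining the exterior algebra on $\alpha_1,\ldots,\alpha_n$, so
\[
\Ext_R^*(\msf k,\msf k)\cong\msf k[\beta_1,\ldots,\beta_k]\otimes\Lambda[\alpha_1,\ldots,\alpha_n],
\]
as claimed.
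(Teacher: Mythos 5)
Your proposal is correct and follows essentially the same route as the paper: generators from Theorem~\ref{thm:image_homotopy}, relations from the restricted Lie structure of Theorem~\ref{thm:HomotopyLiealgebra}, and the observation that strict graded commutativity of the relations is equivalent to $c_p\in\mf m^3$. The only difference is one of explicitness — the paper compresses the injectivity of the surjection from the presented algebra into the single assertion that $\Ext_R^*(\msf k,\msf k)$ is the restricted enveloping algebra of $\Psi(\pi^*(R,\msf k))$, whereas you unfold that assertion into the Hilbert-series comparison $(1+t)^n/(1-t^2)^k$ against the dual PBW basis of the minimal resolution, which is precisely the dimension count underlying the paper's claim.
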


\begin{proof}
{\color{red}We know that $\Ext_R^*(\msf k, \msf k)$ is the enveloping algebra of $\pi^*(R, \msf k)$ (only if $\msf k$ is a field of characteristic zero (Milnor-Moore theorem))}. From the Lie brackets given in Theorem \ref{thm:HomotopyLiealgebra}, the presentation of the Yoneda algebra follows.

Considering that $\on{deg}\alpha_i=1$ and $\on{deg}\beta_p=2$, if the Yoneda algebra is strictly graded commutative, then $\alpha_i \alpha_j+\alpha_j\alpha_i=0$ for all $i \neq j$, and $\alpha_i^2=0$ for all $i$. This would mean that $\sum_{p=1}^k (n^p_{i, j}+n^p_{j, i})\beta_p=0$ for all $i \neq j$ and $\sum_{p=1}^k n^p_{i, i}\beta_p=0$ for all $i$. But by construction, the $\beta_p$'s are linearly independent, and so we obtain $n^p_{i, j}+n^p_{j, i}=0=n^p_{i, i}$ for all $i \neq j$ and all $p$. As a consequence, we get that for all $p$, the relation $c_p$ has no term of degree $2$, i.e., $c_p \in \mf{m}^3$.

Conversely, if $c_p \in \mf{m}^3$ for all $p$, then $n^p_{i, j}+n^p_{j, i}=0=n^p_{i, i}$ for all $i \neq j$ and all $p$, and based on the relations defining $\mathcal{I}$, we see that $\Ext_R^*(\msf k, \msf k)$ is clearly strictly graded commutative and given by $\msf k[\beta_1, \dots, \beta_k] \otimes \Lambda[ \alpha_{1}, \dots, \alpha_{n}]$.    
\end{proof}
}

\delete{
\begin{proof}
Let $\mathcal A$ denote the algebra on the right hand side of the equality in the statement. We know from Proposition \ref{BasisofExtdegree2} that $\Ext_R^*(\msf k, \msf k)$ is generated by the $\alpha_i$'s and $\beta_p$'s. We have also seen in Lemma \ref{lem:RelationsinExt} that the relations in $\mathcal{I}$ are satisfied in $\Ext_R^*(\msf k, \msf k)$. Hence there is a natural surjective algebra morphism $\pi: \mathcal A \longrightarrow \Ext_R^*(\msf k, \msf k)$ corresponding to the quotient of $A$ by eventual additional relations. Furthermore, the Yoneda algebra is naturally graded, coming from a projective complex. If we grade $\mathcal A$ in the same way as $\Ext_R^*(\msf k, \msf k)$, then $\pi$ is homogeneous.

Based on the relations defining $\mathcal A$, any monomial in $\mathcal A$ can be expressed as a linear combination of terms of the form $\alpha_1^{a_1} \cdots \alpha_n^{a_n}\beta_1^{b_1} \cdots \beta_k^{b_k}$, with $b_p \in \bb N$ and $a_i=0, 1$. Thus the homogeneous summand of degree $m$ of $\mathcal A$, written $\mathcal A_m$, has a basis $\{\alpha_1^{a_1} \cdots \alpha_n^{a_n}\beta_1^{b_1} \cdots \beta_k^{b_k} \ | \ b_p \geq 0, a_i \leq 1,\sum_{i=1}^na_i+2\sum_{p=1}^kb_p=m\}$. However, according to Proposition \ref{BasisofExtdegree2}, this basis of $\mathcal A_m$ is sent to a basis of $\Ext_R^*(\msf k, \msf k)$, which makes $\pi$ into a linear isomorphism on the homogeneous subspaces. As we know that $\pi$ preserves the algebra structure, it follows that $\pi$ is an algebra isomorphism, which proves the presentation of the Yoneda algebra.

Considering that $\on{deg}\alpha_i=1$ and $\on{deg}\beta_p=2$, if the Yoneda algebra is strictly graded commutative, then $\alpha_i \alpha_j+\alpha_j\alpha_i=0$ for all $i \neq j$, and $\alpha_i^2=0$ for all $i$. This would mean that $\sum_{p=1}^k (n^p_{i, j}+n^p_{j, i})\beta_p=0$ for all $i \neq j$ and $\sum_{p=1}^k n^p_{i, i}\beta_p=0$ for all $i$. But by construction, the $\beta_p$'s are linearly independent, and so we obtain $n^p_{i, j}+n^p_{j, i}=0=n^p_{i, i}$ for all $i \neq j$ and all $p$. As a consequence, we get that for all $p$, the relation $c_p$ has no term of degree $2$, i.e., $c_p \in \mf{m}^3$.

Conversely, if $c_p \in \mf{m}^3$ for all $p$, then $n^p_{i, j}+n^p_{j, i}=0=n^p_{i, i}$ for all $i \neq j$ and all $p$, and based on the relations defining $\mathcal{I}$, we see that $\Ext_R^*(\msf k, \msf k)$ is clearly strictly graded commutative and given by $\msf k[\beta_1, \dots, \beta_k] \otimes \Lambda[ \alpha_{1}, \dots, \alpha_{n}]$. 
\end{proof}
}

\delete{
We have seen in Corollary \ref{cor:PresentationofExt} that $\Ext_R^*(\msf k, \msf k)$ is finitely generated by $n+k$ elements. We can however refine this number (see Appendix \ref{appendix:sec:Yoneda_CI} for the proof).

\begin{proposition}\label{prop:NumberofGenerators}
Let $R=\msf k[x_1,\dots, x_n]/(c_1,\dots, c_k)$ be a complete intersection ring with $\msf k$ a commutative ring. Consider the $R$-module $\msf k=R/\mf{m}_x$. We assume that for all $1 \leq p \leq k$, we have $c_p \in \mf{m}^2$. We write $c_p \equiv \sum_{i < j}(n^p_{i, j}+n^p_{j, i})x_ix_j+\sum_{i}n^p_{i, i}x_i^2 \text{ mod } \mathfrak{m}^3$. Then $\Ext_R^*(\msf k, \msf k)$ is generated by $n+k-\on{rank}A/I$ elements, where $A/I$ is an $\binom{n+1}{2} \times k$ matrix composed of the $n^p_{i, j}$'s.
\end{proposition}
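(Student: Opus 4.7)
The plan is to start from the presentation of $\Ext_R^*(\msf k,\msf k)$ in Corollary \ref{cor:PresentationofExt}, which produces the $n+k$ generators $\alpha_1,\ldots,\alpha_n$ (degree $1$) and $\beta_1,\ldots,\beta_k$ (degree $2$), subject to the relations
\[
\alpha_i\alpha_j+\alpha_j\alpha_i=\sum_{p=1}^k(n^p_{i,j}+n^p_{j,i})\beta_p \quad (i<j), \qquad \alpha_i^2=\sum_{p=1}^k n^p_{i,i}\beta_p.
\]
The key observation is that these $\binom{n}{2}+n=\binom{n+1}{2}$ relations form a linear system in the unknowns $\beta_1,\ldots,\beta_k$ whose coefficient matrix is precisely the matrix $M$ of the statement (rows indexed by pairs $(i,j)$ with $i\le j$), and whose right-hand sides are quadratic in the $\alpha_i$'s.

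Setting $r=\on{rank}M$, standard $\msf k$-linear row reduction on this system produces $r$ independent equations from which $r$ of the $\beta_p$'s may be expressed as $\msf k$-linear combinations of the remaining $k-r$ of them together with products of the $\alpha_i$'s. These $r$ redundant $\beta_p$'s can then be deleted from the generating set, leaving $n+(k-r)$ generators, which proves the upper bound.

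To verify that $n+k-r$ is actually the minimum, I would examine the indecomposables in low degree. In degree $1$, since $\dim_{\msf k}\Ext_R^1(\msf k,\msf k)=n$, one needs at least $n$ degree-$1$ generators. In degree $2$, using the basis $\{\alpha_i\alpha_j:i<j\}\cup\{\beta_p:1\le p\le k\}$ of $\Ext_R^2(\msf k,\msf k)$ of dimension $\binom{n}{2}+k$, I would compute the image of the multiplication map $\Ext_R^1\otimes\Ext_R^1\to\Ext_R^2$: it is spanned by the $\alpha_i\alpha_j$ for $i<j$ (which give the $\alpha$-block of the basis) together with the images of $\alpha_j\alpha_i$ ($j>i$) and $\alpha_i^2$, whose $\beta$-components are exactly the rows of $M$. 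Hence this image has dimension $\binom{n}{2}+r$, so the quotient $\Ext_R^2/\on{Im}(\text{mult})$ of degree-$2$ indecomposables has dimension $k-r$. This forces any generating set to contain at least $k-r$ elements in degree $\ge 2$, whence the minimality of $n+k-r$.

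The main obstacle is mostly bookkeeping: one must carefully match the row/column indexing of $M$ with the pairs $(i,j)$, track separately the diagonal entries $n^p_{i,i}$ and off-diagonal entries $n^p_{i,j}+n^p_{j,i}$, and argue that no further reduction is possible (e.g., that no $\alpha_i$ can be rewritten from the remaining generators and no $\beta_p$ outside the chosen complement can be eliminated). None of these steps seem difficult in principle, since everything reduces to the linear algebra of $M$ and the dimension count in $\Ext_R^2$ afforded by the PBW-type basis of Proposition \ref{BasisofExtdegree2}.
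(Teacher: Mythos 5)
Your argument for the generation claim is essentially the paper's own proof: both of you row-reduce the linear system whose coefficient matrix is $M$ to express $\on{rank}M$ of the $\beta_p$'s in terms of products of the $\alpha_i$'s and the remaining $\beta$'s, thereby trimming the generating set of Corollary~\ref{cor:PresentationofExt} from $n+k$ to $n+k-\on{rank}M$ elements. Your additional minimality argument via degree-$2$ indecomposables goes beyond what the paper establishes (the paper only proves the upper bound) and is correct in the field case, which is the setting in which Corollary~\ref{cor:PresentationofExt} and the notion of $\on{rank}M$ actually make sense, though note the proposition as stated allows $\msf k$ to be a commutative ring, an issue neither your proof nor the paper's addresses.
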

}

\delete{
\begin{definition}
    The universal restricted enveloping algebra of a restricted graded Lie algebra $\mathfrak{g}$ is defined as the quotient
    \[
    \msf u(\mathfrak{g})=T(\mathfrak{g})/I(\mathfrak{g})
    \]
where $T(\mathfrak{g})$ is the tensor algebra of $\mathfrak{g}$ and $I(\mathfrak{g})$ is the two sided ideal generated by $a \otimes b-(-1)^{|a||b|}b \otimes a-[a, b]$ for all $a, b \in \mathfrak{g}$, and by $a \otimes a-q(a)$ for all $a \in \mathfrak{g}$ of odd degree.
\end{definition}

\begin{remark}
   The universal restricted enveloping algebra $\msf u(\mathfrak{g})$ of a restricted graded Lie algebra is referred to as the u-algebra of $\mathfrak{g}$ in \cite{Jacobson}.  
\end{remark}

The \textbf{homotopy Lie algebra} of an algebra $A$ (over $\msf k$) was introduced by Avramov (\cite[Theorem 1.2]{Avramov}) and is defined as the restricted graded Lie algebra $\pi^*(A)$ satisfying
\[
\on{Ext}_A^*(\msf k, \msf k)=\msf u(\pi^*(A)).
\]

}

\begin{remark}
Here we can make a connection with the cohomological variety introduced in \cite{CJL1}. Consider the case where $A$ is not a complete intersection, and set $\varpi: \widetilde{A} \to A$ the complete intersection approximation of $A$ (see \cite{CJL1} for details on the notations). Set $\widetilde{I}=\varpi^{-1}(I)$. Then $A/I$ is an $A$-module and an $\widetilde{A}$-module through the $\widetilde{A}$-module isomorphism $A/I \cong \widetilde{A}/\widetilde{I}$. We then have the following commutative diagram
 \begin{center}
  \begin{tikzpicture}[scale=0.9,  transform shape]
  \tikzset{>=stealth}
  
\node (1) at ( 0,0){$\on{Ext}_{\widetilde{A}}^*(A/I,A/I)$};
\node (2) at ( 5,0){$\on{Ext}_{A}^*(A/I,A/I)$};

\node (3) at (0,-3){$\on{Ext}_{\widetilde{A}}^*(A/I,A/I)^{ab}$};
\node (4) at ( 5,-3){$\on{Ext}_{A}^*(A/I,A/I)^{ab}$};

\draw [decoration={markings,mark=at position 1 with
    {\arrow[scale=1.2,>=stealth]{>}}},postaction={decorate}] (2) --  (1) node[midway, above] {$\varpi^\#$};
    \draw [decoration={markings,mark=at position 1 with
    {\arrow[scale=1.2,>=stealth]{>>}}},postaction={decorate}] (2) --  (4) node[midway, above] {};
    \draw [decoration={markings,mark=at position 1 with
    {\arrow[scale=1.2,>=stealth]{>>}}},postaction={decorate}] (1) --  (3) node[midway, above] {};
    \draw [decoration={markings,mark=at position 1 with
    {\arrow[scale=1.2,>=stealth]{>}}},postaction={decorate}] (4) --  (3) node[midway, below] {$(\varpi^\#)^{ab}$};

\end{tikzpicture}
 \end{center}
We know from Theorem \ref{thm:image_homotopy} that $\widetilde{\pi}^*=\pi^*(\widetilde{A},\widetilde{I})$ and $\pi^*=\pi^*(A,I)$ generate their respective Yoneda algebras. Then using Theorem \ref{thm:HomotopyLiealgebra}, we obtain
\begin{align*}
& \on{Ext}_{\widetilde{A}}^*(A/I,A/I)^{ab}=\on{Sym}_{A/I}\left(\widetilde{\pi}^2/[\widetilde{\pi}^1,\widetilde{\pi}^1]\bigoplus \widetilde{\pi}^1\right), \\
& \on{Ext}_{A}^*(A/I,A/I)^{ab}=\on{Sym}_{A/I}(\pi^*/[\pi^*,\pi^*]). 
\end{align*}
Set $A=R(L_{\mathfrak{\hat{g}}}(k,0))$ where $\mathfrak{g}$ is a finite dimensional simple Lie algebra with non-degenerate symmetric invariant bilinear form with level $k \in \mathbb{Z}_{\geq 2}$, as in \cite[Theorem 6.2]{CJL1}. Set also $I$ the ideal generated by a basis of $\mathfrak{g}$, so that $A/I=\msf k$. Then $[\widetilde{\pi}^1,\widetilde{\pi}^1]=0$ as the relations of $\widetilde{A}$ all satisfy $\widetilde{c_p} \in \mathfrak{m}^3 \backslash \mathfrak{m}^2$. Then the morphism of \cite[Corollary 4.9]{CJL1} becomes
\[
\on{Ext}_{A}^*(\msf k,\msf k) \twoheadrightarrow \on{Sym}_{\msf k}(\widetilde{\pi}^2).
\]
It follows that the results on the cohomological variety found in \cite{CJL1} focus on the degree $2$ part of the homotopy Lie algebra of $\widetilde{A}$ when $A=R(L_{\hat{\mathfrak{g}}}(k,0))$. Moreover, we also know from \cite[Lemma 6.1]{CJL1} that $\dim_{\msf k} \pi^2=\dim_{\msf k} \widetilde{\pi}^2=\dim_{\msf k} L((k+1)\theta)$, where $L((k+1)\theta)$ is the highest $\mathfrak{g}$-module of highest weight $(k+1)\theta$ with $\theta$ the highest root of $\mathfrak{g}$.
\end{remark}

\begin{example}
A simple singularity is an isolated hypersurface singularity in $\cc^3$ obtained as the quotient of the complex plane by the natural action of a finite subgroup of $\on{SU}_2$. Such singularities are linked to the finite dimensional simple Lie algebras through the resolution of the singular point, and they are given by the following equations:
\begin{align*}
\renewcommand\arraystretch{1.1}
\begin{array}{|l|l|}
\hline
\text{Type} & \text{Equation} \\
\hline
A_n \ (n \geq 1) & x^{n+1}+yz \\
\hline
D_{n+2} \ (n \geq 2)& x^{n+1}+xy^2+z^2 \\
\hline
E_6 & x^4+y^3+z^2 \\
\hline
E_7 & x^3+xy^3+z^2 \\
\hline
E_8 & x^5+y^3+z^2 \\
\hline
\end{array}
\end{align*}

We consider the singularities above over any commutative Noetherian ring, i.e., the ring of functions of the singularity $X$ is given by $A_X=\msf k[x,y,z]/(c)$ with $\msf k$ a commutative Noetherian ring and $c \in \mf{m}^2$. This is a complete intersection $\msf k$-algebra, and so we can apply Theorem \ref{thm:HomotopyLiealgebra}. In the table below, we give the degree $1$ and degree $2$ basis of $\pi^*(A_X, \msf k)$, as well as the non-zero values for the bracket and for $q$.

\[
\begin{array}{|c|c|c|c|c|}
\hline
    X & \pi^1(A_X, \msf k) & \pi^2(A_X, \msf k) & [\cdot, \cdot] & q \\
    \hline
    A_1 & \msf k \alpha_x \oplus \msf k \alpha_y \oplus \msf k \alpha_z & \msf k \beta &\begin{array}{l}
        [\alpha_x, \alpha_x]=2\beta \\[5pt]
        [\alpha_y, \alpha_z]=\beta 
    \end{array} & \begin{array}{l}
        q(\alpha_x)=\beta 
    \end{array} \\
    \hline
    A_n (n \geq 2) & \msf k \alpha_x \oplus \msf k \alpha_y \oplus \msf k \alpha_z & \msf k \beta & \begin{array}{l}
        [\alpha_y, \alpha_z]=\beta 
    \end{array} & q=0 \\
    \hline
    \begin{array}{c}
    D_{n+2} \ (n \geq 2),\\
    E_6, E_7, E_8
    \end{array}& \msf k \alpha_x \oplus \msf k \alpha_y \oplus \msf k \alpha_z & \msf k \beta & \begin{array}{l}
        [\alpha_z, \alpha_z]=2\beta 
    \end{array} & \begin{array}{l}
        q(\alpha_z)=\beta 
    \end{array} \\
    \hline
\end{array}
\]
\end{example}

The above example illustrates a particular property of $\pi^*(A,\msf k)$, in that it is not impacted by terms of degree higher than $2$ in the relations of $A$. We indeed see that for the simple singularities of types $D_{n+2}$, $E_6$, $E_7$ and $E_8$, we obtain the same $\pi^*(A,\msf k)$, even if the rings of functions are different. However, those ring differences appear in terms of polynomial degree $3$ and above in the relations. This particularity of $\pi^*(A,\msf k)$ is due to the fact that we are looking at the trivial $A$-module $\msf k$. Indeed, if we were to add to the relation $c$ of $A_X$ a term $x^{a_x}y^{a_y}z^{a_z}$ with $a_x+a_y+a_z \geq 3$ and $a_x \geq 1$, then we could choose the differential of the corresponding element $S$ of degree $-2$ in the resolution so that it contains $x^{a_x-1}y^{a_y}z^{a_z}T_x$. It follows that if we lift the morphism $\alpha_x$, then $(\widetilde{\alpha_x})_{-2}(S)$ contains $x^{a_x-2}y^{a_y}z^{a_z}T_x$ if $a_x \geq 2$, or $y^{a_y-1}z^{a_z}T_y$, or $y^{a_y}z^{a_z-1}T_z$ otherwise, depending on the values of $a_y$ and $a_z$. In all cases, we see that $(\widetilde{\alpha_x})_{-2}(S)$ contains a term of the form $p_iT_i$ where $p_i$ is a monomial of degree $\geq 1$. Hence $\alpha_i \circ (\widetilde{\alpha_x})_{-2}(S)$ will contain $p_i\alpha_i(T_i)=p_i1=0$ because $\msf k$ is the trivial $A$-module. Therefore if we were to modify the exponents in $x^{a_x}y^{a_y}z^{a_z}$, as long as their sum remains above $3$, the Yoneda product will remain the same. This can be seen more generally in Theorem \ref{thm:HomotopyLiealgebra}, because in the presentation of $\pi^*(A,\msf k)$, the relations are constructed from the terms of polynomial degree $2$ in the relations, and higher degree terms do not interfere.

Let $A=\msf k[x_1,\dots, x_n]/(c_1,\dots, c_k)$ be a $\msf k$-algebra such that for all $1 \leq p \leq k$, we have $c_p \in \mf{m}^2$. Let $\lfloor c_p \rfloor \in \mf{m}^2$ be the homogeneous polynomial degree $2$ part of $c_p$. We define $\lfloor A \rfloor= \msf k[x_1,\dots, x_n]/(\lfloor c_1 \rfloor,\dots, \lfloor c_k \rfloor)$, which is a quadratic algebra. Even if $A$ is a complete intersection, the algebra $\lfloor A \rfloor $ might not be a complete intersection. Due to the above discussion, we obtain a corollary to the previous theorem.

\begin{corollary}
Assume that both $A$ and $\lfloor A \rfloor$ are complete intersections. Then we have an isomorphism of restricted graded Lie algebras
\[
\pi^*(A, \msf k) \cong \pi^*(\lfloor A \rfloor, \msf k).
\]
\delete{an isomorphism of algebras
\[
\Ext_A^*(\msf k, \msf k) \cong \Ext_{\lfloor A \rfloor}^*(\msf k,\msf k)
\]
and}
\end{corollary}

We conclude this section by drawing a parallel between Theorem \ref{thm:HomotopyLiealgebra} and the following result of G. Sj\"odin:

\begin{theorem}\emph{\textbf{(\cite[Theorem 6]{Sjodin76}).}}\label{thm:Tate_fg}
Let $R$ be a local ring with maximal ideal $\mathfrak{m}_x$ and residue field $\msf k$. The algebra $\Ext_R^*(\msf{k}, \msf{k})$ is finitely generated and strictly graded commutative if and only if $R$ is a local complete intersection with
\[
\on{dim}_{\msf{k}} \mf{m}_x^2/\mf{m}_x^3=\binom{n+1}{2}
\]
where $n=\on{dim}_{\msf{k}} \mf{m}_x/\mf{m}_x^2$.
\end{theorem}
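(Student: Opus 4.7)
The plan is to prove the equivalence by handling each direction separately, leveraging Corollary~\ref{cor:PresentationofExt} together with a classical characterization of complete intersections. Throughout, let $n=\dim_{\msf k}\mf{m}_x/\mf{m}_x^2$ denote the embedding dimension of $R$, and after passing to the completion $\widehat{R}$ (which does not affect $\Ext^*_R(\msf k,\msf k)$ by flatness and by the standard identification $\Ext^*_R(\msf k,\msf k)\cong\Ext^*_{\widehat R}(\msf k,\msf k)$), realize $\widehat R\cong Q/(c_1,\dots,c_k)$ where $Q$ is a regular local ring with maximal ideal $\mf m$ containing a coefficient field $\msf k$.

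For the reverse direction, assume $R$ is a local complete intersection with $\dim_{\msf k}\mf{m}_x^2/\mf{m}_x^3=\binom{n+1}{2}$. First I would analyze the short exact sequence obtained from quotienting by $(c_1,\dots,c_k)$, observing that the natural surjection $\mf m^2/\mf m^3\twoheadrightarrow \mf{m}_x^2/\mf{m}_x^3$ has kernel spanned by the images of the quadratic parts of the $c_p$'s. Since the source has dimension exactly $\binom{n+1}{2}$, the hypothesis forces that kernel to be zero, hence each $c_p\in\mf m^3$. Now invoke Corollary~\ref{cor:PresentationofExt}: with all $c_p\in\mf m^3$, one obtains $\Ext_R^*(\msf k,\msf k)\cong \msf k[\beta_1,\dots,\beta_k]\otimes \Lambda[\alpha_1,\dots,\alpha_n]$, which is both finitely generated (by $n+k$ elements) and strictly graded commutative.

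For the forward direction, the essential input is the classical theorem of Gulliksen: the Yoneda algebra $\Ext_R^*(\msf k,\msf k)$ is finitely generated as an algebra if and only if $R$ is a complete intersection. This result is equivalent, in the language developed here, to the statement that the homotopy Lie algebra $\pi^*(R,\msf k)$ is concentrated in degrees $1$ and $2$, which can be deduced from the Tate resolution produced in Theorem~\ref{thm:Tate}: finite generation of the enveloping algebra forces the Tate construction to terminate after adjoining generators in degrees $-1$ and $-2$, yielding precisely the complete intersection presentation. Once $R$ is known to be complete intersection, Corollary~\ref{cor:PresentationofExt} applies and shows that the Yoneda algebra is strictly graded commutative precisely when every $c_p$ belongs to $\mf m^3$, which by the kernel argument of the previous paragraph is equivalent to $\dim_{\msf k}\mf{m}_x^2/\mf{m}_x^3=\binom{n+1}{2}$.

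The main obstacle will be the direction ``$\Ext_R^*(\msf k,\msf k)$ finitely generated $\Rightarrow R$ is complete intersection,'' i.e., Gulliksen's theorem itself. A self-contained proof in the present framework would require showing that the minimal model $P^*$ of Theorem~\ref{thm:Tate} admits no generators in cohomological degrees $\le -3$ whenever $\pi^{\ge 3}(R,\msf k)=0$, and that a bounded supply of generators in $\pi^*(R,\msf k)$ forces vanishing above degree $2$. Rather than reproving Gulliksen's result, the cleanest route is to cite it and combine it with Corollary~\ref{cor:PresentationofExt}, which makes the numerical dimension condition transparent once the complete intersection structure is in hand.
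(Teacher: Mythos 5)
The paper does not actually prove this statement: it is quoted verbatim from Sj\"odin (\cite[Theorem 6]{Sjodin76}) and used as a black box, so there is no internal proof to compare yours against. What the paper \emph{does} supply is the paragraph immediately following the theorem, which establishes the equivalence $\on{dim}_{\msf{k}} \mf{m}_x^2/\mf{m}_x^3=\binom{n+1}{2} \Longleftrightarrow c_p \in \mf{m}^3$ for all $p$ --- and your kernel argument for the surjection $\mf m^2/\mf m^3\twoheadrightarrow \mf m_x^2/\mf m_x^3$ is precisely that argument (the kernel is spanned by the images of the quadratic parts $\lfloor c_p\rfloor$, and vanishes exactly when every $c_p\in\mf m^3$). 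Your reverse direction then correctly assembles this with Corollary~\ref{cor:PresentationofExt}. For the forward direction you delegate the implication ``$\Ext_R^*(\msf k,\msf k)$ finitely generated $\Rightarrow R$ complete intersection'' to Gulliksen; this is legitimate and is in fact exactly what the paper does in Section~\ref{sec:finite_gen} (citing \cite[Theorem 2.3]{Gulliksen}), but you should be aware that your ``proof'' is then really an assembly of two external theorems plus the paper's corollary, rather than an independent derivation --- which is presumably why the authors chose to cite Sj\"odin outright.

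Two technical caveats worth stating explicitly if you want your argument to cover the generality claimed in the statement (an arbitrary local ring $R$). First, Corollary~\ref{cor:PresentationofExt} is proved for $R=\msf k[x_1,\dots,x_n]/(c_1,\dots,c_k)$ a quotient of a \emph{polynomial} ring over a field; after completing and invoking Cohen's structure theorem you land in a quotient of a power series ring, so you need the power-series analogue of the corollary (same resolution, but not literally what is stated), and in mixed characteristic a coefficient field need not exist, so the reduction requires more care than ``realize $\widehat R\cong Q/(c_1,\dots,c_k)$ with $Q$ containing $\msf k$.'' Second, you should note that in a minimal Cohen presentation the relations automatically lie in $\mf m^2$, which is the standing hypothesis of Corollary~\ref{cor:PresentationofExt}; without minimality the identification of $n$ with the embedding dimension and the count $\binom{n+1}{2}$ both break down.
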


 When the algebra $A$ in Theorem \ref{thm:HomotopyLiealgebra} is a complete intersection with the relations in $\mf{m}^2$, we know from Theorem \ref{thm:image_homotopy} that the Yoneda algebra is finitely generated. So we want to focus on the numerical condition in Sj\"odin's result. The number $\binom{n+1}{2}$ is the dimension of the space of homogeneous polynomials of degree $2$ in $n$ variables. On the other hand, $\on{dim}_{\msf{k}} \mf{m}_x^2/\mf{m}_x^3$ is the dimension of the space of homogeneous polynomials of degree $2$ in $A$. Hence Sj\"odin's criteria means that the degree $2$ homogeneous subspace of $A$ is as big as possible. A consequence of Theorem \ref{thm:HomotopyLiealgebra} is that the Yoneda algebra is strictly graded commutative if and only if $c_p \in \mathfrak{m}^3$ for all $p$. However, if this condition is verified, then all monomials of degree $2$ in $A$ are linearly independent, and so Sj\"odin's criteria is verified. If a relation $c_p$ contains terms of degree $2$, it can be written as $c_p=\lfloor c_p \rfloor+(c_p)_{\geq3}$ with $(c_p)_{\geq3} \in \mathfrak{m}^3$. But as $\overline{c_p}=0$ in $A$, we see that $\overline{\lfloor c_p \rfloor}= -\overline{(c_p)_{\geq3}}$ and so $\overline{\lfloor c_p \rfloor} \in \mf{m}_x^3$, creating a linear dependency between elements in $\mf{m}_x^2/\mf{m}_x^3$. Thus $\mf{m}_x^2/\mf{m}_x^3$ cannot have the highest possible dimension. It follows that we have an equivalence:
\[
\on{dim}_{\msf{k}} \mf{m}_x^2/\mf{m}_x^3=\binom{n+1}{2} \Longleftrightarrow c_p \in \mf{m}^3 \ \forall \ p.
\]
Moreover, we know from Theorem \ref{thm:HomotopyLiealgebra} that for a complete intersection ring, having $c_p \in \mathfrak{m}^3$ for all $p$ is equivalent to $\pi^*(R, \msf k)$ having trivial Lie bracket. We thus obtain the following corollary to Theorem \ref{thm:Tate_fg}:

\begin{corollary}
Let $R$ be a local ring with maximal ideal $\mathfrak{m}_x$ and residue field $\msf k$. The algebra $\Ext_R^*(\msf{k}, \msf{k})$ is finitely generated and strictly graded commutative if and only if $\pi^*(R, \msf k)$ has trivial Lie bracket and is in degree $1$ and $2$. 
\end{corollary}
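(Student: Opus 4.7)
The plan is to combine Sj\"odin's theorem (Theorem~\ref{thm:Tate_fg}) with the structural results about the Tate resolution for complete intersections and with Theorem~\ref{thm:HomotopyLiealgebra}. The key conceptual point is that the hypotheses of the corollary are equivalent to two assertions, which I will verify separately: (a) $R$ is a local complete intersection, and (b) the defining relations $c_p$ lie in $\mathfrak{m}^3$.

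For the forward direction $(\Rightarrow)$, I would first apply Sj\"odin's theorem directly: the finite generation and strict graded commutativity of $\Ext_R^*(\msf k,\msf k)$ force $R$ to be a local complete intersection with $\dim_\msf k \mathfrak{m}_x^2/\mathfrak{m}_x^3=\binom{n+1}{2}$. Writing $R=\msf k[x_1,\dots,x_n]/(c_1,\dots,c_k)$ with $c_p\in\mathfrak{m}^2$, the minimal Tate resolution $P^*$ constructed in Theorem~\ref{thm:Tate} terminates after one step of adjoining degree $-2$ generators (the Koszul variables $T_i$ together with the divided-power generators $S_p$), so the set $\mathcal{B}$ of pd generators lies entirely in degrees $-1$ and $-2$. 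By Theorem~\ref{thm:image_homotopy}, $\Psi(\pi^*(R,\msf k))$ has a basis consisting of the duals $\alpha_i=T_i^\vee$ and $\beta_p=S_p^\vee$, hence is concentrated in degrees $1$ and $2$. The numerical equality translates into $c_p\in\mathfrak{m}^3$ for every $p$ (as spelled out in the paragraph preceding the corollary), and then Theorem~\ref{thm:HomotopyLiealgebra} gives $[\alpha_i,\alpha_j]=\sum_p(n^p_{i,j}+n^p_{j,i})\beta_p=0$ and $[\alpha_i,\alpha_i]=\sum_p 2n^p_{i,i}\beta_p=0$; the remaining brackets $[\beta_p,-]$ are zero for degree reasons, so the Lie bracket on $\Psi(\pi^*(R,\msf k))$ is trivial.

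For the reverse direction $(\Leftarrow)$, the hypothesis that $\Psi(\pi^*(R,\msf k))$ is concentrated in degrees $1$ and $2$ implies, via the iterative construction of Theorem~\ref{thm:Tate}, that no Tate generators are needed in degrees $\le -3$: in particular $Z^{-2}(P_2)=B^{-2}(P_2)$, which is exactly the statement that the Koszul syzygies on $(x_1,\dots,x_n)$ are generated by the relations $c_p$, i.e., that $R$ is complete intersection. Thus I can invoke Corollary~\ref{cor:PresentationofExt}: the vanishing of the Lie bracket means that each structure coefficient $n^p_{i,j}+n^p_{j,i}$ and $n^p_{i,i}$ vanishes, and the presentation in Corollary~\ref{cor:PresentationofExt} collapses to $\msf k[\beta_1,\dots,\beta_k]\otimes\Lambda[\alpha_1,\dots,\alpha_n]$, which is manifestly finitely generated and strictly graded commutative.

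The main obstacle, and the reason to be careful in the reverse direction, is justifying that the Lie algebra being concentrated in degrees $1$ and $2$ really forces $R$ to be a complete intersection rather than merely forcing the first two stages of $P^*$ to look like a Koszul-plus-relations complex. This is a classical result of Gulliksen (complete intersection $\Leftrightarrow$ $\pi^{\geq 3}(R,\msf k)=0$), and in the present setup it follows from the minimality of $P^*$ together with Theorem~\ref{thm:image_homotopy}, which identifies $\Psi(\pi^*(R,\msf k))$ with the $\msf k$-span of the duals $b^\vee$ of the pd generators $b\in\mathcal{B}$; thus the absence of generators in degree $\ge 3$ is exactly the vanishing of $\pi^{\ge 3}$. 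Once this identification is made, both directions reduce to bookkeeping between the quadratic part of the $c_p$, the Lie bracket formulas of Theorem~\ref{thm:HomotopyLiealgebra}, and Sj\"odin's numerical criterion.
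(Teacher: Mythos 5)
Your proposal is correct and follows essentially the same route as the paper: Sj\"odin's theorem plus the translation of the numerical condition $\dim_{\msf k}\mathfrak m_x^2/\mathfrak m_x^3=\binom{n+1}{2}$ into $c_p\in\mathfrak m^3$, combined with Theorem~\ref{thm:HomotopyLiealgebra} for the bracket and the identification (via Theorem~\ref{thm:image_homotopy} and Gulliksen) of ``complete intersection'' with ``$\Psi(\pi^*(R,\msf k))$ concentrated in degrees $1$ and $2$.'' The only cosmetic difference is that in the reverse direction you conclude via the explicit presentation of Corollary~\ref{cor:PresentationofExt} rather than feeding the numerical condition back into Sj\"odin's equivalence, which is equally valid.
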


The advantage of Sj\"odin's theorem is that $R$ being a complete intersection is not a hypothesis but a part of the result. However, his result does not give any information about the structure of $\pi^*(R, \msf k)$ when the above equality is not verified. On the other hand, Theorem \ref{thm:HomotopyLiealgebra} gives a presentation of the restricted graded Lie algebra even if the equality is not verified.

\begin{remark}
As mentioned before, the Yoneda product is not impacted by the terms of degree at least $3$ in the relations of $A$, which illustrates the importance of the quotient $\mf{m}_x^2/\mf{m}_x^3$ in Sj\"odin's result. However, one may wonder how to differentiate between two Yoneda algebras when the Yoneda product is the same. This can be done using $A_\infty$-structures. Simply put, an $A_\infty$-structure on a graded vector space $A$ is a collection of products $b_n: A^{\otimes n} \longrightarrow A$ for all $n \geq 1$, verifying compatibility conditions. For example, for each $n \geq 3$, the ring $R_n=\cc[x]/(x^n)$ gives $\Ext_{R_n}^*(\cc, \cc)=\cc[\alpha, \beta]/(\alpha^2)$. But by adding an $A_\infty$-structure, we obtain that $b_2$ is the Yoneda product, $b_n$ is non-zero and $b_m=0$ for $m \neq 2, n$. Equipped with this extra structure, $\Ext_{R_n}^*(\cc, \cc) \not\cong \Ext_{R_{n'}}^*(\cc, \cc)$ for $n \neq n'$ as $A_\infty$-algebras. We will not be dealing with $A_\infty$-structures in the rest of this article, the reader is referred to \cite{Keller} and \cite{Lu-Palmieri-Wu-Zhang} for more details on the subject. 
\end{remark}

\delete{
\subsection{The weak graded commutativity of the Yoneda algebra}\label{Section3.3}

Let $A=\bigoplus_{i}A_i$ be a graded algebra over a field $\msf k$ of characteristic not $2$. It is equipped with a graded commutator given by $[a,b]_s=ab-(-1)^{ij}ba$ for $a \in A_i$, $b \in A_j$, and then extended to $A$ by bilinearity. In particular, the quotient $A/ \langle [A, A]_s \rangle$ is strictly graded commutative, and the two sided ideal $\langle [A, A]_s \rangle$ is the smallest two sided ideal such that the quotient becomes strictly graded commutative. We notice that if $a$ is of odd degree, then the graded commutator gives $[a, a]_s=2a^2$, and as $2$ is invertible in $\msf k$, we have $ \langle a^2 \ | \ a \text{ of odd degree} \rangle \subseteq \langle [A, A]_s \rangle$. In general, this inclusion is strict.

\begin{definition}
A graded algebra $A$ over a field of characteristic not $2$ is \textbf{weakly graded commutative} if:
\[
\langle a^2 \ | \ a \text{ of odd degree} \rangle = \langle [A, A]_s \rangle,
\]
or equivalently if $A/ \langle a^2 \ | \ a \text{ of odd degree} \rangle $ is a strictly graded commutative algebra.
\end{definition}

We have already seen that the strictly graded commutativity of $\Ext_R^*(\msf k, \msf k)$ when $R$ is a complete intersection with relations of least degree at least 2 is equivalent to $c_p \in \mf{m}^3$ for all $p$. We would like to see how those conditions change when we look at the weak graded commutativity.

Assume that $\msf k$ is a field of characteristic not 2. In order for $\Ext_R^*(\msf k, \msf k)$ to be weakly graded commutative, we need $\alpha_i \alpha_j +\alpha_j \alpha_i \in \langle a^2 \ | \ a \text{ of odd degree} \rangle$ for all $i, j$. But we have
\[
\alpha_i \alpha_j+\alpha_j \alpha_i=(\alpha_i+\alpha_j)^2-\alpha_i^2-\alpha_j^2.
\]
Thus for all $i, j$, we have $\alpha_i \alpha_j+\alpha_j \alpha_i \in \langle a^2 \ | \ a \text{ of odd degree} \rangle$. We know that the algebra $\Ext_R^*(\msf k, \msf k)$ is generated by the $\alpha_i$'s and $\beta_p$'s, and that the $\beta_p$'s are in the center. It follows that in $\Ext_R^*(\msf k, \msf k)/\langle a^2 \ | \ a \text{ of odd degree} \rangle$, we have:
\begin{itemize}
\item $\overline{\beta_p}$ is in the center, for all $p$;
\item $\overline{\alpha_i}$ and $\overline{\alpha_j}$ anti-commute, for all $i \neq j$;
\item $\overline{\alpha_i}^2=0$, for all $i$.
\end{itemize}
As the quotient is generated by the $\overline{\alpha_i}$'s and $\overline{\beta_p}$'s, it is a strictly graded commutative algebra. We have thus proved:

\begin{theorem}
The Yoneda algebra $\Ext_R^*(\msf k, \msf k)$ of a complete intersection polynomial ring $R$ over a field $\msf k$ of characteristic not $2$ and with relations of least degree at least $2$ is weakly graded commutative.
\end{theorem}
}

\subsection{Reconstruction of complete intersections} \label{sec:reconstruct_homLie}
\delete{
Given a positively graded $\msf k$-algebra $A=\bigoplus _{i\geq 0}A_i$ with $ \dim_{\msf k} A_0=1$, we determine the conditions on $A$ so that there exists a commutative algebra $R$ with an homomorphism $x: R \longrightarrow \msf k$ such that $A$ is isomorphic to $\Ext^{*}_R(\msf k, \msf k)$ as graded algebra. Koszul duality in quadratic algebras answers this question for quadratic algebras. In this paragraph, we give a complete description of the Yoneda algebra $\Ext^{*}_R(\msf k, \msf k)$.
}
In this section, we show how to reconstruct an algebra from its homotopy Lie algebra. We will first need results related to Gr\"obner bases.

Let $\on{Pol}=\msf k[x_1, \dots, x_n]$ be a polynomial ring with $\msf k$ a field. For $\mathbf{a}=(a_1,\dots,a_n) \in \bb N^n$, we write $x^\mathbf{a}=x_1^{a_1} \cdots x_n^{a_n} \in \on{Pol}$. We will refer to notations of \cite[Ch. 2]{Herzog-Hibi}. 

\begin{proposition}\label{prop:grob_basis}
Set $f_1, \dots, f_s \in \on{Pol}$ with $s \leq n$. For any $1 \leq i \leq s$, set $ l_i> \deg (f_i)$ (here $\deg$ refers to the total degree of the polynomial) and define $g_i=f_i+x_i^{l_i}$. Then $\{g_1, \dots, g_s\}$ is a Gr\"obner basis of $I_s=(g_1, \dots, g_s)$.
\end{proposition}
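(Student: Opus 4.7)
The plan is to apply the classical Buchberger criterion in the simple form that says: if the leading monomials of a finite generating set are pairwise coprime, then the set is automatically a Gr\"obner basis. The whole argument reduces to arranging the monomial order so that the ``added'' term $x_i^{l_i}$ becomes the leading term of $g_i$, and then observing that these leading terms are supported on disjoint variables.

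First I would fix the graded lexicographic order on $\on{Pol}=\msf k[x_1,\dots,x_n]$ (with any chosen ordering of the variables). Since $l_i>\deg(f_i)$, every monomial appearing in $f_i$ has total degree strictly less than $l_i=\deg(x_i^{l_i})$. In a graded order, the larger total degree wins, so $\on{in}_<(g_i)=x_i^{l_i}$ for each $i=1,\dots,s$.

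Next I would note that for $i\neq j$ with $i,j\le s\le n$, the leading monomials $\on{in}_<(g_i)=x_i^{l_i}$ and $\on{in}_<(g_j)=x_j^{l_j}$ involve disjoint sets of variables, so $\gcd(\on{in}_<(g_i),\on{in}_<(g_j))=1$. I would then invoke the coprime-leading-term version of Buchberger's criterion (see \cite[Prop.\ 2.3.1 and Thm.\ 2.3.2]{Herzog-Hibi}, which states that if $\gcd(\on{in}_<(g_i),\on{in}_<(g_j))=1$ then the $S$-polynomial $S(g_i,g_j)$ reduces to zero modulo $\{g_1,\dots,g_s\}$). Applying this to every pair, all $S$-polynomial reductions vanish, and therefore $\{g_1,\dots,g_s\}$ is a Gr\"obner basis of the ideal it generates with respect to $<$.

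There is really no obstacle here; the only subtle point is making sure that the chosen monomial order forces $x_i^{l_i}$ to dominate all the monomials of $f_i$, and that is automatic from the strict inequality $l_i>\deg(f_i)$ once any graded monomial order is chosen. If one preferred a purely lexicographic order, the inequality $l_i>\deg(f_i)$ alone would not suffice (one might need additional hypotheses on the ordering of variables), which is why I would explicitly work with a graded order.
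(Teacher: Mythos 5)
Your proof is correct and follows essentially the same route as the paper: both fix a graded (degree-first) monomial order so that $\on{in}_<(g_i)=x_i^{l_i}$, observe that these initial monomials are pairwise relatively prime, and invoke the standard coprime-leading-term criterion from \cite{Herzog-Hibi} to conclude. Your closing remark about why a purely lexicographic order would not suffice is a sensible clarification, since the paper's cited order is in fact degree-graded despite being labelled lexicographic.
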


\begin{proof}
We will take the lexicographic order (see \cite[2.1.2(a)]{Herzog-Hibi}) on the set of monomials. With this order, $x^\mathbf{a}<x^\mathbf{b}$ for $\mathbf{a}, \mathbf{b}\in \bb N^n$ if either $|\mathbf{a}|=\sum_i a_i<|\mathbf{b}|=\sum_i b_i$, or $|\mathbf{a}|=|\mathbf{b}|$ and $a_i<b_i$ for the smallest $i$ with $ a_i\neq b_i$. In particular, we have $x_1 > x_2 > \dots > x_n$.

In our setting, the initial monomial of $g_i$ is $\on{in}_<(g_i)=x_i^{l_i}$ and the pair $ (\on{in}_<(g_i), \on{in}_<(g_j))$ is relatively prime if $i\neq j$. By \cite[Cor. 2.3.4] {Herzog-Hibi}, $\{g_1, \dots, g_s\}$ is a Gr\"obner basis for the ideal $ I_s$. In fact, it is a reduced Gr\"obner basis. 
\end{proof}

The following result seems to be well-known to the commutative algebra community, but we failed to find a reference. We provide a proof in Appendix \ref{appendix:sec_5} for the sake of completeness.

\begin{lemma}\label{lem:reg_seq} Let $\{g_1, \dots, g_s\}$ be a Gr\"obner basis such that $ (\on{in}_<(g_i), \on{in}_<(g_j))$ is a relative prime pair if $i\neq j$. Then $ \{g_1, \dots, g_s\}$ is a regular sequence in $\on{Pol}$. 
\end{lemma}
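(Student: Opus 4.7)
The plan is to reduce the claim for $g_1, \dots, g_s$ to the corresponding claim for the initial monomials $m_i := \on{in}_<(g_i)$, then exploit the Cohen--Macaulayness of $\on{Pol}$. The engine of the argument is the dimension-preservation property of Gr\"obner degeneration: if $\{g_1, \dots, g_s\}$ is a Gr\"obner basis of $I$, then $\on{in}_<(I) = (m_1, \dots, m_s)$ and $\dim(\on{Pol}/I) = \dim(\on{Pol}/\on{in}_<(I))$.

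First I would verify that the monomial ideal $J := (m_1, \dots, m_s)$ has height exactly $s$. Let $V_i \subseteq \{x_1, \dots, x_n\}$ be the set of variables appearing in $m_i$; by hypothesis the $V_i$ are pairwise disjoint and each is non-empty (the $m_i$ are positive-degree monomials since the $g_i$ are non-constant). For any minimal prime $P$ of $J$ and each index $i$, primality forces some $v_i \in V_i$ to lie in $P$; pairwise disjointness makes $v_1, \dots, v_s$ distinct, so $\on{ht}(P) \geq s$. Krull's height theorem gives $\on{ht}(J) \leq s$, so $\on{ht}(J) = s$. Since $\on{Pol}$ is Cohen--Macaulay, an ideal of height $s$ generated by exactly $s$ elements is a complete intersection, so $m_1, \dots, m_s$ is a regular sequence in $\on{Pol}$ and $\dim(\on{Pol}/J) = n - s$.

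Next I would transfer regularity to the original sequence. Gr\"obner-basis dimension preservation yields $\dim(\on{Pol}/I) = \dim(\on{Pol}/J) = n - s$, hence $\on{ht}(I) = n - \dim(\on{Pol}/I) = s$. Thus $I$ is a height-$s$ ideal in the Cohen--Macaulay ring $\on{Pol}$ generated by precisely $s$ elements $g_1, \dots, g_s$; by the standard criterion (any $s$ generators of a height-$s$ ideal in a Cohen--Macaulay ring form a regular sequence, and such sequences are unordered), $g_1, \dots, g_s$ is a regular sequence.

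The main obstacle is justifying cleanly the dimension-preservation step in the inhomogeneous setting, since the usual Hilbert-series argument applies directly only to graded ideals. I would handle this by homogenizing with an auxiliary variable $t$ and exhibiting the standard flat family over $\msf k[t]$ whose special fibre at $t = 0$ is $\on{Pol}/\on{in}_<(I)$ and whose generic fibre is $\on{Pol}/I$; flatness forces equality of Krull dimensions of the fibres. All other steps reduce to textbook facts about monomial ideals and Cohen--Macaulay rings.
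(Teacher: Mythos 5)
Your proof is correct, but it takes a genuinely different route from the paper's. The paper argues directly and elementarily: to show $\overline{g_s}$ is a nonzerodivisor modulo $I_{s-1}=(g_1,\dots,g_{s-1})$, it replaces a putative $f$ with $fg_s\in I_{s-1}$ by its normal form $f'$ under the division algorithm (no term of $f'$ divisible by any $\on{in}_<(g_i)$), writes $f'g_s=\sum h_ig_i$, and compares leading terms: the equality $\on{in}_<(f')\on{in}_<(g_s)=\on{in}_<(h_i)\on{in}_<(g_i)$ together with coprimality forces $\on{in}_<(g_i)\mid\on{in}_<(f')$, contradicting reducedness. Your argument instead computes $\on{ht}(\on{in}_<(I))=s$ combinatorially from the disjointness of the variable supports, transfers this to $\on{ht}(I)=s$ via the dimension equality $\dim(\on{Pol}/I)=\dim(\on{Pol}/\on{in}_<(I))$, and invokes the Cohen--Macaulay/unmixedness criterion. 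Both are sound; what differs is the cost and the payoff. The paper's proof is self-contained given the division algorithm (it does quietly use that each initial segment $\{g_1,\dots,g_{s-1}\}$ is itself a Gr\"obner basis of $I_{s-1}$, which follows from Buchberger's coprimality criterion), works term by term, and needs no dimension theory. Yours buys permutability of the sequence for free and isolates the combinatorial content cleanly, but leans on two substantial black boxes -- the unmixedness theorem in Cohen--Macaulay rings and the inhomogeneous Gr\"obner degeneration -- the second of which (your homogenization/flat-family step) is arguably heavier than the lemma itself; also be aware that ``flatness forces equality of fibre dimensions'' needs the local dimension formula for flat morphisms applied to this particular family, which is free over $\msf k[t]$ on the standard monomials, rather than being a consequence of flatness alone. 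Both proofs implicitly assume the $g_i$ are non-constant (so that $I$ is proper), which holds in the paper's application.
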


Let $\mathfrak{g}=\mathfrak{g}_1 \oplus \mathfrak{g}_2$ be a finite dimensional restricted graded Lie algebra over a field $\msf k$ such that $\dim_{\msf k} \mathfrak{g}_1 \geq \dim_{\msf k} \mathfrak{g}_2$. We write $\{\alpha_1, \dots, \alpha_n \}$ and $\{\beta_1, \dots, \beta_k\}$ for bases of $\mathfrak{g}_1$ and $\mathfrak{g}_2$ respectively. For each $1 \leq p \leq k$, we construct the matrix $N^p=(n^{p}_{i,j})\in M_n(\msf k)$ given by the structure constants of $\mathfrak{g}$, i.e., $[\alpha_i,\alpha_j]=\sum_{p=1}^k (n^p_{i,j}+n^p_{j,i})\beta_p$ for $i \neq j$ and $q(\alpha_i)=\sum_{p=1}^k n^p_{i,i}\beta$. We can now prove the following result:

\begin{theorem}\label{thm:reconstruct_homotopy}
Given a finite dimensional restricted graded Lie algebra $\mathfrak{g}=\mathfrak{g}_1 \oplus \mathfrak{g}_2$ over a field $\msf k$ with $\dim_{\msf k} \mathfrak{g}_1 \geq \dim_{\msf k} \mathfrak{g}_2$, there exists a finitely generated complete intersection $\msf k$-algebra $A$ such that $\pi^*(A, \msf k) \cong \mathfrak{g}$ as restricted graded Lie algebras. 
\end{theorem}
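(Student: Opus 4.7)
The plan is constructive: I would build $R$ explicitly from the structure constants of $\mathfrak{g}$, then invoke Theorem \ref{thm:HomotopyLiealgebra} to read off its homotopy Lie algebra. Set $n = \dim \mathfrak{g}_1$ and $k = \dim \mathfrak{g}_2$, fix the ordered bases $\{\alpha_1, \dots, \alpha_n\}$ and $\{\beta_1, \dots, \beta_k\}$, and record the structure constants $B^p_{ij}, Q^p_i \in \msf k$ via
\[
[\alpha_i, \alpha_j] = \sum_{p=1}^k B^p_{ij}\,\beta_p, \qquad q(\alpha_i) = \sum_{p=1}^k Q^p_i\,\beta_p,
\]
where $B^p$ is symmetric (since $|\alpha_i|$ is odd so $[\alpha_i,\alpha_j] = [\alpha_j,\alpha_i]$). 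Note $B^p_{ii} = 2Q^p_i$ by axiom \ref{def:Lie:cond4} applied to $a = b = \alpha_i$.

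Next, I would set $f_p = \sum_{i<j} B^p_{ij}\,x_i x_j + \sum_i Q^p_i\,x_i^2 \in \mathfrak{m}^2$ for $1 \leq p \leq k$, and define
\[
c_p = f_p + x_p^3 \in \msf k[x_1, \dots, x_n].
\]
The crucial point is that $c_p$ in general cannot be replaced by $f_p$ because the $f_p$ need not form a regular sequence; the added term $x_p^3$ is a Gröbner-basis trick that forces regularity without disturbing the quadratic part. Indeed, since $k \leq n$, Proposition \ref{prop:grob_basis} applies and shows that $\{c_1, \dots, c_k\}$ is a Gröbner basis with pairwise coprime initial terms $x_p^3$; Lemma \ref{lem:reg_seq} then gives that this is a regular sequence. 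Hence $R = \msf k[x_1, \dots, x_n]/(c_1, \dots, c_k)$ is a complete intersection, each $c_p \in \mathfrak{m}^2$, and $c_p \equiv f_p \pmod{\mathfrak{m}^3}$.

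Now I apply Theorem \ref{thm:HomotopyLiealgebra} to $R$. Choose the decomposition $n^p_{i,j} = B^p_{ij}$, $n^p_{j,i} = 0$ for $i<j$, and $n^p_{i,i} = Q^p_i$, which satisfies $n^p_{i,j}+n^p_{j,i}=B^p_{ij}$ and thus matches the mod-$\mathfrak{m}^3$ expansion of $c_p$. Theorem \ref{thm:HomotopyLiealgebra} produces generators $\alpha_i' \in \Psi(\pi^1(R,\msf k))$ and $\beta_p' \in \Psi(\pi^2(R,\msf k))$ with brackets $[\alpha_i', \alpha_j'] = \sum_p B^p_{ij}\,\beta_p'$ for $i \neq j$, $[\alpha_i', \alpha_i'] = 2\sum_p Q^p_i\,\beta_p'$, and $q(\alpha_i') = \sum_p Q^p_i\,\beta_p'$, while the $\beta_p'$ are central. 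Sending $\alpha_i \mapsto \alpha_i'$ and $\beta_p \mapsto \beta_p'$ is then a graded Lie algebra homomorphism by construction.

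The only remaining step, which I expect to be the main (though routine) obstacle, is to verify that this map is an isomorphism, i.e., that the $\alpha_i'$ and $\beta_p'$ are $\msf k$-linearly independent in the appropriate degrees. This is immediate from Corollary \ref{cor:PresentationofExt}: the explicit presentation of $\Ext_R^*(\msf k,\msf k)$ shows that $\{\alpha_1', \dots, \alpha_n'\}$ and $\{\beta_1', \dots, \beta_k'\}$ are $\msf k$-bases of $\Ext_R^1(\msf k,\msf k)$ and of $\on{Prim}(\Ext_R^2(\msf k,\msf k)) = \Psi(\pi^2(R,\msf k))$ respectively. Dimension count then forces the homomorphism above to be an isomorphism of graded Lie algebras (and in fact of restricted graded Lie algebras). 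This completes the construction.
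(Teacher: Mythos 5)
Your proposal is correct and follows essentially the same route as the paper: encode the structure constants as quadratic forms, add the cubes $x_p^3$ to force a Gr\"obner basis with coprime initial terms (Proposition \ref{prop:grob_basis}, Lemma \ref{lem:reg_seq}), hence a regular sequence, and then read off the homotopy Lie algebra from Theorem \ref{thm:HomotopyLiealgebra}. Your write-up is somewhat more explicit than the paper's about the choice of decomposition $n^p_{i,j}$ and the final isomorphism check via Corollary \ref{cor:PresentationofExt}, but these are elaborations of the same argument rather than a different one.
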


\begin{proof}
We first define $\on{Pol}=\msf k[x_1, \dots, x_n]$ and, using the matrices $N^p$ introduced previously, we set $c'_p=\sum_{i, j}n^{p}_{i,j}x_ix_j \in \on{Pol}$ and  $c_p=c_p'+x_p^3 \in \on{Pol}$ for all $1 \leq p \leq k$. 
Let $A=\on{Pol}/(c_1, \dots, c_k)$ and let $\mathfrak{m}_x$ be the kernel of the morphism $A\to \msf k$ defined by $\overline{x_i}\mapsto 0$. Then by Proposition \ref{prop:grob_basis} the family $\{c_1, \dots, c_k\}$ is a Gr\"obner basis of the ideal $(c_1, \dots, c_k)$, and by Lemma \ref{lem:reg_seq} it is also regular sequence in $\on{Pol}$. Using Theorem \ref{thm:HomotopyLiealgebra}, we can then conclude the proof.   
\end{proof}

\delete{
We say that two complete intersections $(R, \mathfrak{m}_x)$ and $(R',\mathfrak{m}_{x'})$ are Yoneda equivalent if $\Ext^*_R(\msf k, \msf k)$ and $\Ext^*_{R'}(\msf k_{x'}, \msf k_{x'})$ are isomorphic as graded algebras. Thus the Yoneda equivalence classes are defined by the isomorphism classes of the algebras $B(N)$. This will be addressed in a later work. 
}

\delete{
\section{Finite generation of the Yoneda algebra}

Let $ \msf k$ be a field of characteristic zero and let $R= \msf k[x_1, \dots, x_n]/\langle c_1, \dots, c_k \rangle$ be a finitely generated commutative algebra such that the Tate resolution $T$ of the trivial $R$-module $ \msf k=R/\langle x_1, \dots, x_n \rangle$ is obtained after adding variables of degree $N$ for some $N \in \bb N$. We write
\[
T=R \langle A_1, \dots, A_{m_1}, B_1, \dots, B_{m_2}, \dots, P_1, \dots, P_{m_{N}} \rangle, 
\]
where $|A_i|=1$, $|B_i|=2$, $\dots$, $|P_i|=N$. We also assume that $X$ is an $I$-minimal resolution of $ \msf k$, i.e., $\on{Im}\partial \subset \on{Rad} X$. A basis of $X$ as an $R$-module is composed of ordered monomials of the form $A_1^{a_1}\dots P_{m_N}^{p_{m_N}}$. For such a monomial $Z$, we write $Z^*$ for the element in $\on{Hom}_R(X, \msf k)$ sending $Z$ to $1$ and every other monomial to $0$.

We need to introduce some notation. Fix an ordered monomial $M$ in the variables $A, B, \dots, P$. If in $M$ we move a collection of variable $X, \dots, Y$ to the position of another variable $Z$, it might introduce a $-1$ factor. We write this factor as $(-1)^{(Z, X \cdots Y)_M}$. We also write $\widetilde{Z}^M$ for the number of odd degree variables on the left of $Z$ in $M$. Finally, if a variable $Z$ appears in $M$, we write $k_{M, Z}$ for the power in which $Z$ appears, $z_{X \cdots Y} \in \langle x_1, \dots, x_n \rangle$ for the coefficient of $X \cdots Y$ in $\partial(Z)$, and $Z_{X \cdots Y}$ for the element in $T_1$ such that $\partial_1(Z_{X \cdots Y})=z_{X \cdots Y}$.

Starting from an ordered monomial $M$, we will denote the changes in $M$ in the following way: the monomial obtained by adding to the left of $M$ variables $W_1, \dots, W_s$ will be written as $([+W_1\cdots+W_s] \ | \ M)$. If in $M$ we remove variables $X, \dots, Y$ and add a variable $Z$ (in the position corresponding to its degree), the monomial obtained will be denoted by $([-X\cdots-Y+Z] \ | \ M)$.

\begin{lemma}\label{lem:lift}
Let $M$ be an ordered monomial such that $M=K_i^{k_i-1} \cdots$ with $|K_i| \geq 4$ and $k_i \geq 1$. Consider $g_1$, $g_2$, $\dots$, the successive lifts of $M^*$. Then for $k \geq 3$, we have
\begin{align}
\begin{array}{rl}
g_k([+W_1 &\cdots+W_s]  [-X_1\cdots-Y_1+Z_1] \dots [-X_r\dots-Y_r+Z_r] \ | \ M)  =  \\[5pt]
&\displaystyle \sum_{\{i_1, \dots, i_r\}=\{1, \dots, r\}}(-1)^{C_{(i_1, 1), \dots, (i_r, r)}}(k_{M, Z_1}+1)\cdots(k_{M, Z_r}+1)\delta_{|Z_1|, |Z_{i_1}|} \cdots \delta_{|Z_r|, |Z_{i_r}|} \\[5pt]
&\quad \quad \quad  \times W_1\cdots W_s \cdot (Z_1)_{M, X_{i_1} \cdots Y_{i_1}} \cdots (Z_r)_{M, X_{i_r} \cdots Y_{i_r}} 
\end{array}
\end{align}
if $|W_j| < |K_i|$ for all $1 \leq j \leq s$. The constant $C_{(i_1, 1), \dots, (i_r, r)}$ is given by
\delete{
\begin{align*}
\resizebox{\textwidth}{!}{$\begin{array}{l}
C_{(i_1, 1), \dots, (i_r, r)} \\[5pt]
\displaystyle = \widetilde{Z}_1+\sum_{\on{var}^i < Z_1}|\on{var}^i|+(Z_1, X_1 \cdots Y_1)+|X_1| \sum_{\substack{X_1 < \on{var}^i < Z_1 \\ i\neq 1}}|\on{var}^i|+\cdots+|Y_1| \sum_{\substack{Y_1 < \on{var}^i < Z_1 \\ i \neq 1}}|\on{var}^i| \\[5pt]
\quad \displaystyle +\widetilde{Z}_2+\sum_{\substack{\on{var}^i < Z_2 \\ i \neq 1}}|\on{var}^i|+(Z_2, X_2 \cdots Y_2)+|X_2| \sum_{\substack{X_2 < \on{var}^i < Z_2 \\ i \neq 1, 2}}|\on{var}^i|+\cdots+|Y_2| \sum_{\substack{Y_2 < \on{var}^i < Z_2 \\ i \neq 1, 2}}|\on{var}^i| \\[5pt]
\quad \displaystyle +\widetilde{Z}_{r-2}+\sum_{\substack{\on{var}^i < Z_{r-2} \\ i \neq 1, \cdots, r-3}}|\on{var}^i|+(Z_{r-2}, X_{r-2} \cdots Y_{r-2})+|X_{r-2}| \sum_{\substack{X_{r-2} < \on{var}^i < Z_{r-2} \\ i \neq 1, 2, \cdots, r-2}}|\on{var}^i|+\cdots+|Y_{r-2}| \sum_{\substack{Y_{r-2} < \on{var}^i < Z_{r-2} \\ i \neq 1, 2, r-2}}|\on{var}^i| \\[5pt]
\quad \displaystyle +\widetilde{Z}_{r-1}+\sum_{\substack{\on{var}^i < Z_{r-1} \\ i \neq 1, \cdots, r-2}}|\on{var}^i|+(Z_{r-1}, X_{r-1} \cdots Y_{r-1})+|X_{r-1}| \sum_{\substack{X_{r-1} < \on{var}^i < Z_{r-1} \\ i \neq 1, 2, \cdots, r-1}}|\on{var}^i|+\cdots+|Y_{r-1}| \sum_{\substack{Y_{r-1} < \on{var}^i < Z_{r-1} \\ i \neq 1, 2, r-1}} |\on{var}^i|\\[5pt]
\quad \displaystyle +\widetilde{Z}_{r}+\sum_{\substack{\on{var}^i < Z_{r} \\ i \neq 1, \cdots, r-1}}|\on{var}^i|+(Z_r, X_r \cdots Y_r). 
\end{array}$}
\end{align*}}

\begin{align*}
\begin{array}{l}
C_{(i_1, 1), \dots, (i_r, r)} = \displaystyle \sum_{j=1}^{r}\Bigl( \widetilde{Z}_{j}+(Z_{j}, X_{i_j} \cdots Y_{i_j})_M +\sum_{\substack{|\on{var}^i| < |Z_{j}| \\ i \neq i_1, \dots, i_{j-1}}}|\on{var}^i| \Bigr) \\
\quad \quad \quad \quad +\displaystyle \sum_{j=1}^{r-1} \Bigl( |X_{i_j}| \sum_{\substack{|X_{i_j}| < |\on{var}^i| < |Z_{j}| \\ i \neq i_1, \dots, i_j}}|\on{var}^i|+\cdots+|Y_{i_j}| \sum_{\substack{|Y_{i_j}| < |\on{var}^i| < |Z_{j}| \\ i \neq i_1, \dots, i_j}} |\on{var}^i| ) \Bigr). 
\end{array}
\end{align*}
where $\on{var}^i$ denotes a variable $X_i$, \dots, $Y_i$, or $Z_i$.
\end{lemma}

\begin{proof}
The proof is done by direct computation. We check the formula directly for $k=1, \dots, 4$, and then proceed by induction on $k$.
\end{proof}

\begin{proposition}\label{prop:decomp_dual}
For any positive integers $b_1, \dots, p_{m_N}$, the element $(B_1^{b_1} \dots P_{m_N}^{p_{m_N}})^*$ can be expressed as a polynomial of the duals of the variables. In particular, if $|B_1^{b_1} \dots P_{m_N}^{p_{m_N}}| \geq N$, then
\begin{equation}\label{eq:split_formula}
(B_1^{b_1} \dots P_{m_N}^{p_{m_N}})^*=\frac{1}{b_1! \cdots p_{m_N}!}(B_1^*)^{b_1} \dots (P_{m_N}^*)^{p_{m_N}}.
\end{equation}
\end{proposition}

\begin{proof}
Consider a monomial $B_i^{b_i} \dots P_{m_N}^{p_{m_N}}$ with $b_i \geq 1$. In order to lift $M^*=(B_i^{b_i-1} \dots P_{m_N}^{p_{m_N}})^*$, we set 
\begin{align*}
\begin{array}[t]{cccl}
g_1: & B_i^{b_i-1} \dots P_{m_N}^{p_{m_N}} & \longmapsto & 1 \in R, \\
           & \text{other} & \longmapsto & 0,
\end{array} 
\end{align*}
The image by $\partial$ of a monomial of degree $|M|+1$ will contain $M$ if it is (possibly) a variable $Q$ of degree $|M|+1$, a monomial of the form $([+A_i] \ | \ M)$, or of the form $([-X\cdots-Y+Z] \ | \ M)$ with $|Z|-|X| \cdots-|Y|=1$. So we have
\begin{align*}
\begin{array}{rcl}
g_1\partial(Q)&=& q_M, \\[5pt]
g_1\partial([+A_i] \ | \ M)&=&\partial(A_i), \\[5pt]
g_1\partial([-X\cdots-Y+Z] \ | \ M)&=&(-1)^{\widetilde{Z}^M+|X|+\cdots+|Y|+(Z, X \cdots Y)_M}(k_{M, Z}+1)\partial(Z_{M, X\cdots Y})
\end{array}
\end{align*}
Hence we define
\begin{align*}
\begin{array}[t]{cccl}
g_2 :&Q& \longmapsto & Q_M, \\[5pt]
&([+A_i] \ | \ M)& \longmapsto & A_i, \\[5pt]
& ([-X\cdots-Y+Z] \ | \ M)& \longmapsto&(-1)^{\widetilde{Z}^M+|X|+\cdots+|Y|+(Z, X \cdots Y)}(k_{M, Z}+1)Z_{X\cdots Y}, \\[5pt]
    & \text{other} & \longmapsto & 0.
\end{array} 
\end{align*}
Likewise, we can compute $g_3$ and verify that $(B_i^{b_i} \dots P_{m_N}^{p_{m_N}})^*=\frac{1}{b_i} B_i^* \cdot M^*+\sum_W W^*$ with $W$ variables of degree $|M|+2$. By repeating this procedure for the other $B_i$'s and the $C_i$'s, we obtain
\begin{align}\label{eq:decomposition2}
(B_1^{b_1} \dots P_{m_N}^{p_{m_N}})^*=\frac{1}{b_1! \cdots c_{m_3}!}(B_1^*)^{b_1} \dots (C_{m_3}^*)^{c_{m_3}} (D_1^{d_1} \cdots P_{m_N})^*+\Xi
\end{align}
where $\Xi$ is a sum of monomials in the duals of the variables.

Let $M$ be an ordered monomial such that $M=K_i^{k_i-1} \cdots$ with $|K_i| \geq 4$ and $k_i \geq 1$. Using Lemma \ref{lem:lift}, we can determine that in the lift $g_{|K_i|+1}$ of $M^*$, the only elements whose images contain $K_i$ are $([+K_j] \ | \ M)$ and (possibly) variables of degree $|M|+|K_i|$. It follows that $K_i^* \cdot M^*=k_i (K_i^{k_i} \cdots)^*+\sum_U U^*$ with $U$ variables of degree $|M|+|K_i|$. By successively applying this result for all degrees, we prove the first statement of the proposition. The polynomial $\Xi$ in Formula~\eqref{eq:decomposition2} appears because when one lifts a monomial $M^*$, there might be variables of degree $|M|+1$ whose differentials contain the element not in $\on{Ker} g_1$, i.e., $M$, and then there might be variables of degree $|M|+2$ whose differential contain the elements of degree $|M|+1$ not in $\on{Ker}g_2$, and so on. But if $M$ is such that $|M| \geq N$ (for example is $M$ contains some variable of degree $N$), then there is no variable of degree strictly more than $|M|$, and thus we obtain Formula~\eqref{eq:split_formula}.
\end{proof}

We can now prove the main result of this section.

\begin{theorem}
Let $R$ be a finitely generated commutative algebra. Assume that Tate resolution $T$ of the trivial $R$-module is obtained after finitely many steps and is an $I$-minimal. Then $\Ext_{R}^*(\msf k, \msf k)$ is finitely generated by the duals of the variables in Tate resolution.
\end{theorem}

\begin{proof}
Consider the monomial $M=B_1^{b_1} \cdots P_{m_N}^{p_{m_N}}$. We lift the morphism $M^*$. We see that 
\begin{align*}
\begin{array}[t]{cccl}
g_1: & M & \longmapsto & 1 \in R, \\
           & \text{other} & \longmapsto & 0,
\end{array} 
\end{align*}
and
\begin{align*}
\begin{array}[t]{cccl}
g_2 :&Q& \longmapsto & Q_M, \\[5pt]
 &([+A_i] \ | \ M)& \longmapsto & A_i, \\[5pt]
& ([-X\cdots-Y+Z] \ | \ M)& \longmapsto&(-1)^{\widetilde{Z}^M+|X|+\cdots+|Y|+(Z, X \cdots Y)_M}(k_{M, Z}+1)Z_{M, X\cdots Y} \\[5pt]
    & \text{other} & \longmapsto & 0,
\end{array} 
\end{align*}
where the $Q$'s are variables of degree $|M|+1$ such that $q_M \neq 0$. Thus we obtain
\begin{align*}
A_i^* \cdot (B_1^{b_1} \cdots P_{m_N}^{p_{m_N}})^*=(A_i B_1^{b_1} \cdots P_{m_N}^{p_{m_N}})^*+\text{monomials with 0 }A_q\text{'s}.
\end{align*}

By doing the same construction and increasing the numbers of $A_i$'s, we can show that for any $1 \leq r \leq m_1$,
\begin{align}\label{eq:extract_A_(r)}
\begin{array}{rl}
A_{i_1}^* \cdot (A_{i_2} \cdots A_{i_r} B_1^{b_1} \cdots P_{m_N}^{p_{m_N}})^*= & \ (A_{i_1}A_{i_2} \cdots A_{i_r}  B_1^{b_1} \cdots P_{m_N}^{p_{m_N}})^* \\[5pt]
& +\text{monomials with strictly less than r }A_q\text{'s}.
\end{array}
\end{align}

We know that the terms without $A_q$'s can be expressed by the dual of the variables according to Proposition \ref{prop:decomp_dual}. Thus by doing an induction on $r$ and using Formula~\eqref{eq:extract_A_(r)}, we conclude the proof of the theorem.
\end{proof}
}

\subsection{Finite generation of the Yoneda algebra}\label{sec:finite_gen}

Let $ \msf k$ be a commutative ring and let $A=\msf k[x_1,\dots, x_n]/(c_1,\dots, c_k)$ be a finitely generated commutative $\msf k$-algebra such that the resolution $P^*$ of Theorem \ref{thm:Tate} of the trivial $A$-module $ \msf k=A/\mathfrak{m}_x$ is obtained after finitely many steps and each $F_n$ is of finite rank over $A$ (see the proof of Theorem \ref{thm:Tate} for the notation). It follows that $\mathcal B$ is a finite set and we write $N$ for its cardinal. We also assume that $P^*$ is an $\mf m_x$-minimal resolution of $ \msf k$, i.e., $d(P^*) \subset \mathfrak{m}_x P^*$, and so $\Ext_{A}^*(\msf k, \msf k)=\mc Hom^*_A(P^*, \msf k)$.
\delete{
As the resolution is $\mf m_x$-minimal, for all $1 \leq i \leq N$, we know that $\partial X_i$ is a sum of monomials in variables $X_j$ with $|X_j| \leq  |X_i|-1$ and with coefficients in $\mathfrak{m}_x$. Hence, for such a monomial $g$, there exists an element $T_{X_i,g} \in T_R(\msf k)_1$ of degree $1$ such that the coefficient of $g$ in $\partial X_i$ is $\partial T_{X_i, g}$. If $g$ does not appear in  $\partial X_i$, we set $T_{X_i, g}=0$.
}
We know from Proposition \ref{prop:decomp_dual} that for any $n >0$, the space $\on{Ext}_A^n(\msf k, \msf k)$ is spanned by elements of the form $(b^\vee)^f$ with $f \in \mathcal{B}(-n)$. In particular, we have seen that $\Ext_{A}^*(\msf k, \msf k)$ is generated over $\msf k$ (with the Yoneda product) by the subspace $V=\bigoplus_{b \in \mathcal B} \msf k b^\vee$, and any element of $\Ext_{A}^*(\msf k, \msf k)$ is an ordered polynomial in the $b^\vee$. Assuming $\msf k$ is a field, it follows that
\[
\bigoplus_{s=0}^k\dim_{\msf k} V^s   \leq  \displaystyle  \bigoplus_{s=0}^k \binom{N-1+s}{N-1} \leq \displaystyle  (k+1)(N+k)^{N-1},
\]
where $V^s$ is the subspace of $\Ext_{A}^*(\msf k, \msf k)$ spanned by the $(b^\vee)^f$ with $\on{poldeg}f=s$. It follows that $\on{GKdim}_{\msf k}\Ext_{A}^*(\msf k, \msf k) \leq N$ (cf. \cite{Krause-Lenagan}) and the Yoneda algebra has finite Gelfand-Kirillov dimension. Based on \cite[Theorem 2.3]{Gulliksen}\delete{ and Theorem \ref{ExtofLocalisation}}, it follows that the local ring $A_{\mathfrak{m}_x}$ is a complete intersection. Furthermore, as stated in Section \ref{sec:Yoneda_CI}, if $A$ is a complete intersection, then there exists a resolution $P^*$ obtained after adjoining variables of degree $-2$. This resolution is $\mf m_x$-minimal if the relations of $A$ are at least of polynomial degree $2$. By combining these results, we obtain:

\delete{
For an element $X^{(f)} \in \mathcal{B}$, we again write $(X^{(f)})^\vee$ for the dual element in $\on{Hom}_R(T_R(\msf k), \msf k)$.
given by
\begin{align*}
\begin{array}[t]{cccl}
(X^{(f)})^*: & T_R(\msf k) & \longrightarrow & \msf k \\[5pt]
& X^{(f')} \in \mathcal{B} & \longmapsto &  \delta_{f, f'}.
\end{array}
\end{align*}

We need to introduce some notations. Later on, we will need to reorder some monomials when the variables are not in ascending order. If in an ordered monomial $X^{(f)}$ there is an ordered sub-monomial $X^{(f')}$, and we wish to move it left of a variable $X_i$ in $X^{(f)}$, the sign change due to the different degrees will be written as $(-1)^{(f', f)_{X_i}}$, and the non-negative scalar factor due to the divider power structures will be denoted $s_{(f', f)_{X_i}}$. We illustrate this process in the following example.

\begin{example}
Consider odd variables $X_1$, $X_2$ and even variables $X_3$, $X_4$. Set
\[
X^{(f)}=X_{1}^{(1)}X_{2}^{(1)}X_{3}^{(3)}X_{4}^{(5)},
\]
\[
X^{(f')}=X_{1}^{(1)}X_{3}^{(1)},
\]
and single out the variable $X_4$. We want to rewrite the monomial $X^{(f)}$ but where the sub-monomial $X^{(f')}$ is left of $X_4$. We can verify that
\[
X_{2}^{(1)}X_{3}^{(2)}\underbrace{(X_{1}^{(1)}X_{3}^{(1)})}_\text{$X^{(f')}$}X_{4}^{(5)}=(-1)^{(f', f)_{X_4}}s_{(f', f)_{X_4}}X^{(f)}
\]
with
\[
(-1)^{(f', f)_{X_4}}=-1,
\]
and
\[
s_{(f', f)_{X_4}}=\frac{3!}{2!1!}=3 \in \bb N.
\]
\end{example}

Notice that the convention we use is that the coefficient appears in front of the ordered monomial $X^{(f)}$. So it is the reordering of the variables that creates the coefficient.

For an element $f \in \mathcal{B}$, we will write $\on{supp_{mul}}f$ for the support of $f$ with multiplicity, i.e., $\on{supp_{mul}}f$ is the set of variables $X \in \mathcal{X}$ such that $f(X) \neq 0$, and it contains $f(X)$ copies of $X$.

Set $f \in \mathcal{B}$ and $X_i \in \on{supp}f$. We define $(\on{deg} < X_i)_f=\sum_{j=1}^{i-1}|X_j|f(X_j)$. For example, if $X^{(f)}=X_{1}X_{2}^{(2)}X_{3}X_{5}X_{6}^{(4)}$ then
\[
\begin{array}{l}
(\on{deg} < X_3)_f=|X_1|+2|X_2|, \\[5pt]
(\on{deg} < X_6)_f=|X_1|+2|X_2|+|X_3|+|X_5|.
\end{array}
\]

For a variable $X_i \in \mathcal{X}$ and $f \in \mathcal{B}(|X_i|-1)$, the coefficient of $X^{(f)}$ in $\partial(X_i)$ will be written as $(x_i)_f \in \mathfrak{m}_x$. As $\partial_1$ is surjective on $\mathfrak{m}_x$ by assumption, there exists an element $T_{X_i, f}$ of degree $1$ in $T_R(\msf k)$ such that $\partial_1(T_{X_i, f})=(x_i)_f$. Moreover, if $f \notin \mathcal{B}(|X_i|-1)$, we set $T_{X_i, f}=0$

For any $X_i \in \mathcal{X}$, define $\delta_{X_i} : \mathcal{X} \longrightarrow \bb N$ such that $\delta_{X_i}(X_j)=\delta_{i, j}$. We now introduce operations on the basis $\mathcal{B}$. Consider a variable $X_i \in \mathcal{X}$ and define the maps
\[
\begin{array}{rccc}
e_{X_i} : & \mathcal{B}(n) & \longrightarrow & \mathcal{B}(n+|X_i|) \\[5pt]
 & X^{(f)} & \longmapsto & X^{(f+\delta_{X_i})}
\end{array}
\]
and
\[
\begin{array}{rccc}
f_{X_i} : & \mathcal{B}(n) & \longrightarrow & \mathcal{B}(n-|X_i|) \\[5pt]
 & X^{(f)} & \longmapsto &  \left\{\begin{array}{cl}
X^{(f-\delta_{X_i})} & \text{ if } f(X_i) \geq 1,\\[5pt] 
 0 & \text{ if } f(X_i)=0.
 \end{array}\right.
\end{array}
\]
Given a collection of variables $X_{i_1}, \dots, X_{i_k}$ and $X_{a}$ in $\mathcal{X}$ such that $|X_a|-\sum_{s=1}^k |X_{i_s}|=1$, we define the map
\[
\begin{array}{rccc}
e_{i_1, \dots, i_k; a} : & \mathcal{B}(n) & \longrightarrow & \mathcal{B}(n+1) \\[5pt]
 & X^{(f)} & \longmapsto &f_{X_{i_1}} \dots f_{X_{i_k}}e_{X_a}X^{(f)}.
\end{array}
\]

For any $1 \leq i \leq N$, we write $X_i^\vee=(X^{(\delta_{X_i})})^\vee$. Moreover, for $f \in \mathcal{B}$, we write $(X^\vee)^{f}=(X_1^\vee)^{f(X_1)} \cdots (X_N^\vee)^{f(X_N)}$ where the $X_i^\vee$'s are in ascending order (notice that there is no divided powers). The proof of the following result is in Appendix \ref{appendix:sec:finite_gen}.

\begin{proposition}\label{prop:decomp_dual}
Set $X^{(f)} \in \mathcal{B}$ such that $f(X_i)=0$ for any $X_i$ with $|X_i|=1$. Then its dual $(X^{(f)})^\vee$ can be expressed as a linear combination of elements of the form $(X^\vee)^{f'}$ with $f' \in \mathcal{B}(\on{deg}f)$. In particular, if $f(X_j) \neq 0$ for some $X_j \in \mathcal{X}$ with $|X_j|=|X_N|$, then
\begin{equation}\label{eq:split_formula}
(X^{(f)})^\vee=(X^\vee)^f.
\end{equation}
\end{proposition}

We can now prove the main result of this section.

\begin{theorem}\label{thm:Yoneda_fg}
Let $R$ be a finitely generated commutative $\msf k$-algebra. Assume that the Tate resolution $T_R(\msf k)$ of the trivial $R$-module $\msf k$ is obtained after adjoining finitely many variables, i.e,
\[
T_R(\msf k)=R \langle X_1, \dots, X_N \rangle, 
\]
and is an $I$-minimal. Then $\Ext_{R}^*(\msf k, \msf k)$ is finitely generated by the $X_i^\vee$'s. Furthermore, every element in $\Ext_{R}^*(\msf k, \msf k)$ of degree $n$ can be expressed as linear combination of elements of the form $(X^\vee)^{f}$ with $f \in \mathcal{B}(n)$.
\end{theorem}

{\color{red} Use earlier argument with the derivations. Then homotopy if finite dimensional and PBW gives the result.}

\begin{proof}
Consider the element $f \in \mathcal{B}$ with $f(X_i)=0$ for all $|X_i|=1$. We lift the morphism $(X^{(f)})^\vee$. We see that 
\begin{align*}
\begin{array}[t]{cccl}
(\widetilde{f})_0: & T_R(\msf k)_{\on{deg}f} & \longrightarrow & R \\[5pt]
& X^{(f')} \in \mathcal{B}(\on{deg}f) & \longmapsto &  \delta_{f, f'},
\end{array} 
\end{align*}
and
\begin{align*}
\begin{array}[t]{cccl}
(\widetilde{f})_1 :& T_R(\msf k)_{\on{deg}f+1} & \longrightarrow & T_R(\msf k)_{1} \\[5pt]
&X_s & \longmapsto & T_{X_s, f}, \\[5pt]
&e_{X_s} X^{(f)}& \longmapsto & X_s, \\[5pt]
& e_{i_1, \dots, i_k; a} X^{(f)}& \longmapsto&(-1)^{(\on{deg} < X_a)_f+\on{deg}f'+(f', f)_{X_a}}s_{(f', f)_{X_a}} T_{X_a, f'}, \\[5pt]
& \text{other} & \longmapsto & 0,
\end{array} 
\end{align*}
where $f'=\sum_{j=1}^k \delta_{X_{i_j}}$. Thus for any $X_i \in \mathcal{X}$ such that $|X_i|=1$, we obtain
\[
\begin{array}{ccl}
(e_{X_i}X^{(f)})^\vee & = &\displaystyle  X_i^\vee \cdot (X^{(f)})^\vee+\text{ linear combination of duals of elements} \\[5pt]
 &&  \text{in } \mathcal{B}(\on{deg}f+1) \text{ not containing variables of degree }1.
 \end{array}
\]
By doing the same construction and increasing the numbers of variables of degree $1$ in $X^{(f)}$, we can show that for an element $f \in \mathcal{B}$ containing $s$ variables of degree $1$ and such that $f(X_i)=0$ for some other $X_i$ with $|X_i|=1$, then
\begin{align}\label{eq:extract_A_(r)}
\begin{array}{rcl}
 (e_{X_i}X^{(f)})^\vee &=& X_i^\vee \cdot (X^{(f)})^\vee +\text{ linear combination of duals of elements in} \\[5pt]
&&\quad  \mathcal{B}(\on{deg}f+1)  \text{ with $s$ or less variables of degree } 1.
\end{array}
\end{align}

We know that the duals of the $X^{(f)}$ without variables of degree $1$ can be expressed as linear combination of the $(X^\vee)^{f'}$ according to Proposition \ref{prop:decomp_dual}. Thus by doing an induction on the number of variables of degree $1$ and using Formula~\eqref{eq:extract_A_(r)}, we conclude the proof of the theorem.
\end{proof}
}

\begin{theorem}\label{thm:Tate_finite}
Let $A$ be a finitely generated commutative $\msf k$-algebra with $\msf k$ a field and with relations of polynomial degree at least $2$. There exists a PD dg resolution $P^*$ of the trivial $A$-module $\msf k$ that is $\mf m_x$-minimal and such that $\mathcal B$ is finite if and only if (the localisation of) $A$ is a complete intersection. In this case, there exists a PD dg resolution such that $\on{min}\{|b| \ | \ b \in \mathcal B\}=-2$, and a presentation of $\pi^*(A,\msf k)$ is given in Theorem \ref{thm:HomotopyLiealgebra}.
\end{theorem}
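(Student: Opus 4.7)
The plan is to argue the two directions separately, leveraging the Gelfand--Kirillov dimension computation sketched in the paragraph preceding the statement together with Gulliksen's characterization of complete intersections.

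For the ``only if'' direction, I would first assume that $P^*$ is a minimal pd dg resolution with $\mathcal B$ finite of cardinality $N$. By Theorem~\ref{thm:image_homotopy}, $\on{Ext}_R^*(\msf k,\msf k)$ is generated as an $\msf k$-algebra by the finite set $\{b^\vee \mid b\in\mathcal B\}$, and every element of polynomial degree $s$ is an ordered product of these generators. Counting ordered monomials of polynomial degree at most $k$ bounds $\sum_{s=0}^k \dim V^s$ by $\sum_{s=0}^k \binom{N+s-1}{N-1} \leq (k+1)(N+k)^{N-1}$, which grows polynomially in $k$. Hence $\on{GKdim}\on{Ext}_R^*(\msf k,\msf k) \leq N < \infty$. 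Applying Gulliksen's theorem (\cite[Theorem 2.3]{Gulliksen}) then forces the localization $R_{\mathfrak m_x}$ to be a complete intersection.

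For the ``if'' direction, I would invoke Tate's construction as recalled in Section~\ref{sec:Yoneda_CI}: when $R$ is a complete intersection with regular sequence $c_1,\dots,c_k$ whose terms lie in $\mathfrak m^2$, the resolution produced by Theorem~\ref{thm:Tate} already stabilizes at $P_2$. Explicitly, adjoining degree $-1$ variables $T_1,\dots,T_n$ with $d(T_i)=x_i$ produces a complex whose only cocycles of degree $-1$ are generated by the relations $c_p$ (Koszul exactness for the regular sequence); adjoining degree $-2$ variables $S_1,\dots,S_k$ with $d(S_p)=\sum_i c_{p,i} T_i$ then kills these cocycles, and no further generators are needed. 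Since the relations lie in $\mathfrak m^2$, one has $d(P^*)\subseteq \mathfrak m_x P^*$, so this resolution is minimal, $\mathcal B = \{T_i\}\cup\{S_p\}$ is finite, and $\min\{|b|\mid b\in\mathcal B\}=-2$.

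The final assertion about the presentation of the Yoneda algebra is then immediate from Corollary~\ref{cor:PresentationofExt}, since the minimal pd dg resolution constructed above is exactly the one used in Section~\ref{sec:Yoneda_CI}. The main obstacle, as I see it, is purely bookkeeping: one should make precise how passing to the localization $R_{\mathfrak m_x}$ interacts with the minimality and finiteness of the pd dg resolution, so that the equivalence reads correctly whether we work globally on $R$ or locally at $\mathfrak m_x$. A short remark appealing to the fact that minimal pd dg resolutions of $R/\mathfrak m_x$ over $R$ and over $R_{\mathfrak m_x}$ have the same set of generators $\mathcal B$ (because minimality is tested against $\mathfrak m_x$ and the divided power construction is compatible with localization at $\mathfrak m_x$) should suffice to bridge the two formulations.
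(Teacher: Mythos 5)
Your proposal is correct and follows essentially the same route as the paper: the forward direction via finite generation of $\on{Ext}_R^*(\msf k,\msf k)$, the polynomial bound on $\sum_s \dim V^s$, finiteness of the Gelfand--Kirillov dimension, and Gulliksen's theorem; the converse via Tate's construction stopping at degree $-2$ with minimality forced by the relations lying in $\mathfrak m^2$. Your closing remark on how localization at $\mathfrak m_x$ interacts with minimality is a point the paper leaves implicit, so it is a reasonable addition rather than a deviation.
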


\delete{
\begin{theorem}[Reformulation of the previous one]
Let $R$ be a Noetherian local ring. Then an $\mathfrak{m}$-minimal resolution is obtained after adjoining finitely many variables if and only if $R$ is a complete intersection. In that case, there exists an $\mathfrak{m}$-minimal resolution $T_R(\msf k)$ generated in degree $1$ and $2$ (so minimal in terms of generators), and a presentation of the Yoneda algebra is given in Corollary \ref{cor:PresentationofExt}.
\end{theorem}
}

We now assume that $A$ is local and not a complete intersection. Hence its $\mf m_x$-minimal PD dg resolution (if it exists) will need an infinite set $\mathcal B$. It follows that its Yoneda algebra will be generated by infinitely many variables. If we assume that there are relations so that $\on{Ext}_A^*(\msf k,\msf k)$ only needs a finite subset $\mathcal B'$ of $\mathcal B$, then the previous reasoning still applies and so any element will be a linear combination of elements of the form $(b'^\vee)^f$ with $b' \in \mathcal B'$. But then we can use the same reasoning as the proof of Theorem \ref{thm:Tate_finite} to see that $A$ is a complete intersection, which is a contradiction. Hence $\on{Ext}_A^*(\msf k,\msf k)$ is generated by an infinite set $\mathcal B^\vee=\{b^\vee \ | \ b \in \mathcal B\}$, and for each $b^\vee \in \mathcal B^\vee$, we have $b^\vee \notin (b'^\vee \ | \ b' \in \mathcal B\backslash \{b\})$. It follows that $\pi^*(A, \msf k)$ will have infinitely many homogeneous components. In particular, we get the following corollary:

\begin{corollary}
Let $A$ be a finitely generated local $\msf k$-algebra with $\msf k$ a field. Assume that there exists an $\mf m_x$-minimal PD dg resolution of the trivial module $\msf k$. Then $\pi^*(A, \msf k)$ is either in degree $1$ and $2$, or has infinitely many homogeneous components.
\delete{
Any restricted graded Lie algebra $\mathfrak{g}=\bigoplus_{i \geq 1} \mathfrak{g}_i$ with finitely many homogeneous degrees and with $\mathfrak{g}_i \neq 0$ for some $i>2$ is not the homotopy Lie algebra of a finitely generated local ring {\color{red}whose $I$-minimal Tate resolution exists}.
}
\end{corollary}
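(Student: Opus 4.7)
The plan is to split into two cases according to whether $R$ is a complete intersection or not, and in each case read off the degrees where $\pi^*(R,\msf k)$ can be nonzero from the set $\mathcal B$ indexing the pd dg generators of a minimal resolution $P^*$.

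First, if $R$ is a complete intersection, I would apply Theorem~\ref{thm:Tate_finite} to fix a minimal pd dg resolution $P^*$ of $\msf k$ whose generating set $\mathcal B$ satisfies $\mathcal B=\mathcal B(-1)\cup \mathcal B(-2)$, i.e.\ there are no pd dg generators below degree $-2$. Combining Eq.\eqref{eq:neg_hom} (which gives $\pi^{\leq 0}(R,\msf k)=0$) with Theorem~\ref{thm:lin_comb} (which writes every class in $H^n(\mc Der_R^{*,\on{pd}}(P^*,P^*))$ with $n>0$ as an $R$-linear combination of lifts of $b^\vee$ for $b\in\mathcal B(-n)$), I would conclude $\pi^n(R,\msf k)=0$ whenever $\mathcal B(-n)=\emptyset$. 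Since $\mathcal B(-n)=\emptyset$ for $n\neq 1,2$, this places $\pi^*(R,\msf k)$ in degrees $1$ and $2$.

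Second, suppose $R$ is not a complete intersection and let $P^*$ be any minimal pd dg resolution with generating set $\mathcal B$. By Theorem~\ref{thm:Tate_finite} (contrapositive), $\mathcal B$ must be infinite; in the local setting, minimality of $P^*$ moreover ensures that each $F_{n+1}$ is a free $R$-module on the image of a minimal generating set of $Z^{-n}(P_n)$, so each $\mathcal B(-n)$ is finite. Consequently, the set of degrees $-n$ with $\mathcal B(-n)\neq\emptyset$ must itself be infinite. Applying Theorem~\ref{thm:lin_comb} once more, for each such $n$ there is a nonzero class in $\pi^n(R,\msf k)$ corresponding to a lift of some $b^\vee$ with $b\in\mathcal B(-n)$ (note these lifts project under $\Psi$ to the linearly independent primitives $b^\vee\in\on{Prim}(\on{Ext}_R^*(\msf k,\msf k))$ of Theorem~\ref{thm:image_homotopy}, so they are themselves nonzero in $\pi^n(R,\msf k)$). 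Hence $\pi^n(R,\msf k)\neq 0$ for infinitely many $n$.

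The main obstacle is ensuring that the dichotomy is well-posed: the conclusion is a statement about $\pi^*(R,\msf k)$, which ought to be intrinsic to the pair $(R,\msf k)$, yet the argument uses a specific choice of minimal resolution in each case. I would address this either by appealing to the independence of the homotopy Lie algebra up to isomorphism from the choice of resolution, or by noting that any minimal pd dg resolution in the CI case must have $\mathcal B$ in degrees $-1$ and $-2$ only (for otherwise $\on{Ext}_R^*(\msf k,\msf k)$ would acquire additional primitive generators, contradicting the presentation of Corollary~\ref{cor:PresentationofExt} via Theorem~\ref{thm:image_homotopy}). A secondary subtlety to handle carefully is the finiteness of $\mathcal B(-n)$ for each $n$, which is the translation of the local/Noetherian hypothesis into the pd dg language and is what forces infinitely many \emph{degrees}, rather than merely infinitely many generators sitting in a few degrees.
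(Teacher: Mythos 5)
Your proposal is correct and follows essentially the same route as the paper: a dichotomy on whether $R$ is a complete intersection, using Theorem~\ref{thm:Tate_finite} to place $\mathcal B$ in degrees $-1,-2$ in the CI case and to force $\mathcal B$ infinite otherwise, then reading off the degrees of $\pi^*(R,\msf k)$ from $\mathcal B$ via Theorems~\ref{thm:lin_comb} and~\ref{thm:image_homotopy}. Your explicit observation that each $\mathcal B(-n)$ is finite (so that infinitely many generators force infinitely many \emph{degrees}) makes precise a step the paper leaves implicit, but it is the same argument.
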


\delete{

\section{The non complete intersection case}\label{sec:non_CI}

Let $\msf k$ be a commutative ring and set $R=\msf k[x_1,\dots, x_n]/( c_1,\dots, c_k)$ where $c_1, \dots, c_k$ are homogeneous polynomials in the variables $x_1, \dots, x_n$ such that they form a minimal set of generators of the ideal $(c_1,\dots, c_k)$. By minimal we mean that $c_i \notin (c_1, \dots, \widehat{c_i}, \dots, c_k)$ for all $1 \leq i \leq k$, so there is no redundancy among the generators. As in the previous section, we write $c_p=\sum_{i=1}^n c_{p, i}x_i$ with $c_{p, i} \in \mf{m}$. 

Set $\mf{m}=(x_1, \dots , x_n) \subset k[x_1, \cdots, x_n]$ and assume that for all $1 \leq i \leq k$, we have $c_i \in \mf{m}^2$. Let $\mf m_x=(\overline{x_1}, \dots, \overline{x_{n}})$ the image of $\mf m$ in $R$, and We would like to determine the Yoneda algebra for the $R$-module $\msf k=R/\mf{m}_x$. However, as $R$ might not be a complete intersection ring, the construction of Theorem \ref{thm:Tate} might never stop, or we might need to add too many variables for the computation to be practical. We are thus going to approximate $R$ by a complete intersection ring $\widetilde{R}$, and from this new ring we will obtain a presentation of a quotient of the Yoneda algebra of $R$.

We order the $c_i$'s such that $c_1,\dots, c_r$, $r \leq k$, is the longest regular sequence among all sequences of $c_i$'s (if there are several such sequences, we just choose one). We define the following ring:
\begin{align*}
\widetilde{R}= \msf k[x_1,\dots, x_n, x_{n+1}, \dots, x_{n+k-r}]/(c_1,\dots, c_r, c_{r+1}-x_{n+1}^2, \dots, c_k-x_{n+k-r}^2).
\end{align*}
The ring $\widetilde{R}$ is clearly a complete intersection because $c_1,\dots, c_r, c_{r+1}-x_{n+1}^2, \dots, c_k-x_{n+k-r}^2$ is a regular sequence in $\msf k[x_1,\dots, x_{n+k-r}]$. Furthermore we have a surjective morphism:
\begin{align*}
\begin{array}[t]{cccl}
\pi: & \widetilde{R} & \longrightarrow & R \\
           & \overline{x_i} & \longmapsto & \left\{\begin{array}{cl}
        \overline{x_i} & \text{ if } i \leq n, \\
       0 & \text{ if } i \geq n+1.
        \end{array}\right.
\end{array}
\end{align*}
Here $\overline{x_i}$ means the class of $x_i$ modulo the ideal of relations. The above morphism induces $\pi^{\#} :\Ext_R^*(\msf k, \msf k) \longrightarrow \Ext_{\widetilde{R}}^*(\msf k, \msf k)$ where $\msf k$ is an $\widetilde{R}$-module through $(\overline{x_1}, \dots, \overline{x_{n+k-r}})$. As discussed at the beginning of Section \ref{sec:Yoneda_CI}, $\widetilde{R}$ is a complete intersection so a free resolution of the $\widetilde{R}$-module $\msf k$ is given as a PD dg $\widetilde{R}$-algebra by:
\begin{align*}
Q_*= \widetilde{R} \langle t_1, \dots, t_{n+k-r}, s_1, \dots, s_k \rangle,
\end{align*}
where the $t_i$'s have degree $1$, the $s_p$'s have degree $2$ and:
\begin{itemize}
\item $\partial_1 t_i = \overline{x_i}$ for all $1 \leq i \leq n+k-r$,
\item $\partial_2 s_p = \displaystyle \sum_{i=1}^n \overline{c_{p, i}} \, t_i$ if $1 \leq j \leq r$,
\item $\partial_2 s_p = \displaystyle \sum_{i=1}^n \overline{c_{p, i}} \, t_i-\overline{x_{n+p-r}} \, t_{n+p-r}$ if $r+1 \leq p \leq k$.
\end{itemize}
To lighten notations, we will remove the $\overline{\color{white}x}$ so it is assumed that we are in the quotient ring. We write $Q_i$ for the subspace of $Q$ of degree $i$. For all  $1 \leq i \leq n+k-r$ and  $1 \leq j \leq k$, we write $\alpha_i=t_i^\vee$ and $\beta_p=s_p^\vee$.
\delete{:
\begin{align*}
\begin{array}[t]{cccc}
\alpha_i: & Q_1 & \longrightarrow & \msf k \\
           & t_i & \longmapsto & 1, \\
           & \text{other} & \longmapsto & 0,
\end{array} \quad \text{ and } \quad \begin{array}[t]{cccc}
\beta_p: & Q_2 & \longrightarrow & \msf k \\
           & s_p & \longmapsto & 1, \\
           & \text{other} & \longmapsto & 0.
\end{array}
\end{align*}
}
Using Corollary \ref{cor:PresentationofExt}, we know that:
\begin{align*}
\Ext_{\widetilde{R}}^*(\msf k,\msf k)=\msf k[\beta_1, \dots, \beta_k] \otimes \msf k \langle \alpha_{1}, \dots, \alpha_{n+k-r} \rangle / \mathcal{I}, 
\end{align*}
where $\mathcal{I}$ is the two sided ideal given by the terms of degree $2$ in the relations of $\widetilde{R}$. In particular, $\alpha_{n+p-r}^2=-\beta_p$ in the quotient for $p \geq r+1$.

\subsection{The lift between the $I$-minimal resolutions}\label{sec:lift_min_res}

We now look into the free resolution of $\msf k$ as an $R$-module. As mentioned above, because $R$ is not a complete intersection if $r < k$, we cannot compute explicitly the whole resolution with a PD dg algebra. We can however compute the first two steps and it will be enough for our purpose. 

We apply Theorem \ref{thm:Tate} but stop after the degree $2$. Using Lemma \ref{lem:H_1}, we obtain a PD dg $R$-algebra generated by $n$ variables $T_1, \dots, T_n$ of degree $1$ such that $\partial_1(T_i)=x_i$, and by $k$ variables $S_1,\dots, S_k$ of degree $2$ such that $\partial_2(S_p)=\sum_{i=1}^n c_{p, i} T_i$. This complex is not exact. We would need to add variables of higher degrees to obtain the free resolution $P_*$ of $\msf k$ as an $R$-module. Hence the beginning of the $R$-algebra $P_*$ is as follows:
\[
  \begin{tikzpicture}[scale=0.9,  transform shape]
  \tikzset{>=stealth}
  
\node (1) at ( -8,0){$\dots$};
\node (2) at ( -4,0){$\displaystyle \bigoplus_{p=1}^k RS_p \oplus \bigoplus_{1 \leq i < j \leq n} R T_i T_j$};
\node (3) at ( 0,0){$\displaystyle \bigoplus_{i=1}^nRT_i$};
\node (4) at ( 2.5,0) {$R$};
\node (5) at ( 4.5,0){$\cc$};
\node (6) at ( 6.5,0){$0$,};

\draw [decoration={markings,mark=at position 1 with
    {\arrow[scale=1.2,>=stealth]{>}}},postaction={decorate}] (1) --  (2) node[midway, above] {$\partial_3$};
\draw [decoration={markings,mark=at position 1 with
    {\arrow[scale=1.2,>=stealth]{>}}},postaction={decorate}] (2)  --  (3) node[midway, above] {$\partial_2$};
\draw [decoration={markings,mark=at position 1 with
    {\arrow[scale=1.2,>=stealth]{>}}},postaction={decorate}] (3)  --  (4) node[midway, above] {$\partial_1$};
\draw [decoration={markings,mark=at position 1 with
    {\arrow[scale=1.2,>=stealth]{>}}},postaction={decorate}] (4)  --  (5) node[midway, above] {$\epsilon$};
\draw [decoration={markings,mark=at position 1 with
    {\arrow[scale=1.2,>=stealth]{>}}},postaction={decorate}] (5)  --  (6) ;
\end{tikzpicture}
\]
with $\epsilon$ the augmentation map. 

We call $\psi$ the lift between the free resolutions of $\msf k$ as an $R$-module and as an $\widetilde{R}$-module:
 \begin{center}
  \begin{tikzpicture}[scale=0.9,  transform shape]
  \tikzset{>=stealth}
  
\node (1) at ( -8,0){$\dots$};
\node (2) at ( -4,0){$\displaystyle \bigoplus_{p=1}^k \widetilde{R}s_p \oplus \bigoplus_{1 \leq i < j \leq n+k-r} \widetilde{R} t_i t_j$};
\node (3) at ( 1,0){$\displaystyle \bigoplus_{i=1}^{n+k-r} \widetilde{R}t_i$};
\node (4) at ( 4,0) {$\widetilde{R}$};
\node (5) at ( 6,0){$\msf k$};
\node (6) at ( 8,0){0};
  
\node (7) at ( -8,-2){$\dots$};
\node (8) at ( -4,-2){$\displaystyle \bigoplus_{p=1}^k RS_p \oplus \bigoplus_{1 \leq i < j \leq n} R T_i T_j$};
\node (9) at ( 1,-2){$\displaystyle \bigoplus_{i=1}^nRT_i$};
\node (10) at ( 4,-2) {$R$};
\node (11) at ( 6,-2){$\msf k$};
\node (12) at ( 8,-2){0};

\draw [decoration={markings,mark=at position 1 with
    {\arrow[scale=1.2,>=stealth]{>}}},postaction={decorate}] (1) --  (2) node[midway, above] {$\partial_3$};
\draw [decoration={markings,mark=at position 1 with
    {\arrow[scale=1.2,>=stealth]{>}}},postaction={decorate}] (2)  --  (3) node[midway, above] {$\partial_2$};
\draw [decoration={markings,mark=at position 1 with
    {\arrow[scale=1.2,>=stealth]{>}}},postaction={decorate}] (3)  --  (4) node[midway, above] {$\partial_1$};
\draw [decoration={markings,mark=at position 1 with
    {\arrow[scale=1.2,>=stealth]{>}}},postaction={decorate}] (4)  --  (5) node[midway, above] {$\widetilde{\epsilon}$};
\draw [decoration={markings,mark=at position 1 with
    {\arrow[scale=1.2,>=stealth]{>}}},postaction={decorate}] (5)  --  (6);
    
\draw [decoration={markings,mark=at position 1 with
    {\arrow[scale=1.2,>=stealth]{>}}},postaction={decorate}] (7) --  (8) node[midway, above] {$\partial_3$};
\draw [decoration={markings,mark=at position 1 with
    {\arrow[scale=1.2,>=stealth]{>}}},postaction={decorate}] (8)  --  (9) node[midway, above] {$\partial_2$};
\draw [decoration={markings,mark=at position 1 with
    {\arrow[scale=1.2,>=stealth]{>}}},postaction={decorate}] (9)  --  (10) node[midway, above] {$\partial_1$};
\draw [decoration={markings,mark=at position 1 with
    {\arrow[scale=1.2,>=stealth]{>}}},postaction={decorate}] (10)  --  (11) node[midway, above] {$\epsilon$};
\draw [decoration={markings,mark=at position 1 with
    {\arrow[scale=1.2,>=stealth]{>}}},postaction={decorate}] (11)  --  (12);  
    
\draw [decoration={markings,mark=at position 1 with
    {\arrow[scale=1.2,>=stealth]{>}}},postaction={decorate}] (2) --  (8) node[midway, right] {$\psi_2$};
\draw [decoration={markings,mark=at position 1 with
    {\arrow[scale=1.2,>=stealth]{>}}},postaction={decorate}] (3)  --  (9) node[midway, right] {$\psi_1$};
\draw [decoration={markings,mark=at position 1 with
    {\arrow[scale=1.2,>=stealth]{>}}},postaction={decorate}] (4)  --  (10) node[midway, right] {$\pi$};
\draw [decoration={markings,mark=at position 1 with
    {\arrow[scale=1.2,>=stealth]{>}}},postaction={decorate}] (5)  --  (11) node[midway, right] {$\operatorname{id}$};
\end{tikzpicture}
 \end{center}
where the arrows going down are morphisms of $\widetilde{R}$-modules. Indeed, there exists an action of $\widetilde{R}$ on the free resolution of $\msf k$ as an $R$-module induced by the morphism $\pi: \widetilde{R} \longrightarrow R$ and the resolution of the $\widetilde{R}$-module is projective.

In order to make the diagram commute, it is possible to define $\psi_1$ and $\psi_2$ as:
\begin{align*}
 \begin{array}[t]{cccl}
\psi_1: & Q_1& \longrightarrow & P_1 \\
           & t_i & \longmapsto & \left\{\begin{array}{l}
        T_i  \text{ if } i \leq n,\\
        0  \text{ if } i \geq n+1,
        \end{array}\right.
\end{array} \quad \text{and} \quad   \begin{array}[t]{cccl}
\psi_2: & Q_2& \longrightarrow & P_2 \\
           & t_i t_j & \longmapsto & \left\{\begin{array}{l}
        T_i T_j \text{ if } i, j \leq n,\\
        0 \text{ otherwise},
        \end{array}\right. \\
        & s_p & \longmapsto & S_p.
\end{array}
\end{align*}

The next proposition gives an explicit realisation of $\psi_n$ for any $n \in \mathbb{Z}_{>0}$ (proof in Appendix \ref{appendix:sec:lift_min_res}).

\begin{proposition}\label{prop:LiftofPsi}
Let $m \in \mathbb{Z}_{>0}$ and set $t_1^{a_1}\dots t_{n+k-r}^{a_{n+k-r}}s_1^{(b_1)}\dots s_k^{(b_k)} \in Q_{m}$. Then $\psi$ is a PD dg algebra homomorphism with:
\begin{align}
\psi_m(t_1^{a_1}\dots t_{n+k-r}^{a_{n+k-r}}s_1^{(b_1)}\dots s_k^{(b_k)})=T_1^{a_1} \dots T_n^{a_n}0^{a_{n+1}} \dots 0^{a_{n+k-r}}S_1^{(b_1)}\dots S_k^{(b_k)}.
\end{align}
\end{proposition}

\subsection{A finitely generated quotient in the non complete intersection case}\label{sec:fg_quotient}

The induced morphism $\pi^{\#}: \Ext_R^*(\msf k, \msf k) \longrightarrow \Ext_{\widetilde{R}}^*(\msf k, \msf k)$ is given by the precomposition with $\psi$ (see \cite[Chap. III, Thm.6.7]{MacLane}). Before computing $\pi^\#$ explicitly, we need to determine the Yoneda product in $\Ext_R^*(\msf k, \msf k)$ for elements of small degree.

For $1 \leq i \leq n$ and $1 \leq p \leq k$, let $\gamma_i=T_i^\vee$ and $\delta_p=S_p^\vee$ be the duals of $T_i$ and $S_p$, respectively.
\delete{i.e.:
\begin{align*}
\begin{array}[t]{cccl}
\gamma_i: & P_1& \longrightarrow & \msf k \\
           & T_l & \longmapsto & \delta_{i, l},
\end{array} \quad  \text{ and } \quad \begin{array}[t]{cccl}
\delta_p: & P_2& \longrightarrow & \msf k \\
           & S_l & \longmapsto & \delta_{p, l} \\
        & T_l T_m & \longmapsto & 0 \text{ for all }1 \leq l, m \leq n.
           \end{array} 
\end{align*}
}
We know from the construction of the beginning of the free resolution $P_*$ that $\partial_1^*=\partial_2^*=0$. It follows that $\Ext_R^1(\msf k, \msf k)=\text{Hom}_R(P_1,\msf k)=\bigoplus_{i=1}^n\msf k \gamma_i$, and $\Ext_R^2(\msf k, \msf k)=\text{Ker}(\partial_3^*)$. We cannot express $\text{Ker}(\partial_3^*)$ explicitly because we do not know the variables we may need to add to $P_3$. However, we can prove the following lemma (proof in Appendix \ref{appendix:sec:fg_quotient}):

\begin{lemma}\label{lem:ext2_non_CI}
The $\delta_p$'s defined above are elements of $\Ext_R^2(\cc, \cc)$.
\end{lemma} 

\begin{remark}
The necessity of the assumption on the degrees of the generators of the relation ideal comes from the previous lemma. Indeed, if some $c_p$ were to have monomials of different degrees, then the above result might not be true. We give the following counter example. Let $R'=\msf k[x, y]/(c_1, c_2)$ with $c_1=x^3$ and $c_2=x^2+x^2y$. It is not a complete intersection as $xc_2=0$ in $\msf k[x,y]/(c_1)$. The ideal $(c_1, c_2)$ is clearly minimally generated by $c_1$ and $c_2$, and $c_2$ is not homogeneous. If we compute the beginning of the free resolution of $\msf k=R'/(x, y)$, we obtain:
\begin{itemize}\setlength\itemsep{5pt}
\item $\partial_1 T_x = x$, $\partial_1 T_y = y$,
\item $\partial_2 S_x = x^2T_x$,  $\partial_2 S_y = xT_x+x^2T_y$.
\end{itemize}
We can verify that $(1+y)S_x \in \on{Ker}(\partial_2)$. Hence $\delta_x$ is not a cocycle, and so is not an element of $\Ext^2_{R'}(\msf k, \msf k)$.
\end{remark}

{\color{red} We need to assume that $P_*$ is $I$-minimal so that $\on{Ext}_R^*(\msf k,\msf k)=\on{Hom}_R(P_*,\msf k)$. Otherwise $(T_1^{a_1}\cdots T_{n}^{a_n}S_1^{(b_1)}\cdots S_k^{(b_k)})^*$ and $z$ might not be elements in $\on{Ext}^*(\msf k,\msf k)$, in which case we do not need Lemma \ref{lem:ext2_non_CI}.}

{\color{red}Not true if the relations have degree 2 terms (see Lemma \ref{lem:RelationsinExt}).} We can directly compute that, for all $1 \leq i < j \leq n$, we have $\gamma_i \gamma_j=-\gamma_j \gamma_i=(T_i T_j)^*$. Furthermore, we can check that $\gamma_i^2=0$ for all $i$. As we have just found a description of $\psi$, we can see that:
\[
\begin{array}[t]{cccl}
\pi^{\#}(\gamma_i)=\gamma_i \circ \psi_1: & Q_1 & \longrightarrow & \cc \\
           & t_j & \longmapsto & \delta_{i, j}.
\end{array}
\]
Therefore for all $1 \leq i \leq n$, we have $\pi^{\#}(\gamma_i)=\alpha_i$. We do the same for $\delta_p$ and find that $\pi^{\#}(\delta_p)= \beta_p$. As we do not know the basis of $P_3$ and $P_4$, we cannot determine explicitly the commutation relations between $\gamma_i$ and $\delta_p$, and between $\delta_p$ and $\delta_q$. However, it is known that the induced morphism $\pi^{\#}$ is a morphism of algebras preserving the Yoneda product (it is easily seen when the product is done by splicing $n$-fold exact sequences), thus $\pi^{\#}(\gamma_i \delta_p-\delta_p \gamma_i)= \alpha_i \beta_p-\beta_p \alpha_i=0$, based on the commutation relations in $\Ext_{\widetilde{R}}^*(\cc, \cc)$. Thus $\gamma_i \delta_p \equiv \delta_p \gamma_i \text{ mod } \text{Ker}(\pi^{\#})$. With the same approach we obtain $\delta_p \delta_q \equiv \delta_q \delta_p \text{ mod } \on{Ker}(\pi^{\#})$. It follows that $\on{Im}(\pi^{\#})$ contains the subalgebra of $\Ext_{\widetilde{R}}^*(\cc, \cc)$ generated by the $\alpha_i$'s and $\beta_p$'s, for $i \leq n$. We intend to prove that these two spaces are in fact equal.

\begin{proposition}\label{prop:MonomialContainingDual}
Set $m \in \mathbb{Z}_{>0}$ and $T_1^{a_1}\cdots T_{n}^{a_n}S_1^{(b_1)}\cdots S_k^{(b_k)} \in P_m$. Then there exist $a \in \msf k$ and $z \in \on{Ker}(\pi^{\#})$ such that $(T_1^{a_1}\cdots T_{n}^{a_n}S_1^{(b_1)}\cdots S_k^{(b_k)})^*=a \gamma_1^{a_1}\cdots \gamma_{n}^{a_n}\delta_1^{b_1}\cdots \delta_k^{b_k}+z$.
\end{proposition}

{\color{red}Put the proof in the appendix}
\begin{proof}[Proof of Proposition \ref{prop:MonomialContainingDual}]
We use the same notations as the proof of Proposition \ref{BasisofExtdegree2}. We write $\mathcal{P}(m)$ for the statement of the proposition for a fixed $m \in \mathbb{Z}_{>0}$. Based on previous computations, we know that $\mathcal{P}(1)$ and $\mathcal{P}(2)$ are true. We will proceed by induction.
 
$\bullet$ \underline{Even case:} Assume $\mathcal{P}(m)$ is true up to some $m$ even. Take $T_1^{a_1}\dots T_n^{a_n}S_1^{(b_1)}\dots S_k^{(b_k)} \in P_{m+1}$. Because $m+1$ is odd and the degrees of $T_i$'s and $S_p$'s are $1$ and $2$ respectively, the element is of the form $T_{i_1}T_{i_2} \dots T_{i_s}S_1^{(b_1)}\dots S_k^{(b_k)}$ with $i_1 < \dots < i_s$ and $s$ odd. Therefore $X=T_{i_2} \dots T_{i_s}S_1^{(b_1)}\dots S_k^{(b_k)} \in P_{m}$ and, using the assumption, its dual element is $f_X=a (\gamma_{i_2} \dots \gamma_{i_s}\delta_1^{b_1}\dots \delta_k^{b_k}+z)$ with $a \in \cc^*$ and $z \in \on{Ker}(\pi^{\#})$. We are going to lift this morphism.
 \begin{center}
  \begin{tikzpicture}[scale=0.9,  transform shape]
  \tikzset{>=stealth}
  
\node (1) at ( 0,0){$P_{m+1}$};
\node (2) at ( 3,0){$P_m$};
\node (3) at ( 0,-2){$P_1$};
\node (4) at ( 3,-2) {$R$};
\node (5) at ( 5,-2){$\cc$};
\node (6) at ( 7,-2){0};

\node (7) at ( -2,0){$\dots$};
\node (8) at ( -2,-2){$\dots$};

\draw [decoration={markings,mark=at position 1 with
    {\arrow[scale=1.2,>=stealth]{>}}},postaction={decorate}] (1) --  (2) node[midway, above] {$\partial_{m+1}$};
\draw [decoration={markings,mark=at position 1 with
    {\arrow[scale=1.2,>=stealth]{>}}},postaction={decorate}] (1)  --  (3) node[midway, left] {$g_2$};

\draw [decoration={markings,mark=at position 1 with
    {\arrow[scale=1.2,>=stealth]{>}}},postaction={decorate}] (2)  --  (4) node[midway, left] {$g_1$};
\draw [decoration={markings,mark=at position 1 with
    {\arrow[scale=1.2,>=stealth]{>}}},postaction={decorate}] (3)  --  (4) node[midway, above] {$\partial_1$};

\draw [decoration={markings,mark=at position 1 with
    {\arrow[scale=1.2,>=stealth]{>}}},postaction={decorate}] (2)  --  (5) node[midway, above right] {$f_X$};
\draw [decoration={markings,mark=at position 1 with
    {\arrow[scale=1.2,>=stealth]{>}}},postaction={decorate}] (4)  --  (5) node[midway, above] {$\epsilon$};
    
\draw [decoration={markings,mark=at position 1 with
    {\arrow[scale=1.2,>=stealth]{>}}},postaction={decorate}] (5)  --  (6);
    
\draw [decoration={markings,mark=at position 1 with
    {\arrow[scale=1.2,>=stealth]{>}}},postaction={decorate}] (7)  --  (1) node[midway, above] {$\partial_{m+2}$};
\draw [decoration={markings,mark=at position 1 with
    {\arrow[scale=1.2,>=stealth]{>}}},postaction={decorate}] (8)  --  (3) node[midway, above] {$\partial_2$};
\end{tikzpicture}
 \end{center}
Define $g_1:  P_m  \longrightarrow  R$ such that it sends $X$ to $1$ and everything else is sent to zero, and set $\on{I}'=\{i_2, \dots, i_s\}$ and $\on{I}=\{1, \dots, n\} \backslash \on{I}'$. These two sets have an empty intersection. For any $i \in \on{I}$, $g_1\partial_{m+1}(T_iX)=x_i=\partial_1(T_i)$. Then define $\on{J}$ the subset of $\{1, \dots, k \} \times \on{I}'$ given by the pairs $(p, i_p')$ where $\partial(S_p)$ contains $T_{i_p'}$. We see that for any $(p, i_p') \in \on{J}$, we have: 
\[
g_1\partial_{m+1}(T_{i_2} \dots \widehat{T_{i_p'}} \dots T_{i_s}S_1^{b_1}\dots S_p^{b_p+1} \dots S_k^{b_k})= (-1)^{d_{p, i_p'}}(b_p+1)c_{p,  i_p'}.
\]
where $d_{p, i_p'}$ is an integer. There might be other $Y \in P_{m+1}$ such that $\partial_{m+1}(Y)$ contains $X$, but then $Y$ would contain at least one variable different from $T_i, S_p$. We can thus define:
\begin{align*}
\begin{array}[t]{cccl}
g_2: & P_{m+1} & \longrightarrow & P_1 \\
           & T_iT_{i_2} \dots  T_{i_s}S_1^{(b_1)} \cdots S_k^{(b_k)}  & \longmapsto & T_i  \ \forall \ i \in \on{I}, \\
           & T_{i_2} \dots \widehat{T_{i_p'}} \dots T_{i_s}S_1^{b_1}\dots S_p^{b_p+1} \dots S_k^{b_k}  & \longmapsto &\displaystyle  (-1)^{d_{p, i_p'}}(b_p+1)C_{p,  i_p'}  \ \forall \ (p, i_p')  \in \on{J}, \\
           & Y & \longmapsto & g_2(Y), \\
           & \text{other} & \longmapsto & 0.
\end{array} 
\end{align*}
We have thus have:
\begin{align*}
\begin{array}{rcl}
\gamma_{i_1} f_X & = & (T_{i_1}T_{i_2}\dots T_{i_s}S_1^{(b_1)}\dots S_k^{(b_k)})^* \\[5pt]
 & &\displaystyle +\sum_{ (p, i_p') \in \on{J}} (-1)^{d_{p, i_p'}}(b_p+1)n_{i_1}^{p,  i_p'}(T_{i_2} \dots \widehat{T_{i_p'}} \dots T_{i_s}S_1^{b_1}\dots S_p^{b_p+1} \dots S_k^{b_k})^*  + \sum_Y Y^*,
\end{array}
\end{align*}
where the sum is taken over a subset (or the whole of) the $Y$'s above. Therefore each monomial in this sum contains a variable different from $T_i, S_p$.

We know by construction that $i_1 < i_p'$ for all $i_p' \in \on{I}'$, so $n^{p, i_p'}_{i_1}=0$. {\color{red} This is wrong because we do not have this assumption anymore. Change the statement to look like Prop.\ref{BasisofExtdegree2} by using linear combinations.} Furthermore, the kernel of $\pi^{\#}$ is an ideal of $\Ext_R^*(\cc, \cc)$ so $\gamma_i z \in \text{Ker}(\pi^{\#})$. Because each $Y$ in $\sum_Y Y^*$ is composed of monomials with a variable different from $T_i, S_p$, and the fact that $\on{Im}(\psi)$ is generated by the $T_i, S_p$ (Proposition \ref{prop:LiftofPsi}), we know that $\sum_Y Y^* \in \text{Ker}(\pi^{\#})$. We thus have:
\begin{align*}
(T_{i_1}T_{i_2}\dots T_{i_s}S_1^{(b_1)}\dots S_k^{(b_k)})^*= a(\gamma_{i_1} \gamma_{i_2} \dots  \gamma_{i_s}\delta_1^{b_1}\dots \delta_k^{b_k}+z')
\end{align*}
where $z'=\gamma_{i_1} z-\frac{1}{a}\sum_Y Y^* \in \text{Ker}(\pi^{\#})$. As any element of $P_{m+1}$ composed of $T_i, S_p$ is of this form, we see that $\mathcal{P}(m+1)$ is true.

$\bullet$ \underline{Odd case:} Assume $\mathcal{P}(m)$ is true up to some $m$ odd. An element $T_1^{a_1}\dots T_n^{a_n}S_1^{(b_1)}\dots S_k^{(b_k)} \in P_{m+1}$ has two possible configurations:
\begin{enumerate}
\item There exist $i \neq j$ such that $a_i=a_j=1$,
\item $a_1= a_2 = \dots =a_n=0$.
\end{enumerate}

In the first case, we can apply the same strategy as above and obtain the desired result.

In the second case we have $S_1^{(b_1)}\dots S_k^{(b_k)} \in P_{m+1}$. Thus there exists $1 \leq q \leq k$ such that $b_q \geq 1$. Using the assumption the $\mathcal{P}(m)$ is true up to $m$, we know that there exists $a \in \cc^*$ and $z \in \text{Ker}(\pi^{\#})$ such that $f_X=a(\delta_1^{b_1}\dots \delta_q^{b_q-1} \dots \delta_k^{b_k}+z)$ corresponds to $X=S_1^{b_1}\dots S_q^{b_q-1} \dots  S_k^{b_k} \in P_{m-1}$. By lifting $f_X$ we find that:
\begin{align*}
\begin{array}[t]{cccl}
g_1: & P_{m-1} & \longrightarrow & R\\
           & X & \longmapsto & 1,\\
           & \text{other} & \longmapsto & 0, \\
\end{array}  \hspace{4em}   \begin{array}[t]{cccl}
g_2: & P_{m} & \longrightarrow & P_1 \\
           & T_iX& \longmapsto & T_i \text{ for all } 1 \leq i \leq n,  \\
           & Y & \longmapsto & g_2(Y), \\
           & \text{other} & \longmapsto & 0, \\
\end{array} 
\end{align*}
where the $Y$'s are monomials of $P_m$ such that $\partial_m(Y)$ contains $X$, so they contain at least one variable different from $T_i, S_p$. Finally we obtain:
\begin{align*}
\begin{array}[t]{cccl}
g_3: & P_{m+1} & \longrightarrow & P_2 \\
           & T_iT_jX & \longmapsto & T_iT_j, \text{ for all } 1 \leq i <j \leq n,  \\
           & S_p X & \longmapsto & \left\{\begin{array}{cl}
        (b_p+1)S_p & \text{ if } p \neq q,\\[3pt]
        b_q S_q  & \text{ if } p=q,
        \end{array}\right. \\
           & \Omega & \longmapsto & g_3(\Omega), \\
           & \Psi & \longmapsto & g_3(\Psi), \\
           & \text{other} & \longmapsto & 0,
\end{array} 
\end{align*}
where the $\Omega$'s and $\Psi$'s are other monomials such that $\partial_{m+1}(\Omega)$ contains $T_iX$ for some $1 \leq i \leq n$, and such that $\partial_{m+1}(\Psi)$ contains one of the $Y$'s in the definition of $g_2$. We see that in both cases, $\Omega$ and $\Psi$ contain at least one variable different from $T_i, S_p$. We have thus found:
\begin{align*}
\frac{1}{b_q}\delta_q f_X=\frac{a}{b_q}\delta_q(\delta_1^{b_1}\dots \delta_q^{b_q-1} \dots \delta_k^{b_k}+z)=(S_1^{b_1}\dots S_q^{b_q} \dots S_k^{b_k})^*+\sum_\Omega \Omega^* +\sum_\Psi \Psi^*,
\end{align*}
where the sums are taken over subsets (or the whole of) the $\Omega$'s and $\Psi$'s above. Therefore each monomial in these sums contains a variable different from $T_i, S_p$.

The kernel of $\pi^{\#}$ is an ideal of $\Ext_R^*(\cc, \cc)$ so $\delta_q z \in \text{Ker}(\pi^{\#})$. Furthermore we know that the $\delta_i$'s commute modulo $\text{Ker}(\pi^{\#})$, so there exists $z' \in \text{Ker}(\pi^{\#})$ such that $\delta_q\delta_1^{b_1}\dots \delta_q^{b_q-1} \dots \delta_k^{b_k} =\delta_1^{b_1}\dots \delta_q^{b_q} \dots \delta_k^{b_k} +z'$.

Because each $\Omega$ and $\Psi$ in $\sum_\Omega \Omega^* +\sum_\Psi \Psi^*$ is composed of monomials with a variable different from $T_i, S_p$, and the fact that $\on{Im}(\psi)$ is generated by the $T_i, S_p$ (Proposition \ref{prop:LiftofPsi}), we know that $\sum_\Omega \Omega^* +\sum_\Psi \Psi^* \in \text{Ker}(\pi^{\#})$. As a consequence:
\begin{align*}
(S_1^{b_1}\dots S_q^{b_q} \dots S_k^{b_k})^*= \frac{a}{b_q}(\delta_1^{b_1}\dots \delta_q^{b_q} \dots \delta_k^{b_k}+z''),
\end{align*}
where $z''=(z'+\delta_q z-\frac{b_q}{a}\sum_\Omega \Omega^* -\frac{b_q}{a}\sum_\Psi \Psi^*) \in \text{Ker}(\pi^{\#})$, so $\mathcal{P}(m+1)$ is true.

We have found that $\mathcal{P}$ is hereditary and is true for $m=1,2$. Therefore $\mathcal{P}(m)$ is true for all $m \in \mathbb{Z}_{>0}$. 
\end{proof}

We know that $\on{Im}(\psi)$ is generated by the $T_i, S_p$, so any element of $\Ext_R^*(\cc, \cc)$ corresponding to a monomial containing variables different from $T_i, S_p$ is in the kernel of $\pi^\#$. It follows that:
\[
\begin{array}{rl}
\text{Ker}(\pi^{\#})= & \text{Span} \{\text{duals of monomials in } P \text{ containing at least} \\
 & \quad \quad \quad   \text{one variable different from }T_i, S_p\}.
\end{array}
\]
\indent A consequence of Proposition \ref{prop:MonomialContainingDual} is that 
\begin{align*}
\begin{array}{rcl}
\pi^\#((T_1^{a_1}\dots T_n^{a_n}S_1^{(b_1)}\dots S_k^{(b_k)})^*) & = & \pi^\#(a\gamma_1^{a_1}\dots \gamma_n^{a_n}\delta_1^{b_1}\dots \delta_k^{b_k}), \\[5pt]
& = & a\alpha_1^{a_1}\dots \alpha_n^{a_n}\beta_1^{b_1}\dots \beta_k^{b_k},
\end{array}
\end{align*}
for some $a \in \cc^*$, and so $\on{Im}(\pi^{\#})$ is generated by the $\alpha_i$'s ($1 \leq i \leq n)$ and the $\beta_p$'s. As we know the relations between the $\alpha_i$'s and the $\beta_p$'s from Corollary \ref{cor:PresentationofExt}, we have proved the following result:

{\color{red} Issue due to the linear combinations. $\on{Im} \pi^\#$ contains only linear combinations of monomials in $\alpha, \beta$. Also $\pi^\#$ is an algebra morphism and $\on{Im} \pi^\#$ contains $\alpha$ and $\beta$. Hence $\on{Im} \pi^\#=\msf k\langle \alpha, \beta \rangle$.}

\begin{theorem}\label{QuotientOfExt}
Set $R=\cc[x_1,\dots, x_n]/(c_1,\dots, c_k)$ where $c_1, \dots, c_k$ are homogeneous polynomials in the variables $x_1, \dots, x_n$ such that they form a minimal set of generators of the ideal $(c_1,\dots, c_k)$. We further assume that $c_p \in \mf{m}_x^2$ for all $1 \leq i \leq k$, and write $c_p \equiv \sum_{i \leq j}n^p_{i, j}x_ix_j \text{ mod } \mathfrak{m}_x^3$. We consider the $R$-module $\cc=R/\mf{m}_x$. Then there exists an ideal $\mathcal{J}$ of $\Ext_R^*(\cc, \cc)$ such that:
\begin{align*}
\Ext_R^*(\cc, \cc)/ \mathcal{J} \cong \cc[\beta_1, \dots, \beta_k] \otimes \cc \langle \alpha_{1}, \dots, \alpha_{n} \rangle / \mathcal{I}, 
\end{align*}
where $\mathcal{I}$ is the two sided ideal generated by $\alpha_i \alpha_j +\alpha_j \alpha_i-\sum_{p=1}^kn^p_{i, j}\beta_p$ for all $1 \leq i < j \leq n$, and by $\alpha_i^2-\sum_{p=1}^k n^p_{i, i}\beta_p$ for all $1 \leq i \leq n$. In particular, if for all $1 \leq p \leq k$ we have $c_p \in \mf{m}_x^3$, then $\Ext_R^*(\cc, \cc)/ \mathcal{J}$ is a strictly graded commutative quotient of $\Ext_R^*(\cc, \cc)$.
\end{theorem}

\begin{corollary}
Set $R=\cc[x_1,\dots, x_n]/(c_1,\dots, c_k)$ where $c_1, \dots, c_k$ are homogeneous polynomials in the variables $x_1, \dots, x_n$ such that they form a minimal set of generators of the ideal $(c_1,\dots, c_k)$. We further assume that $c_p \in \mf{m}_x^2$ for all $1 \leq i \leq k$, and write $c_p \equiv \sum_{i \leq j}n^p_{i, j}x_ix_j \text{ mod } \mathfrak{m}_x^3$. Then the quotient $\overline{\pi^*(R, \msf k)}=\pi^*(R, \msf k)/\pi^{>2}(R)$ is a restricted graded Lie algebra
\begin{align*}
\overline{\pi^*(R, \msf k)}=\bigoplus_{i=1}^n\msf k \overline{\alpha_i} \oplus \bigoplus_{p=1}^k \msf k \overline{\beta_p}
\end{align*}
such that $\on{deg}\overline{\alpha_i}=1$, $\on{deg}\overline{\beta_p}=2$, and the restricted Lie structure is given by:
\begin{empheq}[left = \empheqlbrace]{align}
    [\overline{\alpha_i}, \overline{\alpha_j}] & =  \displaystyle\sum_{p=1}^k (n^p_{i, j}+n^p_{j, i})\overline{\beta_p}  \text{ for all } 1 \leq i \neq j \leq n ;\\[5pt]
[\overline{\alpha_i}, \overline{\alpha_i}] & =   \sum_{p=1}^k 2n^p_{i, i}\overline{\beta_p} \text{ for all } 1 \leq i \leq n ; \\[5pt]
[\overline{\beta_p}, \overline{\pi^*(R, \msf k)}] & =  0 \text{ for all } 1 \leq p \leq k; \\[5pt]
q(\overline{\alpha_i}) & =\sum_{p=1}^k n^p_{i, i}\overline{\beta_p} \text{ for all } 1 \leq i \leq n.
\end{empheq}

\end{corollary}
As it is known that the Gelfand-Kirillov dimension of the quotient of an algebra is less or equal to the Gelfand-Kirillov dimension of the algebra (cf. \cite[Lemma 3.1]{Krause-Lenagan}). By the relations defining $\mathcal{I}$, we see that there exists $k' \leq k$ and a subset $\{\beta_{i_1}, \dots, \beta_{i_{k'}}\} \subset \{\beta_1,\dots \beta_k\}$ such that
\[
(\cc[\beta_1, \dots, \beta_k] \otimes \cc \langle \alpha_{1}, \dots, \alpha_{n} \rangle / \mathcal{I})/(\alpha_1,\dots, \alpha_n)=\cc[\beta_{i_1}, \dots, \beta_{i_{k'}}].
\]
We can then see:

\begin{corollary}
Let $R$ be a ring as in Theorem \ref{QuotientOfExt}. Then:
\[
\on{GKdim} \Ext_R^*(\cc, \cc) \geq k'.
\]
\end{corollary}

The next lemma states that the elements of a regular sequence form a minimal set of generators of the ideal generated by the sequence. Therefore, in the event where the generators are homogeneous, Corollary \ref{cor:PresentationofExt} becomes of special case of Theorem \ref{QuotientOfExt}, in which $\mathcal{J}=0$.

\begin{lemma}
Let $\{c_1, \dots, c_k\}$ be a regular sequence in $\cc[x_1,\dots, x_n]$. Then the $c_i$'s are minimal generators of the ideal $(c_1, \dots, c_k)$, i.e., there is no $1 \leq i \leq k$ such that $c_i \in (c_j \ | \ j \neq i)$.
\end{lemma}

\begin{proof}
Let $S=\cc[x_1,\dots, x_n]$ and $\{c_1, \dots, c_k\}$ be as in the statement. For $s \leq k$, we will write $I_s=(c_1, \dots, c_s)$. By contradiction, assume that there exists $1 \leq i \leq k$ such that $c_i=\sum_{j \neq i}f_j c_j$, with $f_j \in S$.

It is impossible to have all $j \leq i-1$, otherwise $c_i \equiv 0 \on{mod} I_{i-1}$, contradicting the regularity of the sequence.

We write $c_i=f_1c_1+\dots +f_j c_j$ with $j>i$ the highest index appearing in the sum. Then $f_j c_j \equiv 0 \on{mod} I_{j-1}$. If $f_j \notin I_{j-1} $, then $c_j$ is a divisor of zero in $S/I_{j-1}$, which is impossible. So there exist $g_1, \dots , g_{j-1}$ such that $f_j=g_1c_1+\dots+g_{j-1}c_{j-1}$. By writing $\tilde{f}_l=f_l+g_lc_j$, we obtain:
\[
c_i(1-g_ic_j)=\sum_{\substack{ l \leq j-1\\ l \neq i}} c_l \tilde{f}_l.
\]
By repeating the same procedure for $c_{j-1}\tilde{f}_{j-1}$, and then for the subsequent last terms of the expression, we arrive at the following expression:
\[
c_i(1-\sum_{l=i+1}^j p_lc_l)=\sum_{l=1}^{i-1}h_lc_l
\]
for some polynomials $p_l$ and $h_l$. If $(1-\sum_{l=i+1}^j p_lc_l) \in I_{i-1}$, then $1 \in I_{j}$, contradicting the fact that $I_j$ is an ideal. Hence $c_i$ is a divisor of zero in $S/I_{i-1}$, which is impossible because of the regularity of the sequence. Therefore the original assumption must be false, and the lemma is proved.
\end{proof}
}

\appendix

\section{Computations on PD dg algebras}\label{appendix:sec_5}

\delete{
\subsection{Computations for Section \ref{sec:4}}\label{appendix:sec:derivations}

For an element $X^{(f)} \in \mathcal{B}$, we again write $(X^{(f)})^\vee$ for the dual element in $\on{Hom}_R(T_R(\msf k), \msf k)$.
given by
\begin{align*}
\begin{array}[t]{cccl}
(X^{(f)})^\vee: & T_R(\msf k) & \longrightarrow & \msf k \\[5pt]
& X^{(f')} \in \mathcal{B} & \longmapsto &  \delta_{f, f'}.
\end{array}
\end{align*}

We need to introduce some notations. Later on, we will need to reorder some monomials when the variables are not in ascending order. If in an ordered monomial $X^{(f)}$ there is an ordered sub-monomial $X^{(f')}$, and we wish to move it left of a variable $X_i$ in $X^{(f)}$, the sign change due to the different degrees will be written as $(-1)^{(f', f)_{X_i}}$, and the non-negative scalar factor due to the divider power structures will be denoted $s_{(f', f)_{X_i}}$. We illustrate this process in the following example.

\begin{example}
Consider odd variables $X_1$, $X_2$ and even variables $X_3$, $X_4$. Set
\[
X^{(f)}=X_{1}^{(1)}X_{2}^{(1)}X_{3}^{(3)}X_{4}^{(5)},
\]
\[
X^{(f')}=X_{1}^{(1)}X_{3}^{(1)},
\]
and single out the variable $X_4$. We want to rewrite the monomial $X^{(f)}$ but where the sub-monomial $X^{(f')}$ is left of $X_4$. We can verify that
\[
X_{2}^{(1)}X_{3}^{(2)}\underbrace{(X_{1}^{(1)}X_{3}^{(1)})}_\text{$X^{(f')}$}X_{4}^{(5)}=(-1)^{(f', f)_{X_4}}s_{(f', f)_{X_4}}X^{(f)}
\]
with
\[
(-1)^{(f', f)_{X_4}}=-1,
\]
and
\[
s_{(f', f)_{X_4}}=\frac{3!}{2!1!}=3 \in \bb N.
\]
\end{example}

Notice that the convention we use is that the coefficient appears in front of the ordered monomial $X^{(f)}$. So it is the reordering of the variables that creates the coefficient.

For an element $f \in \mathcal{B}$, we will write $\on{supp_{mul}}f$ for the support of $f$ with multiplicity, i.e., $\on{supp_{mul}}f$ is the set of variables $X \in \mathcal{X}$ such that $f(X) \neq 0$, and it contains $f(X)$ copies of $X$.

Set $f \in \mathcal{B}$ and $X_i \in \on{supp}f$. We define $(\on{deg} < X_i)_f=\sum_{j=1}^{i-1}|X_j|f(X_j)$. For example, if $X^{(f)}=X_{1}X_{2}^{(2)}X_{3}X_{5}X_{6}^{(4)}$ then
\[
\begin{array}{l}
(\on{deg} < X_3)_f=|X_1|+2|X_2|, \\[5pt]
(\on{deg} < X_6)_f=|X_1|+2|X_2|+|X_3|+|X_5|.
\end{array}
\]

For a variable $X_i \in \mathcal{X}$ and $f \in \mathcal{B}(|X_i|-1)$, the coefficient of $X^{(f)}$ in $\partial(X_i)$ will be written as $(x_i)_f \in \mathfrak{m}_x$. As $\partial_1$ is surjective on $\mathfrak{m}_x$ by assumption, there exists an element $T_{X_i, f}$ of degree $1$ in $T_R(\msf k)$ such that $\partial_1(T_{X_i, f})=(x_i)_f$. Moreover, if $f \notin \mathcal{B}(|X_i|-1)$, we set $T_{X_i, f}=0$

For any $X_i \in \mathcal{X}$, define $\delta_{X_i} : \mathcal{X} \longrightarrow \bb N$ such that $\delta_{X_i}(X_j)=\delta_{i, j}$. We now introduce operations on the basis $\mathcal{B}$. Consider a variable $X_i \in \mathcal{X}$ and define the maps
\[
\begin{array}{rccc}
e_{X_i} : & \mathcal{B}(n) & \longrightarrow & \mathcal{B}(n+|X_i|) \\[5pt]
 & X^{(f)} & \longmapsto & X^{(f+\delta_{X_i})}
\end{array}
\]
and
\[
\begin{array}{rccc}
f_{X_i} : & \mathcal{B}(n) & \longrightarrow & \mathcal{B}(n-|X_i|) \\[5pt]
 & X^{(f)} & \longmapsto &  \left\{\begin{array}{cl}
X^{(f-\delta_{X_i})} & \text{ if } f(X_i) \geq 1,\\[5pt] 
 0 & \text{ if } f(X_i)=0.
 \end{array}\right.
\end{array}
\]
Given a collection of variables $X_{i_1}, \dots, X_{i_k}$ and $X_{a}$ in $\mathcal{X}$ such that $|X_a|-\sum_{s=1}^k |X_{i_s}|=1$, we define the map
\[
\begin{array}{rccc}
e_{i_1, \dots, i_k; a} : & \mathcal{B}(n) & \longrightarrow & \mathcal{B}(n+1) \\[5pt]
 & X^{(f)} & \longmapsto &f_{X_{i_1}} \cdots f_{X_{i_k}}e_{X_a}X^{(f)}.
\end{array}
\]



\delete{
\begin{lemma}\label{lem:lift}
Set $f \in \mathcal{B}$ such that $|X_i| \geq 4$ for all $X_i \in \on{supp}f$. Set variables $X_{m_1}, \dots, X_{m_l}$ such that, for any $1 \leq s \leq l$ we have $|X_{m_s}| < |X_i|$ for all $X_i \in \on{supp}f$. Finally, consider a collection of operators $e_{i_j, \dots, i_{k_j}; a_j}$ for $1 \leq j \leq r$. Then for $k \geq 1$, the successive lifts $(\widetilde{f})_k$ of $(X^{(f)})^\vee$ are given by
\begin{align}\label{eq:big_sum}
\begin{array}{l}
\displaystyle (\widetilde{f})_k\big(\prod_{i=1}^l e_{X_{m_i}}\prod_{j=1}^re_{i_j, \dots, i_{k_j}; a_j}X^{(f)} \big)   \\[15pt]
=\displaystyle \sum_{f_1, \dots, f_r}(-1)^{C_{f_1, \dots, f_r}}  \left(\prod_{i=1}^l X_{m_i}\right) \left(\prod_{i=1}^r  s_{(f_i, f)_{X_{a_i}}}T_{X_{a_i}, f_i}\right).
\end{array}
\end{align}
We write $\mathcal{S}$ for the set of variables involved in the $e_{i_j, \dots, i_{k_j}; a_j}$, $1 \leq j \leq r$ (if a variable appears in several places or with a divided power, then $\mathcal{S}$ contains as many copies of this variable). The sum in Formula~\eqref{eq:big_sum} is over all the tuples of monomials $(f_1, \dots, f_r) \in \mathcal{B}^r$ such that $\mathcal{S}\backslash \{X_{a_1}, \dots, X_{a_r} \}=\bigsqcup_{s=1}^r\on{supp_{mul}}f_s$. Given such a tuple of monomials $(f_1, \dots, f_r)$, for any $1 \leq s \leq r$, we write $\mathcal{S}^{(f_1,\dots,f_r)}_{\leq s}=\bigsqcup_{i=1}^s\big(\on{supp_{mul}}f_i \sqcup \{X_{a_i}\}\big) \subset \mathcal{S}$.

The constant $C_{f_1, \dots, f_r}$ is given by
\begin{align*}
\begin{array}{l}
C_{f_1, \dots, f_r} = \displaystyle \sum_{s=1}^{r}\left( (\on{deg} < X_{a_s})_{f}+(f_s, f)_{X_{a_s}} +\sum_{\substack{X_{i} \in \mathcal{S} \backslash \mathcal{S}^{(f_1,\dots,f_r)}_{\leq s-1} \\|X_{i}| < |X_{a_s}|}}|X_{i}| \right) \\
\quad \quad \quad \quad +\displaystyle \sum_{s=1}^{r-1} \left( \sum_{X_{i_s} \in \on{supp_{mul}}f_s}|X_{i_s}| \Bigl(\sum_{\substack{X_{i} \in \mathcal{S}  \backslash \mathcal{S}^{(f_1,\dots,f_r)}_{\leq s} \\ |X_{i_s}| < |X_{i} | < |X_{a_s}|}}|X_{i}|\Bigr)\right). 
\end{array}
\end{align*}

\begin{align*}
\resizebox{\textwidth}{!}{$\begin{array}{l}
C_{M_1, \dots, M_r} \\[5pt]
\displaystyle = \widetilde{Z}_1+\sum_{\on{var}^i < Z_1}|\on{var}^i|+(Z_1, X_1 \cdots Y_1)+|X_1| \sum_{\substack{X_1 < \on{var}^i < Z_1 \\ i\neq 1}}|\on{var}^i|+\cdots+|Y_1| \sum_{\substack{Y_1 < \on{var}^i < Z_1 \\ i \neq 1}}|\on{var}^i| \\[5pt]
\quad \displaystyle +\widetilde{Z}_2+\sum_{\substack{\on{var}^i < Z_2 \\ i \neq 1}}|\on{var}^i|+(Z_2, X_2 \cdots Y_2)+|X_2| \sum_{\substack{X_2 < \on{var}^i < Z_2 \\ i \neq 1, 2}}|\on{var}^i|+\cdots+|Y_2| \sum_{\substack{Y_2 < \on{var}^i < Z_2 \\ i \neq 1, 2}}|\on{var}^i| \\[5pt]
\quad \displaystyle +\widetilde{Z}_{r-2}+\sum_{\substack{\on{var}^i < Z_{r-2} \\ i \neq 1, \cdots, r-3}}|\on{var}^i|+(Z_{r-2}, X_{r-2} \cdots Y_{r-2})+|X_{r-2}| \sum_{\substack{X_{r-2} < \on{var}^i < Z_{r-2} \\ i \neq 1, 2, \cdots, r-2}}|\on{var}^i|+\cdots+|Y_{r-2}| \sum_{\substack{Y_{r-2} < \on{var}^i < Z_{r-2} \\ i \neq 1, 2, r-2}}|\on{var}^i| \\[5pt]
\quad \displaystyle +\widetilde{Z}_{r-1}+\sum_{\substack{\on{var}^i < Z_{r-1} \\ i \neq 1, \cdots, r-2}}|\on{var}^i|+(Z_{r-1}, X_{r-1} \cdots Y_{r-1})+|X_{r-1}| \sum_{\substack{X_{r-1} < \on{var}^i < Z_{r-1} \\ i \neq 1, 2, \cdots, r-1}}|\on{var}^i|+\cdots+|Y_{r-1}| \sum_{\substack{Y_{r-1} < \on{var}^i < Z_{r-1} \\ i \neq 1, 2, r-1}} |\on{var}^i|\\[5pt]
\quad \displaystyle +\widetilde{Z}_{r}+\sum_{\substack{\on{var}^i < Z_{r} \\ i \neq 1, \cdots, r-1}}|\on{var}^i|+(Z_r, X_r \cdots Y_r). 
\end{array}$}
\end{align*}
\end{lemma}

\begin{proof}
The proof is done by direct computation. We check the formula directly for $k=1, \dots, 4$, and then proceed by induction on $k$.
\end{proof}
}

\begin{proof}[Proof of Proposition \ref{prop:decomp_dual}]
Consider an element $f \in \mathcal{B}$ such that the smallest $i$ with $f(X_i) \neq 0$ satisfies $|X_i|=2$, and set $X^{(f')}=f_{X_i}X^{(f)}$. In order to lift $(X^{(f')})^\vee$  we set 
\begin{align*}
\begin{array}[t]{cccl}
(\widetilde{f'})_0: & T_R(\msf k)_{\on{deg}f'} & \longrightarrow & R \\[5pt]
& g \in \mathcal{B}(\on{deg}f') & \longmapsto &  \delta_{f', g}.
\end{array} 
\end{align*}
For the image by $\partial_{\on{deg}f'+1}$ of an element in $\mathcal{B}(\on{deg}f'+1)$ to contain $X^{(f')}$, there are three possibilities:
\begin{enumerate}[nosep]
\item it is a variable $X_s$ such that $\partial_{\on{deg}f'+1}(X_s)=\partial(T_{X_s,f'})X^{(f')}+\dots$.
\item it is monomial of the form $e_{X_s}X^{(f')}$ with $|X_s|=1$.
\item it is a monomial of the form $e_{i_1, \dots, i_k; a}X^{(f')}$. In this case, we write $f''$ for the element in $\mathcal{B}$ such that $f''=\sum_{j=1}^k \delta_{X_{i_j}}$.
\end{enumerate}
So we have
\begin{align*}
\begin{array}{rcl}
(\widetilde{f'})_0\partial(X_s)&=& \partial(T_{X_s,f'}), \\[5pt]
(\widetilde{f'})_0\partial(e_{X_s}X^{(f')})&=&\partial_1(X_s), \\[5pt]
(\widetilde{f'})_0\partial(e_{i_1, \dots, i_k; a}X^{(f')})&=&(-1)^{(\on{deg} < X_a)_{f'}+\on{deg}f''+(f'',f')_{X_a}}s_{(f'', f')_{X_a}}\partial_1(T_{X_a, f''})
\end{array}
\end{align*}
Hence we define
\begin{align*}
\begin{array}[t]{cccl}
(\widetilde{f'})_1 :& T_R(\msf k)_{\on{deg}f'+1} & \longrightarrow & T_R(\msf k)_{1} \\[5pt]
&X_s & \longmapsto & T_{X_{s, f'}}, \\[5pt]
&e_{X_s}M& \longmapsto & X_s, \\[5pt]
& e_{i_1, \dots, i_k; a}X^{(f')}& \longmapsto&(-1)^{(\on{deg} < X_a)_{f'}+\on{deg}f''+(f'',f')_{X_a}}s_{(f'', f')_{X_a}} T_{X_a, f''}, \\[5pt]
& \text{other} & \longmapsto & 0,
\end{array} 
\end{align*}
where ``other" indicates the other elements in $\mathcal{B}(\on{deg}f'+1)$. With the same reasoning, we can determine $(\widetilde{f'})_2$. \delete{ As $|X_i|=2$, the definition of the Yoneda product (cf. \cite[Theorem 4.2.2]{CTVZ}) states that
\[
X_i^* \cdot (X^{(f')})^*=X_i^* \circ (\widetilde{f'}^*)_2.
\]
}
We can then verify that
 \begin{align}\label{eq:decomposition}
 (X^{(f)})^\vee=X_i^\vee \cdot (X^{(f')})^\vee+\sum_{s} c_{s}X_{s}^\vee
 \end{align}
 where $ c_{s} \in \msf k$ and $|X_s|=\on{deg}f$ for all $s$. The sum over the variables in $\mathcal{B}(\on{deg}f)$ appears because when one lifts $(X^{(f')})^\vee$, there might be variables of degree $\on{deg}f'+1$ whose images by $\partial_{\on{deg}f'+1}$ contain the element in $\mathcal{B}(\on{deg}f') \backslash \on{Ker}(\widetilde{f'})_{0}$, i.e., $X^{(f')}$. By repeating this procedure for the other variables of degree $2$ and $3$, we see that
\begin{align}\label{eq:decomposition2}
\begin{array}{rcl}
(X^{(f)})^\vee&=&\displaystyle \left(\prod_{|X_i|=2, 3}(X_i^\vee)^{f(X_i)}\right) \cdot \left(X^{\left(\displaystyle f-\sum_{|X_i|=2, 3} f(X_i)\delta_{X_i}\right)}\right)^\vee \\[30pt]
&&\displaystyle +\sum_{f' \in \mathcal{B}(\on{deg}f)}c_{f'}(X^\vee)^{f'},
\end{array}
\end{align}
where $c_{f'} \in \msf k$ for all $f' \in \mathcal{B}(\on{deg}f)$.

Set $f \in \mathcal{B}$ such that the smallest $i$ with $f(X_i) \neq 0$ satisfies $|X_i| \geq 4$ {\color{red} Use the fact that the lift is a derivation. It works for any case $|X_i| \geq 1$ (always extract the variable of lowest degree)}. We write $X^{(f')}=f_{X_i}X{(f)}$. Using Lemma \ref{lem:lift}, we can determine that for the lift $(\widetilde{f'})_{|X_i|}: T_R(\msf k)_{|M|+|X_i|} \longrightarrow T_R(\msf k)_{|X_i|}$ of $(X^{(f')})^\vee$, the only elements whose images contain $X_i$ are $e_{X_i}X^{(f')}$ and (possibly) variables $X_s$. It follows that the Yoneda product $X_i^\vee \cdot (X^{(f')})^\vee= X_i^\vee \circ (\widetilde{f'})_{|X_i|}$ satisfies
\begin{align}\label{eq:extract_variable}
(X^{(f)})^\vee  =   X_i^\vee \cdot (X^{(f')})^\vee+\sum_{s} c_{s}X_s^\vee,
\end{align}
 where $ c_{s} \in \msf k$ and $|X_s|=\on{deg}f$ for all $s$. By successively applying this result for all degrees, we prove the first statement of the proposition.
 
Like previously mentioned, the sum over $\mathcal{B}(\on{deg}f)$ in Formula~\eqref{eq:decomposition2} appears because when one lifts a monomial $(X^{(f')})^\vee$, there might be variables of degree $\on{deg}f'+1$ whose images by $\partial_{\on{deg}f'+1}$ contain the element in $\mathcal{B}(\on{deg}f') \backslash \on{Ker}(\widetilde{f'})_{0}$, i.e., $X^{(f')}$. Then there might be variables of degree $\on{deg}f'+2$ whose images by $\partial_{\on{deg}f'+2}$ contain the elements in $\mathcal{B}(\on{deg}f'+1) \backslash \on{Ker}(\widetilde{f'})_1$, and so on. We assume that we start with $X^{(f)}$ such that there exists $X_j$ verifying $|X_j|=|X_N|$, $f(X_j) \neq 0$, and for any $X_i$ with $f(X_i)\neq 0$, we have $i \leq j$. Then each time we extract a variable as in Equation~\eqref{eq:extract_variable}, we see that $\on{deg}f' \geq |X_N|$ as $f'$ still contains $X_j$. Therefore there is no $X_s \in \mathcal{X}$ such that $|X_s| > \on{deg}f'$, and we obtain Formula~\eqref{eq:split_formula}.

Consider the element $f \in \mathcal{B}$ with $f(X_i)=0$ for all $|X_i|=1$. We lift the morphism $(X^{(f)})^\vee$. We see that 
\begin{align*}
\begin{array}[t]{cccl}
(\widetilde{f})_0: & T_R(\msf k)_{\on{deg}f} & \longrightarrow & R \\[5pt]
& X^{(f')} \in \mathcal{B}(\on{deg}f) & \longmapsto &  \delta_{f, f'},
\end{array} 
\end{align*}
and
\begin{align*}
\begin{array}[t]{cccl}
(\widetilde{f})_1 :& T_R(\msf k)_{\on{deg}f+1} & \longrightarrow & T_R(\msf k)_{1} \\[5pt]
&X_s & \longmapsto & T_{X_s, f}, \\[5pt]
&e_{X_s} X^{(f)}& \longmapsto & X_s, \\[5pt]
& e_{i_1, \dots, i_k; a} X^{(f)}& \longmapsto&(-1)^{(\on{deg} < X_a)_f+\on{deg}f'+(f', f)_{X_a}}s_{(f', f)_{X_a}} T_{X_a, f'}, \\[5pt]
& \text{other} & \longmapsto & 0,
\end{array} 
\end{align*}
where $f'=\sum_{j=1}^k \delta_{X_{i_j}}$. Thus for any $X_i \in \mathcal{X}$ such that $|X_i|=1$, we obtain
\[
\begin{array}{ccl}
(e_{X_i}X^{(f)})^\vee & = &\displaystyle  X_i^\vee \cdot (X^{(f)})^\vee+\text{ linear combination of duals of elements} \\[5pt]
 &&  \text{in } \mathcal{B}(\on{deg}f+1) \text{ not containing variables of degree }1.
 \end{array}
\]
By doing the same construction and increasing the numbers of variables of degree $1$ in $X^{(f)}$, we can show that for an element $f \in \mathcal{B}$ containing $s$ variables of degree $1$ and such that $f(X_i)=0$ for some other $X_i$ with $|X_i|=1$, then
\begin{align}\label{eq:extract_A_(r)}
\begin{array}{rcl}
 (e_{X_i}X^{(f)})^\vee &=& X_i^\vee \cdot (X^{(f)})^\vee +\text{ linear combination of duals of elements in} \\[5pt]
&&\quad  \mathcal{B}(\on{deg}f+1)  \text{ with $s$ or less variables of degree } 1.
\end{array}
\end{align}

We know that the duals of the $X^{(f)}$ without variables of degree $1$ can be expressed as linear combination of the $(X^\vee)^{f'}$ according to Proposition \ref{prop:decomp_dual}. Thus by doing an induction on the number of variables of degree $1$ and using Formula~\eqref{eq:extract_A_(r)}, we conclude the proof of the proposition.
\end{proof}

\subsection{Computations for Section \ref{sec:Yoneda_CI}}\label{appendix:sec:Yoneda_CI}
}

\delete{
An element in $\mathfrak{m}_x$ can be given a vector notation where each coordinate corresponds to the coefficient of the variable with the same index. Hence, for $c_p$, we write
\[
c_p=\begin{pmatrix}
    c_{p,1} \\
    \vdots \\
    c_{p,n}
\end{pmatrix}_{ \!\! \! x}
\]
where the subscript $x$ is used to indicated that we consider the coefficients of the $x_i$. Note that this vector representation is not unique.

We know that $c_{p, i}$ is an element of $\mf{m}_x$ for all $i, p$ because $c_p \in \mf{m}_x^2$. Therefore for all $1 \leq i, j \leq n$ and $1 \leq p \leq k$, there exists $n^p_{i,j} \in \msf k$ and $m^p_{i,j} \in \mf{m}_x$ such that:
\[
c_{p,i}=\begin{pmatrix}
    n^p_{i,1} \\
    \vdots \\
    n^p_{i,n}
\end{pmatrix}_{ \!\! \! x}+\begin{pmatrix}
    m^p_{i,1} \\
    \vdots \\
    m^p_{i,n}
\end{pmatrix}_{ \!\! \! x}
\]
Hence the degree $2$ part of $c_p$ is described by the matrix $N_p=\begin{pmatrix}
    n^p_{1,1} & \cdots & n^p_{1,n} \\
     \vdots & \ddots & \vdots \\
     n^p_{n,1} & \cdots & n^p_{n,n}
\end{pmatrix}$, which translates as
\[
c_p \equiv \sum_{1 \leq i <  j \leq n}(n^p_{i,j}+n^p_{j,i})x_ix_j+\sum_{1 \leq i \leq n}n^p_{i,i}x_i^2 \ \on{mod} \ \mathfrak{m}_x^3
\]

We can thus define $C_{p,i} \in P_1$ such that $\partial_1(C_{p,i})=c_{p,i}$:
\[
C_{p,i}=\begin{pmatrix}
    n^p_{i,1} \\
    \vdots \\
    n^p_{i,n}
\end{pmatrix}_{ \!\! \! T}+\begin{pmatrix}
    m^p_{i,1} \\
    \vdots \\
    m^p_{i,n}
\end{pmatrix}_{ \!\! \! T}
\]
where the subscript $T$ indicates that we consider the coefficients of the $T_i$ in $P_1$. In particular, for any $f_i \in R$ with $1 \leq i \leq n$, we have:
\[
\begin{pmatrix}
    f_1 \\
    \vdots \\
    f_n
\end{pmatrix}_{ \!\! \! T} \stackrel{\partial_1}{\longrightarrow} \begin{pmatrix}
    f_1 \\
    \vdots \\
    f_n
\end{pmatrix}_{ \!\! \! x}.
\]
Note that when the $T_i$ and the $c_{p,i}$ are fixed, the vector notation of $C_{p,i}$ is unique. The purpose of the $C_{p, i}$ is that they verify $\partial_1(\sum_{1 \leq i \leq n}C_{p,i}x_i)=c_p$, so they create an antecedent of $c_p$ in $P_1$.

\begin{remark}
We illustrate our notations with an example. Consider $c_p=x_1x_2$ and $\on{char} \msf k \neq 2$. We can choose $c_p=\begin{pmatrix}
    \frac{1}{2}x_2 \\
    \frac{1}{2}x_1
\end{pmatrix}_{ \!\! \! x}$. Thus $c_{p,1}=\begin{pmatrix}
    0 \\
    \frac{1}{2}
\end{pmatrix}_{ \!\! \! x}$ and $c_{p,2}=\begin{pmatrix}
    \frac{1}{2} \\
    0
\end{pmatrix}_{ \!\! \! x}$. It follows that $C_{p,1}=\begin{pmatrix}
    0 \\
    \frac{1}{2}
\end{pmatrix}_{ \!\! \! T}$ and $C_{p,2}=\begin{pmatrix}
    \frac{1}{2} \\
    0
\end{pmatrix}_{ \!\! \! T}$. Therefore $\sum_{i=1}^nC_{p, i}x_i=\frac{1}{2}x_1T_2+\frac{1}{2}x_2T_1 \stackrel{\partial_1}{\longmapsto} x_1x_2=c_p$. Hence when we integrate monomials of degree $2$, each variable contributes.
\end{remark}
}

Consider the settings and notation of Section \ref{sec:Yoneda_CI}.

\begin{lemma}\label{lem:RelationsinExt}
In the Yoneda algebra $\Ext_A^*(\msf k, \msf k)$, we have:
\begin{itemize}
\item $\alpha_i \alpha_j +\alpha_j \alpha_i= \displaystyle \sum_{p=1}^k(n^p_{i, j}+n^p_{j, i})\beta_p$ for all $1 \leq i \neq j \leq n$, 
\item $\alpha_i^2= \displaystyle \sum_{p=1}^k n^p_{i, i}\beta_p$ for all $1 \leq i \leq n$.
\end{itemize}
\end{lemma}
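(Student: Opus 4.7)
The plan is to verify both identities by direct evaluation on a basis of $P^{-2}$, using the explicit formula for $\alpha_j\alpha_i$ displayed in the paragraph immediately preceding the lemma. Since $P^*$ is a minimal resolution and $\on{Hom}_R(P^{-2},\msf k)$ is a finite-dimensional $\msf k$-module with basis dual to $\{T_iT_j : i<j\}\cup\{S_p : 1\leq p\leq k\}$, it suffices to check that both sides agree when evaluated at each such basis element. This is really just a reformulation of the preceding calculation, so there is no serious obstacle; the only subtlety is bookkeeping with the anticommutation $T_iT_j=-T_jT_i$ and the relation $T_i^2=0$ coming from the strict graded commutativity of $P^*$ (Theorem~\ref{TS-scdgalgebra}).

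For the first identity, fix $i\neq j$. From the preceding explicit formula, $\alpha_j\alpha_i$ sends $T_jT_i\mapsto 1$, $S_p\mapsto n^p_{i,j}$, and all other basis elements to $0$; by the same formula with the roles of $i,j$ exchanged, $\alpha_i\alpha_j$ sends $T_iT_j\mapsto 1$, $S_p\mapsto n^p_{j,i}$, and all other basis elements to $0$. Since $T_iT_j=-T_jT_i$, we have $\alpha_j\alpha_i(T_iT_j)=-1$ and $\alpha_i\alpha_j(T_jT_i)=-1$, so
\[
(\alpha_i\alpha_j+\alpha_j\alpha_i)(T_iT_j)=1-1=0,\qquad (\alpha_i\alpha_j+\alpha_j\alpha_i)(S_p)=n^p_{j,i}+n^p_{i,j}.
\]
On the other hand, $\sum_{p=1}^k(n^p_{i,j}+n^p_{j,i})\beta_p$ vanishes on every $T_rT_s$ (since $\beta_p(T_rT_s)=0$ by definition of $\beta_p=S_p^\vee$) and equals $n^p_{i,j}+n^p_{j,i}$ on $S_p$, proving the first identity.

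For the second identity, I would just set $j=i$ in the computation of $\alpha_j\alpha_i$. Inspecting the lift $(\widetilde{\alpha_i})_{-2}$ given just above the lemma, one sees that for any $j,l$ the image $(\widetilde{\alpha_i})_{-2}(T_jT_l)$ is a scalar multiple of some $T_m$ with $m\neq i$ (because either $j=i$ and the image is $-T_l$ with $l\neq i$ forced by $T_i^2=0$, or $l=i$ and the image is $T_j$ with $j\neq i$, or the image is $0$); hence $\alpha_i$ kills it and $\alpha_i^2(T_jT_l)=0$. On $S_p$ we have $\alpha_i^2(S_p)=\alpha_i(C_{p,i})=\alpha_i\bigl(\sum_\ell n^p_{i,\ell}T_\ell\bigr)=n^p_{i,i}$. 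Comparing with $\sum_{p=1}^k n^p_{i,i}\beta_p$, which has exactly the same values on $T_jT_l$ (namely $0$) and on $S_p$ (namely $n^p_{i,i}$), yields the second identity.

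The only step requiring any thought is remembering that the identification of $\alpha_i^2$ with $\sum_p n^p_{i,i}\beta_p$ carries the correct coefficients including the diagonal $n^p_{i,i}$, which arises because $C_{p,i}=\sum_\ell n^p_{i,\ell}T_\ell$ contains the term $n^p_{i,i}T_i$. Everything else is a direct matching of values on dual basis elements, so the proof is essentially bookkeeping.
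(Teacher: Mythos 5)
Your proof is correct and follows essentially the same route as the paper, which simply declares the lemma ``straightforward from the previous computations,'' i.e.\ from the displayed formula for $\alpha_j\alpha_i$ and the lift $(\widetilde{\alpha_i})_{-2}$ that you evaluate on the basis $\{T_rT_s\}\cup\{S_p\}$ of $P^{-2}$. Your explicit bookkeeping (the cancellation on $T_iT_j$ via anticommutation, the vanishing of $\alpha_i^2$ on the $T$-part via $T_i^2=0$, and the diagonal coefficient $n^p_{i,i}$ from $C_{p,i}$) is exactly what the paper leaves implicit.
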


\begin{proof}
Fix $1 \leq i \leq n$ and consider the following diagram:
 \begin{center}
  \begin{tikzpicture}[scale=0.9,  transform shape]
  \tikzset{>=stealth}
  
\node (1) at (0,0){$\displaystyle \bigoplus_{p=1}^k A S_p \oplus \bigoplus_{1 \leq j<l \leq n} A T_j T_l$};
\node (2) at (6,-0.16){$\displaystyle \bigoplus_{1 \leq j \leq n} A T_j $};
\node (3) at (0,-3.16){$\displaystyle \bigoplus_{1 \leq j \leq n}A T_j$};
\node (4) at (6,-3) {$R$};
\node (5) at (8,-3){$\msf k$};
\node (6) at (10,-3){0};

\node (7) at ( -4,0){$\dots$};
\node (8) at ( -4,-3){$\dots$};

\draw [decoration={markings,mark=at position 1 with
    {\arrow[scale=1.2,>=stealth]{>}}},postaction={decorate}] (1) --  (5.1,0) node[midway, above] {$d$};
\draw [decoration={markings,mark=at position 1 with
    {\arrow[scale=1.2,>=stealth]{>}}},postaction={decorate}] (1)  --  (3) node[midway, left] {$(\widetilde{\alpha_i})_{-2}$};

\draw [decoration={markings,mark=at position 1 with
    {\arrow[scale=1.2,>=stealth]{>}}},postaction={decorate}] (2)  --  (4) node[midway, left] {$(\widetilde{\alpha_i})_{-1}$};
\draw [decoration={markings,mark=at position 1 with
    {\arrow[scale=1.2,>=stealth]{>}}},postaction={decorate}] (1,-3)  --  (4) node[midway, above] {$d$};

\draw [decoration={markings,mark=at position 1 with
    {\arrow[scale=1.2,>=stealth]{>}}},postaction={decorate}] (2)  --  (5) node[midway, above right] {$\alpha_i$};
\draw [decoration={markings,mark=at position 1 with
    {\arrow[scale=1.2,>=stealth]{>}}},postaction={decorate}] (4)  --  (5) node[midway, above] {$\epsilon$};
    
\draw [decoration={markings,mark=at position 1 with
    {\arrow[scale=1.2,>=stealth]{>}}},postaction={decorate}] (5)  --  (6);
    
\draw [decoration={markings,mark=at position 1 with
    {\arrow[scale=1.2,>=stealth]{>}}},postaction={decorate}] (7)  --  (1) node[midway, above] {$d$};
\draw [decoration={markings,mark=at position 1 with
    {\arrow[scale=1.2,>=stealth]{>}}},postaction={decorate}] (8)  --  (-1,-3) node[midway, above] {$d$};

\end{tikzpicture}
 \end{center}
where $\epsilon$ is the augmentation map corresponding to the ideal $\mf{m}_x$, and $(\widetilde{\alpha_i})_{-1}$, $(\widetilde{\alpha_i})_{-2}$ make the diagram commute (they exist because the $P^i$'s are projective $A$-modules).

We know that for all $1 \leq l \leq n$, $1 \leq p \leq k$, we have $c_{p,l}=\sum_{j=1}^n n^p_{l,j}x_j \in \mathfrak{m}$. So we can define $C_{p,l}=\sum_{j=1}^n n^p_{l,j}T_j \in P^{-1}$ satisfying $d(C_{p,l})=c_{p,l}$. The first two maps in the above commutative diagram are then given by:
\begin{align*}
\begin{array}[t]{cccc}
(\widetilde{\alpha_i})_{-1}: & P^{-1} & \longrightarrow & P^0 \\
           & T_j & \longmapsto & \delta_{i, j}
            \end{array} \quad \text{ and } \quad \begin{array}[t]{cccc}
(\widetilde{\alpha_i})_{-2}: & P^{-2} & \longrightarrow & P^{-1} \\
           & T_j T_l& \longmapsto & \left\{\begin{array}{cl}
        -T_l & \text{ if } j=i,\\
        T_j & \text{ if } l=i,\\
         0 & \text{ otherwise},
        \end{array}\right. \\
           & S_p & \longmapsto & C_{p, i}.
\end{array}
\end{align*}
The Yoneda product $\alpha_j  \alpha_i$ is then:
\[
\begin{array}[t]{cccl}
\alpha_j \alpha_i: & P^{-2} & \longrightarrow & \msf k \\
           & T_j T_i & \longmapsto & 1, \\
           & S_p & \longmapsto & n^p_{i, j}  \ \forall p, \\
           & \text{other} & \longmapsto & 0.
\end{array}
\]
The formulas of the lemma follow.
\end{proof}

\delete{
Hence $\partial_2(S_p)= \sum_{i<j}  (n^p_{i, j}x_j T_i+n^p_{j, i}x_i T_j)+\sum_{i=1}^n  n^p_{i, i}x_i T_i+\on{deg} \geq 2$. However, in the construction of Tate, the element $\partial_2(S_p)$ is chosen so that it is a generator of the first cohomology group before adding variables of degree $2$. So this element can be chosen modulo $\partial_2(T_iT_j)=x_i T_j-x_j T_i$. Hence, instead of the element previously mentioned as $\partial_2(S_p)$, we chose $\sum_{i=1}^n c_{p, i}T_i+ \sum_{i<j} n^p_{j, i}(x_j T_i-x_i T_j)$ and we will obtain the same resolution. With this choice, we obtain:
\[
 \begin{array}{ccl}
 \partial_2(S_p) & = & \displaystyle \sum_{i<j}  (n^p_{i, j}x_j T_i+n^p_{j, i}x_i T_j)+ \sum_{i<j} n^p_{j, i}(x_j T_i-x_i T_j)+\sum_{i=1}^n  n^p_{i, i}x_i T_i+\on{deg} \geq 2, \\[5pt]
  & = & \displaystyle \sum_{i<j}(n^p_{i, j}+n^p_{j, i})x_j T_i +\sum_{i=1}^n  n^p_{i, i}x_i T_i+\on{deg} \geq 2, \\[5pt]
  & = & \displaystyle \sum_{i \leq j}\widetilde{n}_j^{p, i} x_j T_i +\on{deg} \geq 2.
\end{array}
\]
In this new expression, we have $\widetilde{n}_j^{p, i}=0$ for $j<i$, and the resolution does not change. Hence we will use the convention that $n^p_{i, j}=0$ for $j <i$.

\begin{remark}\label{Convention}
Another way to look at this convention is that in $c_p$, if we have a term $a x_i x_j$ with $a \in \cc^*$, we do not decompose it as $b x_i x_j+c x_j x_i$ with $b+c=a$, $b x_i \in c_{p, j}$ and $c x_j \in c_{p, i}$. We either consider $ax_i \in c_{p, j}$ or $ax_j \in c_{p, i}$. By convention, we chose $ax_j \in c_{p, i}$ for all $i <j$.
\end{remark}
}

\delete{
We also determine the commutation relations of the $\beta_p$'s through direct computations.

\begin{lemma}\label{AlphaBetaCommute}
In the Yoneda algebra $\Ext_R^*(\msf k, \msf k)$, for all $1 \leq i \leq n$, $1 \leq p, q \leq k$, we have:
\[
\alpha_i \beta_p=\beta_p \alpha_i \text{ and } \beta_p \beta_q=\beta_q \beta_p.
\]
\end{lemma}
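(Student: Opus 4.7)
The plan is to deduce both identities from the Hopf-algebraic description of primitive elements established in Theorem~\ref{thm:image_homotopy}, together with the standard observation that graded super-commutators of primitive elements remain primitive.

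First I would record the relevant input from Section~\ref{sec:Yoneda_CI}: because $R$ is a complete intersection, the pd dg resolution $P^*$ of $\msf k$ from Theorem~\ref{thm:Tate} already terminates after adjoining generators in degrees $-1$ and $-2$, so $\mathcal{B} = \{T_1,\dots,T_n,\,S_1,\dots,S_k\}$. Theorem~\ref{thm:image_homotopy} then identifies $\on{Prim}(\on{Ext}_R^*(\msf k,\msf k))=\Psi(\pi^*(R,\msf k))$ with the $\msf k$-span of $\{T_i^{\vee},S_p^{\vee}\}=\{\alpha_i,\beta_p\}$. In particular,
\[
\on{Prim}(\on{Ext}_R^n(\msf k,\msf k))=0 \quad \text{for every } n\geq 3.
\]

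Next I would verify that super-commutators of primitive elements are primitive. For primitive $a,b$ one expands $\Delta(a)=a\otimes 1+1\otimes a$ and $\Delta(b)=b\otimes 1+1\otimes b$, so that
\[
\Delta(ab)=ab\otimes 1+a\otimes b+(-1)^{|a||b|}b\otimes a+1\otimes ab,
\]
and likewise for $\Delta(ba)$ with the roles of $a$ and $b$ swapped. Forming the super-commutator $[a,b]_s=ab-(-1)^{|a||b|}ba$ makes the cross-terms $a\otimes b$ and $b\otimes a$ cancel in pairs, yielding
\[
\Delta([a,b]_s)=[a,b]_s\otimes 1+1\otimes [a,b]_s,
\]
so $[a,b]_s$ is itself primitive.

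Applying this to $a=\alpha_i$ (degree $1$) and $b=\beta_p$ (degree $2$), the element $[\alpha_i,\beta_p]_s=\alpha_i\beta_p-\beta_p\alpha_i$ is a primitive of degree $3$, hence zero by the first paragraph; this gives $\alpha_i\beta_p=\beta_p\alpha_i$. In the same way, $[\beta_p,\beta_q]_s=\beta_p\beta_q-\beta_q\beta_p$ is a primitive of degree $4$, therefore zero, yielding $\beta_p\beta_q=\beta_q\beta_p$. The only point requiring care is the sign bookkeeping in the primitivity calculation, but this is entirely formal and requires no input beyond what was set up for Theorem~\ref{thm:image_homotopy}.
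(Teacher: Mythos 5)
Your proof is correct, but it takes a genuinely different route from the paper's. The paper proves Lemma~\ref{AlphaBetaCommute} by brute force: it continues the explicit lift of $\alpha_i$ one more step to $(\widetilde{\alpha_i})_{-3}\colon P^{-3}\to P^{-2}$, reads off that $\beta_p\alpha_i=(T_iS_p)^\vee=\alpha_i\beta_p$, and similarly computes $\beta_p\beta_q$ and $\beta_p^2$ on the basis of $P^{-4}$. You instead combine Theorem~\ref{thm:image_homotopy} (primitives of $\Ext_R^*(\msf k,\msf k)$ are exactly the $\msf k$-span of $\{\alpha_i,\beta_p\}$, hence vanish in degrees $\ge 3$) with the standard fact that the super-commutator of two primitives is primitive; your sign bookkeeping in the coproduct computation is right, and the degrees $3$ and $4$ of $[\alpha_i,\beta_p]_s$ and $[\beta_p,\beta_q]_s$ force both to vanish. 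This is essentially the degree argument the paper itself uses later in the proof of Theorem~\ref{thm:HomotopyLiealgebra}, so it is consonant with the paper's overall strategy, and as far as the logical ordering goes it is not circular: Theorem~\ref{thm:image_homotopy} and its inputs (Proposition~\ref{prop:decomp_dual}, the lifting results, Proposition~\ref{prop:comult_Yoneda}) are established for a general minimal pd dg resolution without appeal to Lemma~\ref{AlphaBetaCommute}. The trade-off is that your argument imports the full machinery of Section~\ref{sec:derivations} (minimality, finite rank of each $P^{-n}$, the bialgebra structure on $\on{Tor}$) to prove what the paper treats as an elementary appendix computation, and it yields slightly less information --- the direct lift also identifies $\beta_p\alpha_i$ as the dual basis element $(T_iS_p)^\vee$ and evaluates $\beta_p^2$ on $S_p^{(2)}$, which is the kind of explicit data the surrounding computations rely on. If you keep your version, you should state explicitly that you are invoking Theorem~\ref{thm:image_homotopy} in the complete intersection setting, where its hypotheses (minimality and finite rank) hold because the resolution stops at the degree $-2$ generators.
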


\begin{proof}
We keep lifting the morphism $\alpha_i$ using the previous diagram. The morphisms $(\widetilde{\alpha_i})_{-1}$ and $(\widetilde{\alpha_i})_{-2}$ have already been obtained. The morphism $g_3$ is then given by:
\[
\begin{array}[t]{cccc}
(\widetilde{\alpha_i})_{-3}: & P^{-3} & \longrightarrow & P^{-2} \\
           & T_{i_1} T_{i_2} T_{i_3}& \longmapsto & \left\{\begin{array}{cl}
        T_{i_2} T_{i_3} & \text{ if } i_1=i,\\
        -T_{i_1} T_{i_3} & \text{ if } i_2=i,\\
         T_{i_1} T_{i_2} & \text{ if } i_3=i,\\
         0 & \text{otherwise},
        \end{array}\right. \\
           & T_l S_p & \longmapsto & \left\{\begin{array}{cl}
        T_l C_{p, i} & \text{ if } l \neq i,\\
        T_i C_{p, i}+S_p & \text{ if } l=i.
              \end{array}\right.
\end{array}
\]
We can thus determine that:
\[
\begin{array}[t]{cccc}
\beta_p \alpha_i: & P^{-3} & \longrightarrow & \msf k \\
           & T_i S_p& \longmapsto & 1,\\
           & \text{other} & \longmapsto & 0.
\end{array}           
\]
By doing the same procedure but lifting $\beta_p$ instead, we can see that $\alpha_i \beta_p$ sends $T_i S_p$ to $1$ and everything else to $0$, so $\beta_p \alpha_i=\alpha_i \beta_p$. Furthermore for all $ 1 \leq p \neq q \leq k$, we have:
\[
\begin{array}[t]{cccc}
\beta_p \beta_q: & P^{-4} & \longrightarrow & \msf k \\
           & S_p S_q & \longmapsto & 1,\\
           & \text{other} & \longmapsto & 0
\end{array}  \quad \text{ and }    \begin{array}[t]{cccc}
\beta_p^2: & P^{-4} & \longrightarrow & \msf k \\
           & S_p^2 & \longmapsto & 2,\\
           & \text{other} & \longmapsto & 0.
\end{array}     
\]
As we know that the variables $S_p$ commute with one another, it follows that $\beta_p \beta_q=\beta_q \beta_p$ for all $p, q$.
\end{proof}
}

\delete{
\begin{proof}[Proof of Proposition \ref{BasisofExtdegree2}] 
Set $m \in \mathbb{Z}_{>0}$ and consider $T_{i_1}\cdots T_{i_s}S_1^{(b_1)}\cdots S_k^{(b_k)} \in P_{m}$  with $i_1 < \cdots < i_s$ and set $X=T_{i_2}\cdots T_{i_s}S_1^{(b_1)}\cdots S_k^{(b_k)} \in P_{m-1}$. To fix notations, we write $f_X=X^*$ for the dual morphism. We are going to lift this morphism.
 \begin{center}
  \begin{tikzpicture}[scale=0.9,  transform shape]
  \tikzset{>=stealth}
  
\node (1) at ( 0,0){$P_{m}$};
\node (2) at ( 3,0){$P_{m-1}$};
\node (3) at ( 0,-2){$P_1$};
\node (4) at ( 3,-2) {$R$};
\node (5) at ( 5,-2){$\msf k$};
\node (6) at ( 7,-2){0};

\node (7) at ( -2,0){$\dots$};
\node (8) at ( -2,-2){$\dots$};

\draw [decoration={markings,mark=at position 1 with
    {\arrow[scale=1.2,>=stealth]{>}}},postaction={decorate}] (1) --  (2) node[midway, above] {$\partial_{m}$};
\draw [decoration={markings,mark=at position 1 with
    {\arrow[scale=1.2,>=stealth]{>}}},postaction={decorate}] (1)  --  (3) node[midway, left] {$(\widetilde{f_X})_1$};

\draw [decoration={markings,mark=at position 1 with
    {\arrow[scale=1.2,>=stealth]{>}}},postaction={decorate}] (2)  --  (4) node[midway, left] {$(\widetilde{f_X})_0$};
\draw [decoration={markings,mark=at position 1 with
    {\arrow[scale=1.2,>=stealth]{>}}},postaction={decorate}] (3)  --  (4) node[midway, above] {$\partial_1$};

\draw [decoration={markings,mark=at position 1 with
    {\arrow[scale=1.2,>=stealth]{>}}},postaction={decorate}] (2)  --  (5) node[midway, above right] {$f_X$};
\draw [decoration={markings,mark=at position 1 with
    {\arrow[scale=1.2,>=stealth]{>}}},postaction={decorate}] (4)  --  (5) node[midway, above] {$\epsilon$};
    
\draw [decoration={markings,mark=at position 1 with
    {\arrow[scale=1.2,>=stealth]{>}}},postaction={decorate}] (5)  --  (6);
    
\draw [decoration={markings,mark=at position 1 with
    {\arrow[scale=1.2,>=stealth]{>}}},postaction={decorate}] (7)  --  (1) node[midway, above] {$\partial_{m+1}$};
\draw [decoration={markings,mark=at position 1 with
    {\arrow[scale=1.2,>=stealth]{>}}},postaction={decorate}] (8)  --  (3) node[midway, above] {$\partial_2$};
\end{tikzpicture}
 \end{center}
Define $(\widetilde{f_X})_0:  P_{m-1}  \longrightarrow  R$ such that it sends $X$ to $1$ and every other basis element to zero.

We see that for any $i <i_2$, we have
\[
(\widetilde{f_X})_0 \partial_{m}(T_i T_{i_2} \cdots T_{i_s}S_1^{(b_1)} \cdots S_k^{(b_k)})= \partial_1(T_i).
\]
If $i_2 < i <i_3$, then
\[
(\widetilde{f_X})_0 \partial_{m}(T_{i_2} T_i \cdots T_{i_s}S_1^{b_1}\cdots  S_k^{b_k})= -\partial_1(T_i).
\]
We proceed similarly  with the cases $i_3 < i < i_4$, $i_4 < i < i_5$, and so on. Finally, in the case $i_s <i$, we get 
\[
(\widetilde{f_X})_0 \partial_{m}(T_{i_2} \cdots T_{i_s}T_i S_1^{(b_1)}\cdots S_k^{(b_k)})= (-1)^s\partial_1(T_i).
\]
Moreover, we see for any $i_r \in \{i_2, \dots, i_s\}$ and any $1 \leq p \leq k$, we have
\[
(\widetilde{f_X})_0 \partial_{m}(T_{i_2} \cdots\widehat{T_{i_r}}\cdots T_{i_s} S_1^{(b_1)}\cdots S_p^{(b_p+1)} \cdots S_k^{(b_k)})= (-1)^r\partial_1(C_{p, i_r}).
\]
We can thus define $(\widetilde{f_X})_1$ as:
\begin{align*}
\begin{array}[t]{cccl}
(\widetilde{f_X})_1: & P_{m} & \longrightarrow & P_1 \\
           & T_iT_{i_2} \cdots  T_{i_s}S_1^{(b_1)} \cdots S_k^{(b_k)}  & \longmapsto & T_i  \ \forall \ i< i_1, \\
           & \vdots& \vdots &\vdots \\
           & T_{i_2} \cdots  T_{i_s}T_iS_1^{(b_1)} \cdots S_k^{(b_k)}  & \longmapsto & (-1)^sT_i  \ \forall \ i_s < i, \\
           & T_{i_2} \cdots \widehat{T_{i_r}} \cdots T_{i_s}S_1^{(b_1)}\cdots S_p^{(b_p+1)} \cdots S_k^{(b_k)}  & \longmapsto &\displaystyle  (-1)^rC_{p,  i_r}  \ \forall \ 1 \leq p \leq k, i_r \in\{i_2, \dots, i_s \}, \\
           & \text{other} & \longmapsto & 0.
\end{array} 
\end{align*}
As $i_1 <i_2$ by definition, we have:
\begin{align*}
\begin{array}[t]{cccl}
\alpha_{i_1}f_X: & P_{m} & \longrightarrow & \msf k \\
           & T_{i_1}T_{i_2} \cdots  T_{i_s}S_1^{(b_1)} \cdots S_k^{(b_k)}  & \longmapsto & 1, \\
           & T_{i_2} \cdots \widehat{T_{i_r}} \cdots T_{i_s}S_1^{(b_1)}\cdots S_p^{(b_p+1)} \cdots S_k^{(b_k)}  & \longmapsto &\displaystyle  (-1)^r n^p_{i_r, i_1}  \ \forall \ 1 \leq p \leq k, i_r \in\{i_2, \dots, i_s \}, \\
           & \text{other} & \longmapsto & 0.
\end{array} 
\end{align*}
We can then write
\begin{align*}
\begin{array}[t]{rl}
(T_{i_1}T_{i_2} \cdots   T_{i_s}S_1^{(b_1)} & \mkern-18mu \  \cdots S_k^{(b_k)})^* = \alpha_{i_1}(T_{i_2} \cdots  T_{i_s}S_1^{(b_1)} \cdots S_k^{(b_k)})^* \\
& \displaystyle -\sum_{p=1}^n\sum_{r=2}^s(-1)^r n^p_{i_r, i_1}( T_{i_2} \cdots \widehat{T_{i_r}} \cdots T_{i_s}S_1^{(b_1)}\cdots S_p^{(b_p+1)} \cdots S_k^{(b_k)})^*.
\end{array} 
\end{align*}
On the right hand side, all the dual terms contain at most $s-1$ variables $T_l$ of degree $1$. We can thus proceed by induction on the number of variables of degree $1$ to prove the first statement of the proposition. It remains only to start the induction process.

Using the same type of construction we did above, i.e., lifting a morphism to compute a Yoneda product, we prove that for any $b_1, \dots, b_k \in \bb N$ (at least one is non zero) and any $1 \leq i \leq n$, we have
\[
(S_1^{(b_1)}\cdots  S_k^{(b_k)})^*=\beta_1^{b_1} \cdots \beta_k^{b_k} 
\]
and
\[
(T_i S_1^{(b_1)}\cdots  S_k^{(b_k}))^*=\alpha_i\beta_1^{b_1} \cdots \beta_k^{b_k}. 
\]
This concludes the proof of the first statement of the proposition.

For any $m \in \mathbb{Z}_{>0}$, we know that $\{(T_1^{a_1}\cdots T_n^{a_n}S_1^{(b_1)}\cdots S_k^{(b_k)})^* \ | \  a_1+\dots+a_n+2b_1+\dots+2b_k=m\}$ is a basis of the vector space $\Ext_R^m(\msf k, \msf k)$ because the resolution is $I$-minimal. Based on the result above, it follows that $\{(\alpha_1^{a_1}\cdots \alpha_n^{a_n}\beta_1^{b_1} \cdots \beta_k^{b_k} \ | \  a_1+\dots+a_n+2b_1+\dots+2b_k=m\}$ is a generating family of the vector space $\Ext_R^m(\msf k, \msf k)$. But as both sets have the same cardinality, it follows that the latter set is also a basis of $\Ext_R^m(\msf k, \msf k)$.

\end{proof}

}

\delete{
\begin{proof}[Proof of Proposition \ref{prop:NumberofGenerators}]
From what we found in Corollary \ref{cor:PresentationofExt}, we have $n+k$ generators. However, they are not independent. We know that in $\Ext_R^*(\msf k, \msf k)$, we have $\alpha_i \alpha_j +\alpha_j \alpha_i=\sum_{p=1}^k(n^p_{j,i}+ n^p_{i, j})\beta_p$ for all $i <j$, and $\alpha_i^2=\sum_{p=1}^k n^p_{i, i}\beta_p$ for all $i$. So we can write:
\[
\begin{pmatrix}
           \alpha_1^2 \\
           \alpha_1 \alpha_2+\alpha_2 \alpha_1 \\
           \alpha_1 \alpha_3+\alpha_3 \alpha_1 \\
           \vdots \\
           \alpha_2^2 \\
           \alpha_2 \alpha_3+\alpha_3 \alpha_2 \\
           \vdots \\
           \alpha_n^2
         \end{pmatrix}=M \begin{pmatrix}
           \beta_1 \\
           \vdots \\
           \beta_k
         \end{pmatrix}.
\]
We put the matrix $M$ in reduced row echelon form $M'$, and applying the same procedure to the vector on the left hand side of the equality. In each line $i$ of $M'$, the first $1$ that appears is in the position $(i, j_i)$ for some $j_i$. We know that $\on{rank}M=\on{rank}M'$, so the last non-zero line of $M'$ has index $\on{rank}M$. It follows that $\beta_{j_{\on{rank}M}}$ can be written as a linear combination of the $\alpha_i \alpha_j$'s and the $\beta_p$'s with $p>j_{\on{rank}M}$. By going up the rows of $M'$, we see that for all $1 \leq s \leq \on{rank} M$, $\beta_{j_s}$ being written as a linear combination of the $\alpha_i \alpha_j$'s and the $\beta_p$'s with $p>j_s$, $p \neq j_{s+1}, \dots, j_{\on{rank}M}$. We have therefore obtained the following equality:
\[
\on{Span}\{\beta_1, \dots, \beta_k\} \subseteq \on{Span}\{\alpha_i \alpha_j+\alpha_j \alpha_j, \alpha_i^2, \beta_p \text{ with } i < j, \ p \neq  j_{1}, \dots, j_{\on{rank}M} \}.
\]
It follows that $\Ext_R^*(\msf k, \msf k)$ is generated by $\alpha_i$'s and $\beta_p$'s where $p \neq  j_{1}, \dots, j_{\on{rank}M}$. There are thus $n+k-\on{rank}M$ generators.
\end{proof}

\subsection{Computations for Section \ref{sec:reconstruct_homLie}}\label{appendix:sec:reconstruct_homLie}
}

We give below the proof of a lemma in Section \ref{sec:reconstruct_homLie}.

\begin{proof}[Proof of Lemma \ref{lem:reg_seq}] We prove that for all $s$, the image $\overline{g_s}$ is not a zero divisor in $\on{Pol}/I_{s-1}$, with $I_{s-1}=(g_1, \dots, g_{s-1})$. 

By the division algorithm \cite[Thm 2.2.1]{Herzog-Hibi}, any $ f\in \on{Pol}$ can be written as $f\equiv f' \on{ mod } I_{s-1}$ such that none of $\on{in}_<(g_i)$ divides any $u\in \on{supp}(f')$ if $f'\neq 0$. 

Assume that $\overline{g_s}$ is a zero divisor in $\on{Pol}/I_{s-1}$, i.e., there exists $\overline{f} \neq 0$ such that $\overline{f} \overline{g_s}=0$. As $\overline{f} \neq 0$, we have $f' \neq 0$ and $f'g_s \in I_{s-1}$. Once again, the division algorithm applied to $f'g_s$ with respect to $g_1, \dots, g_{s-1}$ gives 
\[ f'g_s=h_1g_1+\cdots+h_{s-1}g_{s-1}.\]
If $h_i\neq 0$, then $ \on{in}_<(f'g_s)\geq \on{in}_<(h_i g_i)$ by the division algorithm. In particular, 
\[ \on{in}_<(f'g_s)\geq \max\{  \on{in}_<(h_i g_i) \;|\; h_i\neq 0,\;  i<s\}.\]
On the other hand, by \cite[Lemma 2.1.4(iv)]{Herzog-Hibi} we have 
 \[ \on{in}_<(f'g_s)\leq \max\{  \on{in}_<(h_i g_i) \;|\; h_i\neq 0,\;  i<s\}.\]
 It follows that $\on{in}_<(f'g_s) = \max\{  \on{in}_<(h_i g_i) \;|\; h_i\neq 0,\;  i<s\}$, and so there exists an $i<s$ such that $ \on{in}_<(f'g_s)=\on{in}_<(h_i g_i)$. Using \cite[Lemma 2.1.4(iii)]{Herzog-Hibi}, we have 
 \[ \on{in}_<(f')\on{in}_<(g_s)=\on{in}_<(h_i)\on{in}_<(g_i).\]
 Since $ (\on{in}_<(g_s),\on{in}_<(g_i))$ is a relatively prime pair in $\on{Pol}$ by assumption, it follows that $\on{in}_<(g_i) $ must divide $\on{in}_<(f')$. This contradict the assumption that, since $f'\neq 0$, $ \on{in}_<(g_i)$ does not divide any $ u\in \on{supp}(f')$. 
\end{proof}

\delete{
\subsection{Computations for Section \ref{sec:finite_gen}}\label{appendix:sec:finite_gen}

\subsection{Computations for Section \ref{sec:lift_min_res}}\label{appendix:sec:lift_min_res}

\begin{lemma}\label{lem:H_1}
Let $X_1$ be the complex of $R$-modules obtained after adjoining the $n$ variables of degree $1$. Then the $\sum_{i=1}^n \overline{c_{p, i}} T_i$  ($1 \leq p \leq k$) generate $H_1(X_1)=\operatorname{Ker}\partial_1/\on{Im}\partial_2$ as an $R$-module.
\end{lemma}

\begin{proof}
Let $\overline{r}=\sum_{i=1}^n \overline{r_i} T_i$ with $r_i \in \msf k[x_1,\dots,x_n]$ and $\overline{\color{white}x}$ is the class modulo $(c_1,\dots, c_k)$. If $\partial_1(\overline{r})=0$ in $R$, then $\sum_{i=1}^n r_i x_i=p(c_1,\dots,c_k)$ in $\msf k[x_1,\dots,x_n]$ with $p$ a polynomial without constant term. We write $p(c_1, \dots, c_k)=\sum_{p} a_p c_p+\sum_{p, l}a_{p, l}c_p c_l+\sum_{p, l, m}a_{p, l, m}c_p c_l c_m+...$ where the $a_p$, $a_{p, l}, \dots$ are in $\msf k[x_1,\dots,x_n]$. By regrouping the terms we obtain:
\[
\sum_{i=1}^n (r_i -\sum_{p} a_p c_{p,i}-\sum_{p, l}a_{p, l}c_p c_{l, i}-\sum_{p, l, m}a_{p, l, m}c_p c_l c_{m, i}+\dots)x_i=0
\]
in $\msf k[x_1, \dots, x_n]$. As $r_i$ is a representative of $\overline{r_i}$ modulo the relations, we see that we can choose $r_i$ so that $p$ has only monomials of degree $1$. So after this choice, we get:
\[
\sum_{i=1}^n (r_i -\sum_{p=1}^k a_p c_{p,i})x_i=0
\]
in $\msf k[x_1, \dots, x_n]$. Furthermore, we know that the projective resolution of $\msf k$ as a $\msf k[x_1, \dots, x_n]$-module is given by the Koszul complex with generators $\mathcal T_1, \dots, \mathcal T_n$, so in order to have $\partial_1(\sum_{j=1}^n p_ j \mathcal T_j)=p_1x_1+\dots+p_n x_n=0$ for polynomials $p_i$ in $\msf k[x_1, \dots, x_n]$, we need $\sum_{j=1}^n p_j \mathcal T_j \in \operatorname{Ker}\partial_1=\on{Im}\partial_2$, which is generated by the $x_i \mathcal T_j-x_j \mathcal T_i$. It follows that in the complex $X_1$, we have:
\[
\sum_{i=1}^n (r_i -\sum_{p=1}^k a_p c_{p,i})\mathcal T_i \in \sum_{1 \leq i \neq j \leq n}\msf k[x_1,\dots, x_n](x_i \mathcal T_j-x_j \mathcal T_i).
\]
But we know that $X_1$ is the image of the Koszul complex by sending $\mathcal T_i$ on $T_i$ and $\msf k[x_1,\dots, x_n]$ to $R$. It follows that
\[
\sum_{i=1}^n (\overline{r_i} -\sum_{p=1}^k \overline{a_p c_{p,i}}) T_i \in \sum_{1 \leq i \neq j \leq n}\msf R(\overline{x_i} T_j-\overline{x_j} T_i).
\]
We have found that $\overline{r} \in \operatorname{Ker}\partial_1$ if and only if there exist polynomials $a_p$ such that $\overline{r}=\sum_{p=1}^k \overline{a_p} (\sum_{i=1}^n \overline{c_{p, i}}T_i)$ in $(X_1)_1/\on{Im}\partial_2$. The assertion of the lemma follows.
\end{proof}

\begin{proof}[Proof of Proposition \ref{prop:LiftofPsi}]
We write $\mathcal{P}(m)$ for the statement of the proposition for a fixed $m \in \mathbb{Z}_{>0}$. We have just seen that $\mathcal{P}(1)$ and $\mathcal{P}(2)$ are true. As with the previous proof, we will proceed by induction.

Assume $\mathcal{P}(m)$ is true for some $m \in \mathbb{Z}_{>0}$. We fix $t_1^{a_1}\dots t_{n+k-r}^{a_{n+k-r}}S_1^{(b_1)}\dots S_k^{(b_k)} \in Q_{m+1}$. By direct computation we see that
\begin{equation*}
\resizebox{\hsize}{!}{$
\begin{array}{l}
\psi_m \partial_{m+1}(t_1^{a_1}\dots t_{n+k-r}^{a_{n+k-r}}S_1^{(b_1)}\dots S_k^{(b_k)}) =\displaystyle  \sum_{i=1}^n (-1)^{a_1+a_2+\dots +a_{i-1}} a_ix_iT_1^{a_1} \dots \widehat{T_i^{a_i}} \dots T_n^{a_n}0^{a_{n+1}} \dots 0^{a_{n+k-r}}S_1^{(b_1)}\dots S_k^{(b_k)}  \\
+ (-1)^{a_1+\dots a_{n+k-r}} \displaystyle \sum_{p=1}^k \sum_{i=1}^n(-1)^{a_{i+1}+a_{i+2}+\dots+a_{n+k-r}}b_p c_{p, i}T_1^{a_1}\dots T_i^{a_i+1} \dots T_n^{a_n}0^{a_{n+1}} \dots 0^{a_{n+k-r}}S_1^{b_1}\dots S_p^{b_p-1} \dots S_k^{b_k},  \\
 = \partial_{m+1}(T_1^{a_1} \dots T_n^{a_n}0^{a_{n+1}} \dots 0^{a_{n+k-r}}S_1^{(b_1)}\dots S_k^{(b_k)}).
 \end{array}$}
\end{equation*}
We can check that the second equality is true for any values of the $a_i$, $i \geq n+1$. Indeed, if one of the $a_i$ is equal to $1$, then $\partial_{m+1} (T_1^{a_1} \dots T_n^{a_n}0^{a_{n+1}} \dots 0^{a_{n+k-r}}S_1^{(b_1)}\dots S_k^{(b_k)})=\partial_{m+1}(0)=0$. And if $a_{n+1}= \dots = a_{n+k-r}=0$, then a direct computation verifies the equality.

Thus we found that
\[
\psi_m \partial_{m+1}(t_1^{a_1}\dots t_{n+k-r}^{a_{n+k-r}}S_1^{(b_1)}\dots S_k^{(b_k)})=\partial_{m+1} (T_1^{a_1} \dots T_n^{a_n}0^{a_{n+1}} \dots 0^{a_{n+k-r}}S_1^{(b_1)}\dots S_k^{(b_k)}),
\]
and because all elements of the basis of $Q_{m+1}$ are of this form, we can define:
\begin{align*}
\begin{array}[t]{cccc}
\psi_{m+1}: & Q_{m+1} & \longrightarrow & P_{m+1} \\
           & t_1^{a_1}\dots t_{n+k-r}^{a_{n+k-r}}S_1^{(b_1)}\dots S_k^{(b_k)} & \longmapsto & T_1^{a_1} \dots T_n^{a_n}0^{a_{n+1}} \dots 0^{a_{n+k-r}}S_1^{(b_1)}\dots S_k^{(b_k)}.
\end{array}
\end{align*}
This makes the diagram commute and so $\mathcal{P}(m+1)$ is true. As $\mathcal{P}(1)$ and $\mathcal{P}(2)$ are true, we have proved that $\mathcal{P}(m)$ is true for all $m \in \mathbb{Z}_{>0}$.
\end{proof}

\subsection{Computations for Section \ref{sec:fg_quotient}}\label{appendix:sec:fg_quotient}

\begin{proof}[Proof of Lemma \ref{lem:ext2_non_CI}]
We need to show that for all $1 \leq p \leq k$, we have $0=\partial_3^*(\delta_p)=\delta_p \circ \partial_3$. But when the complex is made acyclic, we know that $\on{Im}(\partial_3)=\text{Ker}(\partial_2)$. So $\delta_p \in \Ext_R^2(\msf k, \msf k)$ if and only if $\delta_p$ is zero on $\text{Ker}(\partial_2)$. As $\delta_p$ is the dual of $S_p$, this is equivalent to saying that no element of $\text{Ker}(\partial_2)$ contains a scalar multiple of $S_p$.

Let us assume that for $1 \leq l \leq k$, $1 \leq i, m \leq n$ there exist $r_l, r_{i, m} \in \msf k[x_1, \dots, x_n]$ such that $r_p S_p+\sum_{l \neq p}r_l S_l+\sum_{i <m}r_{i, m}T_iT_m$ is in $\text{Ker}(\partial_2)$, with $r_p \equiv 1 \text{ mod } \mathfrak{m}_x$. It follows that:
\begin{align*}
\renewcommand{\arraystretch}{2}
\begin{array}{cl}
0 &\displaystyle  =\sum_{i=1}^n r_p c_{p, i}T_i+\sum_{l \neq p}r_l (\sum_{i=1}^nc_{l, i}T_i)+\sum_{i <m}r_{i, m}(x_iT_m-x_mT_i), \\
 & \displaystyle = \sum_{i=1}^n(r_p c_{p, i}+\sum_{l \neq p}r_l c_{l, i}+\sum_{m<i}x_m r_{m, i}-\sum_{i<m}x_m r_{i, m})T_i.
 \end{array}
\end{align*}
As the $T_i$'s are free over $R$, we get that for all $1 \leq i \leq n$:
\begin{align*}
r_p c_{p, i}+\sum_{l \neq p}r_l c_{l, i}+\sum_{m<i}x_m r_{m, i}-\sum_{i<m}x_m r_{i, m}=0 \text{ in } R.
\end{align*}
Hence for all $1 \leq i \leq n$, there exists a polynomial $p_i \in \msf k[x_1, \dots, x_n][y_1, \dots, y_k]$ without constant term such that:
\begin{align*}
r_p c_{p, i}+\sum_{l \neq p}r_l c_{l, i}+\sum_{m<i}x_m r_{m, i}-\sum_{i<m}x_m r_{i, m}=p_i(c_1, \dots, c_k) \text{ in } \msf k[x_1, \dots, x_n].
\end{align*}
By multiplying each expression by $x_i$, and then summing over $i$, the last two terms cancel each other and we get 
\begin{align*}
r_p c_p+\sum_{l \neq p}r_l c_l=\sum_{i=1}^n x_i p_i(c_1, \dots, c_k).
\end{align*}
The coefficient of $c_p$ in the right hand side is clearly in $\mathfrak{m}_x$. We pass it to the left side and regroup all the $c_l$ with $l \neq p$ on the right hand side to obtain:
\begin{align*}
(1+\widetilde{r_p})c_p=p(c_1, \dots, \widehat{c_p}, \dots, c_k).
\end{align*}
with $\widetilde{r_p} \in \mathfrak{m}_x$ and $p \in \msf k[x_1, \dots, x_n][y_1, \dots, y_{k-1}]$. But as the $c_l$'s are homogeneous, this implies that $c_p=\widetilde{p}(c_1, \dots, \widehat{c_p}, \dots, c_k)$ for some polynomial $\widetilde{p}$ without a constant term, contradicting the minimality of the $c_l$'s as generators of $(c_1, \dots, c_k)$. Therefore no element of $\text{Ker}(\partial_2)$ is of the assumed form.
\end{proof}
}

\section{Some homological exercises}
Let $ \mc C$ be an abelian category. We denote by $\mc Ch(\mc C)$  the category of cochain complexes in $\mc C$ with morphisms being chain maps, and  $ \mc H(\mc C)$ the homotopy category. The proof of the following lemma is a straightforward exercise by doing a downward induction on the cohomological degree.

\begin{lemma}\label{lem:homotopy}
Let \(\mathcal C\) be an abelian category.  Let
\(P^\ast\) and \(X^\ast\) be cochain complexes in \(\mathcal C\) with the following properties:
\begin{enumerate}
\item There is an $l\in \bb Z$ such that \(P^n=0\) for all \(n>l\), and each \(P^n\) is projective in \(\mathcal C\).
\item \(X^\ast\) is exact in all negative degrees: \(H^n(X^\ast)=0\) for every \(n<0\).
\end{enumerate}
Then for every integer \(r<-l\) every cocycle
\[
f\in Z^{r}\bigl(\mc Hom^*_{\mathcal C}(P^\ast,X^\ast)\bigr)=\Hom_{\mc Ch(\mc C)}(P^*,X^*[r])
\]
is null-homotopic, equivalently $ \Hom_{\mc H(\mc C)}(P^*, X^*[r])=0$ for all $ r<-l$. Consequently
\[
H^{r}\bigl(\mc Hom^*_{\mathcal C}(P^\ast,X^\ast)\bigr)=0\qquad(\forall r<-l).
\]
\end{lemma}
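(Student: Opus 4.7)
The plan is to translate the cocycle condition into the statement that $f$ is a chain map $P^{*}\to X^{*}[r]$, and then construct a contracting homotopy $h=\{h^n:P^n\to X^{n+r-1}\}$ by descending induction on $n$, starting from the top nonzero degree $n=l$ and working downward. Fix the sign convention in which the differential on $\Hom^{*}(P^{*},X^{*})$ is $(df)^{n}=d_X f^{n}-(-1)^{r}f^{n+1}d_P^{n}$; then a cocycle of degree $r$ is a chain map (up to the sign $(-1)^r$), and a null-homotopy is a family $h$ with $f^{n}=d_X h^{n}+(-1)^{r}h^{n+1}d_P^{n}$.

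First I would set $h^{n}=0$ for every $n>l$; this is forced since $P^{n}=0$ there. At the top degree $n=l$ one has $h^{l+1}=0$, so the requirement reduces to $d_X h^{l}=f^{l}$. The chain-map identity gives $d_X f^{l}=(-1)^{r}f^{l+1}d_P^{l}=0$, hence $f^{l}$ factors through $Z^{l+r}(X^{*})$. Since $l+r<0$, hypothesis (2) gives $Z^{l+r}(X^{*})=B^{l+r}(X^{*})=\mathrm{im}(d_X^{l+r-1})$, and projectivity of $P^{l}$ from hypothesis (1) provides the desired lift $h^{l}$.

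For the inductive step, assuming $h^{m}$ has been constructed for $m>n$, define $g^{n}:=f^{n}-(-1)^{r}h^{n+1}d_P^{n}:P^{n}\to X^{n+r}$. The key verification — which I expect to be the only substantive computation — is that $g^{n}$ lands in the cocycles: using the chain-map relation for $f$ and the induction hypothesis $d_X h^{n+1}=f^{n+1}-(-1)^{r}h^{n+2}d_P^{n+1}$, one computes
\[
d_X g^{n}=(-1)^{r}f^{n+1}d_P^{n}-(-1)^{r}\bigl(f^{n+1}-(-1)^{r}h^{n+2}d_P^{n+1}\bigr)d_P^{n}=h^{n+2}d_P^{n+1}d_P^{n}=0.
\]
Since $n\le l$ forces $n+r\le l+r<0$, acyclicity of $X^{*}$ in negative degrees gives $Z^{n+r}(X^{*})=\mathrm{im}(d_X^{n+r-1})$, and projectivity of $P^{n}$ produces $h^{n}$ with $d_X h^{n}=g^{n}$, which is exactly the required identity $f^{n}=d_X h^{n}+(-1)^{r}h^{n+1}d_P^{n}$.

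The main (minor) obstacle is purely bookkeeping: keeping the signs consistent throughout the induction and making sure the inductive data produces a chain map. There is no Mittag-Leffler or limit issue because the complex is bounded above, so the downward induction terminates at each fixed $n$ without any convergence concern. Finally, since every $r$-cocycle $f\in\Hom_{\mathcal C}(P^{*},X^{*})$ with $r<-l$ is a coboundary $f=dh$, we get $H^{r}(\Hom_{\mathcal C}(P^{*},X^{*}))=0$, equivalently $\Hom_{\mathcal H(\mathcal C)}(P^{*},X^{*}[r])=0$, proving the lemma.
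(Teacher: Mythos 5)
Your proposal is correct and follows essentially the same route as the paper's own proof: a downward induction starting at the top degree $n=l$, checking at each stage that the correction term $g^{n}=f^{n}-(-1)^{r}h^{n+1}d_P^{n}$ is a cocycle valued in a negative degree of $X^{*}$, and then using acyclicity there together with projectivity of $P^{n}$ to lift. The only differences are cosmetic sign normalizations in the definition of the homotopy.
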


\begin{proof}
Write the differential of a complex \(Y^\ast\) by \(d_Y\).  A homogeneous
cochain \(f\) of degree \(r<-l\) is a collection of morphisms
\(f^n:P^n\to X^{n+r}\) satisfying the cocycle condition
\begin{align}\label{eq:C}
d_X\circ f^n - (-1)^r f^{n+1}\circ d_P = 0\qquad\text{for all }n.
\tag{C}
\end{align}
Equivalently $f\in \Hom_{\mc Ch(\mc C)}(P^*, X^*[r])$
We will construct a homotopy \(h\) of degree \(r-1\) with components
\(h^n:P^n\to X^{n+r-1}\) such that
\begin{align}\label{eq:H}
(-1)^{r}d_X\circ h^n +  h^{n+1}\circ d_P \;=\;(-1)^{r} f^n
\qquad\text{for all }n,
\tag{H}
\end{align}
which is exactly the equation \(d_{\Hom}(h)=f\).  The construction is by
downward induction on \(n\) (recall \(P^n=0\) for \(n>l\)).

\medskip\noindent\textbf{Base step.}  Start at \(n=l\).  Since \(P^{l+1}=0\)
the cocycle identity \eqref{eq:C} for \(n=l\) reduces to
\(d_X\circ f^n = 0\).  Because \(r<-l\) we have \(n+r< 0\), so
\(d_X\circ f^n\) lands in a negative degree of \(X^\ast\). By
exactness of \(X^\ast\) at degree \(n+r\) we have
\(\on{Ker}(d_X|_{X^{n+r}})=\on{Im}(d_X|_{X^{n+r-1}})\). Hence the
map \(f^n:P^n\to X^{r+n}\) factors through \(d_X:X^{n+r-1}\to X^{n+r}\); i.e.
there exists a morphism \(h^n:P^n\to X^{n+r-1}\) such that
\[
d_X\circ h^n \;=\; f^n.
\]
since \(P^n\) is projective in $\mathcal{C}$.  Thus \eqref{eq:H} holds for \(n=l\) (the term involving \(h^{n+1}\)
vanishes since \(P^{n+1}=0\)).

\medskip\noindent\textbf{Inductive step.} Let $ m\le l$. Suppose \(h^{m+1},h^{m+2},\dots\)
have been constructed so that \eqref{eq:H} holds on all degrees \(>m\).  Define
the map
\[
u^m \;:=\; f^m + (-1)^{\,r-1} h^{m+1}\circ d_P \;:\; P^m \longrightarrow X^{m+r}.
\]
We check that \(u^m\) takes values in \(\on{Ker}(d_X)\). Indeed
\[
\begin{aligned}
d_X\circ u^m
&= d_X\circ f^m + (-1)^{\,r-1} d_X\circ h^{m+1}\circ d_P \\
&= (-1)^r f^{m+1}\circ d_P + (-1)^{\,r-1}\bigl( f^{m+1} + (-1)^{\,r-1} h^{m+2}\circ d_P \bigr)\circ d_P
\end{aligned}
\]
where we used the cocycle identity \eqref{eq:C} for \(f\) and the inductive
hypothesis \eqref{eq:H} for \(h^{m+1}\). The two terms cancel and one obtains
\(d_X\circ u^m=0\). By exactness of \(X^\ast\) at degree \(m+r\) we have
\(\on{Ker}(d_X|_{X^{m+r}})=\on{Im}(d_X|_{X^{m+r-1}})\), so \(u^m\)
factors through \(d_X:X^{m+r-1}\to X^{m+r}\). Equivalently there exists
a morphism \(h^m:P^m\to X^{m+r-1}\) with
\[
d_X\circ h^m \;=\; u^m
\]
since \(P^m\) is projective in
\(\mathcal C\).  By construction the equality \eqref{eq:H} then holds on degree
\(m\), completing the induction.

\medskip\noindent Since \(P^n=0\) for \(n > l\) the downward induction
defines \(h^n\) for all \(n\).  The family \(h=\{h^n\}\) satisfies
\eqref{eq:H} on every degree, hence \(f=d_{\Hom}(h)\) and so \(f\) is
null-homotopic. 
\end{proof}

\section{Square of PD derivations of odd degrees}\label{appendix:sec:square_pd_der}


\begin{lemma}\label{lem:D2_pd_der}
Let $P^*$ be a strictly graded commutative PD dg algebra over a commutative ring $R$ 
and let
\[
D\colon P^* \longrightarrow P^{*+r}
\]
be a PD derivation of odd degree $r$. Then $D^2:=D\circ D$ is a PD derivation of degree $2r$.
\end{lemma}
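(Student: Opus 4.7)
The plan is to verify directly the two defining conditions of a PD-derivation on $D^{2}=D\circ D$, exploiting at every step the parity $r$ odd together with the strict graded commutativity of $P^{*}$. The degree statement is clear since the composite of maps of degree $r$ has degree $2r$.

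First I would check the Leibniz rule. Applying $D$ to $D(ab)=D(a)b+(-1)^{|a|r}aD(b)$ gives
\[
D^{2}(ab)=D^{2}(a)b+(-1)^{(|a|+r)r}D(a)D(b)+(-1)^{|a|r}D(a)D(b)+(-1)^{2|a|r}aD^{2}(b).
\]
The two middle terms carry the factor $(-1)^{|a|r}\bigl(1+(-1)^{r^{2}}\bigr)$. Since $r$ is odd, $r^{2}$ is odd and this factor is $0$. The remaining identity $D^{2}(ab)=D^{2}(a)b+aD^{2}(b)$ is exactly the Leibniz rule for a derivation of even degree $2r$.

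Next I would check the divided-power condition. For $x$ of even degree, $D(x)$ has degree $|x|+r$ which is odd, so by the strict graded commutativity of $P^{*}$ (Axiom included in the definition, or equivalently $y^{2}=0$ for $y$ homogeneous of odd degree) we have $D(x)\cdot D(x)=0$. Using the PD-derivation property of $D$ twice,
\[
D^{2}(\gamma_{m}(x))=D\bigl(D(x)\,\gamma_{m-1}(x)\bigr)=D^{2}(x)\,\gamma_{m-1}(x)+(-1)^{(|x|+r)r}D(x)\,D(x)\,\gamma_{m-2}(x).
\]
The last term vanishes because $D(x)^{2}=0$, yielding $D^{2}(\gamma_{m}(x))=D^{2}(x)\,\gamma_{m-1}(x)$, which is the PD-Leibniz condition for $D^{2}$ (note that $D^{2}(x)$ has even degree, so both sides are well-defined even-degree elements).

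The main subtlety, and the one I would emphasize, is the interplay between the two parities: the cross terms in the Leibniz computation cancel precisely because $r$ is odd (so $(-1)^{r^{2}}=-1$), and the second PD term vanishes precisely because $r$ is odd makes $D(x)$ odd for $x$ even, letting strict graded commutativity kill $D(x)^{2}$. No divided power of an odd element is ever invoked, so Andr\'e's Axiom \eqref{axiom_7} does not intervene; the proof uses only Axioms \eqref{axiom_2}, \eqref{axiom_8} and the strict graded commutativity of $P^{*}$.
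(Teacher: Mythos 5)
Your proof is correct and follows essentially the same route as the paper's: the cross terms in the iterated Leibniz expansion cancel because $(-1)^{r^2}=-1$ for $r$ odd, and the extra term $\gamma_{m-2}(x)D(x)^2$ in the divided-power computation dies because $D(x)$ is odd and $P^*$ is strictly graded commutative. The only cosmetic difference is the order in which you write $D(x)\gamma_{m-1}(x)$ versus the paper's $\gamma_{m-1}(x)D(x)$, which is immaterial since $\gamma_{m-1}(x)$ has even degree.
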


\begin{proof}
We check the two required properties.

\medskip

\noindent\textbf{(Derivation property).}
Let $a,b\in P^*$ be homogeneous. By the graded Leibniz rule for $D$,
\[
D(ab)=D(a)\,b + (-1)^{r|a|}a\,D(b).
\]
By applying $D$ again and expanding each term we get:
\[
\begin{aligned}
D^2(ab)
&= D\big(D(a)\,b\big) + (-1)^{r|a|} D\big(a\,D(b)\big) \\
&= D^2(a)\,b + (-1)^{r|D(a)|} D(a)D(b)
   + (-1)^{r|a|}\big( D(a)D(b) + (-1)^{r|a|} a D^2(b)\big).
\end{aligned}
\]
But we know that $|D(a)| = |a|+r$, so
\[
(-1)^{r|D(a)|} = (-1)^{r|a|}(-1)^{r^2}=-(-1)^{r|a|}
\]
as $r$ is odd. Substituting this into the previous equation, the two middle terms cancel each other and we obtain
\[
D^2(ab)=D^2(a)\,b + a\,D^2(b),
\]
which is the graded Leibniz rule for $D^2$ (degree $2r$). In particular, $D^2$ is a derivation.

\medskip

\noindent\textbf{(PD rule).}
Let $x\in P^*$ be homogeneous with $x \in I_{ev}$, and fix $m\ge1$. Using the PD rule for $D$ and the Leibniz rule,
\[
\begin{aligned}
D^2(\gamma_m(x))
&= D\big(\gamma_{m-1}(x)D(x)\big) \\
&= D(\gamma_{m-1}(x))\,D(x) + (-1)^{r\cdot|\gamma_{m-1}(x)|}\,\gamma_{m-1}(x)\,D^2(x) \\
&= \gamma_{m-2}(x)\,D(x)^2 + (-1)^{r\cdot|\gamma_{m-1}(x)|}\,\gamma_{m-1}(x)\,D^2(x) \\
&= \gamma_{m-2}(x)\,D(x)^2+\gamma_{m-1}(x)\,D^2(x).
\end{aligned}
\]
as $|\gamma_{m-1}(x)|=(m-1)|x|$ is even. Because $r$ is odd and $|x|$ is even, $|D(x)|=|x|+r$ is odd, hence $D(x)^2=0$ due to the strict graded commutativity. Thus the previous equation becomes
\[
D^2(\gamma_m(x))=\gamma_{m-1}(x)\,D^2(x),
\]
and $D^2$ satisfies the PD rule on divided powers.
\end{proof}

\begin{lemma}
Let \(P^*\) be a PD dg algebra over a commutative ring $R$, and let
\[
\mc Der^{*,\on{pd}}_{R}(P^*,P^*)
\]
be the cochain complex of PD derivations with differential
\[
\partial(D)= [d, D] \;=\; d\circ D - (-1)^{|D|} D\circ d,
\]
where \(d\) is the internal differential of \(P^*\) and \(|D|\) is the degree of \(D\).  
Assume \(q\colon \mc Der^{odd,pd}_R(P^*,P^*) \to \mc Der^{even,pd}_R(P^*,P^*)\) is given by \(q(D)=D\circ D\). 
If \(D\) satisfies \(\partial(D)=0\) (i.e., \(D\) is a cocycle), then \(\partial(q(D))=0\). Hence \(q\) sends cocycles to cocycles and induces a map (for odd $r$)
\[
q\colon H^{r}\big(\mc Der^{*,\on{pd}}_R(P^*,P^*)\big)\to 
H^{2r}\big(\mc Der^{*,\on{pd}}_R(P^*,P^*)\big).
\]
\end{lemma}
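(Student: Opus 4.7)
The plan is to exploit the fact that the differential $\delta$ on $\mc Hom_k^*(P^*,P^*)$ is an inner graded derivation of the composition dg algebra. Specifically, from the definition $\delta(f)=d\circ f-(-1)^{|f|}f\circ d$ and the associativity of composition, one checks directly that
\[
\delta(f\circ g)=\delta(f)\circ g+(-1)^{|f|}f\circ \delta(g)
\]
for all homogeneous $f,g\in\mc Hom_k^*(P^*,P^*)$. This is the only tool needed for the cocycle assertion.

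Applying this Leibniz rule with $f=g=D$ and using $\delta(D)=0$, both summands vanish, so
\[
\delta(D^2)=\delta(D)\circ D+(-1)^{|D|}D\circ\delta(D)=0.
\]
By the previous lemma, $D^2$ is again a PD--derivation of degree $2r$, so it represents a well-defined cocycle in $\mc Der_k^{2r,\mathrm{pd}}(P^*,P^*)$. This gives the map $q\colon Z^r\to Z^{2r}$.

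To verify that $q$ descends to cohomology I would argue as follows. Given a cocycle $D$ and $E\in\mc Der_k^{r-1,\mathrm{pd}}(P^*,P^*)$, both $D$ and $\delta(E)$ have odd degree $r$, so axiom \ref{def:Lie:cond4} of the restricted graded Lie structure (see Proposition \ref{prop:dg_Lie_algebra} and Corollary \ref{cor:der_Lie}) gives
\[
q(D+\delta E)-q(D)=[D,\delta E]+q(\delta E).
\]
For the first term, applying the Leibniz rule of $\delta$ on the bracket and using $\delta D=0$ yields $[D,\delta E]=-\delta([D,E])$, which is a coboundary in $\mc Der_k^{*,\mathrm{pd}}$ since the bracket of two PD--derivations is a PD--derivation. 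For the second term, a similar calculation gives $\delta([\delta E,E])=-[\delta E,\delta E]_{gr}=-2(\delta E)^2$, so $q(\delta E)=-\tfrac12\delta([\delta E,E])$ is a coboundary whenever $2$ is invertible in $\msf k$.

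The main obstacle I anticipate is precisely this last point: when $2$ is not invertible, the naive identity $q(\delta E)=-\tfrac12\delta([\delta E,E])$ fails, and the coboundary $E\circ\delta E$ that would realize $(\delta E)^2=\delta(E\circ\delta E)$ in the ambient $\mc Hom_k^*$ does not live in the subcomplex $\mc Der_k^{*,\mathrm{pd}}$. Overcoming this requires the intrinsic compatibility $\delta(q(a))=[\delta a,a]$ between $\delta$ and the quadratic operator of the dg restricted Lie structure, combined with a direct construction of a PD--derivation $H$ realizing $\delta(H)=q(\delta E)$; in characteristic zero this subtlety disappears, and the straightforward argument above suffices.
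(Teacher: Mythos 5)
Your proof of the main assertion is correct and is essentially the paper's own argument in different clothing: the paper writes $\delta(D)=[d,D]$ and invokes the graded Jacobi-type identity $[d,YZ]=[d,Y]Z+(-1)^{|Y|}Y[d,Z]$, which is exactly your Leibniz rule $\delta(f\circ g)=\delta(f)\circ g+(-1)^{|f|}f\circ\delta(g)$; with $f=g=D$ and $\delta(D)=0$ both terms vanish and $\delta(D^2)=0$ follows. So for the part the paper actually proves, the two arguments coincide.

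Where you genuinely diverge is in the second half. The paper's proof stops at ``cocycles go to cocycles'' and never verifies that $q$ is constant on cohomology classes, even though the lemma asserts an induced map $H^r\to H^{2r}$ and $q$ is quadratic rather than linear. Your expansion $q(D+\delta E)-q(D)=[D,\delta E]+q(\delta E)$ is the right bookkeeping, your identification $[D,\delta E]=\pm\,\delta([D,E])$ is correct (and stays inside $\mc Der^{*,\on{pd}}$ since the bracket of PD--derivations is again one, by Proposition \ref{prop:dg_Lie_algebra}), and your diagnosis of the remaining obstruction is accurate: $(\delta E)^2=\delta(E\circ\delta E)$ exhibits $q(\delta E)$ as a coboundary only in the ambient $\mc Hom$-complex, because $E\circ\delta E$ is a composite of derivations and not itself a derivation, while the derivation-valued primitive $\tfrac12[\delta E,E]$ requires $2$ to be invertible. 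This is a genuine gap in the paper's treatment (which works over an arbitrary commutative ring $\msf k$), not in yours; your proposal is the more complete of the two, and the honest flag that the case $2\notin\msf k^{\times}$ needs a separate construction of a PD--derivation $H$ with $\delta(H)=(\delta E)^2$ is exactly the point the paper should address.
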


\begin{proof}
Consider the graded commutator \([X,Y]=X\circ Y - (-1)^{|X||Y|} Y\circ X\).  We have the identity
\[
[X, Y \circ Z] \;=\; [X,Y] \circ \,Z + (-1)^{|X||Y|} Y \circ \,[X,Z],
\]
valid for any homogeneous \(X,Y,Z \in \mc Der^{*,\on{pd}}_R(P^*,P^*)\).  Setting \(X=d\), \(Y=Z=D\), and using \(|d|=1\) and \(|D|=r\),
\[
[d, q(D)] =  [d,D] \circ \,D + (-1)^r D \circ \,[d,D].
\]
If \(D\) is a cocycle then \([d,D]=0\), so both terms on the right vanish and hence \([d,q(D)]=0\). Equivalently \(\partial(q(D))=0\), as required.
\end{proof}

\begin{lemma}
Let $\mc Der^{*,\on{pd}}_R(P^*,P^*)$ be the dg Lie algebra of PD derivations equipped with the graded commutator
\[
[X,Y]=X\circ Y - (-1)^{|X||Y|}Y\circ X.
\]
Let $D\in\mc Der^{r,pd}_R(P^*,P^*)$ be homogeneous of odd degree $r$, and let $D'\in\mc Der^{*,\on{pd}}_R(P^*,P^*)$ be arbitrary homogeneous. Then
\[
[D^2, D'] \;=\; [D,[D,D']].
\]
\end{lemma}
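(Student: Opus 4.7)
The plan is to expand both sides directly from the definition of the graded commutator and show they are equal as associative operators, exploiting the parity of $r$. No appeal to the PD--derivation axioms is needed — this is a purely formal identity about graded commutators with one of the arguments having odd degree, so the only tool required is the definition
\[
[X,Y] \;=\; X\circ Y - (-1)^{|X||Y|} Y\circ X.
\]

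First I would compute the left-hand side. Since $|D^2|=2r$ is even, we have $(-1)^{2r\cdot|D'|}=1$, so
\[
[D^2,D'] \;=\; D^2 D' - D' D^2.
\]
Next I would expand $[D,D']=DD'-(-1)^{r|D'|}D'D$, note that $|[D,D']|=r+|D'|$, and substitute into
\[
[D,[D,D']] \;=\; D\,[D,D'] - (-1)^{r(r+|D'|)}[D,D']\,D.
\]
Writing $s=|D'|$ and multiplying out gives four terms:
\[
D^2D' \;-\; (-1)^{rs} DD'D \;-\; (-1)^{r^2+rs} DD'D \;+\; (-1)^{r^2+2rs} D'D^2.
\]

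The key step, and really the only point where the hypothesis that $r$ is odd enters, is the observation that $r^2$ is odd, so $(-1)^{r^2}=-1$. Consequently the two middle terms have opposite signs and cancel, and the last coefficient $(-1)^{r^2+2rs}$ becomes $-1$. This collapses the expression to $D^2D'-D'D^2$, which matches the left-hand side.

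I do not expect any real obstacle: no division by $2$ is required (so the identity holds over an arbitrary commutative ring $\msf k$), and one does not invoke the Jacobi identity (which would only give $2[D,[D,D']]=[[D,D],D']=2[D^2,D']$, a weaker statement in the presence of $2$-torsion). The computation is a mechanical verification once the sign bookkeeping for odd $r$ is made explicit.
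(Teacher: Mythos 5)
Your computation is correct and is essentially the same argument as the paper's: both are direct sign calculations with the graded commutator whose only substantive input is that $(-1)^{r^2}=-1$ for odd $r$ (the paper merely packages the expansion through the identity $[XY,Z]=X[Y,Z]+(-1)^{|Y||Z|}[X,Z]Y$ instead of writing out all four composition terms). Your side remark that the graded Jacobi identity only yields $2[D,[D,D']]=2[D^2,D']$ is also accurate and correctly explains why the direct verification is needed over a general base ring.
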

\begin{proof}
For homogeneous $X,Y,Z$ one has
\[
[X \circ Y,Z]=X \circ [Y,Z]+(-1)^{|Y||Z|}[X,Z] \circ Y.
\]
Taking $X=Y=D$ and $Z=D'$ yields
\[
[D^2,D']=D \circ \,[D,D']+(-1)^{|D||D'|}[D,D'] \circ D.
\]
On the other hand
\[
[D,[D,D']]=D \circ \,[D,D']-(-1)^{|D|(|D|+|D'|)}[D,D'] \circ D.
\]
Since $|D|$ is odd, $(-1)^{|D|(|D|+|D'|)}=-(-1)^{|D||D'|}$,
and so
\[
[D,[D,D']]=D \circ \,[D,D']+(-1)^{|D||D'|}[D,D'] \circ D=[D^2,D'].
\]
\end{proof}

\delete{
\subsection{Conceptual Meaning of the Identity 
\texorpdfstring{$[D^{2},-]=[D,[D,-]]$}{[D2,-]=[D,[D,-]]}}\label{appendix:C2}

Let $(\mathfrak{g},[\cdot , \cdot])$ be a graded Lie algebra.  For any homogeneous 
$x\in \mathfrak{g}$ the adjoint operator
\[
\on{ad}_x \colon \mathfrak{g} \to \mathfrak{g},\qquad 
\on{ad}_x(y)=[x,y],
\]
is a graded derivation of degree $|x|$.  A standard computation shows that
for homogeneous $x$,
\begin{equation}\label{eq:ad-ad}
[\on{ad}_x,\on{ad}_x] \;=\; \on{ad}_{[x,x]},
\end{equation}
where the left side is the graded commutator of endomorphisms.

\begin{lemma}
If $x$ has odd degree in a $p$-restricted Lie algebra, and $ \frac{1}{2}\in R$, then
\[
\on{ad}_{x^{[2]}} \;=\; \frac{1}{2}[\on{ad}_x,\on{ad}_x].
\]
Equivalently, for all $y\in \mathfrak{g}$,
\[
[x^{[2]},y]=[x,[x,y]].
\]
{\color{red}$x^2$ makes no sense here as $\mathfrak{g}$ is a Lie algebra.}
\end{lemma}

\begin{proof}
Since $|x|$ is odd, we have $(-1)^{|x|^{2}}=-1$.  
Expanding the graded commutator,
\[
[\on{ad}_x,\on{ad}_x]
= \on{ad}_x\circ \on{ad}_x 
  - (-1)^{|x||x|} \on{ad}_x\circ \on{ad}_x
= 2(\on{ad}_x)^2.
\]
By \eqref{eq:ad-ad} this equals $2\on{ad}_{x^{2}}$ {\color{red}WHY?}.  Thus
$\on{ad}_{x^{2}}=\tfrac12[\on{ad}_x,\on{ad}_x]$, and applying both sides to a
homogeneous $y$ gives
\[
[x^{2},y]=[x,[x,y]]. \qedhere
\]
\end{proof}

\begin{remark}
In the context of a PD dg algebra $P^*$, let $\mathfrak{g}=\mc Der^{*,\on{pd}}_R(P^*, P^*)$ and let
$D\in\mc Der^{*,\on{pd}}_{R}(P^*, P^*)$ be a derivation of odd degree.  Then the lemma shows
that the failure of the odd operator $\on{ad}_D$ to square to zero is
itself an inner derivation:
\[
(\on{ad}_D)^2=\on{ad}_{D^{2}}.
\]
In particular, if $D$ is a cocycle (i.e., $[d,D]=0$), then
$[d,D^{2}]=0$ as well, so $D^{2}$ is again a cocycle.  This is the key
fact ensuring that the assignment $q([D])=[D^{2}]$ defines a map
\[
q \colon H^{r}(\mc Der^{*,\on{pd}}_{R}(P^*,P^*)) \longrightarrow
H^{2r}(\mc Der^{*,\on{pd}}_{R}(P^*,P^*)).
\]
\end{remark}
}

\setlength\bibitemsep{7pt}

\end{document}